\documentclass[article]{amsart}

\usepackage[letterpaper, left=2.5cm, right=2.5cm, top = 1in, bottom = 1in]{geometry}

\usepackage{mathtools}

\newtheorem{thm}{Theorem}[section]
\newtheorem{lem}[thm]{Lemma}
\newtheorem*{thm*}{Theorem}

\newtheorem{proposition}[thm]{Proposition}
\newtheorem{corollary}[thm]{Corollary}

\theoremstyle{definition}
\newtheorem{definition}[thm]{Definition}
\newtheorem{example}[thm]{Example}

\newcommand{\Or}{\mathcal{O}}
\newcommand{\M}{\mathcal{M}}

\DeclareMathOperator{\Div}{Div}
\newcommand{\T}{\mathcal{T}}

\newcommand{\Subdiv}{\text{Subdiv}}
\newcommand{\Su}{\mathcal{S}}

\usepackage{subcaption}
\usepackage{multicol}
\usepackage{tikz}
\usepackage{hyperref}
\usepackage{graphicx}
\usepackage{amsthm}
\usepackage{amssymb}
\usepackage{tikz-cd}
\usetikzlibrary{decorations.markings, arrows.meta}
\tikzset{
	mid arrow/.style={postaction={decorate, decoration={
				markings,
				mark=at position .6 with {\arrow{latex}},
	}}},
}
\tikzset{
	bi arrow/.style={postaction={decorate, 
			decoration={
				markings,
				mark=at position .3 with {\arrowreversed{latex}},
				mark=at position .7 with {\arrow{latex}}
	}}},
}

\tikzset{mark size = 10}

\title{Stability conditions and subdivisions of Lawrence polytopes}
\author{Natasha Crepeau}
\address{Department of Mathematics, University of Washington, USA; {\em Present address: Department of Mathematics, Williams College, USA}}
\email{nc20@williams.edu}
\begin{document}
	
	\begin{abstract}
		Let $X$ be a nodal curve, and $G$ be the graph dual to $X$. A stability condition on a graph $G$ is an assignment of integers to the biconnected subsets of vertices of $G$ satisfying some desired properties. Stability conditions yield degeneracy sets, which are certain collections of biconnected subsets of $G$, but degeneracy sets can also be described without a reference stability condition. We show that the degeneracy sets of a graph $G$ correspond to the single-element extensions of the graphic matroid $M(G)$. Some single-element extensions of the graphic matroid $M(G)$ can be oriented to be single-element extensions of the oriented graphic matroid $\M(G)$. Single-element extensions of oriented matroids are in bijection with subdivisions of the Lawrence polytope of the dual oriented matroid. We construct stability conditions on $G$ from subdivisions of the Lawrence polytope of the cographic matroid $\M^\ast(G)$. Finally, we show that any degeneracy set $\mathcal{D}$ of a graph $G$ that has the set of all biconnected subsets of $G$ as a lower bound in the poset of degeneracy sets corresponds to an orientable extension of $M(G)$. 
	\end{abstract}
	\maketitle
	\section{Introduction}
	\renewcommand*{\thethm}{\Alph{thm}}
	
	A \textit{stability condition} $\mathfrak{n}$ on a nodal curve $X$ can be viewed as an assignment of integers $\mathfrak{n}_W$ to the biconnected subsets $W$ of the vertices of graph $G$ dual to $X$ satisfying certain properties, as in, for example, \cite{pagani2022geometry}, \cite{pagani2023stability}, \cite{viviani2023new} and \cite{fava-pagani-viviani2024}. These stability conditions are used to construct compactified Jacobians on nodal curves. For biconnected subset $W$ and its complement $W^c$, $\mathfrak{n}$ satisfies $\mathfrak{n}_W + \mathfrak{n}_{W^c} + |E(W,W^c)| - d \in \{0,1\}$, where $|E(W,W^c)|$ is the number of edges with one endpoint in $W$ and the other in $W^c$, and $d$ is the degree of the stability condition. The set of all biconnected subsets $W$ such that $\mathfrak{n}_W + \mathfrak{n}_{W^c} + |E(W,W^c)| - d = 0$ form the \textit{degeneracy set} $\mathcal{D}(\mathfrak{n})$ of the stability condition. Degeneracy sets of the graph $G$ can also be defined without a reference stability condition.
	
	In this paper, we study stability conditions on a graph $G$ and degeneracy sets of the graph. To do so, we consider the unoriented graphic matroid $M(G)$ of $G$ and the oriented graphic matroid $\M(G)$ of $G$. A \textit{single-element extension} of a matroid $M$ is a matroid $M'$ such that $M = M' \backslash e$ for some element $e$ in $M'$. Single-element extensions of unoriented matroids were fully characterized by Crapo \cite{crapo-extensions}, while single-element extensions of oriented matroids were fully characterized by Las Vergnas \cite{lasvergnas}. In both characterizations, single-element extensions are described by which cocircuits of the base matroid are extended to include $e$. When $\M$ is an oriented regular matroid, the single-element extensions of $\M$ are in bijection with \textit{subdivisions} of the \textit{Lawrence polytope} corresponding to the dual matroid $\M^\ast$ \cite[Theorem 4.14]{triangulations-oriented}. Although graphic matroids are always \textit{orientable}, not all extensions of $M(G)$ correspond to some extension of $\M(G)$. Our first result shows that the set of degeneracy sets of the graph $G$ are in bijection with the single-element extensions of the graphic matroid $M(G)$.
	
	\begin{lem}[Lemma \ref{lem:degsets-subclasses}]
		$\mathcal{D}$ is a degeneracy set of a graph $G$ if and only if there exists a single-element extension of $M$ by an element $p$ such that $\{C^\ast \in C^\ast(M):\sigma^\ast(C^\ast)=0\} = C^\ast(\mathcal{D})$. 
	\end{lem}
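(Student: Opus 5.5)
The plan is to translate both sides of the equivalence into the language of bonds of $G$ and then compare conditions one by one. On the matroid side, Crapo's theorem \cite{crapo-extensions} says that single-element extensions of $M=M(G)$ by $p$ correspond bijectively to \emph{linear subclasses} (modular cuts) of $C^\ast(M)$. A linear subclass is a set $\mathcal{L}$ of cocircuits with the following property: whenever $C_1^\ast, C_2^\ast\in\mathcal{L}$ form a modular pair, every cocircuit contained in $C_1^\ast\cup C_2^\ast$ lies in $\mathcal{L}$. Under this correspondence, $\mathcal{L}$ is exactly the set of cocircuits $C^\ast$ whose complementary hyperplane contains $p$ in its closure. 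These are precisely the cocircuits with $\sigma^\ast(C^\ast)=0$, i.e.\ those not extended by $p$. So the lemma reduces to the following claim: $\mathcal{D}$ is a degeneracy set if and only if $C^\ast(\mathcal{D})$ is a linear subclass of $M(G)$.

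First I will fix the dictionary between subsets and cocircuits. Cocircuits of $M(G)$ are the bonds $E(W,W^c)$ with both $G[W]$ and $G[W^c]$ connected, i.e.\ with $W$ biconnected. The map $W\mapsto E(W,W^c)$ identifies $W$ and $W^c$. The defining symmetry of a degeneracy set, $W\in\mathcal{D}\iff W^c\in\mathcal{D}$, makes $C^\ast(\mathcal{D})$ well defined. Conversely, any set $\mathcal{L}$ of cocircuits pulls back to a complement-closed collection $\mathcal{D}$ with $C^\ast(\mathcal{D})=\mathcal{L}$. This gives a bijection between complement-closed collections of biconnected subsets and sets of cocircuits.

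The main step, and the one needing care, is identifying modular pairs of bonds graph-theoretically. Take two distinct cocircuits $C_1^\ast, C_2^\ast$ with hyperplanes $H_1, H_2$. They form a modular pair iff $r(H_1\cap H_2)=r(M)-2$. For a graph flat, this means the spanning subgraph with edge set $H_1\cap H_2$ has exactly one more component than $G$ minus a bond, namely three components. These components give a partition $V=V_1\sqcup V_2\sqcup V_3$ with each $G[V_i]$ connected. The cocircuits contained in $C_1^\ast\cup C_2^\ast$ are exactly the cocircuits of the rank-$2$ contraction $M/(H_1\cap H_2)$. That contraction is the graphic matroid of the multigraph obtained by contracting each $V_i$ to a point. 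Its underlying simple graph is either a triangle or a path on three vertices. In the triangle case there are three cocircuits $E(V_i,V_i^c)$, and all three $V_i$ are biconnected. In the path case only the two end parts give bonds; the middle part has disconnected complement. In the path case the modular-cut condition is vacuous, since the only two cocircuits on the line are $C_1^\ast,C_2^\ast$ themselves.

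With this in hand, the linear subclass axiom reads as follows: whenever $V=V_1\sqcup V_2\sqcup V_3$ with each $V_i$ biconnected, if two of the $V_i$ lie in $\mathcal{D}$ then so does the third. I would then check that this is exactly the defining axiom of degeneracy sets as stated earlier in the paper (following \cite{viviani2023new, pagani2023stability}). If the paper's formulation phrases it differently, for instance via $W_1\subset W_2$ nested biconnected sets with $W_2\setminus W_1$ connected, I would translate by setting $V_1=W_1$, $V_2=W_2\setminus W_1$, $V_3=W_2^c$. Both directions of the lemma then follow, together with Crapo's bijection. The potential obstacle is matching the paper's exact axiom (including the degenerate path case) to the triangle condition. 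I would handle this by explicitly checking that any triple appearing in the degeneracy axiom has all three parts biconnected, and conversely.
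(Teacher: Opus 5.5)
Your proposal is correct and follows essentially the same route as the paper. Both arguments use Crapo's criterion (condition (3) of Theorem~\ref{thm:sigs-unoriented extensions}) to reduce the lemma to matching the modular-pair condition on bonds against axiom (2) of Definition~\ref{def:deg-set}, reading $\mathcal{D}$ as complement-closed so that $C^\ast(\mathcal{D})$ determines it. Your explicit description of a modular pair of bonds as a partition of $V(G)$ into three connected parts, together with the triangle/path dichotomy for the rank-$2$ contraction, is more careful than the paper's proof: the paper's ``by minimality'' step tacitly assumes $W_1\cup W_2$ is biconnected and never mentions the path case, where the condition is vacuous.
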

	We refer to the degeneracy sets corresponding to orientable extensions of $M(G)$ as \textit{orientable} degeneracy sets. The \textit{extension poset} of a matroid orders extensions by inclusion; however in \cite{fava-pagani-viviani2025}, the authors define the poset of degeneracy sets, $\text{Deg}(G)$, with a relation stronger than inclusion \cite[Definition 4.18]{fava-pagani-viviani2025}.
	
	In previous work, we constructed nondegenerate stability conditions from triangulations of the Lawrence polytope $\Lambda(\M^\ast(G))$ \cite[Theorem 5.1]{crepeau}. Our second main result in this paper constructs a degenerate stability condition from a subdivision of the Lawrence polytope $\Lambda(\M^\ast(G))$. Every subdivision of a polytope can be refined to a triangulation of that polytope, and we use that triangulation and its corresponding stability condition to construct a stability condition corresponding to the subdivision.
	\begin{thm}[Theorem \ref{thm:stability-condition-through-refinement}]
		Let $\Su$ be a subdivision of the Lawrence polytope $\Lambda(\M^\ast(G))$. Let $\mathcal{T}$ be a triangulation of $\Lambda(\M^\ast(G))$ that refines $\Su$, and consider the associated stability condition $\mathfrak{n}^\T$. Let $\sigma_\T^\ast$ be the localization of the extension of $\M(G)$ corresponding to $\T$, and let $\sigma^\ast_\Su$ be the localization of the extension of $\M(G)$ corresponding to $\Su$. Let $\vec{C}^\ast_W$ be the cocircuit $E(W,W^c)$ of $\M(G)$ oriented towards $W$. 
		
		Define $\mathfrak{n}^\Su: \text{BCon}(G) \to \mathbb{Z}$ by
		\[\mathfrak{n}^\Su_W = \begin{cases} \mathfrak{n}^\T_W - 1 & W \in \mathcal{D}_\Su \text{ and } \sigma_\T^\ast(\vec{C}^\ast_W) = +, \\
			\mathfrak{n}^\T_W & \text{otherwise.}
		\end{cases}\]
		
		Then, $\mathfrak{n}^\Su$ is a degenerate stability condition of degree $g(G) - 1$ such that $\mathcal{D}(\mathfrak{n}^\Su) = \{W:\sigma_\Su^\ast(\pm\vec{C}^\ast_W)=0\}$.
	\end{thm}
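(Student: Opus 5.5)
We check the defining axioms of a stability condition of degree $d=g(G)-1$ for $\mathfrak{n}^\Su$ directly, with $\mathfrak{n}^\T$ as a reference point. We use three inputs. First, by \cite[Theorem 5.1]{crepeau}, $\mathfrak{n}^\T$ is a nondegenerate stability condition of degree $g(G)-1$. Second, for every $W\in\text{BCon}(G)$,
\[\mathfrak{n}^\T_W+\mathfrak{n}^\T_{W^c}+|E(W,W^c)|-d=1 .\]
Third, the sign $\sigma^\ast_\T(\vec{C}^\ast_W)$ governs which of $W$, $W^c$ gets the "extra" unit. Because $\T$ refines $\Su$, the extension for $\T$ is a perturbation of the extension for $\Su$. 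Hence $\sigma^\ast_\T$ agrees with $\sigma^\ast_\Su$ wherever the latter is nonzero, and wherever $\sigma^\ast_\Su(\vec{C}^\ast_W)=0$ it takes a nonzero value. I will cite the weak-map/refinement correspondence from \cite{triangulations-oriented} for this.

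\textbf{Step 1 (the complementary sum).} Here $\mathcal{D}_\Su=\{W:\sigma^\ast_\Su(\pm\vec{C}^\ast_W)=0\}$ is closed under complement. We have $\vec{C}^\ast_{W^c}=-\vec{C}^\ast_W$, and $\sigma^\ast_\T$ is a localization, so $\sigma^\ast_\T(\vec{C}^\ast_{W^c})=-\sigma^\ast_\T(\vec{C}^\ast_W)$. Thus for $W\in\mathcal{D}_\Su$ exactly one of $W,W^c$ is decreased by $1$, and the complementary sum drops from $1$ to $0$. For $W\notin\mathcal{D}_\Su$ neither value changes and the sum stays $1$. This gives the $\{0,1\}$ condition. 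It also shows that $\mathcal{D}(\mathfrak{n}^\Su)=\mathcal{D}_\Su$, which is nonempty as soon as $\Su$ is not itself a triangulation, so $\mathfrak{n}^\Su$ is degenerate. The degree is unchanged, since it is recorded by the sums over the complementary pairs.

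\textbf{Step 2 (the subadditivity/triangle axiom).} Let $W_1,W_2$ be disjoint biconnected sets with $W=W_1\cup W_2$ biconnected. The axiom requires
\[\mathfrak{n}_{W_1}+\mathfrak{n}_{W_2}+|E(W_1,W_2)|-\mathfrak{n}_{W}\in\{0,1\},\]
with the appropriate compatibility with degeneracy. For $\mathfrak{n}^\T$ this quantity is known to be in $\{0,1\}$. Write $\delta_U\in\{0,1\}$ for the decrement, so $\delta_U=1$ iff $U\in\mathcal{D}_\Su$ and $\sigma^\ast_\T(\vec{C}^\ast_U)=+$. The new quantity is the old one minus $\delta_{W_1}+\delta_{W_2}-\delta_W$, and we must rule out the bad cases.

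\textbf{This is the main obstacle.} My plan is to use the cocircuit elimination relation. In $\M(G)$, the signed cocircuit $\vec{C}^\ast_W$ is the conformal composition of $\vec{C}^\ast_{W_1}$ and $\vec{C}^\ast_{W_2}$ with the edges $E(W_1,W_2)$ cancelled, so $\vec{C}^\ast_W$ is obtained by eliminating between $\vec{C}^\ast_{W_1}$ and $\vec{C}^\ast_{W_2}$. Las Vergnas' characterization \cite{lasvergnas} says that on every rank-$2$ contraction (a coline), the localization $\sigma^\ast_\Su$ has one of finitely many allowed sign patterns. Applying it to the coline spanned by these three cocircuits restricts the possible values of $\sigma^\ast_\Su$ on the triple. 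For instance, if $\sigma^\ast_\Su(\vec{C}^\ast_{W_1})=\sigma^\ast_\Su(\vec{C}^\ast_{W_2})=+$, then $\sigma^\ast_\Su(\vec{C}^\ast_W)=+$. The same restriction holds for $\sigma^\ast_\T$, and it lines up with the known values of the old quantity for $\mathfrak{n}^\T$, which the proof of \cite[Theorem 5.1]{crepeau} expresses through $\sigma^\ast_\T$ on this triple. A case analysis on the signs then shows that:
\begin{itemize}
\item when the old quantity is $0$, we have $\delta_{W_1}+\delta_{W_2}\le\delta_W$;
\item when the old quantity is $1$, we have $\delta_{W_1}+\delta_{W_2}-\delta_W\in\{0,1\}$;
\item the result is $0$ precisely in the configurations permitted by the degeneracy compatibility.
\end{itemize}

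If the axioms in use are the "flow" form (inequalities $\mathfrak{n}_W\ge$ something, depending on which of $W$, $W^c$ is degenerate), the same sign case analysis applies verbatim. The only genuinely new input is the coline sign-pattern restriction for the non-uniform extension $\sigma^\ast_\Su$.
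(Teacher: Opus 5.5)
Your overall route matches the paper's. You take $\mathfrak{n}^\T$ as the reference, use $\sigma^\ast_\Su\in\{0,\sigma^\ast_\T\}$, and Step~1 is the paper's complementary-sum argument. Your $Q$-form of the triple axiom is also legitimate: with $W_3=W^c$, the tribond sum in Definition~\ref{def:stability conditon} equals $Q+1-\mathbf{1}_{\mathcal{D}}(W)$. To prove that definition, though, you must state the compatibility exactly:
\begin{itemize}
\item $Q\in\{0,1\}$ if none of $W_1,W_2,W_3$ is degenerate;
\item $Q=1$ if only $W_3$ is degenerate;
\item $Q=0$ if only $W_1$, only $W_2$, or all three are degenerate.
\end{itemize}
You also never state property (1), which follows from $\mathcal{D}_\Su$ being a degeneracy set.

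The genuine gap is Step~2, which you yourself call the main obstacle. The case analysis is announced but not carried out, and it cannot be carried out from what you have written. The complementary identity fixes only $\mathfrak{n}^\T_W+\mathfrak{n}^\T_{W^c}$; it says nothing about $\mathfrak{n}^\T_{W_1}+\mathfrak{n}^\T_{W_2}-\mathfrak{n}^\T_{W}$. Appealing to what the proof of Theorem~\ref{thm:triang-to-goodIG} ``expresses'' does not supply it. What is missing is the dictionary
\[\mathfrak{n}^\T_U=g(G[U])-1+\epsilon^\T_U,\qquad \epsilon^\T_U=1 \text{ if } \sigma^\ast_\T(\vec{C}^\ast_U)=+ \text{ and } \epsilon^\T_U=0 \text{ otherwise}.\]
It comes from the orientability of $I^\T$, which gives $I^\T_U\in\{0,-1\}$ (Lemmas~\ref{lem:I-orientability} and~\ref{lem::partition-I values}), together with the sign convention in Lemma~\ref{lem:restiction_configs}.

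With the dictionary, your first two bullets do hold, and the $\delta$ bookkeeping becomes unnecessary. Since $\sigma^\ast_\Su\in\{0,\sigma^\ast_\T\}$, the definition of $\mathfrak{n}^\Su$ says exactly $\mathfrak{n}^\Su_U=g(G[U])-1+\epsilon_U$, with $\epsilon_U=1$ iff $\sigma^\ast_\Su(\vec{C}^\ast_U)=+$. Equation~\eqref{eqn:sum of genus} then turns the tribond sum into $\epsilon_{W_1}+\epsilon_{W_2}+\epsilon_{W_3}$. On the rank-$2$ minor, the cocircuits occur in the cyclic order
\[\vec{C}^\ast_{W_1},\ -\vec{C}^\ast_{W_3},\ \vec{C}^\ast_{W_2},\ -\vec{C}^\ast_{W_1},\ \vec{C}^\ast_{W_3},\ -\vec{C}^\ast_{W_2},\]
because $-\vec{C}^\ast_{W_3}$ is the elimination of $\vec{C}^\ast_{W_1}$ and $\vec{C}^\ast_{W_2}$. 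Consequently:
\begin{itemize}
\item the first forbidden configuration is precisely ``all three $\epsilon$ equal'' when nothing is degenerate;
\item the second is precisely ``$\epsilon_{W_j}=\epsilon_{W_k}$'' when only $W_i$ is degenerate;
\item the third rules out exactly two degenerate sets.
\end{itemize}
This yields the required values $\{1,2\}$, $\{1\}$, $\{0\}$. It is the paper's case analysis through Lemma~\ref{lem::partition-I values} and Configurations A and B in compressed form; read backwards, it is the proof of Theorem~\ref{thm:subdiv-to-forest}. Until you supply the dictionary and run this check, Step~2 is a plan rather than a proof.
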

	Additionally, every regular subdivision of a polytope can be refined to a regular triangulation of that polytope, so regular subdivisions of $\Lambda(\M^\ast(G))$ yield \textit{classical} stability conditions, which are a particular kind of stability condition (Corollary \ref{cor:reg->clasical}). As a consequence, all orientable degeneracy sets are the degeneracy set of some stability condition (Corollary \ref{cor:stab->odeg is surj}). 
	
	Finally, there exists stability conditions that have degeneracy sets that are non-orientable (Example \ref{ex:not-all-extensions-orientable}), but it is unknown if all degeneracy sets are the degeneracy set of some stability condition \cite[Question 4.23]{fava-pagani-viviani2025}. In our third main result, we show that the orientable degeneracy sets of $G$ form an upper subset of $\text{Deg}(G)$, and that this subset has one minimal element, the set of all biconnected subsets of $V(G)$.
	
	\begin{thm}[Theorem \ref{thm:orientable-iff-bcon}]
		The degeneracy set $\mathcal{D}$ is orientable if and only if $\mathcal{D}\geq \text{BCon}(G)$ in $\text{Deg}(G)$. 
	\end{thm}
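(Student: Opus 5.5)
The plan is to move everything to the oriented side, where the poset structure is controlled by subdivisions of $\Lambda(\M^\ast(G))$. Lemma \ref{lem:degsets-subclasses} identifies each degeneracy set $\mathcal{D}$ with a single-element extension of $M(G)$, namely the one whose localization vanishes exactly on $C^\ast(\mathcal{D})$. In particular, $\text{BCon}(G)$ corresponds to the extension by a loop, where every cocircuit has value $0$. The loop extension is orientable: it is the extension attached to the trivial subdivision of $\Lambda(\M^\ast(G))$, the single cell being the whole polytope. So $\text{BCon}(G)$ is itself orientable, and the statement reduces to two claims.
\begin{enumerate}
\item The orientable degeneracy sets are exactly the degeneracy sets reachable from $\text{BCon}(G)$ by going up in $\text{Deg}(G)$.
\item Going up from an orientable set never leaves the orientable class.
\end{enumerate}

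For ($\Rightarrow$), let $\mathcal{D}$ be orientable, coming from an oriented extension with localization $\sigma^\ast_\Su$ for a subdivision $\Su$. Coarsening $\Su$ step by step to the trivial subdivision gives a chain of oriented extensions $\sigma^\ast_{\Su}=\sigma^\ast_0,\sigma^\ast_1,\dots,\sigma^\ast_k$. Here $\sigma^\ast_k$ is the loop, and each $\sigma^\ast_{i+1}$ is a specialization of $\sigma^\ast_i$: it agrees with $\sigma^\ast_i$ or is $0$ on every signed cocircuit.

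I then need that one such specialization step induces the relation $\mathcal{D}_{i+1}\le\mathcal{D}_i$ of \cite[Definition 4.18]{fava-pagani-viviani2025}, not merely $\mathcal{D}_{i+1}\supseteq\mathcal{D}_i$. I would check this with the stability conditions of Theorem \ref{thm:stability-condition-through-refinement}. Choose one triangulation $\T$ refining all the $\Su_i$, which is possible by refining $\Su_0$. Then $\mathfrak{n}^{\Su_i}$ and $\mathfrak{n}^{\Su_{i+1}}$ differ only by lowering $\mathfrak{n}^\T_W$ by $1$ on the sets $W$ newly made degenerate with $\sigma^\ast_\T(\vec C^\ast_W)=+$. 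This explicit comparison of integer vectors should match whatever compatibility condition Definition 4.18 imposes, since in \cite{fava-pagani-viviani2025} that relation is phrased through such one-step degenerations of stability data.

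For ($\Leftarrow$), suppose $\mathcal{D}\ge\text{BCon}(G)$. I would show the orientable sets form an upper set, by induction along a chain $\text{BCon}(G)=\mathcal{D}_0<\mathcal{D}_1<\dots<\mathcal{D}_m=\mathcal{D}$ of covering relations. Given an orientable $\mathcal{D}_j$ with subdivision $\Su_j$ and a cover $\mathcal{D}_{j+1}>\mathcal{D}_j$, I would produce a refinement $\Su_{j+1}$ of $\Su_j$ whose localization vanishes exactly on $C^\ast(\mathcal{D}_{j+1})$. The compatibility built into Definition 4.18 should supply a consistent sign for each cocircuit $C^\ast_W$ with $W\in\mathcal{D}_j\setminus\mathcal{D}_{j+1}$. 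I would then verify that this signed assignment satisfies Las Vergnas' criterion \cite{lasvergnas}, which only needs checking on rank-2 contractions, that is, on triples of cocircuits in a common line. Because $M(G)$ is regular, the translation to Lawrence polytope subdivisions \cite[Theorem 4.14]{triangulations-oriented} then yields $\Su_{j+1}$.

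I expect this rank-2 verification to be the main obstacle. The danger is exactly the phenomenon of Example \ref{ex:not-all-extensions-orientable}: an unoriented extension satisfying Crapo's conditions may admit no consistent orientation. The point to exploit is that the signs are not chosen freely but inherited from an already oriented $\sigma^\ast_j$. In a rank-2 contraction of $\M(G)$, the new nonzero values lie only on cocircuits that were $0$ before. The covering relation should force these to be determined by a single cut separation $W\subset W'$, so their signs are forced by the orientation of $\vec C^\ast_W$ towards $W$. I would first try to show that a cover in $\text{Deg}(G)$ changes the zero set only on one parallel class of lines in the cocircuit arrangement, which reduces the check to a one-dimensional sign consistency.
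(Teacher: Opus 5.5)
Your architecture matches the paper's. For ($\Rightarrow$) you compare stability conditions built from one refining triangulation. For ($\Leftarrow$) you inherit signs from an existing localization, sign the newly nondegenerate cocircuits using $\mathcal{E}$, and apply the rank-$2$ criterion. However, neither decisive step is carried out.

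In ($\Rightarrow$), ``should match whatever compatibility condition Definition 4.18 imposes'' is not an argument. Definition~\ref{def:poset-relation-deg-set} is purely combinatorial, and the bridge to stability conditions is the order-preserving property, Proposition~\ref{prop:order-preserving-upper-lifting}(1). With it, the chain of coarsenings is unnecessary. Computing with the same $\T$, you have $\mathfrak{n}^\Su_W\ge g(G[W])-1=\mathfrak{n}^{\deg}_W$ for all $W$, and both have degree $g(G)-1$. Hence $\mathcal{D}_\Su\ge\text{BCon}(G)$ in one step, which is exactly the paper's proof. You could also verify Definition~\ref{def:poset-relation-deg-set} directly with $\mathcal{E}=\{W\notin\mathcal{D}_\Su:\sigma^\ast_\Su(\vec C^\ast_W)=+\}$.

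The real gap is in ($\Leftarrow$). The rank-$2$ check you defer is the whole content, and the reduction you hope for (a cover changes the zero set along a single cut) fails. In $K_3$ on $\{a,b,c\}$, the set $\{\{a\},\{b,c\}\}$ covers $\text{BCon}(K_3)$: take $\mathcal{E}=\{\{b\},\{a,b\}\}$, and note that no degeneracy set lies strictly between them. Yet both $E(\{b\},\{b\}^c)$ and $E(\{c\},\{c\}^c)$ become nonzero at once.

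Covers are also unnecessary, since $\mathcal{D}\ge\text{BCon}(G)$ supplies $\mathcal{E}$ directly, and then the obstacle dissolves. For a tripartition $W_1,W_2,W_3$ into biconnected sets, $\vec C^\ast_{W_1}+\vec C^\ast_{W_2}+\vec C^\ast_{W_3}=0$. So a signature is admissible on that minor if and only if one of the following holds:
\begin{itemize}
\item it is identically $0$;
\item it has exactly one zero and opposite signs on the other two;
\item it has no zero and is not constant.
\end{itemize}
Define $\sigma^\ast(\vec C^\ast_W)$ to be $0$, $+$, $-$ according as $W\in\mathcal{D}$, $W\in\mathcal{E}$, $W\in\mathcal{E}^c$. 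Then:
\begin{itemize}
\item condition (1) makes $\sigma^\ast$ antisymmetric;
\item two zeros on a triple are excluded because $\mathcal{D}$ is a degeneracy set;
\item the one-zero case is exactly condition (2), since $W_j\cup W_k=W_i^c\in\mathcal{D}$;
\item the no-zero case is exactly condition (3).
\end{itemize}
So $\sigma^\ast$ is a localization and $\mathcal{D}$ is orientable.

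The paper packages this as Lemma~\ref{lem:odeg-upper-lifting} for a general orientable $\mathcal{D}_2$. There one additionally uses that $\sigma_2^\ast$ already satisfies these constraints wherever it is nonzero, which forces any violation into $\mathcal{D}_2\setminus\mathcal{D}_1$, where conditions (2) and (3) apply.
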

	
	One remaining question is to characterize the minimal non-orientable elements in $\text{Deg}(G)$ for a graph $G$, and construct stability conditions whose degeneracy sets are the non-orientable elements in $\text{Deg}(G)$.
	
	\subsection*{Acknowledgements} This article is based on Chapter 4 of the author's PhD thesis \cite{crepeau-thesis}. The author was partially supported by NSF CAREER DMS-2044564. The author thanks Farbod Shokrieh for his guidance and advice during this project.  Additionally, the author would like to thank Gaku Liu, Lilla T\'othm\'er\'esz, and Cameron Wright for the helpful discussions. 
	\renewcommand*{\thethm}{\arabic{section}.\arabic{thm}}
	\section{Preliminaries}
	
	\subsection{Graphs}
	Throughout, we let $G$ be a loopless connected graph with vertices $V(G)$ and edges $E(G)$. The \textit{genus} of $G$ is $g(G) = |E(G)| - |V(G)| + 1$. A \textit{cycle} is a walk along the graph $G$ that starts and ends at the same vertex without using any edge or any vertex twice, and a \textit{cocycle}, or \textit{bond}, is a minimal subset of edges $E'$ such that $G\backslash E'$ has two connected components.
	
	A \textit{subgraph} of $G$ is a graph $G'$ with vertices $V(G') \subseteq V(G)$ and edges $E(G') \subseteq E(G)$. Subgraphs of connected graphs are not necessarily connected. A subgraph $G'$ is \textit{spanning} if $V(G') = V(G)$.  Given a subset $W$ of the vertices $V(G)$, the \textit{induced subgraph} $G[W]$ is the subgraph of $G$ containing all edges between any vertices in $W$. A subset $W$ is \textit{nontrivial} if $W \neq \emptyset$ and $W \neq V(G)$. A subset $W$ is \textit{biconnected} if both $G[W]$ and $G[W^c]$ are connected; if only $G[W]$ is connected, we say that $W$ is \textit{connected}. For disjoint subsets $W_1$ and $W_2$ of $V(G)$, $E(W_1, W_2)$ is the set of edges connecting a vertex in $W_1$ to a vertex in $W_2$; equivalently, it is the set of edges between $G[W_1]$ and $G[W_2]$. Every cocycle of $G$ can be written as $E(W,W^c)$, where $W$ is a biconnected subset. Given an edge $e=vw$, the \textit{deletion} of $e$ from $G$ is the graph $G\backslash \{e\} = (V, E\backslash \{e\})$. The \textit{contraction} of $e$ is the graph $G/e = (V', E')$, where $V' = V \backslash \{v,w\} \cup \{v'\}$ for a new vertex $v'$, and $E'$ is all edges in $E$ that were not incident to $v$ or $w$ in $G$, and all edges that were incident to $v$ or $w$ that are now incident to $v'$ instead. 
	
	A \textit{tree} is a graph without any cycles. A \textit{spanning tree} $T$ of $G$ is a connected spanning subgraph of $G$ that is a tree. A \textit{spanning forest} is a disconnected spanning subgraph of $G$ made up of trees. The collection of all spanning trees of a graph $G$ is denoted by $\mathcal{ST}(G)$. Given a spanning tree $T$ and an edge $e \in E(G \backslash T)$, the \textit{fundamental cycle} $C(T,e)$ is the unique cycle in $T \cup \{e\}$. If $e \in E(T)$, $T \backslash e$ has two connected components with vertices $W$ and $W^c$. The \textit{fundamental cocycle} $C^\ast(T,e)$ is the unique cocycle $E(W,W^c)$ partitioning $V(G)$ into $W$ and $W^c$. 
	
	A \textit{divisor} $D$ on a graph $G$ is an assignment of integers $D(v)$ to the vertices of $G$. The \textit{degree} of a divisor $D$ is $\deg(D) = \sum_{v \in V(G)} D(v)$. We let $\Div^d(G)$ be the collection of all divisors of degree $d$ on $G$, and $\text{Div}(G)$ denote the collection of all divisors on $G$. An \textit{orientation} $\Or$ of a graph is a choice of direction for every edge of $G$. Given an orientation $\Or$ of a graph $G$, we can associate a divisor $D_\Or$ to it by setting $D_\Or(v) = \text{indegree}_\Or(v) - 1$ for each $v \in V(G)$, where $\text{indegree}_\Or(v)$ is the number of edges incident to $v$ pointing towards $v$. The degree of $D_\Or$ is always $g(G)-1$. If $D = D_\Or$ for some orientation $\Or$, we say that $D$ is \textit{orientable}. A \textit{fourientation} is a generalization of orientations that allows unoriented edges and edges oriented in both directions (bioriented), in addition to one-way oriented edges. For a fourientation $\vec{F}$, $-\vec{F}$ is the fourientation with all one-way oriented arcs reversed, while $\vec{F}^c$ reverses one-way oriented arcs and switches unoriented and bioriented arcs.
	
	\begin{figure}
		\centering
		\begin{multicols}{2}
			\begin{tikzpicture}
				\draw[thick, mid arrow] (0,1) to (1,0);
				\draw[thick, mid arrow] (1,0) to (0,-1);
				\draw[thick, mid arrow] (0,-1) to (-1,0);
				\draw[thick, mid arrow] (-1,0) to (0,1);
				\draw[thick, mid arrow] (-1, 0) to (1,0);
				
				\filldraw[black] (0,1) circle (2pt);
				\filldraw[black] (1,0) circle (2pt);
				\filldraw[black] (0,-1) circle (2pt);
				\filldraw[black] (-1,0) circle (2pt);
			\end{tikzpicture}
			
			\begin{tikzpicture}
				\draw[thick, mid arrow] (0,1) to (1,0);
				\draw[thick, bi arrow] (1,0) to (0,-1);
				\draw[thick, mid arrow] (0,-1) to (-1,0);
				\draw[thick, mid arrow] (-1,0) to (0,1);
				\draw[thick] (-1, 0) to (1,0);
				\filldraw[black] (0,1) circle (2pt);
				\filldraw[black] (1,0) circle (2pt);
				\filldraw[black] (0,-1) circle (2pt);
				\filldraw[black] (-1,0) circle (2pt);
			\end{tikzpicture}
			
		\end{multicols}
		\caption{An orientation and a fourientation of a graph G.}
		\label{fig: orientation ex}
	\end{figure}
	
	\subsection{Matroids}
	Let $E$ be a finite set, which we call the \textit{ground set}. Let $\mathcal{I}$ be a family of subsets of $E$, called \textit{independent sets}, that satisfy the following axioms:
	\begin{enumerate}
		\item $\emptyset \in \mathcal{I}$
		\item If $I \in \mathcal{I}$ and $J \subset I  \subset E$, then $J \in \mathcal{I}$.
		\item If $I, J \in \mathcal{I}$ and $|I| > |J|$, then there exists $x \in I \backslash J$ such that $J \cup \{x\} \in \mathcal{I}$. 
	\end{enumerate}
	Then, $M = (E, \mathcal{I})$ is a \textit{matroid} (see, e.g. \cite{oxley}). The largest (with respect to inclusion) independent sets of a matroid are called \textit{bases}. By the third axiom, any two bases have the same size, which we call the \textit{rank} of $M$. A \textit{loop} of a matroid is an element $e \in E$ such that $e \notin \mathcal{I}$, and a \textit{coloop} is an element $e \in E$ such that $e \in B$ for every base $B$ of $M$. For $e \in E$, the \textit{deletion} of $e$ from $M$ is the matroid $M\backslash e = (E \backslash \{e\}, \mathcal{I}')$, where $\mathcal{I}' = \{I:I \in \mathcal{I} \text{ and } e\notin I\}$, and the \textit{contraction} of $e$ is the matroid $M/e = (E\backslash\{e\}, \mathcal{I}'')$, where $\mathcal{I}'' = \{I\backslash \{e\} : I \in \mathcal{I} \text{ and } e \in I\}$. A \textit{minor} of $M$ is any matroid obtained from $M$ through a sequence of deletions and contractions. A \textit{circuit} $C$ of $M$ is a minimally dependent subset of the ground set $E$. A matroid $M$ has a dual $M^\ast$, whose basis elements $B^\ast$ are the complements of the basis elements $B$ of $M$ in $E$. The \textit{cocircuits} $C^\ast$ of $M$ are the circuits of the dual matroid $M^\ast$.    For a basis $B$ and $e \notin B$, the \textit{fundamental circuit} $C(B,e)$ is the unique circuit contained in $B \cup \{e\}$, and for $e \in B$ the \textit{fundamental cocircuit} $C^\ast(B,e)$ is the unique cocircuit in $(E\backslash B) \cup \{e\}$.
	
	For a connected graph $G$, we consider the (unoriented) graphic matroid $M(G)$. The ground set $E$ of $M(G)$ is the set of edges of the graph $E(G)$, and the independent sets are the subgraphs of $G$ that do not contain a cycle. The bases of $M(G)$ are the spanning trees of $G$, and the rank of $M(G)$ is $r(M(G)) = |V(G)| - 1$. The circuits of $M(G)$ are the subsets of edges corresponding to cycles of $G$, and the cocircuits of $M(G)$ are the subsets of edges corresponding to cocycles of $G$. If $G$ is planar, then the dual $M^\ast(G)$ is the graphic matroid of the planar dual of $G$; however is $G$ is not planar, there is no graph interpretation of $M^\ast(G)$. The rank of $M^\ast(G)$ is the genus $g(G)$ of the graph $G$.
	
	Graphic matroids are an example of \textit{regular matroids}, which are matroids that can be defined from a totally unimodular matrix $U$. The dimensions of $U
	$ are $r \times n$, where $r$ is the rank of the matroid and $n = |E|$. For a graphic matroid $M(G)$, we take the matrix $U$ to be the oriented incidence matrix of $G$ with a row removed. For any loopless regular matroid $M$ that is represented by a totally unimodular matrix $U$, $U$ can be written as $U = [I_r | L]$. Then, the dual matroid $M^\ast$ can be represented by the matrix $U^\ast = [-L^T | I_{n-r}]$, as shown in Theorem 2.2.8 of \cite{oxley}.
	
	In this paper, we also work with oriented matroids. Oriented matroids are defined on a set of edges $E$. An \textit{arc} is an oriented edge $\vec{e}$. A \textit{signed subset} of $E$ is a set $X$ such that $X \in \{-,0,+\}^E$, where $X_e$ is the sign of the edge $e$. The \textit{support} of a signed set $X$ is $\underline{X} = \{ e | X_e \neq 0\}$. 
	
	One way to construct an oriented matroid on a set $E$ is through defining its \textit{signed circuits}.
	\begin{definition}\cite[Definition 3.2.1]{bjorner}\label{def:circuit-axioms}
		A collection of $\mathcal{C}$ of signed subsets of a set $E$ is the set of \textit{signed circuits} of an \textit{oriented matroid} on $E$ if and only if it satisfies the following axioms:
		\begin{enumerate}
			\item $\emptyset \notin \mathcal{C}$,
			\item if $X \in \mathcal{C}$, then $-X \in \mathcal{C}$,
			\item for all $X, Y \in \mathcal{C}$, if $\underline{X} \subset \underline{Y}$, then $X=Y$ or $X = -Y$,
			\item for all $X, Y \in \mathcal{C}$, $X \neq -Y$, and $e \in X^+ \cap Y^-$ there is a $Z \in \mathcal{C}$ such that $Z^+ \subseteq (X^+ \cup Y^+)\backslash\{e\}$ and $Z^- \subseteq (X^-\cup Y^-)\backslash\{e\}$.
		\end{enumerate}
	\end{definition}
	
	For an oriented matroid $\M$, we denote the underlying unoriented matroid by $\underline{\M}$. An unoriented matroid $M$ is \textit{orientable} is there exists an oriented matroid $\mathcal{M}$ such that $\underline{\mathcal{M}} = M$. For an orientable matroid $M$, the support of the signed circuits of $\M$ are exactly the circuits of $M$. 
	
	We can also define minors of oriented matroids using the set of signed circuits. Consider $e \in E$. Then, the deletion of $e$ from the oriented matroid $\M$, denoted $\M\backslash e$, is defined by the signed circuits $\mathcal{C}' = \{\vec{C} \in \mathcal{C} | \underline{\vec{C}} \subseteq E \backslash \{e\}\}$, and the contraction of $e$ in the oriented matroid $\M$, denoted $\M/e$, is defined by its signed circuits, which are the inclusion-wise minimal elements in $\{\vec{C} \backslash \{e\}: \vec{C} \in \mathcal{C}\}$.
	
	For an undirected graph $G$, we can orient the matroids $M(G)$ and $M^\ast(G)$ by endowing $G$ with a choice of orientation $\Or$. Then, for each cycle $C$ of $G$, let $\vec{C}$ denote a choice of orientation of $C$, where every arc in $\vec{C}$ either agrees with the arc in $\Or$ or is oriented in the opposite direction in $\Or$. We can then view $\vec{C}$ as a signed subset of $E$, where 
	\[ \vec{C}_e = \begin{cases}
		+ & \text{if } \vec{e} \in \vec{C} \text{ and } \vec{e}\in \Or \\
		- & \text{if } \vec{e} \in \vec{C} \text{ and } -\vec{e}\in \Or \\
		0 & \text{otherwise}.
	\end{cases}\]
	Every signed cycle $\vec{C}$ can then be described by its positive part $\vec{C}_+ : \{e : \vec{C_e} = +\}$ and its negative part $\vec{C}_{-} = \{e : \vec{C_e} = -\}$. We can write any signed cycle $\vec{C}$ as $\vec{C} = (\vec{C}_+, \vec{C}_{-})$. The collection of signed cycles of $G$ satisfy the circuit axioms in Definition \ref{def:circuit-axioms}, so the collection of signed cycles are the set of (signed) circuits of the oriented matroid $\M(G)$. The collection of all signed cocycles also yields the set of signed cocircuits of the oriented matroid $\M(G)$, and we denote signed cocircuits by $\vec{C}^\ast$. In \cite{ding2023framework}, the definition of fourientation is also extended to regular matroids, where a fourientation is a subset of the set of all arcs of $\M$; as in the graph case, $e \in E$ can be bioriented, unoriented, or one-way oriented.  
	
	Oriented regular matroids have a \textit{circuit-cocircuit reversal system}, introduced in \cite{gioan-circuit-cocircuit}, where an orientation $\Or$ of $\M$ is equivalent to another orientation $\Or'$ if $\Or'$ is reachable from $\Or$ through a sequence of circuit and cocircuit reversals. The number of circuit-cocircuit reversal classes is the number of bases of the matroid. 
	\subsection{Single-element extensions of matroids} \label{subsec:extension}
	Let $M$ be an (unoriented or oriented) matroid on a ground set $E$. Then, a \textit{single-element extension} of $M$ by some element $p$ is a matroid $M'$ on the ground set $E \cup \{p\}$ such that $M'\backslash p = M$. Dually, a \textit{single-element lifting} of $M$ is a matroid $\tilde{M}$ on the ground set $E \cup \{p\}$ such that $\tilde{M} / p = M$. We note that single-element extensions of $M$ are dual to \textit{single-element liftings} of $M^\ast$. A single-element extension is \textit{nontrivial} if $r(M') = r(M)$. We assume all of the single-element extensions we consider are nontrivial.
	
	In both the unoriented and oriented case, extensions are characterized by how they extend the cocircuits of starting matroid. First, we present the characterization of single-element extensions of unoriented matroids from \cite{crapo-extensions}. 
	\begin{proposition}[{\cite{crapo-extensions},\cite[Proposition 7.1.2]{bjorner}}]
		Let $M$ be a matroid on the ground set $E$, and denote the set of cocircuits of $M$ by $C^\ast(M) \subseteq \{0,1\}^E$, given by their incidence vectors. Let $M' = M \cup\{p\}$ be an extension of $M$ by a new element $p$. Let $C^\ast(M')$ be the cocircuits of $M'$. Then for every cocircuit $C^\ast \in C^\ast(M)$, there is a unique cocircuit of $M'$ contained in $C^\ast \cup \{p\}$, that is, there is a unique function 
		\[\sigma^\ast:C^\ast(M) \to \{0,1\}\]
		such that $\{(C^\ast, \sigma^\ast(C^\ast)): C^\ast \in C^\ast(M)\} \subseteq C^\ast(M')$. 
		
		Furthermore, all the cocircuits of $M'$ can be explicitly described using $\sigma^\ast$. Thus, $M'$ is uniquely determined by $\sigma^\ast$. 
	\end{proposition}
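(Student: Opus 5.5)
The statement is Crapo's theorem, and the natural route is through hyperplanes, since cocircuits of a matroid are exactly the complements of its hyperplanes. Let $r$ be the rank of $M$. Because the extension is nontrivial, $M'$ also has rank $r$ and $p$ is not a coloop of $M'$. I will also use that the rank function of $M'$ restricts to that of $M$ on subsets of $E$, since $M'\backslash p = M$.

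\textbf{Existence.} Fix a cocircuit $C^\ast$ of $M$, and let $H = E\setminus C^\ast$ be the corresponding hyperplane of $M$.
\begin{enumerate}
\item Consider the closure $\mathrm{cl}_{M'}(H)$. It has rank $r-1$ in $M'$, so it is a hyperplane of $M'$.
\item Its intersection with $E$ is $\mathrm{cl}_M(H)=H$. Hence $\mathrm{cl}_{M'}(H)$ is either $H$ or $H\cup\{p\}$.
\item The complement in $E\cup\{p\}$ is therefore either $C^\ast\cup\{p\}$ or $C^\ast$. Both are cocircuits of $M'$ contained in $C^\ast\cup\{p\}$.
\item Set $\sigma^\ast(C^\ast)=1$ in the first case, that is, when $p\notin \mathrm{cl}_{M'}(H)$. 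Set $\sigma^\ast(C^\ast)=0$ when $p\in\mathrm{cl}_{M'}(H)$.
\end{enumerate}

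\textbf{Uniqueness.} Let $D$ be any cocircuit of $M'$ with $D\subseteq C^\ast\cup\{p\}$.
\begin{enumerate}
\item $D\neq\{p\}$, since $p$ is not a coloop.
\item So $D\cap E$ is nonempty, and it is the complement in $E$ of $H'\cap E$, where $H'=E\cup\{p\}\setminus D$ is a hyperplane of $M'$.
\item The set $H'\cap E$ is a flat of $M$ with rank at least $r-2$ (adding $p$ raises rank by at most one) and at most $r-1$, and it contains $H$. Since $H'\supseteq H$ and $H'$ is closed, $H'\supseteq \mathrm{cl}_{M'}(H)$.
\item Both $H'$ and $\mathrm{cl}_{M'}(H)$ are hyperplanes of $M'$, so they are equal. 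Hence $D$ is the cocircuit found above, and $\sigma^\ast$ is unique.
\end{enumerate}

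\textbf{Describing all cocircuits.} Let $H'$ be an arbitrary hyperplane of $M'$. Either $H'\cap E$ has rank $r-1$, or it has rank $r-2$.
\begin{enumerate}
\item If $H'\cap E$ has rank $r-1$, it is a hyperplane $H$ of $M$ and $H'=\mathrm{cl}_{M'}(H)$. Its complement is one of the cocircuits $(C^\ast,\sigma^\ast(C^\ast))$ already described.
\item If $H'\cap E$ has rank $r-2$, then $p\in H'$ and $H'=F\cup\{p\}$, where $F$ is a corank-$2$ flat of $M$. Every hyperplane $H\supseteq F$ of $M$ then satisfies $p\in\mathrm{cl}_{M'}(H)$, i.e.\ $\sigma^\ast=0$ on its cocircuit.
\item Conversely, suppose $F$ is a corank-$2$ flat none of whose containing hyperplanes has $\sigma^\ast$-value $0$. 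I claim $F\cup\{p\}$ is then not a flat of rank $r-1$. I expect this to follow from the modular-cut property of $\{H:\sigma^\ast=0\}$, applied to two hyperplanes that meet in $F$.
\end{enumerate}
So the second-type cocircuits are exactly $E\setminus F$ for those corank-$2$ flats $F$ all of whose containing hyperplanes have $\sigma^\ast=0$. The set $E\setminus F$ is the union of the cocircuits $C^\ast$ with $E\setminus C^\ast\supseteq F$. Thus every cocircuit of $M'$ is expressed through $\sigma^\ast$.

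\textbf{Determination of $M'$.} A matroid is determined by its cocircuits, so $M'$ is determined by $\sigma^\ast$.

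\textbf{Main obstacle.} The delicate step is item 3 of the last case: that the corank-$2$ flats with $F\cup\{p\}$ closed are exactly those with all containing hyperplanes having $\sigma^\ast=0$. My plan is to use submodularity of the rank in $M'$ on two hyperplanes $H_1,H_2$ containing $F$. This shows that if $p\in\mathrm{cl}(H_1)\cap\mathrm{cl}(H_2)$, then $p\in\mathrm{cl}(F)$, and in that case every hyperplane containing $F$ has $\sigma^\ast=0$.
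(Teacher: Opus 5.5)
The paper gives no proof of this proposition; it cites Crapo and Bj\"orner et al. Your existence and uniqueness arguments are correct and are the standard closure argument: $\mathrm{cl}_{M'}(H)$ is a hyperplane of $M'$ meeting $E$ in $H$, and any hyperplane of $M'$ containing $H$ must equal it.

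\textbf{The genuine error is in the description of the remaining cocircuits: the condition is reversed.} In case 2 you have $H'=F\cup\{p\}$ closed of rank $r-1$ with $r(F)=r-2$, so $p\notin\mathrm{cl}_{M'}(F)$. Take any hyperplane $H\supseteq F$ of $M$ and any $e\in H\setminus F$. Then $\mathrm{cl}_{M'}(H)=\mathrm{cl}_{M'}(F\cup\{e\})$. If $p$ lay in this set, the exchange property would give $e\in\mathrm{cl}_{M'}(F\cup\{p\})=H'$, contradicting $H'\cap E=F$. So every hyperplane through $F$ has $\sigma^\ast=1$, not $0$.

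Your item 3 is false for the same reason. If all hyperplanes through $F$ have $\sigma^\ast=1$, then $p\notin\mathrm{cl}_{M'}(F)$. The same exchange argument then shows that no $e\in E\setminus F$ lies in $\mathrm{cl}_{M'}(F\cup\{p\})$. Hence $F\cup\{p\}$ \emph{is} a hyperplane of $M'$.

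Your submodularity computation in the last paragraph is correct, but it proves the opposite of what you need. Two hyperplanes through $F$ with $\sigma^\ast=0$ force $p\in\mathrm{cl}_{M'}(F)$. Then $F\cup\{p\}$ has rank $r-2$ and is not a hyperplane.

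Two small examples show your characterization fails in both directions. Take $M=U_{2,3}$ on $\{a,b,c\}$.
\begin{itemize}
\item The free extension $M'=U_{2,4}$ has $\sigma^\ast\equiv 1$. It has the extra cocircuit $\{a,b,c\}=E\setminus\emptyset$, the complement of the hyperplane $\{p\}$, and your description omits it.
\item Adding $p$ as a loop, which is nontrivial in the paper's sense, gives $\sigma^\ast\equiv 0$. Your description wrongly lists $\{a,b,c\}$ as a cocircuit, whereas the cocircuits of $M'$ are only the three $2$-element sets.
\end{itemize}

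The correct statement is that the cocircuits of $M'$ are the $(C^\ast,\sigma^\ast(C^\ast))$, together with $(E\setminus F,0)$ for those corank-$2$ flats $F$ all of whose containing hyperplanes have $\sigma^\ast=1$. Equivalently, the extra cocircuits are $(C_1^\ast\cup C_2^\ast,0)$ for modular pairs such that every cocircuit contained in $C_1^\ast\cup C_2^\ast$ has signature $1$. With that correction, your final step (a matroid is determined by its cocircuits) goes through. As written, however, the ``furthermore'' part rests on a false characterization.
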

	
	However, not every function $\sigma^\ast$ corresponds to a single-element extension of a matroid $M$. Two subsets $A$ and $B$ of the ground set $E$ form a \textit{modular pair} of $M$ if $r(A) + r(B) = r(A \cup B) + r(A \cap B)$. Two cocircuits $C_1^\ast$ and $C_2^\ast$ of $M$ are a \textit{modular pair of cocircuits} if they are a modular pair of the dual matroid $M^\ast$.
	\begin{thm}[{\cite{crapo-extensions},\cite[Theorem 7.1.3]{bjorner}}] \label{thm:sigs-unoriented extensions}
		Let $M$ be a matroid. Then, for a function $\sigma^\ast:C^\ast(M) \to \{0,1\}$, which assigns a signature $\sigma^\ast(C^\ast) \in \{0,1\}$ to every cocircuit $C^\ast$ of $M$, the following are equivalent:
		\begin{enumerate}
			\item There exists a single-element extension $M'$ of $M$ by an element $p$ such that $\{(C^\ast, \sigma^\ast(C^\ast)): C^\ast \in C^\ast(M)\} \subseteq C^\ast(M')$.
			\item $\sigma^\ast$ defines a single-element extension on every rank $2$ contraction of $M$.
			\item If a modular pair of cocircuits has signature $0$, then so has every other cocircuit in their union. 
		\end{enumerate}
	\end{thm}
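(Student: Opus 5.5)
The plan is to prove the cycle of implications $(1)\Rightarrow(2)\Rightarrow(3)\Rightarrow(1)$. Throughout I would translate cocircuits into hyperplanes: $C^\ast$ is a cocircuit of $M$ exactly when $H=E\backslash C^\ast$ is a hyperplane. In this language $\sigma^\ast(C^\ast)=0$ means that $p$ lies in the closure of $H$ in $M'$. So the data of $\sigma^\ast$ is the family $\mathcal{H}_0=\{E\backslash C^\ast:\sigma^\ast(C^\ast)=0\}$ of hyperplanes that ``absorb'' $p$.

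For $(1)\Rightarrow(2)$, fix a flat $F$ of $M$ of rank $r(M)-2$.
\begin{itemize}
\item The cocircuits of $M/F$ are precisely the cocircuits of $M$ disjoint from $F$.
\item The same holds for $M'/F$, and $M'/F$ is a single-element extension of $M/F$ by $p$.
\item Deletion and contraction of distinct elements commute, so $(M'/F)\backslash p=(M'\backslash p)/F=M/F$.
\end{itemize}
Hence the restriction of $\sigma^\ast$ to cocircuits avoiding $F$ is the signature of the extension $M'/F$.

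For $(2)\Rightarrow(3)$, let $C_1^\ast,C_2^\ast$ be a modular pair of cocircuits with signature $0$. Modularity in $M^\ast$ says that $F=E\backslash(C_1^\ast\cup C_2^\ast)$ is a flat of $M$ of rank $r(M)-2$, i.e.\ the hyperplanes $H_1,H_2$ meet in a corank-2 flat. The cocircuits of $M$ contained in $C_1^\ast\cup C_2^\ast$ are exactly the cocircuits of the rank-2 matroid $M/F$. Up to loops, a rank-2 matroid is a line with parallel classes, and its cocircuits are the complements of those classes. A single-element extension of such a line has one of three forms:
\begin{itemize}
\item $p$ is a loop, and every cocircuit gets signature $0$;
\item $p$ is parallel to one class, and exactly one cocircuit gets $0$;
\item $p$ is a new point, and no cocircuit gets $0$.
\end{itemize}
Since two cocircuits of $M/F$ have signature $0$, we are in the loop case, so every cocircuit in the union has signature $0$.

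For $(3)\Rightarrow(1)$, which is the real content, let $\mathcal{F}_0$ be the set of flats of $M$ all of whose covering hyperplanes' complements have signature $0$. Equivalently, $\mathcal{F}_0$ is generated upward from $\mathcal{H}_0$, together with intersections of modular pairs. Condition (3) is exactly what makes $\mathcal{F}_0$ a modular cut: it is closed upward, and closed under intersection of modular pairs. I would define $M'$ by the rank function $r'(A)=r(A)$ and $r'(A\cup p)=r(A)$ if $\mathrm{cl}_M(A)\in\mathcal{F}_0$, and $r'(A\cup p)=r(A)+1$ otherwise. The main obstacle is verifying submodularity of $r'$, in the case where $A,B$ have $\mathrm{cl}(A),\mathrm{cl}(B)\in\mathcal{F}_0$ but $\mathrm{cl}(A\cap B)\notin\mathcal{F}_0$. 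There I would first try to show that $A,B$ must then fail to be modular, and then use the resulting strict inequality in the submodularity of $r$ to absorb the extra $+1$. If the rank-function route gets messy, I would fall back on induction over a chain of flats, reducing to the modular pair case given by (3). Finally, I would read off from $r'$ that $M'\backslash p=M$, and that for each hyperplane $H$ of $M$ the set $\mathrm{cl}_{M'}(H)$ is a hyperplane of $M'$ containing $p$ iff $H\in\mathcal{H}_0$. This gives $\{(C^\ast,\sigma^\ast(C^\ast))\}\subseteq C^\ast(M')$.
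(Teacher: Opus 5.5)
The paper does not prove this theorem; it quotes it from Crapo and from Bj\"orner et al. So your proposal has to be judged on its own. Your implications $(1)\Rightarrow(2)$ and $(2)\Rightarrow(3)$ are correct. The translation of a modular pair of cocircuits into two hyperplanes meeting in a corank-$2$ flat is right. Your rank-$2$ classification omits the case where $p$ is a coloop, but that case also gives signature identically $0$, so nothing changes.

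\textbf{The gap in $(3)\Rightarrow(1)$.} It sits at the sentence ``Condition (3) is exactly what makes $\mathcal{F}_0$ a modular cut.'' (I read $\mathcal{F}_0$ as the flats all of whose containing hyperplanes lie in $\mathcal{H}_0$.)
\begin{itemize}
\item Condition (3) only says: if $H_1,H_2\in\mathcal{H}_0$ meet in a corank-$2$ flat, then every hyperplane containing $H_1\cap H_2$ is in $\mathcal{H}_0$. In other words, $\mathcal{H}_0$ is a linear subclass.
\item A modular cut must be closed under intersections of modular pairs $F_1,F_2$ of \emph{arbitrary} flats. So you need: if every hyperplane above $F_1$ and every hyperplane above $F_2$ is in $\mathcal{H}_0$, then so is every hyperplane above $F_1\cap F_2$. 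This does not follow formally from (3).
\item Your ``Equivalently, generated upward \dots'' presupposes the same fact, namely that closing $\mathcal{H}_0$ under modular intersections creates no new hyperplanes.
\end{itemize}
This upgrade is the real content of Crapo's theorem. By contrast, the submodularity of $r'$, which you call the main obstacle, is routine once the modular cut property is available. Your argument for it is fine: a modular pair $A,B$ yields a modular pair of flats $\mathrm{cl}A,\mathrm{cl}B$ with intersection $\mathrm{cl}(A\cap B)$.

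\textbf{How to close it.} The missing lemma can be proved directly, with no induction.
\begin{itemize}
\item Let $F_1,F_2\in\mathcal{F}_0$ be a modular pair, and let $H\supseteq F_1\cap F_2$ be a hyperplane with $F_1,F_2\not\subseteq H$.
\item Set $K_i=H\cap F_i$, which has rank $r(F_i)-1$, and $Y=\mathrm{cl}(F_1\cup F_2)$.
\item Modularity passes to subflats containing $F_1\cap F_2$ (apply submodularity to $\mathrm{cl}(F_1\cup K_2)$ and $F_2$). Hence $K_1,K_2$ are modular, and $K=\mathrm{cl}(K_1\cup K_2)\subseteq H$ has rank $r(Y)-2\le r(M)-2$.
\item Pick a flat $P$ of rank $r(M)-2$ with $K\subseteq P\subseteq H$. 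When $Y\neq E$, choose $P$ to miss some $\ell\in (H\cap Y)\backslash K$; this is possible because $r(H\cap Y)=r(Y)-1>r(K)$.
\item Then $P\cap F_i=K_i$, so $\mathrm{cl}(P\cup F_1)$ and $\mathrm{cl}(P\cup F_2)$ are hyperplanes, and both lie in $\mathcal{H}_0$.
\item They are distinct. A common value would contain $Y$. If $Y=E$ this is absurd. Otherwise it would contain $\mathrm{cl}(P\cup\{\ell\})=H$, hence equal $H$, forcing $F_1\subseteq H$.
\item Their intersection is the corank-$2$ flat $P\subseteq H$, so (3) gives $H\in\mathcal{H}_0$.
\end{itemize}
With this lemma inserted, $\mathcal{F}_0$ is a modular cut whose hyperplanes are exactly $\mathcal{H}_0$. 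Your rank-function construction and your reading-off of the cocircuits of $M'$ then complete the proof.
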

	
	Therefore, we can describe single-element extensions by the cocircuits $C^\ast$ with signature $\sigma^\ast(C^\ast) = 0$. When $\sigma^\ast$ is the signature of a single-element extension of $M$ as in Theorem \ref{thm:sigs-unoriented extensions}, the collection $\{C^\ast \in C^\ast(M):\sigma^\ast(C^\ast) = 0\}$ is a \textit{linear subclass of cocircuits}. The set of linear subclasses of cocircuits of $M$, when partially ordered by inclusion, form the \textit{extension lattice} of $M$ \cite{cheung}. 
	For graphic matroids, the single-element extensions of $M(G)$ can also be characterized by cobiased graphs \cite{slilaty}. 
	
	Now, consider an oriented matroid $\M$. Single-element extensions of $\M$ are characterized by how they extend the signed cocircuits of $\M$.
	\begin{proposition}[\cite{lasvergnas}, {\cite[Proposition 7.1.4]{bjorner}}]
		Let $\M$ be an oriented matroid on the ground set $E$, and denote the signed cocircuits of $\M$ by $\mathcal{C}^\ast(\M)$. Let $\M'$ be a single-element extension of $\M$. Then for every signed cocircuit $\vec{C}^\ast \in \mathcal{C}^\ast(\M)$, there is a unique way to extend $\vec{C}^\ast$ to a cocircuit of $\M'$: there is a unique function
		\[\sigma^\ast:\mathcal{C}^\ast(\M) \to \{+, -, 0\}\]
		such that $(\vec{C}^\ast, \sigma^\ast(\vec{C}^\ast))$ is a signed cocircuit of $\M'$. Furthermore, this $\sigma^\ast$ satisfies $\sigma^\ast(-\vec{C}^\ast) = - \sigma^\ast(\vec{C}^\ast)$. Finally, $\M'$ is uniquely determined by $\sigma^\ast$. 
	\end{proposition}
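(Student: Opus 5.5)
This is the oriented analogue of Crapo's Proposition, due to Las Vergnas, so the plan follows the unoriented argument with signs tracked. I would work in the dual setting, where single-element extensions of $\M$ are single-element liftings of $\M^\ast$, but phrase everything via cocircuits of $\M'$ directly. The three claims are existence of $\sigma^\ast$, uniqueness of $\sigma^\ast$, and reconstruction of $\M'$ from $\sigma^\ast$; the antisymmetry is immediate once existence and uniqueness hold.

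\textbf{Existence and uniqueness.} Fix $\vec{C}^\ast \in \mathcal{C}^\ast(\M)$. Since $\M = \M'\backslash p$, the signed cocircuits of $\M'\backslash p$ are the inclusion-minimal nonempty restrictions $\vec{D}^\ast|_E$ with $\vec{D}^\ast \in \mathcal{C}^\ast(\M')$. This is the dual of the circuit description of contraction recalled above, since deletion in $\M'$ is contraction in $(\M')^\ast$. Hence some $\vec{D}^\ast$ satisfies $\vec{D}^\ast|_E = \vec{C}^\ast$, and we set $\sigma^\ast(\vec{C}^\ast) = \vec{D}^\ast_p$. For uniqueness, suppose $\vec{D}_1^\ast, \vec{D}_2^\ast$ both restrict to $\vec{C}^\ast$ but differ at $p$.
\begin{itemize}
\item If one of them has $p$-entry $0$, its support lies strictly inside the other's, and the cocircuit version of axiom (3) forces $\vec{D}_1^\ast = \pm\vec{D}_2^\ast$. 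This is impossible, because both restrict to $\vec{C}^\ast$ yet differ at $p$.
\item If the $p$-entries are $+$ and $-$, the supports agree, so again $\vec{D}_1^\ast = \pm\vec{D}_2^\ast$. The sign must be $-$, contradicting that they agree on $\underline{\vec{C}^\ast}\neq\emptyset$.
\end{itemize}
Antisymmetry follows because $-\vec{D}^\ast$ is a cocircuit restricting to $-\vec{C}^\ast$.

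\textbf{Reconstruction.} I would show every signed cocircuit of $\M'$ is determined by $\sigma^\ast$. The cocircuits $\vec{D}^\ast$ of $\M'$ whose restriction to $E$ is a cocircuit of $\M$ are exactly the pairs $(\vec{C}^\ast,\sigma^\ast(\vec{C}^\ast))$. The remaining cocircuits have $\vec{D}^\ast_p\neq 0$ and restriction $\vec{D}^\ast|_E$ a non-minimal covector. Nontriviality ($r(\M')=r(\M)$) rules out the cocircuit $\{p\}$ alone.

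For these, I would mimic Crapo's description. In the unoriented case they are the sets $(C_1^\ast\cup C_2^\ast)\setminus \{\text{common part}\}$ with $p$ appended, for modular pairs with opposite signature behaviour. In the oriented case I would use cocircuit elimination in $\M'$ between $(\vec{C}_1^\ast,+)$ and $(\vec{C}_2^\ast,-)$ eliminating $p$, or the reverse. Concretely, $\vec{D}^\ast$ would be taken as the conformal composition of two cocircuits of $\M$ forming a modular pair, i.e.\ spanning a rank-$2$ contraction, with $\sigma^\ast$-values $0$ and $\vec{D}^\ast_p$ on appropriately chosen members.

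The cleanest route to this step is via covectors. The covectors of $\M'$ are determined by its cocircuits through composition, and $\M'$ is determined by its covectors. A covector $Y$ of $\M'$ with $Y|_E$ a covector of $\M$ can be recovered from the cocircuits below $Y|_E$ in the face lattice and their signatures. Reducing to rank $2$ contractions, where the cocircuits form a cyclic arrangement and the extension inserts a point, gives the explicit description.

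\textbf{Main obstacle.} I expect this last determination step to be the main obstacle, namely showing that cocircuits of $\M'$ not of the form $(\vec{C}^\ast,\sigma^\ast(\vec{C}^\ast))$ are nonetheless forced by $\sigma^\ast$. I would try first the reduction to rank-$2$ contractions along the lines of the proof of Theorem~\ref{thm:sigs-unoriented extensions}, citing \cite[Proposition 7.1.4]{bjorner} for the detailed verification.
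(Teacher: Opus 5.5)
Your existence, uniqueness and antisymmetry steps are correct: cocircuits of $\M'\backslash p$ are the minimal nonempty restrictions of cocircuits of $\M'$, and incomparability of supports does the rest. The paper gives no proof of this statement; it only quotes it from Las Vergnas and Bj\"orner et al., so the benchmark is the standard argument there.

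The genuine gap is in the final claim. You misidentify which cocircuits of $\M'$ are not of the form $(\vec{C}^\ast,\sigma^\ast(\vec{C}^\ast))$. They all have $p$-entry $0$, not $\neq 0$. To see this, let $\vec{D}^\ast\in\mathcal{C}^\ast(\M')$ with $\vec{D}^\ast_p\neq 0$.
\begin{enumerate}
\item Then $\vec{D}^\ast|_E$ is a covector of $\M$, and it is nonzero since $p$ is not a coloop. So some cocircuit $\vec{C}^\ast$ of $\M$ is conformal to it.
\item If $\underline{\vec{C}^\ast}\subsetneq\underline{\vec{D}^\ast}\setminus\{p\}$, the cocircuit $(\vec{C}^\ast,\sigma^\ast(\vec{C}^\ast))$ of $\M'$ would have support strictly inside $\underline{\vec{D}^\ast}$, contradicting incomparability.
\item Hence $\vec{D}^\ast|_E=\vec{C}^\ast$. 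Equivalently, the zero set of $\vec{D}^\ast$ is a hyperplane of $\M'$ avoiding $p$, hence a hyperplane of $\M$, since $r(\M')=r(\M)$.
\end{enumerate}
So the new cocircuits are the $(X,0)$ where $X$ is a covector of $\M$ whose zero set is a coline $F$ of $\M$ with $F\cup\{p\}$ a hyperplane of $\M'$.

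This breaks the rest of your reconstruction. Your concrete candidate $(\vec{C}_1^\ast,0)\circ(\vec{C}_2^\ast,\sigma^\ast(\vec{C}_2^\ast))$ is a covector whose support strictly contains that of the cocircuit $(\vec{C}_1^\ast,0)$, so it is never a cocircuit. Your recollection of Crapo is also off. There the new cocircuits are the full unions $C_1^\ast\cup C_2^\ast$ of modular pairs, with $p$ not appended, whenever every cocircuit inside $C_1^\ast\cup C_2^\ast$ has signature $1$.

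A quick check: realize $\M$ by $e_1,e_2$ on $\{a,b\}$ and set $p=e_1+e_2$. In the order $(a,b,p)$, the cocircuits of $\M'$ are $\pm(+,0,+)$, $\pm(0,+,+)$ and $\pm(+,-,0)$; the new one has $p$-entry $0$. The same example shows your covector route is circular. Every composition of the $(\vec{C}^\ast,\sigma^\ast(\vec{C}^\ast))$ here has nonzero $p$-entry, so $(+,-,0)$ is never generated. Deferring this step to Proposition 7.1.4 of Bj\"orner et al. amounts to citing the statement you are proving.

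The correct description is that the new cocircuits are exactly the $(\vec{C}_1^\ast\circ\vec{C}_2^\ast,0)$ where:
\begin{enumerate}
\item $\vec{C}_1^\ast,\vec{C}_2^\ast$ form a modular pair with no separating element, i.e.\ they are adjacent cocircuits of the rank-$2$ contraction $\M/F$;
\item $\sigma^\ast(\vec{C}_1^\ast)=-\sigma^\ast(\vec{C}_2^\ast)\neq 0$.
\end{enumerate}
In the example, $(+,-)=(+,0)\circ(0,-)$, with signatures $+$ and $-$. Your idea of eliminating $p$ between $(\vec{C}_1^\ast,+)$ and $(\vec{C}_2^\ast,-)$ is the right tool. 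It produces a cocircuit with $p$-entry $0$, which already contradicts your own classification. You still have to show the result equals $\vec{C}_1^\ast\circ\vec{C}_2^\ast$, and that every new cocircuit arises this way.

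A shorter route to ``$\M'$ is determined by $\sigma^\ast$'' avoids listing the new cocircuits at all:
\begin{enumerate}
\item Normalize the chirotope so that $\chi_{\M'}$ agrees with $\chi_{\M}$ on $E$.
\item For independent $x_1,\dots,x_{r-1}\in E$, the cocircuit of $\M'$ vanishing on them is $e\mapsto\chi_{\M'}(x_1,\dots,x_{r-1},e)$. Its restriction to $E$ is the cocircuit $\vec{C}^\ast=\chi_{\M}(x_1,\dots,x_{r-1},\cdot)$ of $\M$.
\item Your uniqueness argument then gives $\chi_{\M'}(x_1,\dots,x_{r-1},p)=\sigma^\ast(\vec{C}^\ast)$.
\item Every basis of $\M'$ containing $p$ has this form up to reordering, so $\chi_{\M'}$, and hence $\M'$, is determined by $\chi_{\M}$ and $\sigma^\ast$.
\end{enumerate}
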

	
	Every single-element extension of $\M$ defines a single-element extension of the underlying unoriented matroid $\underline{\M}$. If $\sigma^\ast$ determines an extension of an oriented matroid $\M$, we say that $\sigma^\ast$ is a \textit{localization}. Oriented matroids of rank $2$ can always be represented by a central hyperplane arrangement, where each line corresponds to a cocircuit of the matroid, so a localization $\sigma^\ast$ can be described by its behavior on all the rank $2$ minors of $\M$ (see \cite[Example 7.1.7]{bjorner}).
	
	\begin{thm}[\cite{lasvergnas}, {\cite[Theorem 7.1.8]{bjorner}}] \label{thm::or-ext-equivalences}
		Let $\M$ be an oriented matroid, and $\sigma^\ast : \mathcal{C}^\ast(\M) \to \{+, -, 0\}$ a signature satisfying $\sigma^\ast(-\vec{C}^\ast) = -\sigma^\ast(\vec{C}^\ast)$ for all $\vec{C}^\ast \in \mathcal{C}^\ast(\M)$ Then, the following statements are equivalent:
		
		\begin{enumerate}
			\item $\sigma^\ast$ is a localization: there exists a single-element extension $\M'$ of $\M$ such that
			$$\{(\vec{C}^\ast, \sigma^\ast(\vec{C}^\ast)) : \vec{C}^\ast \in \mathcal{C}^\ast(\M)\} \subseteq \mathcal{C}^\ast(\M').$$
			\item $\sigma^\ast$ defines a single-element extension on every contraction of $\M$ of rank $2$. 
			\item The signature $\sigma^\ast$ produces none of the three excluded subconfigurations (minors) of rank $2$ on three elements, as given in Figure \ref{fig:forbiddenconfigs}. Each ray corresponds to a cocircuit of the rank $2$ minor. The signs on each ray from the origin represent the sign of the new element $p$ in $\M' = \M \cup \{p\}$. 
			
		\end{enumerate}
		
	\end{thm}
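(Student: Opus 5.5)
The statement is a cited result (\cite{lasvergnas}, \cite[Theorem 7.1.8]{bjorner}), so the plan follows the standard cocircuit-signature argument. I will prove the cycle $(1)\Rightarrow(2)\Leftrightarrow(3)$ and then $(2)\Rightarrow(1)$. Throughout I use the description of contractions by cocircuits. For a flat $F$ of $\M$ of corank $2$, the signed cocircuits of $\M/F$ are exactly the signed cocircuits of $\M$ whose support is disjoint from $F$.

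\emph{Step $(1)\Rightarrow(2)$.} Suppose $\M'$ is an extension of $\M$ realizing $\sigma^\ast$, and let $F$ be a corank-$2$ flat. Then $\M'/F$ is a single-element extension of $\M/F$, since deletion of $p$ commutes with contraction of $F$. Its signed cocircuits include the pairs $(\vec{C}^\ast,\sigma^\ast(\vec{C}^\ast))$ with $\underline{\vec{C}^\ast}\cap F=\emptyset$. So the restriction of $\sigma^\ast$ to $\mathcal{C}^\ast(\M/F)$ is a localization of the rank $2$ contraction. The only point to check is that $\M'/F$ is still nontrivial, or else that the degenerate case is allowed in (2).

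\emph{Step $(2)\Leftrightarrow(3)$.} This is a rank $2$ classification. A rank $2$ oriented matroid, after removing loops and identifying parallel classes, is a cyclic arrangement of lines through the origin. Its signed cocircuits are the rays, and they come in antipodal pairs in cyclic order. A single-element extension inserts one new line, either at a new position or on an existing line. Its signature on the rays is therefore of a specific form: $0$ on at most one antipodal pair, constant on each open arc between the new line's two rays, and of opposite sign on the complementary arc. I will check that a sign-symmetric function on the cyclically ordered rays has this form exactly when no three consecutive rays carry one of the three forbidden patterns of Figure \ref{fig:forbiddenconfigs}. That forbidden-pattern check is a finite, local verification.

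\emph{Step $(2)\Rightarrow(1)$, the main obstacle.} Here $\M'$ must be built from $\sigma^\ast$ alone. I will define a candidate set of signed cocircuits $\mathcal{C}^{\ast\prime}$ on $E\cup\{p\}$ consisting of two kinds of elements:
\begin{itemize}
\item all pairs $(\vec{C}^\ast,\sigma^\ast(\vec{C}^\ast))$;
\item for every modular pair $\vec{C}_1^\ast,\vec{C}_2^\ast$ with $\sigma^\ast(\vec{C}_1^\ast)=+$, $\sigma^\ast(\vec{C}_2^\ast)=-$, and $\vec{C}_1^\ast\neq-\vec{C}_2^\ast$, the vectors $(\vec{Z},0)$, where $\vec{Z}$ ranges over the cocircuits obtained by eliminating $p$ between them inside the rank $2$ contraction they span.
\end{itemize}
Hypothesis (2) ensures that, within each rank $2$ contraction, this recipe reproduces exactly the cocircuits of a genuine extension.

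I then verify the cocircuit axioms for $\mathcal{C}^{\ast\prime}$. Symmetry holds because $\sigma^\ast(-\vec{C}^\ast)=-\sigma^\ast(\vec{C}^\ast)$, and incomparability of supports is routine. For elimination I will appeal to the theorem that it suffices to check elimination for modular pairs (\cite[Theorem 3.6.1]{bjorner}). A modular pair in $\mathcal{C}^{\ast\prime}$ lives in a single rank $2$ contraction, either of $\M$ or of the new structure. By (2), elimination there holds because it holds in an honest rank $2$ extension.

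The delicate parts are two. First, one must show that the underlying unoriented sets form the cocircuits of a matroid $\underline{\M'}$ with $\underline{\M'}\backslash p=\underline{\M}$; this is Crapo's Theorem \ref{thm:sigs-unoriented extensions}, applied via the zero set of $\sigma^\ast$. Second, one must show that every modular pair of the new family genuinely lies in a single rank $2$ contraction of $\M$. I would handle the second by showing that the flat spanned by their complements in $\M'$, after deleting $p$, has corank $2$ in $\M$.
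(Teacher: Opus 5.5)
The paper gives no proof of this statement; it quotes it from Las Vergnas and \cite[Theorem 7.1.8]{bjorner}. Your outline follows the architecture of the standard proof there: the candidate cocircuit set of Proposition 7.1.4, Crapo's theorem for the underlying matroid, and modular elimination. However, $(2)\Rightarrow(1)$ has a genuine gap exactly at the step you flag.

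\textbf{Main gap.} It is false that every modular pair of $\mathcal{C}^{\ast\prime}$ lies in a rank $2$ contraction of $\M$. Let $Z^1,Z^2$ be a modular pair with complementary hyperplanes $H_1',H_2'$ of $\M'$, and put $L'=H_1'\cap H_2'$. Then $L'\setminus\{p\}$ has corank $2$ in $\M$ exactly when $p\notin L'$ or $p\in\mathrm{cl}(L'\setminus\{p\})$. In the remaining case $Z^1_p=Z^2_p=0$ and $L'\setminus\{p\}$ has corank $3$, so your planned fix (showing corank $2$) fails.

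\emph{Example.} Let $\M$ have rank $3$, let $p$ be in general position, and let $Z^1,Z^2$ be the zero-$p$ cocircuits whose hyperplanes are the lines $\{p,a\}$ and $\{p,b\}$.
\begin{enumerate}
\item They meet in $\{p\}$, which has corank $2$ in $\M'$, while $\emptyset$ has corank $3$ in $\M$.
\item $Z^1$ was manufactured in the pencil $\M/a$ and $Z^2$ in the pencil $\M/b$.
\item Eliminating at an element $c$ must produce the zero-$p$ cocircuit of the line through $p$ and $c$, which was manufactured in a third pencil $\M/c$.
\item The rank $2$ contraction $\M'/p$ in which this pair lives is not an extension of any rank $2$ contraction of $\M$. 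Hypothesis (2) says nothing about it directly, so ``elimination holds because it holds in an honest rank $2$ extension'' is empty here.
\end{enumerate}

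\textbf{What is missing.} This both-zero case is the real content of Las Vergnas's theorem. You must show that inside every rank $3$ contraction $\M/L$, the zero-$p$ cocircuits built separately in the different pencils are mutually compatible, i.e.\ satisfy elimination among themselves. That requires an additional rank-$3$ argument combining (2) at several points, and your plan does not contain one. One possible route:
\begin{enumerate}
\item Fix $a\in H_1'\setminus L'$, $b\in H_2'\setminus L'$, the elimination element $e$, and a test element $f$.
\item Reduce the sign check to the restriction to $\{a,b,e,f\}$.
\item Show that on a rank $3$ oriented matroid with four elements, the rank-$2$ conditions force a genuine extension.
\end{enumerate}

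\textbf{A smaller point.} In $(2)\Leftrightarrow(3)$ you must test all triples of lines in a pencil, not consecutive ones. Take six lines whose rays $r_1,\dots,r_6$ in one half of the pencil carry signs $+,+,-,-,+,+$. Every cyclically consecutive triple of lines is admissible, yet lines $1,3,5$ show the alternating excluded pattern. Since (3) quantifies over all three-element rank $2$ minors, this is easily repaired.
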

	
	\begin{figure}
		\centering
		\begin{subfigure}{.3\textwidth}
			\centering
			\begin{tikzpicture}
				\draw (0,1.25)--(0,-1.25);
				\draw (-1,1)--(1,-1);
				\draw (1,1)--(-1,-1);
				\node[anchor = south] at (0,1.25) {$+$};
				\node[anchor = south west] at (1,1) {$-$};
				\node[anchor = south east] at (-1,1) {$-$};
				\node[anchor = north] at (0,-1.25) {$-$};
				\node[anchor = north west] at (1,-1) {$+$};
				\node[anchor = north east] at (-1,-1) {$+$}; 
			\end{tikzpicture}
		\end{subfigure}
		\begin{subfigure}{.3\textwidth}
			\centering
			\begin{tikzpicture}
				\draw (0,1.25)--(0,-1.25);
				\draw (-1,1)--(1,-1);
				\draw (1,1)--(-1,-1);
				
				\node[anchor = south] at (0,1.25) {$-$};
				\node[anchor = south west] at (1,1) {$+$};
				\node[anchor = south east] at (-1,1) {$0$};
				\node[anchor = north] at (0,-1.25) {$+$};
				\node[anchor = north west] at (1,-1) {$0$};
				\node[anchor = north east] at (-1,-1) {$-$};
			\end{tikzpicture}
		\end{subfigure}
		\begin{subfigure}{.3\textwidth}
			\centering
			\begin{tikzpicture}
				\draw (0,1.25)--(0,-1.25);
				\draw (-1,1)--(1,-1);
				\draw (1,1)--(-1,-1);
				\node[anchor = south] at (0,1.25) {$-$};
				\node[anchor = south west] at (1,1) {$0$};
				\node[anchor = south east] at (-1,1) {$0$};
				\node[anchor = north] at (0,-1.25) {$+$};
				\node[anchor = north west] at (1,-1) {$0$};
				\node[anchor = north east] at (-1,-1) {$0$};
			\end{tikzpicture}
		\end{subfigure}
		\caption{Forbidden subconfigurations for signatures of cocircuits}
		\label{fig:forbiddenconfigs}
	\end{figure}
	
	Finally, we note that not every unoriented extension of an orientable matroid corresponds to an oriented extension of that oriented matroid. 
	
	\begin{example}\label{ex:not-all-extensions-orientable}
		Consider $K_4$, the complete graph on $4$ vertices. We label those vertices by $V(K_4) = \{v_1, v_2, v_3, v_4\}$. All graphic matroids are orientable. Consider the linear subclass of cocircuits of $M(K_4)$ given by $$\mathcal{L} = \{E(v_1v_2, v_3v_4), E(v_1v_3, v_2v_4), E(v_1v_4, v_2v_3)\},$$ which corresponds to a single-element extension of $M(K_4)$. However, this extension of $M(K_4)$ yields a matroid that is not orientable \cite[Example 3.3]{oriented-graphic-extensions}, so the extension cannot be oriented.
	\end{example}
	
	\subsection{Polytopes and subdivisions}
	A \textit{polytope} $\mathcal{P}$ is the convex hull of a finite set of points $V$ in $\mathbb{R}^n$. A \textit{face} of a polytope is its intersection with any hyperplane that does not cross the relative interior of $P$. Faces of dimension $0$ are called \textit{vertices}. A $d$-dimensional \textit{simplex} is a polytope of dimension $d$ with exactly $d+1$ vertices. 
	
	A \textit{subdivision} $\Su$ of a polytope $\mathcal{P} \subset \mathbb{R}^d$ is a collection of polytopes, called \textit{cells}, whose union is $\mathcal{P}$, and any two distinct cells intersect at a (possibly empty) common face. A \textit{triangulation} is a subdivision where every cell is a $d$-dimensional simplex. 
	We denote the set of all subdivisions of a polytope $\mathcal{P}$ as $\Subdiv(\mathcal{P})$. We recall the following definitions from \cite{triangulations-book}. A subdivision $\mathcal{S}$ is a \textit{refinement} of the subdivision $\Su'$ if for every cell $c \in \Su$, there exists a cell $c' \in \Su'$ such that $c \subseteq c'$. Refinement induces a partial order on $\Subdiv(\mathcal{P})$. The minimal elements of the poset $(\Subdiv(\mathcal{P}), \preccurlyeq)$ are the triangulations of $\mathcal{P}$, and the unique maximal element is given by the trivial subdivision of $\mathcal{P}$. 
	
	One special kind of subdivision is a \textit{regular subdivision}. Let $V$ be the vertices of a polytope $\mathcal{P} \subset \mathbb{R}^d$, and let $\omega: V \to \mathbb{R}$ be a function that lifts $V$ to $\mathbb{R}^{d+1}$ by sending $V$ to
	\[V_\omega \coloneqq \{(v, \omega(v)) : v \in V \}.\]
	The lower faces of the convex hull of $V_\omega$ form a subdivision of $\mathcal{P}$, which we call a \textit{regular subdivision}. Every subdivision of a polytope can be refined to a triangulation of that polytope; furthermore, every regular subdivision of a polytope can be refined to a regular triangulation of that polytope \cite[Corollary 2.3.18]{triangulations-book}.
	
	\subsection{Lawrence polytopes and connections to single-element extensions}
	Every oriented matroid $\M$ of rank $r$ on $n$ elements can be lifted to an associated oriented matroid $\Lambda(\M)$ of rank $n+r$ on $2n$ elements through a construction of Jim Lawrence (see \cite{lawrencepolytopes}). If $\M$ is a loopless matroid and can be realized by a matrix $M_{r\times n}$, the \textit{Lawrence matrix} of $\M$ is given by
	$$\Lambda(\M) = \begin{pmatrix}
		M_{r\times n} & \mathbf{0} \\
		I_{n\times n} & I_{n\times n}
	\end{pmatrix}.$$
	We label the columns of the Lawrence matrix as $P_1,...,P_n,P_{-1},...,P_{-n}$. The \textit{Lawrence polytope} of $\M$, which we also denote by $\Lambda(\M)$, is the convex hull of the points $P_1,...,P_n,P_{-1},...,P_{-n}$. Each column $P_i$ of $M_{r\times n}$ corresponds to an arc $\vec{e_i}$ of $\M$, and the columns $P_{-i}$ correspond to the arc $\vec{e_{-i}} = -\vec{e_i}$. The vertices of $\Lambda(\M)$, which is the ground set of $\Lambda(\M)$ when viewed as an oriented matroid, are the set $E \cup \overline{E}$, where $E$ is the ground set $E$ of $\M$ and $\overline{E} = \{-\vec{e}:\vec{e} \in E\}$. 
	
	As an oriented matroid, $\Lambda(\M)$ inherits its circuits, cocircuits, and bases from $\M$. If $B$ is an unsigned subset of $E$ and $A \subseteq B$, the \textit{reorientation} of $B$ at $A$ is $_AB \coloneqq (B\backslash A)\cup\overline{A}$. If $X = (X_+, X_-)$, and $Y \subseteq X_+ \cup X_-$ is a signed subset of $E$, the \textit{reorientation} of $X$ at $Y$ is $_YX \coloneqq ((X_+\backslash Y\cup \overline{(X_- \cap A)}),( X_-\backslash A, \overline{(X_+ \cap A)}))$. 
	
	\begin{lem}[{\cite[Lemma 4.11]{triangulations-oriented}}]
		Let $\Lambda(\M)$ be the Lawrence polytope associated with an oriented matroid $\M$. 
		\begin{enumerate}
			\item The set of circuits of $\Lambda(\M)$ is
			\[\mathcal{C}(\Lambda(\M)) \coloneqq \{(C_+\cup \overline{C_-}, C_- \cup \overline{C_+}):(C_+, C_-) \in \mathcal{C}(\M)\}.\]
			\item The set of cocircuits of $\Lambda(\M)$ is 
			\[\mathcal{C}^\ast(\Lambda(\M)) \coloneqq \{_A\vec{C}^\ast: \vec{C^{\ast}} \in \mathcal{C}^\ast(\M), A\subseteq \vec{C}_+^\ast \cup \vec{C}_-^\ast \} \cup \{(\{\vec{e},-\vec{e}\},\emptyset), (\emptyset, \{\vec{e},-\vec{e}\}): e \text{ is not a coloop} \}.\]
			\item The set of bases of $\Lambda(\M)$ is 
			\[\mathcal{B}(\Lambda(\M)) \coloneqq \{_A(E\backslash B) \cup B \cup \overline{B}:B \in \mathcal{B}(\M), A  \subseteq E \backslash B\}.\]
		\end{enumerate}
	\end{lem}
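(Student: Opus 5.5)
The plan is to work first with the realization given by the Lawrence matrix and read off all three descriptions from linear algebra. After that I will indicate how to pass to arbitrary oriented matroids using orthogonality of circuits and cocircuits. Throughout, columns are labelled $P_1,\dots,P_n,P_{-1},\dots,P_{-n}$, and $\M$ is realized by $M_{r\times n}$ of full rank $r$.

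\emph{Circuits.} The signed circuits of $\Lambda(\M)$ are the sign vectors of support-minimal nonzero vectors $(x,y)$ in the kernel of the Lawrence matrix. The lower block of the matrix forces $x+y=0$, so $y=-x$. The upper block then forces $Mx=0$. Thus the kernel is $\{(x,-x): x\in\ker M\}$, and its minimal supports are exactly those with $x$ a minimal-support vector of $\ker M$, i.e.\ $x$ signing a circuit $(C_+,C_-)$ of $\M$. The sign of $x$ on $E$ is $(C_+,C_-)$, and the sign of $-x$ on $\overline{E}$ is $(\overline{C_-},\overline{C_+})$. This gives $(C_+\cup\overline{C_-},\,C_-\cup\overline{C_+})$, which is item (1).

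\emph{Bases.} Take $n+r$ columns, say $B_0\subseteq E$ and $\overline{B_1}\subseteq\overline{E}$. The bottom identity blocks force every index $i$ to lie in $B_0$ or $B_1$, since otherwise row $r+i$ of the submatrix is zero. Since $|B_0|+|B_1|=n+r$, exactly $r$ indices are doubled. Call this set $B=B_0\cap B_1$.

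For each doubled index, subtract the column $P_{-i}$ from $P_i$. This leaves $\binom{M_{\cdot i}}{0}$. Expanding the determinant along the identity rows of the singly-used indices and of the $\overline{B}$ columns reduces it, up to sign, to $\det M_{B}$. So the submatrix is invertible iff $B\in\mathcal B(\M)$. The singly-used indices are exactly $E\setminus B$, and each appears either as $e$ or as $\bar e$. This is precisely ${}_A(E\setminus B)\cup B\cup\overline B$ with $A\subseteq E\setminus B$, which is item (3).

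\emph{Cocircuits.} Covectors are the sign vectors of the row space. A functional $(a,b)$ takes the value $(a^TM)_i+b_i$ on $P_i$ and $b_i$ on $P_{-i}$. Write $c=a^TM$, which ranges over the covector vectors of $\M$.

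If $c=0$, the support is $\{i,\bar i: b_i\neq0\}$, with the same sign on both copies. Its minimal instances are $(\{\vec e,-\vec e\},\emptyset)$ and $(\emptyset,\{\vec e,-\vec e\})$.

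If $c\neq0$, the support can be shrunk by choosing $b_i=0$ off $\operatorname{supp}c$. For $i\in\operatorname{supp}c$, choose $b_i\in\{0,-c_i\}$, which kills exactly one of the two copies. The copy that survives carries sign $\operatorname{sign}(c_i)$ whether it is $P_i$ (when $b_i=0$) or $P_{-i}$ (when $b_i=-c_i$). Keeping the barred copy is exactly what the reorientation ${}_A$ in the statement records.

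The resulting vector has support of size $|\operatorname{supp}c|$, and it is minimal iff $c$ is a cocircuit $\vec C^\ast$. This yields ${}_A\vec C^\ast$ for every $A\subseteq\underline{\vec C^\ast}$.

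Finally, compare the two types. A pair $\{e,\bar e\}$ fails to be minimal exactly when some ${}_A\vec C^\ast$ has support inside it. That happens iff $\{e\}$ is a cocircuit of $\M$, i.e.\ $e$ is a coloop. Conversely, no ${}_A\vec C^\ast$ contains a pair. This gives item (2) with the coloop exclusion.

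\emph{General case and main obstacle.} The step I expect to be the main obstacle is the cocircuit description, specifically establishing minimality and completeness without a matrix, i.e.\ for non-realizable $\M$. For that case I would take the sets in (1) and (2) as candidate circuit and cocircuit families and proceed in three steps:
\begin{itemize}
\item verify the circuit axioms of Definition~\ref{def:circuit-axioms} for (1), which transfer directly from those of $\M$ since $\overline{\cdot}$ just duplicates each circuit with reversed signs;
\item check orthogonality between the families in (1) and (2), which reduces to orthogonality in $\M$ because each reoriented element contributes the same sign product on its barred copy;
\item use the bases from (3) to show that every fundamental cocircuit of $\Lambda(\M)$ is of the listed form.
\end{itemize}
The uniqueness of the cocircuit set orthogonal to a given circuit set then forces equality.
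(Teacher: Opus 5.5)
The paper gives no proof of this lemma; it is imported by citation. So your argument has to stand on its own. Your realizable-case computations of circuits and bases are correct, but the cocircuit step has a sign error. With $b_i=-c_i$ the functional takes the value $b_i=-c_i$ on $P_{-i}$, so the surviving barred copy carries $-\mathrm{sign}(c_i)$, not $\mathrm{sign}(c_i)$. That flip is exactly what ${}_A$ encodes: ${}_AX=\bigl((X_+\setminus A)\cup\overline{X_-\cap A},\,(X_-\setminus A)\cup\overline{X_+\cap A}\bigr)$, which matches item (1), where $\bar e$ always carries the sign opposite to $e$. Read literally, your recipe produces barred copies with the \emph{same} sign, and these are not even covectors. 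For $\M$ realized by $M=(1\;1)$, $\Lambda(\M)$ has the circuit $(\{1,\bar 2\},\{2,\bar 1\})$. Your vector $(\{2,\bar 1\},\emptyset)$ (from $\vec C^\ast=(\{1,2\},\emptyset)$, $A=\{1\}$) has negative products on both common elements, so it is not orthogonal to that circuit. The functional $a=1$, $b=(-1,0)$ actually gives $(\{2\},\{\bar 1\})={}_{\{1\}}\vec C^\ast$. Your later orthogonality remark, that each reoriented element contributes the same sign product, holds only with the flip, so the two halves of the proposal currently contradict each other. With the sign corrected, the realizable argument proves all three items. That covers everything the paper uses, since it only works with Lawrence polytopes of regular matroids.

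For non-realizable $\M$ you have a plan rather than a proof, with three concrete holes.
\begin{enumerate}
\item Your proof of (3) uses determinants, so invoking (3) there is circular. Derive it from (1) instead: $S\subseteq E\cup\overline{E}$ contains a lifted circuit iff $\{e: e,\bar e\in S\}$ contains a circuit of $\M$. Hence $S$ is independent iff its doubled part is independent in $\M$, and the maximal such $S$ are exactly those in (3).
\item The claim that every fundamental cocircuit has the listed form is asserted, not shown. Use $C^\ast(\Lambda_B,x)_y=-C(\Lambda_B,y)_x$, normalized by $C^\ast(\Lambda_B,x)_x=C(\Lambda_B,y)_y=+$. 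For $\Lambda_B=B\cup\overline B\cup{}_A(E\setminus B)$, the fundamental circuit of the missing copy of $e\notin B$ is $\pm$ the lift of $C(B,e)$. This gives $C^\ast(\Lambda_B,b)$ and $C^\ast(\Lambda_B,\bar b)$ as reorientations of $\pm C^\ast(B,b)$, and gives $(\{e,\bar e\},\emptyset)$ for the chosen copy of $e\notin B$.
\item \emph{Uniqueness of the cocircuit set orthogonal to a given circuit set} is not a usable principle. Orthogonality to all circuits only makes a sign vector a covector; support-minimality would still have to be shown.
\end{enumerate}
That last detour is unnecessary. Every cocircuit is fundamental for some basis, and every listed vector arises as one. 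For ${}_A\vec C^\ast$, take $B$ with $C^\ast(B,b)=\pm\vec C^\ast$ and choose the copies off $B$ complementary to those used by ${}_A\vec C^\ast$; for $(\{e,\bar e\},\emptyset)$, take $B\not\ni e$, which exists when $e$ is not a coloop. So (2) follows directly from (1), (3), and this fundamental-circuit computation.
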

	
	Through the relationship between circuits of $\Lambda(\M)$, the circuits of $\M$, and the cocircuits of $\M^\ast$, we can identify extensions of $\M^\ast$ with subdivisions of $\Lambda(\M)$ using the following lemmas. 
	
	\begin{lem}[{\cite[Lemma 4.13(i)-(iii)]{triangulations-oriented}}]\label{lem:lawrence_polytopes_lifting_subdivisions}
		Let $\Lambda(\M)$ be the Lawrence polytope associated to an oriented matroid $\M$. Let $\Su$ be a subdivision of $\Lambda(\M)$. 
		\begin{enumerate}
			\item The support of every signed circuit of $\Lambda(\M)$ is a face of $\Lambda(\M)$. 
			\item Let $F$ be a face of $\Lambda(\M)$ which is the support of a circuit $\vec{C}$. Let $k$ be the rank of $F$. Consider the ``restriction" of $\Su$ to $F$ defined as:
			\[\Su_F \coloneqq \{\gamma \cap F:\gamma \in \Su, \text{rank}(\gamma \cap F) = k\}.\]
			Then, $\Su_F$ is either the trivial subdivision $\{F\}$ of the restricted oriented matroid $\Lambda(\M)(F) \coloneqq \Lambda(\M)\backslash F^c$ or one of the triangulations $\mathcal{T}_{\vec{C}}^+ \coloneqq \{(\vec{C_+}\cup\vec{C_-})\backslash \{e\}:e\in \vec{C_+}\}$ or $\mathcal{T}_{\vec{C}}^- \coloneqq \{(\vec{C_+}\cup\vec{C_-})\backslash \{e\}:e\in \vec{C_-}\}$ of the face $F$. 
			\item The signature of $\Lambda(\M)^\ast$ defined by $\vec{C}(p) = 0$ if $\Su_C = \{C\}$ and $\vec{C}(p) = +$ (resp. $\vec{C}(p)=-$) if $\Su_C = \mathcal{T}_{\vec{C}}^+$ (resp. $\Su_C = \mathcal{T}_{\vec{C}}^-$) is the signature of an extension $\Lambda(\M)^\ast \cup p$ of $\Lambda(\M)^\ast$.
		\end{enumerate}
	\end{lem}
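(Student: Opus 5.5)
The plan is to prove the three parts in order, working throughout in the oriented-matroid language of $\Lambda(\M)$. I will use the circuit and cocircuit descriptions of the earlier Lemma (Lemma 4.11 of \cite{triangulations-oriented}), together with the standard fact that for an acyclic point configuration, a subset is the vertex set of a face exactly when it is the zero set of some nonnegative covector.

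For (1), let $\vec{C}=(C_+,C_-)$ be a circuit of $\M$. The corresponding circuit of $\Lambda(\M)$ has support $\underline{C}\cup\overline{\underline{C}}$. For each $f\notin\underline{C}$ that is not a coloop, the earlier Lemma provides the positive cocircuit $(\{\vec f,-\vec f\},\emptyset)$ of $\Lambda(\M)$. The composition of all of these is a nonnegative covector. Its zero set contains $\underline{C}\cup\overline{\underline{C}}$ and omits every pair $\{\vec f,-\vec f\}$ with $f\notin \underline{C}$ non-coloop. Coloops $f$ of $\M$ need a separate check. Such an $f$ lies in no circuit, and in the Lawrence matrix the two columns $P_f,P_{-f}$ can be separated from the rest by a positive cocircuit of $\Lambda(\M)$ obtained from a cocircuit $\{f\}$ of $\M$ with a suitable reorientation $A$. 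Composing that cocircuit in as well, the zero set is exactly the support, so the support is a face.

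For (2), restrict to $F=\underline{C}\cup\overline{\underline{C}}$. Then $\Lambda(\M)(F)$ is an oriented matroid whose ground set is a single circuit, of rank $|F|-1$. As a point configuration this is a set of points in general position with a unique affine dependence, whose sign vector is $\vec C$. Next, the cells of $\Su$ meeting $F$ in full rank restrict to a subdivision of $F$, because faces of cells are faces and $F$ is a face of the polytope. Finally, a single circuit has exactly three subdivisions: the trivial one and the two triangulations $\mathcal{T}^{\pm}_{\vec C}$. This is the classical Radon-partition description, obtained by checking that any proper cell must omit an element of $C_+$ or of $C_-$ consistently.

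For (3), which I expect to be the main obstacle, I need to show that the resulting $\sigma$ on circuits of $\Lambda(\M)$ (equivalently cocircuits of $\Lambda(\M)^\ast$) is a localization. Antisymmetry is clear, since $\mathcal{T}^+_{-\vec C}=\mathcal{T}^-_{\vec C}$. I will then use criterion (3) of Theorem \ref{thm::or-ext-equivalences}. A rank $2$ contraction of $\Lambda(\M)^\ast$ corresponds dually to a restriction of $\Lambda(\M)$ to a set $S$ of corank $2$, whose circuits span a $2$-dimensional space of dependences. The forbidden configurations of Figure \ref{fig:forbiddenconfigs} translate into incompatible choices of $\Su_C$ on three circuits supported in $S$. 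To exclude them I will take $S$ itself to be a face, arguing as in (1). Then $\Su$ restricts to a genuine subdivision of $S$, a configuration of corank $2$. Its restrictions to the circuit faces inside $S$ must agree on common faces, and this agreement is exactly the Gale-dual statement that the signs come from a point of the rank-$2$ dual arrangement. This rules out each forbidden picture case by case. If the face property of $S$ fails, I will instead invoke the general correspondence between subdivisions of a configuration all of whose circuit supports are faces and liftings of it (Billera--Kapranov--Sturmfels / Santos).
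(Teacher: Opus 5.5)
The paper gives no proof of this lemma; it is quoted from Santos's memoir, so your argument has to stand on its own. Parts (1) and (2) are fine in outline. There is one slip in (1). For a coloop $f$, no single cocircuit of $\Lambda(\M)$ cuts off both $P_f$ and $P_{-f}$; you need the two positive cocircuits supported on $\{\vec f\}$ and $\{-\vec f\}$. It is simpler to note that $(v,v)$, with $v$ the indicator vector of $E\setminus\underline{C}$, lies in the row space of the Lawrence matrix. It is therefore a nonnegative covector with zero set exactly $\underline{C}\cup\overline{\underline{C}}$, whether or not there are coloops.

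There is a genuine gap in (3), and it sits at the only step with real content.

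The reduction itself is fine. A rank-$2$ contraction of $\Lambda(\M)^\ast$ is dual to a corank-$2$ restriction $S$. You may replace $S$ by the union of the circuit supports it contains, which has the form $U\cup\overline{U}$ and is therefore a face. So your fallback is never needed. It would also be circular: the correspondence for arbitrary, non-regular subdivisions is exactly what this lemma and Santos's Theorem 4.14 establish, while Billera--Kapranov--Sturmfels covers only regular subdivisions.

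The problem is the mechanism you then invoke: that the restrictions of $\Su$ to the circuit faces of $S$ ``agree on common faces''. This carries no information. Two distinct circuit faces $U_1\cup\overline{U_1}$ and $U_2\cup\overline{U_2}$ meet in $(U_1\cap U_2)\cup\overline{U_1\cap U_2}$. That is a proper subset of a circuit support, hence an independent set, whose only subdivision is the trivial one. So every assignment of $\{F\}$, $\T^+_{\vec C}$, $\T^-_{\vec C}$ to the circuit faces is pairwise compatible, and none of the forbidden pictures is excluded.

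The constraint has to come from the full-rank cells of the subdivision of $S$ that lie in no single circuit face. Write the circuit faces as $S\setminus L$ for the parallel classes $L$ of the rank-$2$ dual. Then full-rank subsets of $S$ are complements of at most two non-parallel Gale vectors. For example, suppose $S\setminus L_1$ and $S\setminus L_2$ are both unsubdivided, with $L_1\neq L_2$.
\begin{itemize}
\item The cells containing them are $S$, or $S\setminus\{x\}$ and $S\setminus\{y\}$ with $x\in L_1$, $y\in L_2$.
\item Those two distinct cells would meet in the full-rank set $S\setminus\{x,y\}$, not in a common proper face, which is impossible.
\item So the subdivision of $S$ is trivial, and the third forbidden configuration cannot occur.
\end{itemize}
The first two configurations need an analogous but longer case analysis. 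Alternatively, you can use that $\Lambda(\M)|_S$ is realizable (its dual has rank $2$) and that configurations of corank at most $2$ have only regular subdivisions. The induced signature is then $Z\mapsto\operatorname{sign}\sum_e\lambda^Z_e\omega_e$ for a height vector $\omega$, which is visibly a localization. As written, your proposal asserts this step rather than proving it.
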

	We note that in Lemma \ref{lem:lawrence_polytopes_lifting_subdivisions}(2), the triangulations $\mathcal{T}^+_{\vec{C}}$ and $\mathcal{T}^-_{\vec{C}}$ are triangulations of the oriented matroid $\Lambda(\M)(F)$, as defined in Chapter 2 of \cite{triangulations-oriented}.  
	
	\begin{thm}[{\cite[Theorem 4.14]{triangulations-oriented}}]\label{thm::subdivisions<->extensions}
		Let $\Lambda(\M)$ be the Lawrence polytope associated to an oriented matroid $\M$. Then, there is a natural bijection between the extensions of $\M^\ast$ and the subdivisions of $\Lambda(\M)$. 
	\end{thm}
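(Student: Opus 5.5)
The plan is to realize the bijection as the composite of two correspondences:
\[
\Subdiv(\Lambda(\M)) \;\longleftrightarrow\; \{\text{extensions of } \Lambda(\M)^\ast\} \;\longleftrightarrow\; \{\text{extensions of } \M^\ast\}.
\]
The first map is the one already given by Lemma \ref{lem:lawrence_polytopes_lifting_subdivisions}. To a subdivision $\Su$ we attach the signature on the circuits of $\Lambda(\M)$ (equivalently, on the cocircuits of $\Lambda(\M)^\ast$) that records whether $\Su_C$ is trivial, $\mathcal{T}^+_{\vec C}$ or $\mathcal{T}^-_{\vec C}$. Part (3) of that lemma says this signature is a localization of $\Lambda(\M)^\ast$. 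For the second map, note that by part (1) of the description of circuits of $\Lambda(\M)$, these circuits are in sign-preserving bijection with the circuits of $\M$, that is, with the cocircuits of $\M^\ast$, via $(C_+,C_-)\mapsto (C_+\cup\overline{C_-},C_-\cup\overline{C_+})$. So a signature on one side is the same data as a signature on the other, and the task is to show this identification preserves localizations in both directions.

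For that identification I would use minors. Contracting $\overline{E}$ in $\Lambda(\M)$ sends each circuit $(C_+\cup\overline{C_-},C_-\cup\overline{C_+})$ to $(C_+,C_-)$, so $\Lambda(\M)/\overline{E}=\M$, and dually $\Lambda(\M)^\ast\backslash \overline{E}=\M^\ast$. Given an extension of $\Lambda(\M)^\ast$ by $p$, deleting $\overline{E}$ gives an extension of $\M^\ast$. Its cocircuit signature is the transported one, because each cocircuit of $\M^\ast$ extends a unique cocircuit of $\Lambda(\M)^\ast$, namely the corresponding circuit of $\Lambda(\M)$.

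Conversely, given a localization $\sigma^\ast$ of $\M^\ast$, I would check condition (2) of Theorem \ref{thm::or-ext-equivalences} for the transported signature on $\Lambda(\M)^\ast$. A rank $2$ contraction of $\Lambda(\M)^\ast$ has its cocircuits indexed by circuits of $\Lambda(\M)$ lying in a common rank-$2$ flat of $\Lambda(\M)^\ast$. Via the bijection these are exactly the cocircuits of a rank-$2$ contraction of $\M^\ast$, with the same cyclic sign pattern. Hence none of the forbidden configurations of Figure \ref{fig:forbiddenconfigs} can appear on the Lawrence side if they do not appear on $\M^\ast$. Together with the previous paragraph, this shows the second correspondence is a bijection between localizations.

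It remains to show that the first map $\Su\mapsto$ (extension of $\Lambda(\M)^\ast$) is a bijection.
\begin{itemize}
\item \emph{Injectivity.} Because every circuit support is a face (part (1) of Lemma \ref{lem:lawrence_polytopes_lifting_subdivisions}), the restrictions $\Su_F$ are well defined. A subdivision of a point configuration is determined by its restrictions to all circuit supports: a set is contained in a cell exactly when no circuit restricted to it is ``broken'' in the wrong way. So two subdivisions with equal signatures coincide.
\item \emph{Surjectivity.} Given an extension $\Lambda(\M)^\ast\cup p$, dualize to a lifting $\widetilde{\Lambda}$ of $\Lambda(\M)$. Declare the cells to be the zero sets, within $E\cup\overline{E}$, of covectors of $\widetilde{\Lambda}$ that are $+$ on $p$ and nonnegative elsewhere (lower faces of the lift). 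Then verify that this is a subdivision whose circuit restrictions reproduce the given signature.
\end{itemize}

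I expect surjectivity to be the main obstacle: in general not every subdivision is a lifting subdivision, and not every lifting defines a polyhedral subdivision. The point to exploit is that every circuit support of $\Lambda(\M)$ is a face, so every circuit is ``visible'' on the boundary. I would first try to prove that the lower-face sets above cover $\Lambda(\M)$ and intersect properly by checking compatibility circuit by circuit. On each face $F$ that supports a circuit, the restriction is forced to be $\{F\}$, $\mathcal{T}^+_{\vec C}$ or $\mathcal{T}^-_{\vec C}$, so no incompatible pair of cells can arise. Finally, naturality (equivariance under reorientation and minors) is immediate from the construction.
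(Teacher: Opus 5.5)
The paper gives no proof of this statement; it quotes Theorem 4.14 of \cite{triangulations-oriented}. Your reduction from extensions of $\Lambda(\M)^\ast$ to extensions of $\M^\ast$ is sound, and simpler than you make it: $\Lambda(\M)^\ast$ is just $\M^\ast$ with an antiparallel copy $\overline{e}$ added for each $e$. The real gap is elsewhere. The correspondence between subdivisions of $\Lambda(\M)$ and extensions of $\Lambda(\M)^\ast$ is the entire content of the theorem, and you do not establish it.

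\textbf{Injectivity.} You assert that a set lies in a cell exactly when no circuit ``restricted to it'' is broken the wrong way. If this means circuits supported in the set, it is false already in the smallest case. Take $\M$ of rank $1$ on two parallel elements $e_1,e_2$. Then $\Lambda(\M)$ is a parallelogram whose only circuit is $Z=(\{e_1,\overline{e_2}\},\{e_2,\overline{e_1}\})$. In the triangulation $\mathcal{T}^+_Z$, the diagonal $\{e_1,\overline{e_2}\}$ contains no circuit, yet it lies in no cell. The correct criterion is that $B$ lies in a cell iff there is no circuit $Z$ with $\sigma(Z)\neq 0$ and $Z_{\sigma(Z)}\subseteq B$; here $\underline{Z}$ need not lie in $B$. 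That criterion follows from orthogonality and Farkas for \emph{lifting} subdivisions. Asserting it for an arbitrary subdivision $\Su$ amounts to saying that $\Su$ is the lifting subdivision of its own signature, which is what has to be proved. Also, for general point configurations the slogan ``a subdivision is determined by its restrictions to circuit supports'' is not even meaningful, since circuit supports need not be faces. You need a real argument. For example: take cells of $\Su_1\neq\Su_2$ that meet improperly, with separate care for nested cells. Extract a circuit $Z$ with $Z_+$ in one cell and $Z_-$ in the other, and use that $\underline Z$ is a face to contradict $(\Su_1)_{\underline Z}=(\Su_2)_{\underline Z}$.

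\textbf{Surjectivity.} Here you give only a plan, and its key step is circular. The trichotomy $\{F\}$, $\mathcal{T}^+_{\vec C}$, $\mathcal{T}^-_{\vec C}$ of Lemma~\ref{lem:lawrence_polytopes_lifting_subdivisions}(2) is a property of subdivisions, so it cannot be used to show that your family of lower-face sets is one. Three things must actually be proved.
\begin{enumerate}
\item \emph{Proper intersection.} This does follow from orthogonality between the defining covectors $Y$ (with $Y(p)=+$ and $Y\geq 0$) and the lifted circuits $(Z,\sigma(Z))$. If $Z_+\subseteq c_{Y_1}$ and $Z_-\subseteq c_{Y_2}$, orthogonality forces $\sigma(Z)=0$ and $\underline Z\subseteq c_{Y_1}\cap c_{Y_2}$.
\item \emph{Covering.} The lower-face sets must cover $\Lambda(\M)$.
\item \emph{Correct restriction.} On each face $\underline Z$, the induced subdivision must be $\mathcal{T}^+_Z$, $\mathcal{T}^-_Z$ or $\{\underline Z\}$ as dictated by $\sigma(Z)$. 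This requires producing suitable positive covectors of the lifting. It is where the Lawrence structure must enter, namely the positive cocircuits $(\{\vec e,-\vec e\},\emptyset)$ that cut out each $\underline Z$ as a face.
\end{enumerate}

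There is a further issue. An extension of $\M^\ast$ is in general non-realizable even when $\M$ is regular, so ``lower faces of the lift'' is a purely combinatorial definition. You must either work with the oriented-matroid notion of subdivision throughout, or show that these sets form a genuine polyhedral subdivision of the real polytope. As written, the proposal has the right architecture but omits both of its nontrivial steps.
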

	
	\section{Stability conditions and degeneracy sets} \label{sec:stabilityconditions}
	Let $X$ be a nodal curve, and $G$ be its dual graph. While stability conditions can be defined on the curve itself, we work with the dual graph $G$ throughout. 
	
	\begin{definition}[{\cite[Definition 2.1]{fava-pagani-viviani2024}}] \label{def:stability conditon}
		Let $\text{BCon(G)}$ denote the set of all biconnected and nontrivial $W \subset V(G)$. A \textit{$V-$stability condition} (or \textit{stability condition}) $\mathfrak{n}$ of degree $d \in \mathbb{Z}$ on $G$ is an assignment of integers 
		\begin{align*}
			\mathfrak{n}: \text{BCon(G)} &\to \mathbb{Z} \\
			W &\mapsto \mathfrak{n}_W
		\end{align*}
		satisfying the following properties: For any $W \in \text{BCon}(G)$, we have 
		\[\mathfrak{n}_W + \mathfrak{n}_{W^c} + |E(W, W^c)| - d \in \{0,1\},\]
		where any $W$ such that $\mathfrak{n}_W + \mathfrak{n}_{W^c} + |E(W, W^c)| - d = 0$ are called $\mathfrak{n}$-\textit{degenerate}, 
		and for any pairwise disjoint $W_1, W_2,$ and $W_3$ in $\text{BCon(G)}$ such that $W_1 \cup W_2 \cup W_3 = V(G)$, the following two properties hold:
		\begin{enumerate}
			\item for $i \neq j \neq k$, if $W_i$ and $W_j$ are $\mathfrak{n}$-degenerate, then $W_k$ is $\mathfrak{n}$-degenerate;
			\item the following equation is satisfied:
			\begin{equation}\label{eq:tribond-stabilitycond}
				\sum_{i=1}^3 \mathfrak{n}_{W_i} + \sum_{1 \leq i < j \leq 3} |E(W_i, W_j)| - d = \begin{cases}
					\{1,2\} & W_i \text{ is } \mathfrak{n}\text{-nondegenerate for all } i \\
					\{1\} & \exists! i \text{ such that } W_i \text{ is } \mathfrak{n}\text{-degenerate} \\
					\{0\} & W_i \text{ is } \mathfrak{n}\text{-degenerate for all } i.
				\end{cases}
			\end{equation}
		\end{enumerate}
		
		The \textit{degeneracy set} of a stability condition $\mathfrak{n}$, denoted $\mathcal{D}(\mathfrak{n})$, is the set of all $W \in \text{BCon}(G)$ such that $W$ is $\mathfrak{n}$-degenerate. The degree of the stability condition, $d$, is also denoted by $|\mathfrak{n}|$.
	\end{definition}
	
	One can use stability conditions to construct smoothable compactified Jacobians of $X$, as is done by Pagani and Tommasi \cite{pagani2023stability}; Viviani \cite{viviani2023new}; and by Fava, Pagani, and Viviani for nodal curves and reduced curves \cite{fava-pagani-viviani2024, fava-pagani-viviani2025}. Smoothable compactified Jacobians constructed from stability conditions include those constructed previously by Oda and Seshadri \cite{oda-seshadri}, Simpson \cite{simpson}, Caporaso \cite{caporaso}, and Esteves \cite{esteves}, and the compactifications corresponding to these previous constructions are called \textit{classical compactified Jacobians}. In this paper, we work with the graph theoretic construction given in \cite{oda-seshadri}. A \textit{numerical polarization} $\phi: V(G) \to \mathbb{R}$ is a real-valued divisor on a graph $G$ such that the degree $\deg(\phi) = \sum_{v \in V(G)} \phi(v)$ is an integer. For $W \in \text{BCon}(G)$, we let $\phi_W = \sum_{v \in W} \phi(v)$. Given a numerical polarization $\phi$, we can construct the $V-$stability condition $\mathfrak{n}(\phi)$, where each $\mathfrak{n}(\phi)_W = \left\lceil \phi_W - \frac{|E(W,W^c)|}{2} \right \rceil$. We say such a $V-$stability condition is \textit{classical}. 
	
	Let $V^d(G)$ be the affine space of numerical polarizations of degree $d$ supported on $V(G)$. One can construct an infinite arrangement of hyperplanes
	\[\mathcal{A}_G^d \coloneqq \left\{\phi_W - \frac{|E(W,W^c)|}{2} = n \right\}_{W \in \text{BCon}(G), n \in \mathbb{Z}},\]
	in $V^d(G)$, as is done in \cite{oda-seshadri} (see also \cite{pagani2023stability} and \cite{viviani2023new}). This yields a wall and chamber decomposition of $V^d(G)$. A numerical polarization $\phi \in V^d(G)$ is generic (or nondegenerate) if $\phi$ does not lie on any wall, and from the construction of $\mathfrak{n}(\phi)$, two generic polarizations $\phi$ and $\phi'$ are in the same chamber if and only if $\mathfrak{n}(\phi) = \mathfrak{n}(\phi')$.	
	
	Above, degeneracy sets are defined with respect to a stability condition, but a degeneracy set can also be defined without a reference stability condition. We recall the following definitions from \cite{fava-pagani-viviani2024}.
	
	\begin{definition}\label{def:deg-set}
		A \textit{degeneracy set} of a graph $G$ is a subset $\mathcal{D} \subseteq \text{BCon}(G)$ satisfying the following properties:
		\begin{enumerate}
			\item If $W \in \mathcal{D}$, then $W^c \in \mathcal{D}$;
			\item for all $W_1, W_2 \in \text{BCon}(G)$ such that $W_1 \cap W_2 = \emptyset$ and $W_1 \cup W_2 \in \text{BCon}(G)$, if $W_1, W_2 \in \mathcal{D}$, then $W_1 \cup W_2 \in \mathcal{D}$. 
		\end{enumerate}
	\end{definition}
	
	\begin{definition}\label{def:ext-deg-set}
		Let $\mathcal{D}$ be a degeneracy set for a graph $G$, and let $\text{Con}(G)$ denote all connected $W \subset V(G)$. The \textit{extended degeneracy subset} associated to $\mathcal{D}$ is the subset $\hat{\mathcal{D}} \subseteq \text{Con}(G)$ defined as
		\[\hat{\mathcal{D}} \coloneqq \{V \in \text{Con}(G):G[V^c] = \bigsqcup_{i\in I} G[W_i] \text{ with all } W_i \in \mathcal{D} \}.\]
	\end{definition}
	
	For a degeneracy set $\mathcal{D}$, a connected subgraph $G'$ of $G$ is \textit{$\mathcal{D}$-admissible} if $V(G') \in \hat{\mathcal{D}}$. If $G'$ is a spanning subgraph, but not necessarily connected, it admits a decomposition $G' = \bigsqcup_{i \in I}G'_i$ into connected components. A spanning subgraph $G'$ is \textit{$\mathcal{D}$-admissible} if $V(G_i') \in \hat{\mathcal{D}}$ for all $i \in I$. The poset of $\mathcal{D}$-admissible spanning subgraphs of $G$, ordered by inclusion, is denoted by $\mathcal{SS}_{\mathcal{D}}(G)$. We are especially interested in the poset of $\mathcal{D}$-admissible spanning forests of $G$, denoted by $\mathcal{SF}_{\mathcal{D}}(G)$, ordered by inclusion. The \textit{$\mathcal{D}$-complexity} of $G$ is $c_{\mathcal{D}}(G) = |\mathcal{SF}_{\mathcal{D}}(G)|$. Finally, we denote the minimal elements of $\mathcal{SS}_{\mathcal{D}}(G)$ by $m\mathcal{SS}_{\mathcal{D}}(G)$, and note that $m\mathcal{SS}_{\mathcal{D}}(G) \subseteq \mathcal{SF}_{\mathcal{D}}(G)$.
	
	One can restrict degeneracy sets and extended degeneracy sets of a graph $G$ to $\mathcal{D}$-admissible connected subgraphs of $G$. 
	
	\begin{lem}[{\cite[Lemma 3.5(2)]{fava-pagani-viviani2024}}] \label{lem:restrict-deg-set}
		Let $\mathcal{D}$ be a degeneracy set of $G$, $\hat{\mathcal{D}}$ be the extended degeneracy subset of $\mathcal{D}$, and let $G'$ be a $\mathcal{D}$-admissible connected subgraph of $G$. The restriction of $\mathcal{D}$ to $G'$ is 
		$\mathcal{D}(G') \coloneqq \{Z \in \text{BCon}(G'):Z \in \hat{\mathcal{D}}\}$ and its associated degeneracy set is $\hat{\mathcal{D}}(G') \coloneqq \{Z \in \text{Con}(G'):Z \in \hat{\mathcal{D}}\}$. 
	\end{lem}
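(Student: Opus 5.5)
The statement packages two claims: $\mathcal{D}(G')$ is a degeneracy set of $G'$ in the sense of Definition \ref{def:deg-set}, and $\hat{\mathcal{D}}(G')$ is its extended degeneracy subset in the sense of Definition \ref{def:ext-deg-set}. Throughout, I would write $V' = V(G')$. Since $G'$ is $\mathcal{D}$-admissible, $G[V'^c] = \bigsqcup_j G[U_j]$ with every $U_j \in \mathcal{D}$. The whole proof rests on an auxiliary ``gluing'' claim about $\hat{\mathcal{D}}$, and I would prove that first.

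\emph{Gluing claim.} If $V \in \hat{\mathcal{D}}$ and $U \in \mathcal{D}$ are disjoint, $G[V \cup U]$ is connected, and $G[U]$ is one of the components of $G[V^c]$, then $V \cup U \in \hat{\mathcal{D}}$. Indeed, the components of $G[(V\cup U)^c]$ are exactly the remaining components of $G[V^c]$, which all lie in $\mathcal{D}$. I would also record the converse-type facts needed to move between $\hat{\mathcal{D}}$ and $\mathcal{D}$.
\begin{itemize}
\item A set $Y \in \hat{\mathcal{D}}$ whose complement is connected is biconnected. Its complement is then a single component, which lies in $\mathcal{D}$, so $Y \in \mathcal{D}$ by property (1).
\item If a biconnected set is obtained by merging a set in $\mathcal{D}$ with an adjacent set in $\mathcal{D}$, then it lies in $\mathcal{D}$ by property (2).
\end{itemize}

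\emph{Verification for $G'$.} Take $Z \in \mathrm{BCon}(G')$ with $Z \in \hat{\mathcal{D}}$.

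For property (1), I must show $Z' = V' \setminus Z \in \hat{\mathcal{D}}$. Its complement in $V(G)$ is $Z \cup V'^c$. The components of $G[Z \cup V'^c]$ are of two kinds.
\begin{itemize}
\item The sets $U_j$ not adjacent to $Z$. These lie in $\mathcal{D}$ directly.
\item The single component $Y = Z \cup \bigcup_{U_j \sim Z} U_j$. This is connected because $Z$ is connected in $G'$.
\end{itemize}
So it remains to show $Y \in \mathcal{D}$. I would add the adjacent $U_j$ to $Z$ one at a time, using the gluing claim to stay inside $\hat{\mathcal{D}}$; this works because each adjacent $U_j$ is a full component of $G[Z^c]$ contained in $V'^c$. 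That gives $Y \in \hat{\mathcal{D}}$. The complement of $Y$ is $Z' \cup \bigcup_{U_j \not\sim Z} U_j$. This is connected: $Z'$ is connected, and each remaining $U_j$ is adjacent to $V'$ but not to $Z$, hence adjacent to $Z'$. So $Y$ is biconnected, and the first bullet above gives $Y \in \mathcal{D}$.

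Property (2) is handled the same way. If $Z_1, Z_2 \in \mathcal{D}(G')$ are disjoint with $Z_1 \cup Z_2 \in \mathrm{BCon}(G')$, I would express $Z_1 \cup Z_2$ as a complement in $V(G)$ of sets assembled from $\mathcal{D}$. The complement of $Z_1 \cup Z_2$ inside $G'$ is in $\hat{\mathcal{D}}$ by the property (1) argument just given. Then the gluing claim, together with property (2) of $\mathcal{D}$ on $G$, closes the argument.

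\emph{Extended subset.} Finally I would check that $\hat{\mathcal{D}}(G')$ coincides with the extended degeneracy subset of $\mathcal{D}(G')$ computed inside $G'$. Let $Z \in \mathrm{Con}(G')$. The components of $G'[V' \setminus Z]$, after each is merged with the adjacent $U_j$'s, become the components of $G[Z^c]$. Running the same merging and splitting argument in both directions shows that $Z \in \hat{\mathcal{D}}$ exactly when each component of $G'[V'\setminus Z]$ lies in $\mathcal{D}(G')$.

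The main obstacle is the bookkeeping in the property (1) step: showing that $Y$ is biconnected in $G$, not merely connected, so that membership in $\hat{\mathcal{D}}$ upgrades to membership in $\mathcal{D}$. I would try first the adjacency argument given above, namely that every $U_j$ not adjacent to $Z$ must touch $Z'$. If that fails in some degenerate configuration, I would fall back on induction on the number of components $U_j$.
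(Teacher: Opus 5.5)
Your proof of property (1) breaks at the gluing step. You add each $U_j$ adjacent to $Z$ on the grounds that $G[U_j]$ is a whole component of $G[Z^c]$. That fails whenever $U_j$ is also adjacent to $Z'=V'\setminus Z$, because then $U_j$ lies in the same component of $G[Z^c]$ as $Z'$, so the gluing claim does not apply. For example, take $G$ a triangle on $a,b,u$, $\mathcal D=\mathrm{BCon}(G)$, $G'$ the edge $ab$, and $Z=\{a\}$. Since $Y^c$ is connected, $Y\in\hat{\mathcal D}$ already means $Y\in\mathcal D$, so this is the whole content of the step. The biconnectivity of $Y$, which you flag as the obstacle, is the easy part.

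The missing idea is to apply $Z\in\hat{\mathcal D}$ to the component of $G[Z^c]$ that contains $Z'$, namely $B=Z'\cup\bigcup_{U_j\sim Z'}U_j$.
\begin{itemize}
\item This gives $B\in\mathcal D$, hence $B^c=Z\cup\bigcup_{U_j\not\sim Z'}U_j\in\mathcal D$.
\item From $B^c$ you reach $Y$ by adding, one at a time via property (2), the $U_j$ adjacent to both $Z$ and $Z'$.
\item Each intermediate union is connected. Its complement is $Z'$ together with the not-yet-added $U_j$ adjacent to $Z'$, which is connected and nonempty.
\end{itemize}

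Your sketch of property (2) is circular. The set $Z_3=V'\setminus(Z_1\cup Z_2)$ lies in $\hat{\mathcal D}$ \emph{by the property (1) argument} only once $Z_1\cup Z_2\in\hat{\mathcal D}$ is known, and that is the goal. The same device works instead.
\begin{itemize}
\item From $Z_i\in\hat{\mathcal D}$ and the connectivity of $G[V'\setminus Z_i]$, you get $R_i=Z_i\cup\bigcup\{U_j : U_j \text{ is not adjacent to } V'\setminus Z_i\}\in\mathcal D$.
\item Absorb into $R_1$, one at a time, the $U_j$ adjacent to $Z_1$ and $Z_2$ but not to $Z_3$. Complements stay connected because $G[Z_2\cup Z_3]$ is connected.
\item Then merge the result with $R_2$. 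This gives $Z_1\cup Z_2\cup\bigcup_{U_j\not\sim Z_3}U_j$, which is biconnected and hence in $\mathcal D$. It is the complement of the unique component of $G[Z_3\cup (V')^c]$ that is not a $U_j$.
\end{itemize}

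The extended-subset step cannot be repaired as stated. Distinct components of $G'[V'\setminus Z]$ can lie in one component of $G[Z^c]$, joined through a common $U_j$ or through an edge of $G[V']$ missing from $G'$. In that case the identity you aim for is false under the paper's definitions. Take $G$ the triangle on $a,b,c$ and $\mathcal D=\{\{b\},\{a,c\}\}$; this is a degeneracy set, since its two disjoint members have trivial union. Take the admissible subgraph $G'=G\setminus ac$. Then $\mathrm{BCon}(G')=\{\{a\},\{c\},\{a,b\},\{b,c\}\}$, and none of these lies in $\hat{\mathcal D}=\{V(G),\{b\},\{a,c\}\}$. So $\mathcal D(G')=\emptyset$, and its extended subset in $G'$ is $\{V(G')\}$. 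Yet $\{b\}\in\hat{\mathcal D}$ is connected in $G'$, so $\{b\}\in\hat{\mathcal D}(G')$.

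An induced example fails the same way: the $4$-cycle with edges $xz,zy,yu,ux$, with $\mathcal D=\{\{u\},\{z\},\{x,z,y\},\{x,u,y\}\}$ and $G'=G[\{x,z,y\}]$. What does hold, by the same merging technique, is the inclusion $\widehat{\mathcal D(G')}\subseteq\hat{\mathcal D}(G')$. The clause calling $\hat{\mathcal D}(G')$ the associated degeneracy set can only be read as a name, not as an identity to prove.
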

	
	One can then connect stability conditions on $G$ to certain collections of divisors on $G$. For a forest $F$, let $b_0(F)$ denote the number of connected components of $F$.
	\begin{definition}
		Let $\mathcal{D}$ be a degeneracy set for a graph $G$. Let $I: m\mathcal{SS}_{\mathcal{D}}(G) \to \text{Div}(G)$ be any function that assigns a degree $d-g(G)-b_0(F)+1$ divisor $I(F)$ to every $F \in m\mathcal{SS}_{\mathcal{D}}(G)$, and call such an $I$ a \textit{$\mathcal{D}$-forest function of degree $d$.}
	\end{definition}
	
	For any spanning forest $F$ of $G$, let $\Or_{\mathcal{E}_F}$ be a choice of orientation of the edges in $\mathcal{E}_F \coloneqq E(G) \backslash E(F)$. By dropping a chip at the head of each oriented edge in $\Or_{\mathcal{E}_F}$, we get a divisor, which we denote by $\mathcal{D}(F, \Or_{\mathcal{E}_F})$.

	\begin{definition} 
		For a $\mathcal{D}$-forest function $I$, the set of \textit{generalized break divisors with respect to the $\mathcal{D}$-forest function $I$} is defined as 
		\[\mathcal{BD}_I(G) \coloneqq \{\mathcal{D}(F, \Or_{\mathcal{E}_F}) + I(F): F \in m\mathcal{SS}_{\mathcal{D}}(G) \text{ and } \Or_{\mathcal{E}_F} \text{ is an orientation of edges } \mathcal{E}_F\}.\]
	\end{definition}
	
	We can then view stability conditions in bijection with $\mathcal{D}$-forest functions that yield sets of generalized break divisors of a certain size.
	
	\begin{thm}[{\cite[Theorem 3.32(1)]{fava-pagani-viviani2024}}]\label{thm:stability-forest-fn-bij}
		Let $G$ be a connected graph. For any degeneracy set $\mathcal{D}$ of $G$, the map 
		\[\{V-\text{stability conditions } \mathfrak{n} \text{ on }G\} \leftrightarrow \{\mathcal{BD}_I(G) \text{ such that } |\mathcal{BD}_I(G)| = c_{\mathcal{D}(\mathfrak{n})}(G)\}\]
		is a bijection, in the following way: every V-stability condition $\mathfrak{n}$ with degeneracy set $\mathcal{D}(\mathfrak{n})$ corresponds to a $\mathcal{D}(\mathfrak{n})$-forest function $I$ such that $|\mathcal{BD}_I(G)| = c_{\mathcal{D}(\mathfrak{n})}(G)$.
	\end{thm}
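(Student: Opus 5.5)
Since this is quoted from \cite{fava-pagani-viviani2024}, I would reconstruct its proof by constructing the maps in both directions explicitly and checking that they are mutually inverse.

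\emph{From stability conditions to forest functions.} Given a stability condition $\mathfrak{n}$ of degree $d$ with degeneracy set $\mathcal{D}=\mathcal{D}(\mathfrak{n})$, I would first consider the set of $\mathfrak{n}$-semistable divisors, namely those $D\in\Div^d(G)$ with
\[\deg(D|_W)\ \ge\ \mathfrak{n}_W \quad \text{for all } W\in\text{BCon}(G).\]
For each $F\in m\mathcal{SS}_{\mathcal{D}}(G)$ I would then single out the divisors adapted to $F$. These are divisors satisfying the inequality on every $W$, with equality exactly on those $W$ whose cut $E(W,W^c)$ is compatible with the connected components of $F$, i.e.\ $W\in\hat{\mathcal{D}}$ assembled from components of $F$. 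I would use the defining axioms of a stability condition, specifically the tri-bond equation \eqref{eq:tribond-stabilitycond} and the closure property (1). These should force the restriction of $\mathfrak{n}$ to each component $F_i$ of $F$ to be determined by a single integer. This gives an $I(F)$ of degree $d-g(G)-b_0(F)+1$, unique up to the choices that $\mathcal{D}(F,\Or_{\mathcal{E}_F})$ already absorbs. Lemma \ref{lem:restrict-deg-set} is the tool for passing to each $\mathcal{D}$-admissible component, so that the construction can be run inductively on the components.

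\emph{Counting.} Next I would show that the resulting set $\mathcal{BD}_I(G)$ is a set of representatives, one divisor for each pair $(F,\text{orbit})$, of the appropriate semistable locus. Concretely, the map
\[(F,\Or_{\mathcal{E}_F})\ \mapsto\ \mathcal{D}(F,\Or_{\mathcal{E}_F})+I(F)\]
should be injective modulo orientations that yield the same divisor, with the total count equal to $c_{\mathcal{D}}(G)=|\mathcal{SF}_{\mathcal{D}}(G)|$. The intended argument is a deletion--contraction induction on edges, in the spirit of the break-divisor proof for the nondegenerate case (where $\mathcal{D}=\emptyset$ and $c_\emptyset(G)$ is the number of spanning trees). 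Contracting a $\mathcal{D}$-admissible edge, or deleting an edge, preserves the axioms of a degeneracy set, and both sides of the count satisfy the same recursion.

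\emph{Inverse map and main obstacle.} Conversely, given $I$ with $|\mathcal{BD}_I(G)|=c_{\mathcal{D}}(G)$, I would define
\[\mathfrak{n}_W=\min\{\deg(D|_W): D\in\mathcal{BD}_I(G)\}\]
and verify the axioms. Showing $\mathfrak{n}_W+\mathfrak{n}_{W^c}+|E(W,W^c)|-d\in\{0,1\}$ and the tri-bond equation \eqref{eq:tribond-stabilitycond} is the step I expect to be the main obstacle. The cardinality hypothesis must be what rules out "gaps": if some required value were missed, there would be fewer distinct divisors than $c_{\mathcal{D}}(G)$. I would therefore first prove a sharp upper bound $|\mathcal{BD}_I(G)|\le c_{\mathcal{D}}(G)$ for every forest function $I$, with equality characterized by local compatibility conditions on pairs and triples of biconnected subsets. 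I would then check that these local conditions are exactly the stability axioms. Finally, I would verify that the two constructions compose to the identity on both sides.
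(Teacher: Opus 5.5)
\textbf{Gap: the converse direction rests on a false inequality.} To get from $I$ back to $\mathfrak{n}$, you plan to prove $|\mathcal{BD}_I(G)|\le c_{\mathcal{D}}(G)$ for every $\mathcal{D}$-forest function $I$, with equality characterized by local conditions. This bound is false. Take $G$ with two vertices $v_1,v_2$ joined by parallel edges $e_1,e_2$, and $\mathcal{D}=\emptyset$. Then $m\mathcal{SS}_{\emptyset}(G)=\mathcal{ST}(G)=\{T_1=\{e_1\},T_2=\{e_2\}\}$ and $c_\emptyset(G)=2$. In degree $d=1$ a forest function assigns divisors of degree $0$. Choose $I(T_1)=(0,0)$ and $I(T_2)=(5,-5)$. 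Then $T_1$ contributes $(1,0),(0,1)$ and $T_2$ contributes $(6,-5),(5,-4)$, so $|\mathcal{BD}_I(G)|=4>2$.

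Your heuristic that a missing value produces fewer divisors is backwards. For a genuine stability condition the translates $I(F)+\{\mathcal{D}(F,\Or_{\mathcal{E}_F})\}$ overlap heavily: for instance $I(T_1)=I(T_2)=(0,0)$ gives exactly $\{(1,0),(0,1)\}$. Pulling the $I(F)$ apart destroys these coincidences and makes the count grow. So the hypothesis $|\mathcal{BD}_I(G)|=c_{\mathcal{D}}(G)$ is a forced-coincidence condition, and the converse must extract from it two facts about $\mathfrak{n}^I_W=I(F)_W+g(G[W])+b_0(F[W])-1$ from Lemma \ref{lem:forest-fn-to-n}. First, this value must not depend on the adapted forest $F$. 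Second, it must satisfy the pair and tri-bond axioms.

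Your $\min$-formula is consistent with this. The divisor built from an adapted $F$, with every cut edge outside $F$ oriented away from $W$, has $W$-degree exactly $\mathfrak{n}^I_W$, so $\min\le\mathfrak{n}^I_W$ holds automatically. The reverse inequality, and with it all the axioms, is precisely what the cardinality hypothesis has to deliver. Your plan gives no valid mechanism for that step.

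\textbf{The forward direction is also not pinned down.} $I_{\mathfrak{n}}(F)$ is not ``determined by a single integer'' per component, nor ``unique up to choices absorbed by $\mathcal{D}(F,\Or_{\mathcal{E}_F})$''. As in Lemma \ref{lem:forest-fn-from-n}, it is the unique divisor fixed by two kinds of conditions, each taken from the restricted stability conditions of Lemma \ref{lem:restriction-n-to-subgraphs}:
\begin{itemize}
\item for each edge $e\in F$, its degree on $W_e$ equals $\mathfrak{n}(F_{i_e})_{W_e}$;
\item for each component $F_i$, its degree on $V(F_i)$ equals $|\mathfrak{n}(F_i)|$.
\end{itemize}
That is $|V(G)|$ conditions in all, and they are consistent because minimality of $F$ makes each $W_e$ nondegenerate for $\mathfrak{n}(F_{i_e})$.

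Your description of the divisors ``adapted to $F$'' by where equality holds cannot be right as stated. For $\mathcal{D}=\emptyset$ no $W\in\text{BCon}(G)$ lies in $\hat{\mathcal{D}}=\{V(G)\}$, yet divisors coming from a single tree attain $D_W=\mathfrak{n}_W$ for many $W$.

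The identity $|\mathcal{BD}_{I_{\mathfrak{n}}}(G)|=c_{\mathcal{D}}(G)$ is only asserted through an unspecified deletion--contraction recursion. To use it you would need to define how $\mathfrak{n}$ and $\mathcal{D}$ transform under deleting and under contracting an edge. You would also need to show that $\mathcal{D}$-admissibility of forests is compatible with contraction.

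Note that the paper does not prove this theorem. It imports it from \cite{fava-pagani-viviani2024} and records only the two explicit maps (Lemmas \ref{lem:forest-fn-from-n} and \ref{lem:forest-fn-to-n}). A reconstruction should at least start from those maps.
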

	
	For future reference, we provide the constructions of a $\mathcal{D}$-forest function from a stability condition $\mathfrak{n}$ and of the stability condition $\mathfrak{n}^I$ from a $\mathcal{D}$-forest function $I$ satisfying $|\mathcal{BD}_I(G)| = c_{\mathcal{D}}(G)$. First, every stability condition $\mathfrak{n}$ can be extended to all subsets of $V(G)$ in the following way.
	
	\begin{definition}\label{def:extended-V-fn}
		The \textit{extended V-function} associated to a stability condition $\mathfrak{n}$ of degree $d$ is defined as follows:
		\begin{enumerate}
			\item If $W \subseteq V(G)$ is connected and $G[W^c] = \bigsqcup_{i=1}^k G[Z_i]$ is the decomposition of $G[W^c]$ into connected components, we define
			\[\mathfrak{n}_W = d - \sum_{i=1}^k \mathfrak{n}_{Z_i} - |E(W, W^c)| + |\{Z_i : Z_i \notin \mathcal{D}(\mathfrak{n})\}|.\]
			\item If $W \subseteq V(G)$ is not connected, and $\bigsqcup_{j=1}^h G[W_j]$ is the decomposition of $G[W]$ into connected components, we set 
			\[\mathfrak{n}_W = \sum_{j=1}^h \mathfrak{n}_{W_j}.\]
		\end{enumerate}
	\end{definition}
	
	We can also restrict a stability condition on a graph $G$ to a stability condition on a connected subgraph $G'$ of $G$. 
	
	\begin{lem}[{\cite[Lemma-Definition 2.16]{fava-pagani-viviani2024}}]\label{lem:restriction-n-to-subgraphs}
		Let $\mathfrak{n}$ be a stability condition of degree $d$ on a connected graph $G$. Let $G' = (V', E')$ be a connected, $\mathcal{D}(\mathfrak{n})$-admissible subgraph of $G$. We can write $G' = G[V']\backslash X$ for some $X \subseteq E(G)$. 
		
		Then, the restriction of $\mathfrak{n}$ to $G'$ is a stability condition $\mathfrak{n}(G')$ on $G'$ defined by 
		\[\mathfrak{n}(G')_W \coloneqq \mathfrak{n}_W - |X \cap E(G[W])|\]
		for any $W \in \text{BCon}(G')$.
		
		We then have the following:
		\begin{enumerate}
			\item $|\mathfrak{n}(G')| = \mathfrak{n}_{V(G')} - |X|$;
			\item $\hat{\mathcal{D}}(\mathfrak{n}(G')) = \hat{\mathcal{D}}(\mathfrak{n}) \cap  \text{Con}(G')$.
		\end{enumerate}
	\end{lem}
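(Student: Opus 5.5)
The plan is to reduce everything to an additivity property of the extended V-function of Definition \ref{def:extended-V-fn} on $G$. Write $V' = V(G')$, $d' \coloneqq \mathfrak{n}_{V'} - |X|$, and for $W \in \text{BCon}(G')$ let $W' = V' \setminus W$. Both $G'[W]$ and $G'[W']$ are connected, so $W$ and $W'$ are connected in $G$ as well, since $G'$ is a subgraph of $G$.

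First I would do the bookkeeping that removes $X$. The edges of $X$ inside $G[V']$ split as
\[|X| = |X\cap E(G[W])| + |X \cap E(G[W'])| + |X \cap E(W,W')|,\]
and $|E_{G'}(W,W')| + |X\cap E(W,W')| = |E_G(W,W')|$. Substituting these gives
\[\mathfrak{n}(G')_W + \mathfrak{n}(G')_{W'} + |E_{G'}(W,W')| - d' = \mathfrak{n}_W + \mathfrak{n}_{W'} + |E_G(W,W')| - \mathfrak{n}_{V'}.\]
The analogous identity holds for a partition $V' = W_1\sqcup W_2\sqcup W_3$ into sets biconnected in $G'$:
\[\textstyle\sum_i \mathfrak{n}(G')_{W_i} + \sum_{i<j}|E_{G'}(W_i,W_j)| - d' = \sum_i \mathfrak{n}_{W_i} + \sum_{i<j} |E_G(W_i,W_j)| - \mathfrak{n}_{V'}.\]
So all axioms of Definition \ref{def:stability conditon} for $\mathfrak{n}(G')$ become statements purely about the extended V-function of $\mathfrak{n}$ on $G$. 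This also shows that the degree is $d'$, which is claim (1), once the axioms are verified.

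The key lemma I would prove is the following. Let $V' \in \hat{\mathcal{D}}(\mathfrak{n})$ be connected, and let $V' = W \sqcup W'$ with $W, W'$ connected. Then
\[\mathfrak{n}_W + \mathfrak{n}_{W'} + |E(W,W')| - \mathfrak{n}_{V'} \in \{0,1\},\]
and the value is $0$ exactly when $W \in \hat{\mathcal{D}}(\mathfrak{n})$, equivalently when $W' \in \hat{\mathcal{D}}(\mathfrak{n})$. There is also a three-part version mirroring \eqref{eq:tribond-stabilitycond}.

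To prove the key lemma, I would expand $\mathfrak{n}_{V'}$, $\mathfrak{n}_W$ and $\mathfrak{n}_{W'}$ using part (1) of Definition \ref{def:extended-V-fn}. The connected components of $W^c$ are $W'$ together with those components $Z_i$ of $V'^c$ that are not adjacent to $W'$. The components adjacent to $W'$ merge with $W'$ into a single component. I would then induct on the number of components of $V'^c$, and at each step merge one component using the tribond relation \eqref{eq:tribond-stabilitycond} and the degeneracy propagation axiom (1).

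The hypothesis $V'\in\hat{\mathcal{D}}$ makes every $Z_i$ degenerate, so the correction terms $|\{Z_i \notin \mathcal{D}\}|$ vanish on the $V'$ side. What remains of the error term is then exactly $0$ or $1$, according to whether the relevant pieces of $W$ (or $W'$) lie in $\mathcal{D}$. This case analysis, which tracks how the components of $V'^c$ attach to $W$ versus $W'$, is the main obstacle. I would first check it when $V'^c$ is connected, which is the case $V' \in \mathcal{D}$, and then do the inductive merging.

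With the key lemma in hand, the two-part and three-part conditions for $\mathfrak{n}(G')$ follow directly. Degeneracy propagation in $G'$ follows from the characterization that degeneracy holds exactly when $W \in \hat{\mathcal{D}}$, together with the closure of $\hat{\mathcal{D}}$ under the relevant unions, which comes from Definition \ref{def:deg-set}(2). Claim (2) follows from the same characterization: a set that is biconnected in $G'$ is $\mathfrak{n}(G')$-degenerate iff it lies in $\hat{\mathcal{D}}(\mathfrak{n})$. Passing to extended degeneracy subsets of $G'$, as in Lemma \ref{lem:restrict-deg-set}, then gives $\hat{\mathcal{D}}(\mathfrak{n}(G')) = \hat{\mathcal{D}}(\mathfrak{n})\cap \text{Con}(G')$.
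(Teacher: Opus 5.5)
The paper quotes this result from \cite{fava-pagani-viviani2024} without proof, so your argument has to stand on its own. Most of it does, but the last step fails, and claim (2) as stated here is in fact false.

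Your bookkeeping identity and your two-part key lemma are correct. The merging step you describe also works: if $U$, $Z$, $(U\cup Z)^c$ are biconnected and $Z\in\mathcal{D}$, the tribond relation together with axiom (1) gives $\mathfrak{n}_{U\cup Z}=\mathfrak{n}_U+\mathfrak{n}_Z+|E(U,Z)|$ and $U\cup Z\in\mathcal{D}\iff U\in\mathcal{D}$. This yields $\mathcal{D}(\mathfrak{n}(G'))=\hat{\mathcal{D}}(\mathfrak{n})\cap\text{BCon}(G')$.

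The genuine gap is the step from there to claim (2). Peeling the components $Q_1,\dots,Q_m$ of $G'[V'\setminus U]$ off one at a time with your key lemma only gives $\hat{\mathcal{D}}(\mathfrak{n}(G'))\subseteq\hat{\mathcal{D}}(\mathfrak{n})\cap\text{Con}(G')$. The converse would need $U\in\hat{\mathcal{D}}(\mathfrak{n})$ to force every $Q_l\in\hat{\mathcal{D}}(\mathfrak{n})$, and this fails once $m\geq 2$. Here is a counterexample.
\begin{itemize}
\item Let $G$ be the $4$-cycle with vertices $u,q_1,z,q_2$ in cyclic order.
\item Take $\phi(u)=\phi(z)=0$, $\phi(q_1)=\tfrac{1}{3}$, $\phi(q_2)=-\tfrac{1}{3}$. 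Then $\mathcal{D}(\mathfrak{n}(\phi))=\{\{u\},\{z\},\{u\}^c,\{z\}^c\}$.
\item $G'=G[\{u,q_1,q_2\}]$ is admissible, since $V(G')=\{z\}^c\in\mathcal{D}$.
\item None of $\{q_1\},\{q_2\},\{u,q_1\},\{u,q_2\}$ lies in $\hat{\mathcal{D}}$, because their complements in $G$ are connected and not in $\mathcal{D}$.
\item So $\mathfrak{n}(G')$ is nondegenerate and $\hat{\mathcal{D}}(\mathfrak{n}(G'))=\{V'\}$. Directly: $d'=-1$, and both splittings of the path give the value $1$.
\item Yet $\{u\}\in\hat{\mathcal{D}}(\mathfrak{n})\cap\text{Con}(G')$; it even lies in $\mathcal{D}(\mathfrak{n})$.
\end{itemize}
Appealing to Lemma \ref{lem:restrict-deg-set} cannot close this. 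In the same example $\mathcal{D}(G')=\emptyset$, while $\{Z\in\text{Con}(G'):Z\in\hat{\mathcal{D}}\}$ contains $\{u\}$. What survives is the $\text{BCon}$-level equality and the inclusion above.

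Two further steps are asserted rather than proved, and neither follows from what you state.

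First, the three-part identity is not a consequence of the two-part key lemma. Splitting $V'=(W_1\cup W_2)\sqcup W_3$ leaves the term $\mathfrak{n}_{W_1}+\mathfrak{n}_{W_2}+|E(W_1,W_2)|-\mathfrak{n}_{W_1\cup W_2}$, and $W_1\cup W_2$ may lie outside $\hat{\mathcal{D}}$, where your lemma says nothing. A uniform fix, which also subsumes your induction, is as follows.
\begin{itemize}
\item Absorb each component $Z$ of $G[V'^c]$ into one $W_i$ adjacent to it. This gives sets $Y_i\supseteq W_i$ that are biconnected in $G$ and partition $V(G)$.
\item Your merging step gives $\mathfrak{n}_{W_i}=\mathfrak{n}_{Y_i}-\sum_{Z\subseteq Y_i}\bigl(\mathfrak{n}_Z+|E(Z,W_i)|\bigr)$ and $W_i\in\hat{\mathcal{D}}\iff Y_i\in\mathcal{D}$.
\item The two- and three-part expressions for $\mathfrak{n}(G')$ are then literally the corresponding expressions for the $Y_i$ in $G$.
\end{itemize}

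Second, $\hat{\mathcal{D}}$ is not closed under disjoint unions in general. Take $K_4$ on vertices $w_1,w_2,z_a,z_b$ minus the edge $z_az_b$, with $\mathcal{D}=\{\{w_1\},\{w_2\},\{w_1\}^c,\{w_2\}^c\}$. Both $\{w_1\}$ and $\{w_2\}$ lie in $\hat{\mathcal{D}}$, but $\{w_1,w_2\}$ does not. Degeneracy propagation for $\mathfrak{n}(G')$ should instead be read off from axiom (1) in $G$ applied to $Y_1,Y_2,Y_3$.
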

	
	We use the previous lemma to construct $\mathcal{D}$-forest functions from stability conditions $\mathfrak{n}$.
	
	\begin{lem}[{\cite[Lemma-Definition 3.30]{fava-pagani-viviani2024}}] \label{lem:forest-fn-from-n}
		Let $\mathfrak{n}$ be a stability condition of degree $d$ on $G$. Consider $F \in m\mathcal{SS}_{\mathcal{D}(\mathfrak{n})}(G)$ and let $F = \bigsqcup_{i=1}^k F_i$ be its decomposition into connected components. For any edge $e \in F$, let $F_{i_e}$ denote the connected component of $F$ containing $e$, and consider the decomposition of $F\backslash \{e\}$ given by
		\[F\backslash\{e\} = F[W_e] \cup F[V(F_{i_e})\backslash W_e] \bigsqcup_{i\neq i_e} F_i,\]
		where $F[W_e]$ and $ F[V(F_{i_e})\backslash W_e]$ are both connected. The $\mathcal{D}(\mathfrak{n})$-forest function of degree $d$ associated to $\mathfrak{n}$ is the function
		\[I_{\mathfrak{n}} :m\mathcal{SS}_{\mathcal{D}}(G) \to \text{Div}(G),\]
		where $I_{\mathfrak{n}}(F)$ is the unique divisor on $G$ satisfying
		\begin{equation}\label{eqn::I from n}
			\begin{cases}
				I_{\mathfrak{n}}(F)_{W_e} = \mathfrak{n}(F_{i_e})_{W_e} \\
				I_{\mathfrak{n}}(F)_{V(F_{i_e})\backslash W_e} = \mathfrak{n}(F_{i_e})_{V(F_{i_e}) \backslash W_e} \\
				I_{\mathfrak{n}}(F)_{V(F_i)} = |\mathfrak{n}(F_i)|.
			\end{cases}
		\end{equation}
	\end{lem}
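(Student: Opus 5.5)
The plan is to treat \eqref{eqn::I from n} as a linear system in the $|V(G)|$ unknowns $I(v)$. I will show three things: the system splits over the components of $F$, it is consistent, and on each component it has a unique solution, which is integral. I will then check that the resulting divisor has the degree $d-g(G)-b_0(F)+1$ required of a $\mathcal{D}(\mathfrak{n})$-forest function.

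\emph{Reduction to trees.} Since $F\in m\mathcal{SS}_{\mathcal{D}}(G)\subseteq \mathcal{SF}_{\mathcal{D}}(G)$, each $F_i$ is a tree. Each equation in \eqref{eqn::I from n} only involves the values $I(v)$ for $v\in V(F_i)$ for a single $i$. So it suffices to fix one component $F_i$ and set $d_i=|\mathfrak{n}(F_i)|$. Here $\mathfrak{n}(F_i)$ is the restriction of Lemma \ref{lem:restriction-n-to-subgraphs}, which applies because $F_i$ is a connected $\mathcal{D}$-admissible subgraph.

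\emph{Uniqueness and integrality.} Root the tree $F_i$ at a vertex $r$. For each non-root vertex $v$, let $e_v$ be the edge from $v$ to its parent, and let $S_v$ be the vertex set of the subtree below $v$; this is one of the two sides $W_{e_v}$, $V(F_i)\backslash W_{e_v}$. Consider the indicator vectors $\mathbf{1}_{S_v}$ for $v\neq r$ together with $\mathbf{1}_{V(F_i)}$. Order the vertices so that each vertex comes after all of its descendants. In this order the matrix with these rows is unitriangular, so these vectors form a $\mathbb{Z}$-basis of $\mathbb{Z}^{V(F_i)}$. Prescribing $I_{S_v}$ for all $v\neq r$ and $I_{V(F_i)}=d_i$ therefore determines $I$ on $V(F_i)$ uniquely, and the solution is an integer vector.

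\emph{Consistency.} The only remaining equations are those for the other sides $V(F_i)\backslash S_v$. These are implied by the ones above exactly when
\[
\mathfrak{n}(F_i)_{W_e}+\mathfrak{n}(F_i)_{V(F_i)\backslash W_e}=d_i \quad\text{for every } e\in F_i .
\]
In the tree $F_i$ the cut $E(W_e, V(F_i)\backslash W_e)$ is the single edge $e$, so this says precisely that $W_e$ is $\mathfrak{n}(F_i)$-nondegenerate. To see this, suppose $W_e$ were degenerate for $\mathfrak{n}(F_i)$. Then by Lemma \ref{lem:restriction-n-to-subgraphs}(2) both sides of the cut lie in $\hat{\mathcal{D}}(\mathfrak{n})$. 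Hence $F\backslash\{e\}$ would still be a $\mathcal{D}$-admissible spanning subgraph, contradicting the minimality of $F$. So the system on each component has exactly one solution.

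\emph{Degree.} Summing over components gives $\deg I_{\mathfrak{n}}(F)=\sum_i |\mathfrak{n}(F_i)| = \sum_i\big(\mathfrak{n}_{V(F_i)}-|X_i|\big)$, where $X_i=E(G[V(F_i)])\backslash E(F_i)$. This is the step I expect to be the main obstacle. The plan is to use Definition \ref{def:extended-V-fn}, applied to the partition $V(G)=\bigsqcup_i V(F_i)$ into sets lying in $\hat{\mathcal{D}}$, to show that
\[
\sum_i \mathfrak{n}_{V(F_i)} = d - \Big|\bigcup_{i<j}E(V(F_i),V(F_j))\Big| .
\]
I would prove this by induction on the number of parts, merging two adjacent parts at a time. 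At each merge, the extended-function identity for degenerate pieces contributes no "$+1$" correction, because every piece involved is degenerate. Granting this, we get $\deg = d - \big(|E(G)|-|E(F)|\big)$. Since $|E(F)|=|V(G)|-b_0(F)$ for a forest, this equals $d-g(G)-b_0(F)+1$, as required.
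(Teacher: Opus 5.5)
The paper does not prove this statement; it quotes it from \cite[Lemma-Definition 3.30]{fava-pagani-viviani2024}, so your argument has to stand on its own. Most of it does. The following steps are all correct:
\begin{itemize}
\item the reduction to one tree;
\item the unitriangular system, which gives uniqueness and integrality;
\item the observation that consistency is exactly the $\mathfrak{n}(F_{i_e})$-nondegeneracy of $W_e$;
\item the derivation of that nondegeneracy from the minimality of $F$, via Lemma \ref{lem:restriction-n-to-subgraphs}(2) together with $\mathcal{D}\subseteq\hat{\mathcal{D}}$;
\item the final bookkeeping from $\sum_i\mathfrak{n}_{V(F_i)}$ to $d-g(G)-b_0(F)+1$.
\end{itemize}
The genuine gap is the identity $\sum_i\mathfrak{n}_{P_i}=d-|E_{\mathrm{btw}}|$, where $P_i=V(F_i)$ and $E_{\mathrm{btw}}$ is the set of edges joining different $P_i$. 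You only announce it. The reason you give (no ``$+1$'' terms, because all complementary components are degenerate) covers only the trivial part.

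To see what is actually needed, expand $\mathfrak{n}_{P_a\cup P_b}$, $\mathfrak{n}_{P_a}$ and $\mathfrak{n}_{P_b}$ for adjacent parts by Definition \ref{def:extended-V-fn}. The components of $G[(P_a\cup P_b)^c]$ touching only one of $P_a,P_b$ cancel, and what remains to be shown is
\[
\mathfrak{n}_{Y_a}+\mathfrak{n}_{Y_b}-\sum_{Z}\mathfrak{n}_Z+|E(P_a,P_b)|=d .
\]
Here $Y_b$ is the component of $G[P_a^c]$ containing $P_b$, $Y_a$ is the component of $G[P_b^c]$ containing $P_a$, and $Z$ runs over the components of $G[(P_a\cup P_b)^c]$ adjacent to both. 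This relates $\mathfrak{n}$ at different degenerate sets, so it needs the all-degenerate case of Equation \ref{eq:tribond-stabilitycond}. Moreover, to continue merging you must know that these $Z$, which are unions of several parts, lie in $\mathcal{D}$, i.e.\ that $P_a\cup P_b\in\hat{\mathcal{D}}$; that is not given.

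One route that works is induction on the number $k$ of parts.
\begin{itemize}
\item Suppose some $G[P_1^c]$ is disconnected, with components $Z_1,\dots,Z_m$ ($m\ge 2$, all in $\mathcal{D}$). Apply induction to each partition $\{Z_j^c\}\cup\{P_i:P_i\subseteq Z_j\}$. It has fewer parts, and $Z_j^c\in\hat{\mathcal{D}}$ because $Z_j\in\mathcal{D}$. Combining with $\mathfrak{n}_{Z_j}+\mathfrak{n}_{Z_j^c}=d-|E(Z_j,Z_j^c)|$ gives $\mathfrak{n}_{Z_j}=\sum_{P_i\subseteq Z_j}\mathfrak{n}_{P_i}+|E_{\mathrm{btw}}\cap E(G[Z_j])|$. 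Substitute this into the formula for $\mathfrak{n}_{P_1}$.
\item Otherwise every part is biconnected, hence in $\mathcal{D}$. For $k=2$ the identity is just the degenerate relation. For $k\ge 3$, choose adjacent $P_a,P_b$ with $(P_a\cup P_b)^c$ connected; this is possible because the graph obtained by contracting each part is $2$-connected. Then $P_a\cup P_b\in\mathcal{D}$ by Definition \ref{def:deg-set}(2). Subtract the degenerate relation for $P_a\cup P_b$ from the all-degenerate case of Equation \ref{eq:tribond-stabilitycond} for $P_a,P_b,(P_a\cup P_b)^c$. This gives $\mathfrak{n}_{P_a\cup P_b}=\mathfrak{n}_{P_a}+\mathfrak{n}_{P_b}+|E(P_a,P_b)|$, and you can merge the two parts.
\end{itemize}
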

	
	Now, given a $\mathcal{D}$-forest function $I$ satisfying $|\mathcal{BD}_I(G)| = c_{\mathcal{D}}(G)$, one can construct the corresponding stability condition $\mathfrak{n}^I$.
	\begin{lem}[{\cite[Lemma-Definition 3.31]{fava-pagani-viviani2024}}] \label{lem:forest-fn-to-n}
		Let $\mathcal{D}$ be a degeneracy set of a graph $G$, and let $I$ be a $\mathcal{D}$-forest function of degree $d$ such that $|\mathcal{BD}_I(G)| = c_{\mathcal{D}}(G)$. For any $W \in \text{BCon}(G)$, pick a minimal spanning forest $F \in m\mathcal{SS}_{\mathcal{D}}(G)$ that satisfies
		\begin{equation}\label{eqn:adapted-forest}
			\begin{cases}
				F[W] \text{ is a spanning forest of } G[W] \\
				F[W^c] \text{ is a spanning forest of } G[W^c] \\
				|E(F) \cap E(W,W^c)| = 1 - \mathbf{1}_{\mathcal{D}}(W),
			\end{cases}
		\end{equation}
		where $\mathbf{1}_{\mathcal{D}}(W)$ is $1$ if $W \in \mathcal{D}$ and $0$ otherwise. Then 
		\begin{align*}
			\mathfrak{n}^I: \text{BCon}(G) &\to \mathbb{Z} \\
			W &\mapsto \mathfrak{n}_W^I \coloneqq I(F)_W + g(G[W]) + b_0(F[W]) - 1
		\end{align*}
		is a stability condition of degree $d$ on $G$ with $\mathcal{D}(\mathfrak{n}^I) = \mathcal{D}$.
	\end{lem}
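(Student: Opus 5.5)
The plan is to first show that $\mathfrak{n}^I_W$ does not depend on the choice of the adapted forest $F$ in \eqref{eqn:adapted-forest}, and then to verify the axioms of Definition \ref{def:stability conditon} together with $\mathcal{D}(\mathfrak{n}^I)=\mathcal{D}$. The engine throughout is the cardinality hypothesis $|\mathcal{BD}_I(G)| = c_{\mathcal{D}}(G)$. I would first establish the \emph{a priori} inequality $|\mathcal{BD}_I(G)| \geq c_{\mathcal{D}}(G)$ for every $\mathcal{D}$-forest function. This should follow from a standard decomposition of orientations of $\mathcal{E}_F$ indexed by $\mathcal{SF}_{\mathcal{D}}(G)$, in the spirit of the break divisor/orientation correspondence. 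Equality then forces the sets $\{\mathcal{D}(F,\Or_{\mathcal{E}_F}) + I(F)\}$ coming from different $F \in m\mathcal{SS}_{\mathcal{D}}(G)$ to overlap in a maximally rigid way.

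The key step is to extract local compatibility relations between $I(F)$ and $I(F')$ whenever $F,F'\in m\mathcal{SS}_{\mathcal{D}}(G)$ differ by a single edge exchange across a bond $E(W,W^c)$. I expect equality in the count to force
\[ I(F)_W + (\text{chips dropped on } W) = I(F')_W + (\text{chips dropped on } W) \]
for the relevant orientations. Concretely, $I(F)_W + b_0(F[W])$ should be invariant under exchanges that preserve the three conditions in \eqref{eqn:adapted-forest}. Otherwise one could exhibit two distinct generalized break divisors where the count allows only one, or a missing one. Since any two forests adapted to $W$ are connected by such exchanges, with the exchanges performed separately inside $G[W]$ and $G[W^c]$ and using basis exchange in the graphic matroid restricted to $\mathcal{D}$-admissible pieces, $\mathfrak{n}^I_W$ is well defined. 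I expect this to be the main obstacle. It needs a careful choice of orientation of $\mathcal{E}_F$ so that the two divisors are directly comparable, and it needs an argument that adapted forests exist for every $W\in\text{BCon}(G)$. For the existence I would use the admissibility of $W$ and $W^c$ when $W\in\mathcal{D}$, and a single bridging edge otherwise.

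With well-definedness in hand, the remaining checks reduce to bookkeeping. For $W$ and $W^c$, choose one forest $F$ adapted to both simultaneously; the definition is symmetric in $W \leftrightarrow W^c$. Then, using $\deg I(F) = d - g(G) - b_0(F) + 1$ and $g(G) = g(G[W]) + g(G[W^c]) + |E(W,W^c)| - 1$, we get
\[\mathfrak{n}^I_W+\mathfrak{n}^I_{W^c}+|E(W,W^c)|-d = 1-\mathbf{1}_{\mathcal{D}}(W) \in\{0,1\}.\]
This gives degeneracy exactly on $\mathcal{D}$, so $\mathcal{D}(\mathfrak{n}^I)=\mathcal{D}$, and the degree is $d$.

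For a triple partition $W_1\sqcup W_2\sqcup W_3=V(G)$, property (1) follows from Definition \ref{def:deg-set}(2) applied to $W_i\cup W_j=W_k^c$ and then (1) of that definition. For property (2), I would pick a forest spanning each $G[W_i]$ and using $0$, $1$ or $2$ crossing edges according to how many $W_i$ lie in $\mathcal{D}$. The forest should be adapted simultaneously to each $W_i$ after exchanges, which is again justified by the invariance from the key step. Summing the three defining formulas and applying the same genus identity for a three-part partition yields the value of \eqref{eq:tribond-stabilitycond} in each case. In the fully nondegenerate case both values $1$ and $2$ can occur, depending on which of the two crossing edges the forest uses.
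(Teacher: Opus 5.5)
Your pair computation is correct once $\mathfrak{n}^I_W$ is known to be well defined: a forest adapted to $W$ is also adapted to $W^c$, and the count returns $1-\mathbf{1}_{\mathcal{D}}(W)$. The tribond cases with one or three degenerate parts also reduce to your summation, which returns the number $k$ of edges of $F$ joining different parts, provided a simultaneously adapted forest exists in $m\mathcal{SS}_{\mathcal{D}}(G)$ (you still need to show this). There are, however, two genuine gaps.

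First, well-definedness, which you call the main obstacle, is never argued. To turn the global equality $|\mathcal{BD}_I(G)|=c_{\mathcal{D}}(G)$ into local identities you need something like a decomposition of the count into pieces that are each bounded below. You neither construct one nor prove $|\mathcal{BD}_I(G)|\ge c_{\mathcal{D}}(G)$. Moreover, your claim that adapted forests are connected by single exchanges is false. Take $G$ the $4$-cycle $v_1v_2v_3v_4$, $\mathcal{D}=\{\{v_1\},\{v_2,v_3,v_4\}\}$ (a degeneracy set), and $W=\{v_1,v_2\}\notin\mathcal{D}$. Both $\{v_2v_3,v_3v_4\}$ (two components) and the spanning tree $\{v_1v_2,v_1v_4,v_3v_4\}$ lie in $m\mathcal{SS}_{\mathcal{D}}(G)$ and satisfy \eqref{eqn:adapted-forest}. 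They have different numbers of edges, so no basis exchanges join them. This is exactly why $b_0(F[W])$ appears in the formula and why $\deg I(F)$ varies with $F$. Any invariance argument must compare $I$ on forests of different sizes.

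Second, even granting well-definedness, the nondegenerate case of \eqref{eq:tribond-stabilitycond} is not bookkeeping. If $W_1,W_2,W_3\notin\mathcal{D}$, no forest is adapted to all three parts. Every crossing edge of $F$ lies in exactly two of the cuts $E(W_i,W_i^c)$, so $\sum_i|E(F)\cap E(W_i,W_i^c)|$ is even, whereas adaptedness to all three demands $3$. With your two crossing edges, the part $W_2$ meeting both is not adapted, and your sum (which would always give $2$) does not compute $\mathfrak{n}^I_{W_2}$. What is actually needed is the two-sided bound
\[0\le I(F)_{W_2}+g(G[W_2])+b_0(F[W_2])-1-\mathfrak{n}^I_{W_2}\le 1\]
for such $F$. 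Equivalently, you need the near-additivity $\mathfrak{n}^I_{W_1\cup W_3}-\mathfrak{n}^I_{W_1}-\mathfrak{n}^I_{W_3}-|E(W_1,W_3)|\in\{-1,0\}$. This relates values computed from different forests and does not follow from the pair identity. It is a second, independent use of the hypothesis $|\mathcal{BD}_I(G)|=c_{\mathcal{D}}(G)$. For instance, it follows once you know every $D\in\mathcal{BD}_I(G)$ satisfies $D_W\ge\mathfrak{n}^I_W$ for $W=W_2$ and $W=W_2^c$, by minimizing over orientations. In the orientable setting of this paper, it is what rules out the first forbidden configuration of Figure \ref{fig:forbiddenconfigs}. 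In particular, the value $1$ comes from this correction term for the non-adapted part, not from which crossing edges you choose. (The paper itself gives no proof here; it imports the lemma from Fava--Pagani--Viviani.)
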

	Finally, we are interested in $\mathcal{D}$-forest functions $I$ such that every $I(F)$ is an orientable divisor. We call such an $I$ an \textit{orientable} $\mathcal{D}$-forest function.
	
	\begin{lem}\label{lem:I-orientability}
		Let $I$ be a $\mathcal{D}$-forest function for some degeneracy set $\mathcal{D}$ of $G$ satisfying $|\mathcal{BD}_I(G)| = c_{\mathcal{D}}(G)$. Then, $I$ is an orientable $\mathcal{D}$-forest function if and only if
		\begin{equation} \label{eqn:forest-cases}
			\begin{cases}
				I(F)_{W_e} \in \{0, -1\} \\
				I(F)_{V(F_{i_e})\backslash W_e} \in \{0, -1\} \backslash I(F)_{W_e} \\
				I(F)_{V(F_i)} = -1.
			\end{cases}
		\end{equation}
	\end{lem}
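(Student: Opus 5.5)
The plan is to unwind what orientability of $I(F)$ means on a minimal admissible spanning forest, and then reduce both directions to a statement about divisors on trees. The degree bookkeeping comes first. For $F\in m\mathcal{SS}_{\mathcal{D}}(G)$ the divisor $I(F)$ has degree $d-g(G)-b_0(F)+1$. Since every condition in \eqref{eqn:forest-cases} forces $\deg I(F)=\sum_i I(F)_{V(F_i)}=-b_0(F)$, the relevant case is $d=g(G)-1$. There I read ``$I(F)$ is orientable'' as $I(F)=D_{\Or}$ for an orientation $\Or$ of the forest $F$, with $D_\Or(v)=\text{indegree}_\Or(v)-1$. This is the only reading compatible with the degree, because each tree component has genus $0$ and so $\deg D_\Or=\sum_i(0-1)=-b_0(F)$. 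I will fix this interpretation at the start of the proof.

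For the forward direction, suppose $I(F)=D_\Or$ for an orientation $\Or$ of $F$. For a component $F_i$ we get $D_\Or(V(F_i)) = (|V(F_i)|-1)-|V(F_i)|=-1$, since every edge of the tree $F_i$ has its head in $V(F_i)$. For an edge $e\in F_{i_e}$, the set $W_e$ spans a subtree with $|W_e|-1$ edges, and the only other edge of $F$ that can have its head in $W_e$ is $e$ itself. Hence
\[D_\Or(W_e) = |W_e|-1+[e \text{ points into } W_e]-|W_e| \in\{0,-1\}.\]
The symmetric computation for $V(F_{i_e})\backslash W_e$ gives $-1+[e\text{ points into }V(F_{i_e})\backslash W_e]$. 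Exactly one of the two indicators equals $1$, so the two values are distinct elements of $\{0,-1\}$. This is precisely \eqref{eqn:forest-cases}.

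For the converse, assume \eqref{eqn:forest-cases} holds. Define $\Or$ on $F$ by orienting each edge $e$ toward whichever of its two sides has $I(F)$-value $0$; the conditions guarantee there is exactly one such side. By the computation above, $D_\Or$ and $I(F)$ then agree on every set $W_e$, on every set $V(F_{i_e})\backslash W_e$, and on every $V(F_i)$.

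The remaining step, which I expect to be the only real content, is that a divisor supported on $V(F_i)$ is determined by its values on the sets $W_e$ for $e\in F_i$ together with its value on $V(F_i)$. I plan to prove this by leaf induction. If $v$ is a leaf of $F_i$ with pendant edge $e$, then $\{v\}$ is one side of $e$, so the value at $v$ is fixed. Deleting $v$ leaves a tree $F_i\backslash v$, and its edge-cuts are the old cuts with $v$ removed where necessary; these values are known by subtracting the already-determined value at $v$. Equivalently, the $|V(F_i)|$ indicator vectors of the edge-cut sides and of $V(F_i)$ form a unimodular basis of $\mathbb{Z}^{V(F_i)}$. Applying this to $I(F)-D_\Or$ on each component gives $I(F)=D_\Or$, so $I(F)$ is orientable.

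The hypothesis $|\mathcal{BD}_I(G)|=c_{\mathcal{D}}(G)$ is used only to place us in the setting of Lemmas~\ref{lem:forest-fn-from-n} and~\ref{lem:forest-fn-to-n}, where these cut values determine $I(F)$. The argument itself is purely about divisors on trees.
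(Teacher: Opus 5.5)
Your proposal is correct and follows the paper's proof: the forward direction reads off $I(F)_{V(F_i)}=-1$ and $I(F)_{W_e}\in\{0,-1\}$ from the orientation $\Or$ of the forest $F$ with $D_\Or=I(F)$, and the converse orients each edge $e$ toward the side on which $I(F)$ sums to $0$. Your leaf-induction argument (a divisor is determined by its sums over one side of each edge-cut of $F$ together with its sums over the components $V(F_i)$) makes explicit the step the paper compresses into ``by construction, $D_\Or = I(F)$''; it also shows that the hypothesis $|\mathcal{BD}_I(G)| = c_{\mathcal{D}}(G)$ is never actually used, rather than being needed for the cut values to determine $I(F)$.
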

	\begin{proof}
		If $I$ is an orientable $\mathcal{D}$-forest function, then $I(F)$ is an orientable divisor for every $F \in m\mathcal{SS}_{\mathcal{D}}(G)$. Let $\Or$ be the unique orientation of $F$ such that $D_\Or = I(F)$. We note that $\Or$ is unique because $F$ is a forest. Let $\sqcup_i F_i$ be the decomposition of $F$ into connected components. Since $\Or$ orients each $F_i$, we have $I(F_i) = -1$ for each $F_i$. For $F_{i_e}$, if we contract all the edges except for $e$ in $F[W_e]$, $e$ is either oriented away from $W_e$ or towards $W_e$, so $I(F)_{W_e}$ is $0$ or $-1$. Because $I(F_{i_e}) = -1$, we must have $I(F)_{V(F_{i_e})\backslash W_e} \in \{0, -1\} \backslash I(F)_{W_e}$, as desired.
		
		Now, assume $I(F)$ satisfies Equation \ref{eqn:forest-cases}. Then, we construct an orientation $\Or$ of $F$ in the following natural way: consider an edge $e$ of $F$, and let $F_{i_e}$ denote the connected component of $F$ containing $e$. Then, if $I(F)_{W_e} = 0$, we orient $e$ towards $W_e$; otherwise, we orient $e$ away from $W_e$. By construction, $D_\Or = I(F)$, so $I$ is orientable. 
	\end{proof}
	
	\begin{corollary}\label{cor:stab-degree-from-orientable-I}
		Let $I$ be an orientable $\mathcal{D}$-forest function satisfying $|\mathcal{BD}_I(G)| = c_{\mathcal{D}}(G)$. Then, $I$ is a $\mathcal{D}$-forest function of degree $g(G) - 1$. 
	\end{corollary}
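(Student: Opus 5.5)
Corollary~\ref{cor:stab-degree-from-orientable-I} says $I$ has degree $g(G)-1$. The plan is to compute the degree of $I(F)$ for a single minimal admissible forest and compare it with the prescribed degree of a $\mathcal{D}$-forest function. Recall that a $\mathcal{D}$-forest function of degree $d$ assigns to each $F \in m\mathcal{SS}_{\mathcal{D}}(G)$ a divisor of degree $d - g(G) - b_0(F) + 1$. So it suffices to find one $F$ with $\deg I(F) = -b_0(F)$. This forces $d - g(G) - b_0(F) + 1 = -b_0(F)$, that is, $d = g(G) - 1$.

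We need $m\mathcal{SS}_{\mathcal{D}}(G)$ to be nonempty. Indeed, $G$ itself is a connected spanning subgraph with $V(G) \in \hat{\mathcal{D}}$, since $G[\emptyset]$ is the empty disjoint union. Hence $\mathcal{SS}_{\mathcal{D}}(G)$ is nonempty, and it has minimal elements because it is finite.

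Fix such an $F$ and write $F = \bigsqcup_{i=1}^{k} F_i$ with $k = b_0(F)$. Since the vertex sets $V(F_i)$ partition $V(G)$,
\[\deg I(F) = \sum_{i=1}^{k} I(F)_{V(F_i)}.\]
By Lemma~\ref{lem:I-orientability}, orientability of $I$ gives $I(F)_{V(F_i)} = -1$ for each component. Hence $\deg I(F) = -k = -b_0(F)$, and the previous paragraph finishes the argument.

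One subtlety: in Equation~\ref{eqn:forest-cases} the condition $I(F)_{V(F_i)} = -1$ is written for components $i \neq i_e$. For the component $F_{i_e}$ containing an edge $e$, the first two conditions give $I(F)_{W_e} + I(F)_{V(F_{i_e})\backslash W_e} = 0 + (-1) = -1$, so it holds there as well. It is also immediate without the lemma: $I(F)$ is orientable and is oriented by an orientation of $F$, each tree $F_i$ has $|V(F_i)| - 1$ edges, and so the indegree-minus-one contributions sum to $-1$ on each component. This is the only point needing care; the rest is bookkeeping.
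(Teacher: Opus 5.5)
Your proposal is correct and matches the paper's intended argument: the paper states this corollary without a written proof, as an immediate consequence of Lemma~\ref{lem:I-orientability}, whose condition $I(F)_{V(F_i)} = -1$ on every component gives $\deg I(F) = -b_0(F)$, and matching this with the defining degree $d - g(G) - b_0(F) + 1$ forces $d = g(G)-1$. Your extra checks (that $m\mathcal{SS}_{\mathcal{D}}(G)$ is nonempty, and that the component containing $e$ also contributes $-1$) are sound and make explicit what the paper leaves implicit.
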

	
	\begin{corollary} \label{cor:stab-values-from-orientable-I}
		Let $I$ be an orientable $\mathcal{D}$-forest function satisfying $|\mathcal{BD}_I(G)| = c_{\mathcal{D}}(G)$. Then, the corresponding stability condition $\mathfrak{n}^I$ satisfies
		\begin{equation}\label{eqn:orientable-n-values}
			\mathfrak{n}^I_W = \begin{cases}
				g(G[W]) - 1 & W \in \mathcal{D} \\
				g(G[W]) & W \notin \mathcal{D} \text{ and } I(F)_W = 0 \text{ for all forests satisfying Equation \ref{eqn:adapted-forest}} \\
				g(G[W])-1 & W \notin \mathcal{D} \text{ and } I(F)_W = -1 \text{ for all forests satisfying Equation \ref{eqn:adapted-forest}}. 	
			\end{cases}
		\end{equation}
	\end{corollary}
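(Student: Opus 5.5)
The plan is to compute $\mathfrak{n}^I_W = I(F)_W + g(G[W]) + b_0(F[W]) - 1$ directly from Lemma \ref{lem:forest-fn-to-n}. Fix $W \in \text{BCon}(G)$ and a forest $F \in m\mathcal{SS}_{\mathcal{D}}(G)$ satisfying Equation \ref{eqn:adapted-forest}. The main step is to evaluate $I(F)_W$ using Lemma \ref{lem:I-orientability}, which says every component $F_i$ of $F$ satisfies $I(F)_{V(F_i)} = -1$, and that the two sides of a cut edge carry values $0$ and $-1$. By Lemma \ref{lem:forest-fn-to-n} the value $\mathfrak{n}^I_W$ does not depend on the choice of adapted $F$, which will give the ``for all forests'' clause for free.

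\emph{Case $W \in \mathcal{D}$.} Here $E(F)\cap E(W,W^c)=\emptyset$. Every component of $F$ therefore lies entirely in $W$ or entirely in $W^c$. The components lying in $W$ are exactly the components of $F[W]$, so
\[I(F)_W = \sum_{F_i \subseteq W} I(F)_{V(F_i)} = -b_0(F[W]).\]
Substituting gives $\mathfrak{n}^I_W = g(G[W]) - 1$.

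\emph{Case $W \notin \mathcal{D}$.} Now exactly one edge $e$ of $F$ lies in $E(W,W^c)$; let $F_{i_e}$ be its component. Since $F[W]$ and $F[W^c]$ are spanning forests of $G[W]$ and $G[W^c]$ and $e$ is the only crossing edge, $F_{i_e}\backslash\{e\}$ splits into one piece inside $W$ and one inside $W^c$. We may take the side $W_e$ of Lemma \ref{lem:forest-fn-from-n} to be $V(F_{i_e})\cap W$. Every other component of $F$ lies in $W$ or in $W^c$. Hence
\[b_0(F[W]) = 1 + \#\{i\neq i_e : F_i\subseteq W\},\]
and
\[I(F)_W = I(F)_{W_e} - \bigl(b_0(F[W]) - 1\bigr).\]
By Lemma \ref{lem:I-orientability}, $\varepsilon := I(F)_{W_e} \in \{0,-1\}$, so $\mathfrak{n}^I_W = g(G[W]) + \varepsilon$. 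This is $g(G[W])$ when $\varepsilon = 0$ and $g(G[W]) - 1$ when $\varepsilon = -1$.

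The delicate point is interpretive: the quantity ``$I(F)_W$'' in the second and third cases of the statement should be read as this local contribution $I(F)_{W_e}$ of the side of the cut edge lying in $W$. Equivalently, it is the net value once the $-1$ from each component fully inside $W$ is discarded. Well-definedness of $\mathfrak{n}^I$ (Lemma \ref{lem:forest-fn-to-n}) then forces $\varepsilon$ to be the same for every adapted forest. So exactly one of the two nondegenerate cases occurs, and it occurs for all such $F$, which completes the trichotomy.
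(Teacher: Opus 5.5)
Your proposal is correct and follows essentially the same route as the paper. The paper also splits on whether $W\in\mathcal{D}$, uses that every component of $F$ contributes $-1$ while the side of the unique cut edge $e$ lying in $W$ contributes $0$ or $-1$, obtains $I(F)_W=-b_0(F[W])$ or $1-b_0(F[W])$, and hence $\mathfrak{n}^I_W\in\{g(G[W]),\,g(G[W])-1\}$. Your interpretive remark is apt: the paper states the dichotomy as whether $e$ is oriented toward $V(F_{i_e})\cap W$, which is exactly your $\varepsilon=I(F)_{W_e}$, and the literal condition ``$I(F)_W=0$'' agrees with this only when $b_0(F[W])=1$.
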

	\begin{proof}
		Let $F$ be a minimal spanning forest in $m\mathcal{SS}_\mathcal{D}(G)$ satisfying Equation \ref{eqn:adapted-forest}. If $W \in \mathcal{D}$, then $|E(F) \cap E(W, W^c)| = 0$, so $F[W]$ is the union of disconnected components $F_i$ of $F$. For each component $F_i$, we have $I(F_i) = -1$ from Lemma \ref{lem:I-orientability}, so $I(F)_W = -b_0(F[W])$ and the result follows from Lemma \ref{lem:forest-fn-to-n}. Now, consider if $W \notin \mathcal{D}$, so $|E(F) \cap E(W,W^c)| = 1$. Let $e$ be the edge in the intersection $E(F) \cap E(W, W^c)$, and $F_{i_e}$ be the connected component of $F$ containing $e$. Then, $F[W]$ is the disjoint union of connected components $F_i$ not containing $e$ and $F_{i_e}[V(F_{i_e}) \cap W]$. From Lemma \ref{lem:I-orientability}, we have $I(F_i) = -1$ for each $F_i$ and $I(F_{i_e}) = -1$. If $e$ was oriented towards $F_{i_e}[V(F_{i_e}) \cap W]$, then $I(F)_W = 1 - b_0(F[W])$; otherwise, $I(F)_W = - b_0(F[W])$. 
	\end{proof}
	
	\subsection{Poset structures on stability conditions and degeneracy sets}
	We are also interested in two posets relating to stability conditions. 
	\begin{definition}[{\cite[Definition 4.11]{fava-pagani-viviani2025}}] \label{def:poset-relation-stab-set}
		Let $\text{VStab}(G)$ denote the poset of all stability conditions on $G$ with the following relation: for $\mathfrak{n}^1$ and $\mathfrak{n}^2$ in $\text{VStab}(G)$, $\mathfrak{n}^1 \geq \mathfrak{n}^2$ if $|\mathfrak{n}^1| = |\mathfrak{n}^2|$ and $\mathfrak{n}^1_W \geq \mathfrak{n}^2_W$ for all $W \in \text{BCon}(G)$. 
	\end{definition}
	
	\begin{definition}[{\cite[Definition 4.18]{fava-pagani-viviani2025}}] \label{def:poset-relation-deg-set}
		Let $\text{Deg}(G)$ denote the poset of all degeneracy sets of $G$, with the following relation: for $\mathcal{D}_1, \mathcal{D}_2 \in \text{Deg}(G)$, we say that $\mathcal{D}_1 \geq \mathcal{D}_2$ if $\mathcal{D}_1 \subseteq \mathcal{D}_2$ and there exists a subset $\mathcal{E} \subset \mathcal{D}_2 \backslash \mathcal{D}_1$ such that the following conditions hold:
		\begin{enumerate}
			\item $\mathcal{D}_2 \backslash \mathcal{D}_1 = \mathcal{E}\bigsqcup\mathcal{E}^c$, where $\mathcal{E}^c = \{Y^c:Y \in \mathcal{E}\}$,
			\item for any $Z_1, Z_2 \in \mathcal{D}_2 \backslash \mathcal{D}_1$ such that $Z_1 \cap Z_2 = \emptyset$ and $Z_1 \cup Z_2 \in \mathcal{D}_1$, either $Z_1$ or $Z_2$ is in $\mathcal{E}$, but not both. 
			\item for any $Z_1, Z_2, Z_3 \in \mathcal{D}_2 \backslash \mathcal{D}_1$ such that $Z_1, Z_2, Z_3$ are all pairwise disjoint, and $V(G) = Z_1 \cup Z_2 \cup Z_3$, $\mathcal{E}$ contains exactly $1$ or $2$ of the $Z_i$. 
		\end{enumerate}
	\end{definition}
	These posets are compatible with each other in the following way. We can consider $\mathcal{D}$ as a map from stability conditions $\mathfrak{n}$ to their degeneracy sets $\mathcal{D}(\mathfrak{n})$.
	\begin{proposition}[{\cite[Proposition 4.19]{fava-pagani-viviani2025}}]\label{prop:order-preserving-upper-lifting}
		The map $\mathcal{D}$ satisfies the following properties:
		\begin{enumerate}
			\item $\mathcal{D}$ is order-preserving: if $\mathfrak{n}^1 \geq \mathfrak{n}^2$, then $\mathcal{D}(\mathfrak{n}^1) \geq \mathcal{D}(\mathfrak{n}^1)$.
			\item $\mathcal{D}$ is upper lifting: for any $\mathcal{D}_1 \geq \mathcal{D}_2$ in $\text{Deg}(G)$, and for any stability condition $\mathfrak{n}^2$ such that $\mathcal{D}(\mathfrak{n}^2) = \mathcal{D}_2$, there exists a stability condition $\mathfrak{n}^1$ such that $\mathfrak{n}^1 \geq \mathfrak{n}^2$ and $\mathcal{D}(\mathfrak{n}^1) = \mathcal{D}_1$. 
		\end{enumerate}
	\end{proposition}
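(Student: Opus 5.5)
Both parts follow from comparing the defining identities of Definition \ref{def:stability conditon} for the two stability conditions term by term. Throughout, write $s^{\mathfrak{n}}(W) = \mathfrak{n}_W + \mathfrak{n}_{W^c} + |E(W,W^c)| - d$, and let $t^{\mathfrak{n}}(W_1,W_2,W_3)$ denote the left-hand side of Equation \ref{eq:tribond-stabilitycond}.

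\emph{Order-preserving.} Let $\mathfrak{n}^1 \geq \mathfrak{n}^2$ with common degree $d$, and put $\mathcal{D}_i = \mathcal{D}(\mathfrak{n}^i)$. Since $\mathfrak{n}^1_W \geq \mathfrak{n}^2_W$ and $\mathfrak{n}^1_{W^c} \geq \mathfrak{n}^2_{W^c}$, we have $s^{\mathfrak{n}^1}(W) \geq s^{\mathfrak{n}^2}(W)$, and both values lie in $\{0,1\}$. This gives three consequences.
\begin{enumerate}
\item If $W \in \mathcal{D}_1$, then $s^{\mathfrak{n}^2}(W)=0$ and both inequalities are equalities. So $\mathcal{D}_1 \subseteq \mathcal{D}_2$, and $\mathfrak{n}^1=\mathfrak{n}^2$ on $\mathcal{D}_1$.
\item If $W \notin \mathcal{D}_2$, then both values of $s$ equal $1$, so the two increments $\mathfrak{n}^1_W-\mathfrak{n}^2_W$ and $\mathfrak{n}^1_{W^c}-\mathfrak{n}^2_{W^c}$ sum to $0$ and hence both vanish.
\item If $W \in \mathcal{D}_2\backslash\mathcal{D}_1$, the increments of $W$ and $W^c$ are nonnegative and sum to exactly $1$.
\end{enumerate}
Define $\mathcal{E} = \{W \in \mathcal{D}_2\backslash \mathcal{D}_1 : \mathfrak{n}^1_W = \mathfrak{n}^2_W + 1\}$. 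By (3), exactly one of $W, W^c$ lies in $\mathcal{E}$, which is condition (1) of Definition \ref{def:poset-relation-deg-set}.

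For condition (2), take $Z_1,Z_2 \in \mathcal{D}_2\backslash\mathcal{D}_1$ disjoint with $Z_1\cup Z_2 = Y \in \mathcal{D}_1$. Then $Y^c \in \mathcal{D}_1$, and we apply Equation \ref{eq:tribond-stabilitycond} to the triple $(Z_1,Z_2,Y^c)$. For $\mathfrak{n}^2$ all three sets are degenerate, so $t^{\mathfrak{n}^2}=0$. For $\mathfrak{n}^1$ only $Y^c$ is degenerate, so $t^{\mathfrak{n}^1}=1$. The increment of $Y^c$ is $0$, so the increments of $Z_1$ and $Z_2$, each in $\{0,1\}$, sum to $1$. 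Hence exactly one of them lies in $\mathcal{E}$.

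For condition (3), take a triple of pairwise disjoint sets in $\mathcal{D}_2\backslash\mathcal{D}_1$ covering $V(G)$. Then $t^{\mathfrak{n}^2}=0$ and $t^{\mathfrak{n}^1}\in\{1,2\}$, so the increments sum to $1$ or $2$. Thus $\mathcal{E}$ contains one or two of the sets.

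\emph{Upper lifting.} Given $\mathcal{D}_1\geq \mathcal{D}_2$ witnessed by $\mathcal{E}$, and $\mathfrak{n}^2$ with $\mathcal{D}(\mathfrak{n}^2)=\mathcal{D}_2$, set $\mathfrak{n}^1_W=\mathfrak{n}^2_W+\mathbf{1}_{\mathcal{E}}(W)$ and keep the degree $d$. Clearly $\mathfrak{n}^1\geq\mathfrak{n}^2$. By condition (1), $s^{\mathfrak{n}^1}(W)=s^{\mathfrak{n}^2}(W)$ off $\mathcal{D}_2\backslash\mathcal{D}_1$, and $s^{\mathfrak{n}^1}(W)=1$ on $\mathcal{D}_2\backslash\mathcal{D}_1$. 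So $s^{\mathfrak{n}^1}\in\{0,1\}$ everywhere, and the set of $\mathfrak{n}^1$-degenerate sets is exactly $\mathcal{D}_1$.

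Axiom (1) of Definition \ref{def:stability conditon} for $\mathfrak{n}^1$ holds because $\mathcal{D}_1$ is a degeneracy set. Indeed, if $W_i,W_j\in\mathcal{D}_1$, then $W_i\cup W_j=W_k^c$ is biconnected, so $W_i\cup W_j\in\mathcal{D}_1$ and hence $W_k\in\mathcal{D}_1$.

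The main work is Equation \ref{eq:tribond-stabilitycond}. I would do a case analysis on how many of $W_1,W_2,W_3$ lie in $\mathcal{D}_1$, in $\mathcal{D}_2\backslash\mathcal{D}_1$, or outside $\mathcal{D}_2$. Axiom (1) for $\mathfrak{n}^2$ and for $\mathfrak{n}^1$ rules out two members of $\mathcal{D}_2$ with the third outside, and two members of $\mathcal{D}_1$ with the third outside $\mathcal{D}_1$. The remaining cases are as follows.
\begin{enumerate}
\item All three in $\mathcal{D}_1$: nothing changes and $t^{\mathfrak{n}^1}=0$.
\item One in $\mathcal{D}_1$ and two in $\mathcal{D}_2\backslash\mathcal{D}_1$: condition (2) adds exactly $1$ to $t^{\mathfrak{n}^2}=0$.
\item All three in $\mathcal{D}_2\backslash\mathcal{D}_1$: condition (3) adds $1$ or $2$ to $0$.
\item Exactly one in $\mathcal{D}_2$: $t^{\mathfrak{n}^2}=1$, and the total increment is $0$ if that set is in $\mathcal{D}_1$, or $0$ or $1$ if it is in $\mathcal{D}_2\backslash\mathcal{D}_1$. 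The latter gives $\{1,2\}$ with no $\mathfrak{n}^1$-degenerate set.
\item None in $\mathcal{D}_2$: nothing changes.
\end{enumerate}
In each case the value of $t^{\mathfrak{n}^1}$ matches the one required by the number of $\mathfrak{n}^1$-degenerate sets. The main obstacle is making this bookkeeping exhaustive; the rest is immediate.
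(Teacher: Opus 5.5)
Your proof is correct and consistent with what the paper records. The paper only cites this result from \cite{fava-pagani-viviani2025}, but the lift $\mathfrak{n}^1=\mathfrak{n}^2+\mathbf{1}_{\mathcal{E}}$ you use for part (2) is exactly the one it writes down in Equation \ref{eqn:upper-lifting-stab-condition}, and for part (1) your witness $\mathcal{E}$ (the sets in $\mathcal{D}_2\backslash\mathcal{D}_1$ where $\mathfrak{n}^1$ exceeds $\mathfrak{n}^2$) is what inverting that formula requires. You also correctly read part (1) as $\mathcal{D}(\mathfrak{n}^1)\geq\mathcal{D}(\mathfrak{n}^2)$, since the statement has a typo. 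The one step you leave implicit is immediate: when one set is in $\mathcal{D}_1$ and two are in $\mathcal{D}_2\backslash\mathcal{D}_1$, the union of those two is the complement of the first, hence lies in $\mathcal{D}_1$, so condition (2) applies.
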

	
	The stability condition $\mathfrak{n}^1$ in Proposition \ref{prop:order-preserving-upper-lifting}(2) can be constructed explicitly: let $\mathcal{E} \subset \mathcal{D}_2 \backslash \mathcal{D}_1$ be the set satisfying the conditions in Definition \ref{def:poset-relation-deg-set}. Then, $\mathfrak{n}^1$ is defined by
	\begin{equation}\label{eqn:upper-lifting-stab-condition}
		\mathfrak{n}^1_W = \begin{cases}
			\mathfrak{n}^2_W & W \in \mathcal{D}_1 \text{ or } W \notin \mathcal{D}_2 \\
			\mathfrak{n}^2_W + 1 & W \in \mathcal{E} \\
			\mathfrak{n}^2_W & W \in \mathcal{E}^c.
		\end{cases}
	\end{equation}

	\subsection{Triangulations and nondegenerate stability conditions}
	In \cite{crepeau}, we were motivated by studying functions $I: \mathcal{ST}(G) \to \text{Div}^0(G)$ such that $|\mathcal{BD}_I(G)| = |\mathcal{ST}(G)| = c_\emptyset(G)$; in this case, $I$ is a degree $g(G)$ $\mathcal{D}$-forest function for $\mathcal{D}=\emptyset$. To do this, we used a family of bijections constructed by Ding \cite{ding2023framework} between the bases of a regular oriented matroid $\M$ and representatives of the circuit-cocircuit reversal system of $\M$. For an oriented matroid $\M$ and a basis $B$ of $\M$, an \textit{internally oriented basis} $\vec{B^\ast}$ is a fourientation of $\M$ where all the edges of $B$ are one-way oriented, and all the edges not in $B$ are bioriented. An \textit{internal atlas} of $\M$ is a collection of internally oriented bases $\vec{B}^\ast$ where each basis of $\M$ appears once. The internal atlases of interest in this paper are those that can be constructed from triangulations of the Lawrence polytope $\Lambda(\M^\ast)$; such an atlas is called \textit{triangulating} and can be combinatorially characterized without a reference triangulation.
	
	\begin{thm}[{\cite[Theorem 1.30]{ding2023framework}}]\label{thm: triang-to-atlas}
		Let $\mathcal{P}^\ast$ be the Lawrence polytope of the matroid $\M^\ast$ dual to the loopless regular matroid $\M$. The map
		\[\chi^\ast: \{\text{triangulations } \T \text{ of } \mathcal{P}^\ast\} \to \{\text{triangulating internal atlases of } \M\}\]
		is a bijection, where for a triangulation $\T$ of $\mathcal{P}^\ast$, each simplex $\tau \in \T$ corresponds to an internally oriented basis of $\M$ in the following way: $\tau$ is the convex hull of $\{P_j:j\in J\}$ for some $J \subseteq \{1,...,n,-1,...,-n\}$, and $\chi^\ast(\tau)$ is the fourientation given by $\{\vec{e_j}:j\in J\}$.
	\end{thm}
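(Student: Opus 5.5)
The plan is to check three things: that each simplex of $\T$ yields an internally oriented basis of $\M$; that each basis of $\M$ occurs exactly once; and that $\chi^\ast$ is a bijection onto its image. Throughout I use the description of bases of a Lawrence polytope from \cite[Lemma 4.11]{triangulations-oriented}.

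\emph{Each simplex gives an internally oriented basis.} Since $\M$ is regular, so is $\M^\ast$, and $\Lambda(\M^\ast)$ is realized by a totally unimodular Lawrence matrix. Hence every full-dimensional simplex spanned by vertices of $\mathcal{P}^\ast$ has normalized volume $1$, and its vertex set is a basis of $\Lambda(\M^\ast)$. Apply Lemma 4.11(3) of \cite{triangulations-oriented} to $\M^\ast$. A basis of $\Lambda(\M^\ast)$ has the form ${}_A(E\backslash B^\ast)\cup B^\ast\cup\overline{B^\ast}$, where $B^\ast$ is a basis of $\M^\ast$ and $A\subseteq E\backslash B^\ast$. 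Writing $B=E\backslash B^\ast$, which is a basis of $\M$, the fourientation $\chi^\ast(\tau)=\{\vec{e_j}:j\in J\}$ contains both arcs of every $e\notin B$. It contains exactly one arc of every $e\in B$, namely the one selected by $A$. So $\chi^\ast(\tau)$ is an internally oriented basis of $\M$.

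\emph{Each basis of $\M$ occurs exactly once.} This is the main step. For existence I would count volumes. The normalized volume of $\mathcal{P}^\ast$ equals the number of bases of $\M$, which one can see from any single triangulation, for example a regular pulling triangulation, together with unimodularity. Every unimodular triangulation therefore has exactly $|\mathcal{B}(\M)|$ simplices, so it is enough to prove that no basis is used twice.

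For uniqueness, suppose $\tau\neq\tau'$ in $\T$ have the same underlying basis $B$ but differ in the orientation of some $e\in B$. I would look at the fundamental cocircuit $C^\ast(B,e)$ of $\M$, which is a circuit of $\M^\ast$. By Lemma 4.11(1) it lifts to a signed circuit of $\Lambda(\M^\ast)$, whose positive and negative parts are $\vec{C}_+\cup\overline{\vec{C}_-}$ and $\vec{C}_-\cup\overline{\vec{C}_+}$. The vertices of this circuit are the two arcs of $e$ together with both arcs of each element of $C^\ast(B,e)\backslash\{e\}\subseteq E\backslash B$, and all of these lie in $\tau\cup\tau'$. Choosing the sign of the circuit suitably, one side lies in $\tau$ and the other in $\tau'$. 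By Radon's theorem the two cells then meet in relative interior points of the corresponding faces, which is incompatible with $\tau$ and $\tau'$ intersecting in a common face. The case I expect to be delicate is when $\tau$ and $\tau'$ differ in several elements of $B$. Then I would pick $e$ and check that the lifted circuit really splits between $\tau$ and $\tau'$, possibly using Lemma \ref{lem:lawrence_polytopes_lifting_subdivisions}(2) on the face supporting the circuit, where $\Su_F$ must be $\mathcal{T}^+_{\vec C}$ or $\mathcal{T}^-_{\vec C}$ and so cannot contain cells from both.

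\emph{Bijectivity.} Injectivity is immediate, since $\T$ is recovered as the set of simplices $\operatorname{conv}\{P_j:j\in J\}$ read off from the fourientations in the atlas. Surjectivity onto triangulating atlases holds by the definition of ``triangulating''. If one instead wants an intrinsic combinatorial characterization, I would translate the face-intersection condition between pairs of simplices into a condition on pairs of internally oriented bases, using the same circuit argument as above in reverse.
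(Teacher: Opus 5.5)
The paper gives no proof of this statement; it is quoted from \cite[Theorem 1.30]{ding2023framework}, so your proposal has to stand on its own. Several parts of it do. Identifying each maximal simplex with an internally oriented basis via Lemma~4.11(3) is correct, and unimodularity is not needed for that step. Injectivity is immediate. Your no-repetition argument is complete as written. The case you flag as delicate is not delicate: $C^\ast(B,e)\cap B=\{e\}$, so every other element of the lifted circuit is bioriented in both $\chi^\ast(\tau)$ and $\chi^\ast(\tau')$. Hence $Z^+\subseteq\tau$ and $Z^-\subseteq\tau'$ however many other elements of $B$ are oriented differently, and proper intersection fails.

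The genuine gap is the existence half. You reduce it to $\mathrm{Vol}(\mathcal{P}^\ast)=|\mathcal{B}(\M)|$ (in unit-simplex units), which is true, but you do not prove it. ``Reading it off a pulling triangulation'' is circular: to count the simplices of any particular triangulation you must already know that it uses every basis, which is the statement being proved. Your circuit argument only gives the inequality $\le$.

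One fix is to slice. Project $\mathcal{P}^\ast$ to its last $n$ coordinates; the image is the standard simplex. Write $a^\ast_i$ for the columns of the unimodular matrix representing $\M^\ast$. The fiber over an interior point $\lambda$ is the zonotope $\sum_i[0,\lambda_i a^\ast_i]$. A simplex $\tau$ with underlying basis $B$ meets this fiber in a translate of $\sum_{i\notin B}[0,\lambda_i a^\ast_i]$, whose interior lies in the interior of $\tau$. Comparing volumes with the zonotope volume formula gives the identity of homogeneous polynomials $\sum_{\tau\in\T}\prod_{i\notin B(\tau)}\lambda_i=\sum_{B\in\mathcal{B}(\M)}\prod_{i\notin B}\lambda_i$. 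Comparing coefficients of the squarefree monomials shows that each basis occurs exactly once, which gives existence and uniqueness at the same time.

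The same missing identity also undercuts your remark on surjectivity. With the paper's gloss (triangulating means arising from a triangulation), surjectivity is tautological, as you say. For Ding's intrinsic characterization, which is the real content of the cited theorem, translating a pairwise condition on internally oriented bases gives at best pairwise proper intersection. That the $|\mathcal{B}(\M)|$ simplices actually cover $\mathcal{P}^\ast$ follows only once you know they are unimodular with total volume $\mathrm{Vol}(\mathcal{P}^\ast)$.
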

	
	For the graphic matroid $\M(G)$, $\chi^\ast$ maps triangulations of $\Lambda(\M^\ast(G))$ to internally oriented spanning trees $\vec{T^\ast}$ of $G$. We recall that if $\vec{F}$ is a fourientation, then $-(\vec{F})^c$ switches bioriented edges with unoriented ones, so for an internal orientation $\vec{B^\ast}$, $(-\vec{B^\ast})^c$ is an orientation of $B$. Using $\chi^\ast$, we constructed functions $I$ such that $|\mathcal{BD}_I(G)| = |\mathcal{ST}(G)|$.
	
	\begin{thm}[{\cite[Theorem 5.1]{crepeau}}]\label{thm:triang-to-goodIG}
		Let $\mathcal{T}$ be a triangulation of the Lawrence polytope $\Lambda(\M^\ast(G))$. Then, the function
		\begin{align*}
			I^\T:\mathcal{ST}(G) &\to \text{Div}^{-1}(G) \\
			T &\mapsto D_{-(\vec{T^\ast})^c}
		\end{align*}
		satisfies $|\mathcal{BD}_{I^\T}(G)| = |\mathcal{ST}(G)|$. 
	\end{thm}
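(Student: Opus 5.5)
The plan is to read the set $\mathcal{BD}_{I^\T}(G)$ geometrically and count it with a tiling of a graphical zonotope by cubes, obtained from $\T$ by the Cayley trick.

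\textbf{Step 1: rewrite the set.} For a spanning tree $T$ and an orientation $\Or_{\mathcal{E}_T}$ of the non-tree edges, $\mathcal{D}(T,\Or_{\mathcal{E}_T})$ drops one chip at the head of each non-tree arc. The divisor $I^\T(T)=D_{-(\vec{T^\ast})^c}$ is $\text{indegree}-1$ for the orientation $-(\vec{T^\ast})^c$ of $T$. So
\[
\mathcal{D}(T,\Or_{\mathcal{E}_T})+I^\T(T)=D_{\Or},
\]
where $\Or$ is the full orientation that equals $-(\vec{T^\ast})^c$ on $T$ and $\Or_{\mathcal{E}_T}$ off $T$. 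Hence $\mathcal{BD}_{I^\T}(G)$ is the set of divisors $D_\Or$ of orientations $\Or$ compatible with the fourientation $\chi^\ast(\tau_T)$ attached by Theorem \ref{thm: triang-to-atlas} to the simplex $\tau_T$ indexed by $T$, after the reversal $-(\cdot)^c$. There are $2^{g(G)}$ such orientations for each tree, so the content of the theorem is that the overlaps are exactly right.

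\textbf{Step 2: geometric model.} This is the approach of An--Baker--Kuperberg--Shokrieh for break divisors. The divisors $\sum_{e}(\text{head})$ over all orientations are lattice points of the graphical zonotope $Z_G=\sum_{e=uv}[u,v]$, shifted by $-\mathbf{1}$. For a tree $T$ with fixed orientation on $T$, the set of $D_\Or$ over all free non-tree orientations is the vertex set of the parallelepiped
\[
\Pi_T = D_{\Or|_T}-\mathbf{1} + \sum_{e\notin T}[u_e,v_e],
\]
a $g$-dimensional cube in the affine span of the degree $g-1$ divisors.

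By the Cayley trick, a triangulation $\T$ of the Lawrence polytope $\Lambda(\M^\ast(G))$ corresponds to a fine zonotopal tiling of the zonotope of $\M^\ast(G)$, i.e.\ of $Z_G$ projected to its $g$-dimensional image. Each tile is the cube $\Pi_T$ for the tree $T=B$ whose simplex lies in $\T$. The translation vector of each tile is read off from which of $P_i$ or $P_{-i}$ the simplex $\tau$ uses for $e_i\in T$, and this matches $-(\vec{T^\ast})^c$. I would verify this matching carefully using the description of bases of $\Lambda(\M)$ in Lemma 4.11 and the explicit map $\chi^\ast$.

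\textbf{Step 3: counting.} Once we know the $\Pi_T$ tile the zonotope, choose a generic shift vector $\epsilon$ and pass to half-open cubes, as in ABKS. Each half-open cube contains exactly one lattice point. The lattice points of the closed tiling are then the lattice points of the zonotope, which are in bijection with the tiles; that count is $|\mathcal{ST}(G)|$ by the volume formula. It remains to check that the union of vertex sets of the closed $\Pi_T$ equals the set of lattice points of the zonotope. Every cube vertex is a lattice point, and every lattice point lies in some closed tile. Total unimodularity should then force such a point to be a vertex of that tile, and hence to equal some $D_\Or$ from Step 1. This yields $|\mathcal{BD}_{I^\T}(G)| = |\mathcal{ST}(G)|$.

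\textbf{Main obstacle.} The hard step is Step 2: showing that the tiles coming from $\T$ really fit together as a zonotopal tiling, with the translation vectors matching exactly the orientations $-(\vec{T^\ast})^c$. Any sign or complement error there breaks the count. I would first check this on $\M^\ast$ of a single cycle and of the theta graph. Then I would prove it in general by comparing facets of adjacent simplices of $\T$, which differ by a circuit flip, with facets of adjacent cubes, which share a codimension-one face.
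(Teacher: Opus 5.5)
The paper does not prove this statement; it is quoted from \cite{crepeau}. Judged on its own, your argument has two genuine gaps, and the sign matching you single out is not one of them.

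First, Step~2 puts the tiles in the wrong space. Your $\Pi_T$ lies in the $(|V(G)|-1)$-dimensional hyperplane of degree $g(G)-1$ divisors. Its edge vectors $u_e-v_e$, $e\notin T$, are linearly independent only when $E(G)\setminus E(T)$ is a forest. So in general $\Pi_T$ is not a $g$-cube, and different non-tree orientations give the same divisor. For two vertices joined by three edges, $g=2$ and $\Pi_T$ is a segment with three lattice points coming from four orientations.

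What the Cayley trick actually gives is a fine tiling of the cographic zonotope $Z^\ast=\sum_{e}[0,a_e]\subset\mathbb{R}^{g}$, where the $a_e$ are the columns of $U^\ast$. The simplex $\tau_T$ becomes $U^\ast(Q_T)$, where $Q_T$ is the $g$-dimensional face of $[0,1]^{E}$ whose tree coordinates are fixed by $\chi^\ast(\tau_T)$ (with $P_i\mapsto 1$, $P_{-i}\mapsto 0$). Write $x_\Or\in\{0,1\}^E$ for the indicator of the arcs of $\Or$ that agree with the reference orientation. Since $-(\vec{T^\ast})^c$ agrees with $\vec{T^\ast}$ on $T$, the vertices of $Q_T$ are exactly the $x_\Or$ for your Step~1 orientations, so the sign bookkeeping is automatic. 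However, $Z^\ast$ is not a projection of $Z_G$. Both are images of $[0,1]^E$, but $U^\ast x_\Or=U^\ast x_{\Or'}$ means $x_\Or-x_{\Or'}$ lies in the cut space, whereas $D_\Or=D_{\Or'}$ means it lies in the cycle space.

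Second, the count in Step~3 is false. Half-open tiles partition a half-open zonotope, and the closed zonotope has more lattice points than tiles: one per connected spanning subgraph for $Z^\ast$, and one per spanning forest for $Z_G$. Take $G=C_n$ with the cyclic reference orientation. Then $Z^\ast=[0,n]$ is cut into $n$ unit tiles and the closed tiling carries $n+1$ orientations. The equality $|\mathcal{BD}_{I^\T}(G)|=n$ holds only because the orientations over $0$ and $n$ are the two cyclic ones, which have the same divisor.

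In general $|\mathcal{ST}(G)|$ enters as $|\mathbb{Z}^g/K|=|\text{Jac}(G)|$ for the cycle lattice $K=U^\ast(H_1(G,\mathbb{Z}))$. This is why An--Baker--Kuperberg--Shokrieh tile the tropical Jacobian, a torus, rather than a zonotope of divisors. To complete your route you need two facts:
(i) every coset of $K$ meets $Z^\ast$ (for example, the $K$-translates of $Z^\ast$ tile $\mathbb{R}^g$);
(ii) whenever two orientations over the closed tiling satisfy $U^\ast x_\Or\equiv U^\ast x_{\Or'}\pmod K$, in fact $x_\Or-x_{\Or'}\in H_1(G,\mathbb{Z})$, rather than an integral cycle plus a nonzero integral cut. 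Equivalently, no two distinct elements of $\mathcal{BD}_{I^\T}(G)$ are linearly equivalent.
Together these make $D_\Or\mapsto U^\ast x_\Or \bmod K$ a bijection $\mathcal{BD}_{I^\T}(G)\to\mathbb{Z}^g/K$. Neither fact appears in the proposal. Point (ii) is the substantive one: it requires understanding how the tiling of $Z^\ast$ matches up across the faces $Z^\ast\cap(Z^\ast+k)$, $k\in K$.
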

	
	By Theorem \ref{thm:stability-forest-fn-bij}, $I^\T$ corresponds to a nondegenerate stability condition $\mathfrak{n}^\T$ of degree $g(G)-1$. Another motivating question was to understand which stability conditions are induced by a generic numerical polarization. 
	
	\begin{thm}[{\cite[Theorem 5.10]{crepeau}}] \label{thm:acyclic-classical}
		If $\mathcal{T}$ is a regular triangulation of the Lawrence polytope $\Lambda(\M^\ast(G))$, then the stability condition $\mathfrak{n}^\mathcal{T}$ corresponding to the function $I^\T$ is classical.
	\end{thm}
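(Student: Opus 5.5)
The plan is to produce an explicit generic numerical polarization $\phi$ of degree $g(G)-1$ with $\mathfrak{n}(\phi)=\mathfrak{n}^\T$. I would build $\phi$ from the height function defining $\T$, after first reducing the problem to a sign condition.

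\emph{Reduction to signs.} Since $\mathfrak{n}^\T$ is nondegenerate of degree $g(G)-1$, we have $\mathfrak{n}^\T_W+\mathfrak{n}^\T_{W^c}+|E(W,W^c)|=g(G)$. Using $g(G[W])+g(G[W^c])=g(G)-|E(W,W^c)|+1$ together with Corollary \ref{cor:stab-values-from-orientable-I} (with $\mathcal{D}=\emptyset$; Theorem \ref{thm:triang-to-goodIG} shows $I^\T$ is orientable), each $\mathfrak{n}^\T_W$ lies in $\{g(G[W]),\,g(G[W])-1\}$. Exactly one of $W,W^c$ takes the value $g$ and the other takes $g-1$. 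Now write $\phi(v)=\tfrac{\deg(v)}{2}-1+x(v)$ with $x\in\mathbb{R}^{V(G)}$ and $x_{V(G)}=0$, so that $\deg\phi=g(G)-1$. A direct count gives
\[\phi_W-\tfrac{|E(W,W^c)|}{2}=g(G[W])-1+x_W .\]
Hence $\mathfrak{n}(\phi)=\mathfrak{n}^\T$ and $\phi$ is generic as soon as the following hold for every $W\in\text{BCon}(G)$:
\begin{itemize}
\item $0<|x_W|<1$;
\item $x_W>0$ exactly when $\mathfrak{n}^\T_W=g(G[W])$, that is, when the tree edge crossing $E(W,W^c)$ in an adapted forest is oriented towards $W$ in $-(\vec{T^\ast})^c$.
\end{itemize}
The condition $|x_W|<1$ is harmless, since it can always be achieved by rescaling $x$.

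\emph{Translating to the localization.} Next I would identify this orientation data with the localization $\sigma^\ast_\T$ of the extension of $\M(G)$ attached to $\T$ by Theorem \ref{thm::subdivisions<->extensions} and Lemma \ref{lem:lawrence_polytopes_lifting_subdivisions}. The claim to establish is
\[\mathfrak{n}^\T_W=g(G[W])\iff \sigma^\ast_\T(\vec{C}^\ast_W)=+\quad(\text{for a fixed sign convention}).\]
To check it, take a simplex of $\T$ corresponding via $\chi^\ast$ to the internally oriented tree $\vec{T^\ast}$, where $T$ meets $E(W,W^c)$ in one edge $e$. The restriction of $\T$ to the circuit face of $\Lambda(\M^\ast)$ determined by the cocircuit $C^\ast_W$ of $\M(G)$ is one of $\T^\pm$. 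Which one it is records the direction of $e$ in $\vec{T^\ast}$, and hence in $-(\vec{T^\ast})^c$.

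\emph{Using regularity.} Since $\T$ is regular with heights $\omega(P_{\pm i})$, the associated lifting of $\Lambda(\M^\ast)$ is realizable. Dually, the extension of $\M(G)$ is realized by adjoining one vector $p$ to the (reduced) incidence matrix. In the coordinates of the cut space, $p$ gives a vector $x\in\mathbb{R}^V$ with $x_V=0$ whose pairing with the signed cocircuit $\vec{C}^\ast_W$ is $x_W$. Therefore $\sigma^\ast_\T(\vec{C}^\ast_W)=\operatorname{sign}(x_W)$. Since $\T$ is a triangulation, every circuit restriction is $\T^\pm$ rather than trivial, so the extension is generic and $x_W\neq0$ for all $W$. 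Rescaling $x$ gives $|x_W|<1$, and the reduction above finishes the proof.

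\emph{Main obstacle.} The main obstacle is this third step. One must make precise that the height function $\omega$ yields the realizing vector $p$, namely $x_v$ equal to a signed combination of $\omega(P_i)-\omega(P_{-i})$ pushed forward along the incidence map, and one must pin down the sign conventions so they match the second step. I would first try to compute directly the sign of the lifted circuit $(C_+\cup\overline{C_-},\,C_-\cup\overline{C_+})$ of $\Lambda(\M^\ast)$ from the heights. Here $C$ is a circuit of $\M^\ast$, that is, a cocycle $\vec{C}^\ast_W$. The lower-hull criterion says the sign is that of $\sum_{e\in C_+}(\omega(P_e)-\omega(P_{-e}))-\sum_{e\in C_-}(\omega(P_e)-\omega(P_{-e}))$. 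This is exactly $x_W$ with $x=\partial(\omega_+-\omega_-)$.
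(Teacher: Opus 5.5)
The paper does not prove this statement; it quotes it from \cite[Theorem 5.10]{crepeau}, so there is no in-paper argument to match. Your proposal is a correct, self-contained proof.

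\textbf{The reduction.} With $\phi=\tfrac{\deg}{2}-1+x$ and $x_{V(G)}=0$ you get $\deg\phi=g(G)-1$ and $\phi_W-\tfrac{|E(W,W^c)|}{2}=g(G[W])-1+x_W$. Lemma \ref{lem:forest-fn-to-n}, applied to $I^\T$, gives $\mathfrak{n}^\T_W=g(G[W])-1+\delta_W$. Here $\delta_W\in\{0,1\}$ records whether the unique edge $e$ of $T$ in $E(W,W^c)$ points into $W$ in $-(\vec{T^\ast})^c$.

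\textbf{The ``main obstacle''.} Your last paragraph already settles it.
\begin{enumerate}
\item The Lawrence circuit attached to $\vec{C}^\ast_W$ has dependence $(u,-u)$, where $u$ is the signed cut vector; this uses that $\ker U^\ast$ is the cut space. So the lower-hull test on the face $F_{\vec{C}^\ast_W}$ is the sign of $\langle u,\omega_+-\omega_-\rangle=x_W$.
\item A simplex $\tau$ whose tree meets $E(W,W^c)$ only in $e$ has, as its vertices on that face, all arcs of $E(W,W^c)$ except one arc of $e$. So which of $\T^{\pm}_{\vec{C}^\ast_W}$ occurs is read off from the direction of $e$.
\item This dictionary is the same for every $W$. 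Any convention mismatch is therefore a single global sign, fixed by replacing $x$ with $-x$.
\end{enumerate}

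\textbf{Two routine facts to state explicitly.} First, the restriction of the regular subdivision given by $\omega$ to a face $F$ is the regular subdivision of $F$ by $\omega|_F$. Hence a triangulation forces $x_W\neq 0$, and rescaling $\omega$ does not change $\T$. Second, every $W\in\text{BCon}(G)$ admits such a $\tau$, since each maximal cell of the restricted triangulation has the form $\tau\cap F$.

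\textbf{What your route buys.} Compared with how the paper uses this result, your construction makes the polarization explicit: $\phi=\tfrac{\deg}{2}-1\pm\varepsilon\,\partial(\omega_+-\omega_-)$ for small $\varepsilon>0$. It also extends verbatim to a regular subdivision $\Su$ with heights $\omega$.
\begin{enumerate}
\item Then $x_W=0$ exactly for $W\in\mathcal{D}_\Su$. Since $\lceil 0\rceil=0$, this gives $\mathfrak{n}(\phi)_W=g(G[W])-1$. That is the value of $\mathfrak{n}^\Su_W$ on $\mathcal{D}_\Su$, by Corollaries \ref{cor:I-from-n} and \ref{cor:stab-values-from-orientable-I}.
\item Off $\mathcal{D}_\Su$, $\mathfrak{n}^\Su_W=\mathfrak{n}^\T_W$ for any refining triangulation $\T$. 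Such a $\T$ has the same restriction $\T^{\pm}$ on those circuit faces, so the same sign of $x_W$ decides the value.
\end{enumerate}
This gives Corollary \ref{cor:reg->clasical} in one step, replacing the paper's induction that pushes $\phi^\T$ onto walls of its chamber.

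If, as the label suggests, the cited proof goes through weight-induced (``acyclic'') cocircuit signatures, then your identity $\langle u,\omega_+-\omega_-\rangle=x_W$ is a direct derivation of that weight, namely $\omega_+-\omega_-$, from the height function.
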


	\section{Subdivisions and stability conditions}
	
	\subsection{Single-element extensions of matroids and degeneracy sets}
	
	For a degeneracy set, it can be useful to consider the cocycles of $G$ corresponding the the biconnected subsets of vertices in $\mathcal{D}$. We define $C^\ast(\mathcal{D}) \coloneqq \{E(W,W^c):W\in \mathcal{D}\}$. 
	
	The following lemmas are known (see, e.g., \cite[Section 2]{slilaty}), but we state them in the language of this paper for reference.
	\begin{lem}\label{lem:modular-pairs}
		Let $W_1$ and $W_2$ be biconnected subsets of $V(G)$ such that $W_1 \cap W_2 = \emptyset$, and either $W_1 \cup W_2$ is biconnected and nontrivial, or $(W_1 \cup W_2)^c$ is connected and nontrivial. Then, the cocycles $C_1^\ast = E(W_1,W_1^c)$ and $C_2^\ast =E(W_2, W_2^c)$ are a modular pair of cocircuits of $M(G)$.
	\end{lem}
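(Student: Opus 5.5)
Recall that for a modular pair of cocircuits in $M^\ast$, the rank function of $M^\ast(G)$ satisfies $r^\ast(A) = |A| - r(M) + r(E\backslash A)$, where $r$ is the graphic rank. So $C_1^\ast, C_2^\ast$ form a modular pair of $M^\ast(G)$ if and only if
\[ r^\ast(C_1^\ast) + r^\ast(C_2^\ast) = r^\ast(C_1^\ast \cup C_2^\ast) + r^\ast(C_1^\ast \cap C_2^\ast).\]
The plan is to translate each term into graph quantities. For an edge set $A$, the graphic rank satisfies $r(E\backslash A) = |V(G)| - c(G\backslash A)$, where $c$ denotes the number of connected components. Since $|C_1^\ast| + |C_2^\ast| = |C_1^\ast\cup C_2^\ast| + |C_1^\ast \cap C_2^\ast|$, the cardinality terms cancel, as do the constants $r(M)$ and $|V(G)|$. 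Modularity therefore reduces to the component-count identity
\[ c(G\backslash C_1^\ast) + c(G\backslash C_2^\ast) = c(G\backslash (C_1^\ast\cup C_2^\ast)) + c(G\backslash (C_1^\ast\cap C_2^\ast)).\]

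Next we compute each term using the hypotheses. Each $C_i^\ast$ is a bond, and $W_i$ is biconnected, so $c(G\backslash C_i^\ast)=2$ and the left side is $4$.

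Since $W_1\cap W_2=\emptyset$, the intersection is $C_1^\ast\cap C_2^\ast = E(W_1,W_2)$. Deleting $C_1^\ast \cup C_2^\ast$ removes all edges touching $W_1$ or $W_2$ except those inside them. The remaining graph is therefore $G[W_1]\sqcup G[W_2]\sqcup G[(W_1\cup W_2)^c]$. Each $G[W_i]$ is connected. In either hypothesis case, $(W_1\cup W_2)^c$ is nonempty and $G[(W_1\cup W_2)^c]$ is connected (biconnectivity of the union gives this in the first case, and it is assumed in the second). This gives exactly $3$ components.

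The main point is to show $c(G\backslash E(W_1,W_2)) = 1$. Write $U=(W_1\cup W_2)^c$; it is nonempty and $G[U]$ is connected. After deleting $E(W_1,W_2)$, the three connected pieces $G[W_1]$, $G[W_2]$ and $G[U]$ survive, so connectivity is equivalent to having edges from $U$ to each $W_i$. Suppose $E(W_1,U)=\emptyset$. Then every edge of $E(W_1,W_1^c)$ lies in $E(W_1,W_2)$. But $G[W_1^c]=G[W_2\cup U]$ is connected, and $W_2$ and $U$ are both nonempty, so $E(W_2,U)\neq\emptyset$. Consequently $G[W_2]$ together with $G[U]$, plus $W_1$ attached to $W_2$, is all of $G$. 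This does not immediately contradict anything, so instead use that $W_2$ is biconnected: $G[W_2^c]=G[W_1\cup U]$ must be connected. With $E(W_1,U)=\emptyset$ and both $W_1$ and $U$ nonempty, this is impossible. Hence $E(W_1,U)\neq\emptyset$, and symmetrically $E(W_2,U)\neq\emptyset$, so the deleted graph is connected.

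The right side is then $3+1=4$, matching the left side, which proves modularity. The delicate step is the last one. Although $E(W_1,W_2)$ is a subset of a bond, it is not itself a bond, so one must use the biconnectivity of the complements $W_i^c$ to guarantee connectivity after deletion. One must also check that $U$ is nonempty in both cases, which follows from the nontriviality assumption.
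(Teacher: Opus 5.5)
Your proof is correct and follows the paper's argument. Both compute the four cographic ranks via $r^\ast(S)=r(E\backslash S)+|S|-r(M(G))$, using that $G\backslash C_i^\ast$ has two components, $G\backslash(C_1^\ast\cup C_2^\ast)$ has three, and $G\backslash E(W_1,W_2)$ is connected; you simply phrase this as a component-count identity. Your use of the biconnectivity of $W_1$ and $W_2$ to produce edges from $(W_1\cup W_2)^c$ to each $W_i$ also makes explicit a step the paper only asserts from the connectivity of $(W_1\cup W_2)^c$.
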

	\begin{proof}
		Let $r^\ast$ be the rank function of cographic matroid $M^\ast(G)$; for any subset $S \subseteq E$, we have $r^\ast(S) = r(E\backslash S) + |S| - r(M(G))$ (see, e.g., \cite[Proposition 2.1.9]{oxley}). We recall that $C_1^\ast$ and $C_2^\ast$ are a modular pair of cocircuits of $M(G)$ if they are a modular pair of circuits for $M^\ast(G)$. By duality, $C^\ast_1$ and $C_2^\ast$ are circuits of $M^\ast(G)$, so $r^\ast(C_1^\ast) = |C_1^\ast| - 1$ and $r^\ast(C_2^\ast) = |C_2^\ast|-1$. The intersection $C_1^\ast \cap C_2^\ast$ is the set of edges $E(W_1,W_2)$. Since $(W_1 \cup W_2)^c$ is connected, the subgraph $G \backslash (E(W_1, W_2))$ is connected, and $r(E\backslash E(W_1, W_2)) = |V(G)|-1$. Therefore, $r^\ast(C_1^\ast \cap C_2^\ast) = |E(W_1, W_2)|$. 
		
		Because the subsets $W_1$ and $W_2$ are disjoint and biconnected, $G\backslash (C_1^\ast \cup C_2^\ast)$ has $3$ connected components. Therefore, $r^\ast(C_1^\ast \cup C_2^\ast) = |C_1^\ast| + |C_2^\ast| - |E(W_1, W_2)| - 2$.
		
		We then see $r^\ast(C_1^\ast) + r^\ast(C_2^\ast) = r^\ast(C_1^\ast \cap C_2^\ast) + r^\ast(C_1^\ast \cup C_2^\ast)$, so $C_1^\ast$ and $C_2^\ast$ is a modular pair of cocircuits of $M(G)$. 
	\end{proof}
	
	\begin{lem}\label{lem:rank2minorexists}
		Consider biconnected, disjoint subsets $W_1$ and $W_2$ of $V(G)$ such that $W_1 \cup W_2$ is a biconnected, nontrivial subset of $V(G)$. Then, there exists a rank $2$ minor of $\M(G)$, which we denote $\mathcal{N}$, such that $\mathcal{C}^\ast(\mathcal{N}) = \{E(W_1, W_1^c), E(W_2, W_2^c), E(W_1 \cup W_2, (W_1 \cup W_2)^c)\}$. 
	\end{lem}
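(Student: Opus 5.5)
The plan is to build $\mathcal{N}$ as an explicit minor of $G$: a graph whose only bonds are the three cocycles named in the statement. Since a contraction of a graphic oriented matroid is again graphic, with signed cocircuits given by the signed bonds of the contracted graph, it is enough to find a minor of the graph. Set $W_3 \coloneqq (W_1\cup W_2)^c$. By hypothesis $W_1\cup W_2$ is biconnected and nontrivial, so $W_3$ is nonempty and $G[W_3]$ is connected. Also $W_1,W_2$ are biconnected, so $G[W_1]$ and $G[W_2]$ are connected. Thus $V(G)=W_1\sqcup W_2\sqcup W_3$ is a partition into three parts, each inducing a connected subgraph.

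First, contract all edges of $E(G[W_1])\cup E(G[W_2])\cup E(G[W_3])$; concretely, contract a spanning tree of each $G[W_i]$ and then also contract the remaining internal edges. Contracting these remaining edges would create loops in the graph. In matroid terms, contracting elements that become loops is the same as deleting them, so we should phrase this step as: contract a spanning tree of each $G[W_i]$, then delete the resulting loops. The result is a graph $H$ on three vertices $w_1,w_2,w_3$, with edge set $E(W_1,W_2)\cup E(W_1,W_3)\cup E(W_2,W_3)$. Each $E(W_i,W_j)$ becomes a parallel class between $w_i$ and $w_j$. The matroid $\mathcal{N}=\M(H)$ is then a minor of $\M(G)$, of rank $|V(H)|-1=2$.

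Next, identify the cocircuits of $H$. The nontrivial subsets of $\{w_1,w_2,w_3\}$ up to complement are $\{w_1\}$, $\{w_2\}$ and $\{w_3\}$. For each of these, both sides induce connected subgraphs provided the relevant edge classes are nonempty. So the bonds of $H$ are among $E(w_i,\{w_i\}^c)$, which are exactly $E(W_1,W_1^c)$, $E(W_2,W_2^c)$ and $E(W_3,W_3^c)=E(W_1\cup W_2,(W_1\cup W_2)^c)$, each with its two orientations. The signs are inherited because the orientation of each surviving edge is unchanged.

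The main obstacle is nondegeneracy. Each of the three sets must be a genuine bond of $H$, i.e. $H$ must have rank $2$, which requires $H$ to be connected with each cut minimal. The set $E(W_1,W_2)$ is nonempty because $W_1\cup W_2$ induces a connected subgraph and $W_1,W_2$ are disjoint and nonempty. Connectivity of $G$ gives $E(W_3,W_1\cup W_2)\neq\emptyset$. Minimality of $E(w_1,\{w_1\}^c)$ needs $H-w_1$ connected, which holds because $G[W_1^c]$ is connected. The same argument works for $w_2$ using the biconnectivity of $W_2$, and for $w_3$ using the connectivity of $G[W_1\cup W_2]$. One should also check that $E(W_1,W_3)$ or $E(W_2,W_3)$ may be empty. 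If one of them is empty, then $H$ is a path, and some cut coincides with another. For example, if $E(W_1,W_3)=\emptyset$, then $E(W_1,W_1^c)=E(W_1,W_2)$, and both remaining cuts are still distinct bonds. The statement's set of three cocircuits still holds as a set listing, possibly with two entries of different support, so this case needs care but no new idea. Finally, we conclude that $\mathcal{C}^\ast(\mathcal{N})$ is as claimed.
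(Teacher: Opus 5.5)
Your proposal is correct and follows the paper's proof: contract the edges inside $W_1$, $W_2$ and $(W_1\cup W_2)^c$ to get a three-vertex graph whose bonds are exactly the three vertex stars, and your minimality checks via biconnectivity are in fact more careful than the paper's, which asserts the bonds of $G/E'$ without checking them. Delete your last paragraph, though. The case $E(W_1,W_3)=\emptyset$ (or $E(W_2,W_3)=\emptyset$) cannot occur, because $G[W_2^c]$ (resp.\ $G[W_1^c]$) is connected, which is exactly your own minimality check for $w_2$ (resp.\ $w_1$). Moreover, if that case did occur, the claim would fail rather than ``hold as a set listing'', since the star at the middle vertex of a path is not a bond.
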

	\begin{proof}
		Let $E' = E(G[W_1]) \sqcup E(G[W_2]) \sqcup E(G[(W_1 \cup W_2)^c])$. Each of these sets is disjoint because $W_1$, $W_2$, and $(W_1 \cup W_2)^c$ are all disjoint from each other. The contraction $G/E'$ is a three vertex graph with edges $E(G/E') = E(W_1, W_1^c) \cup E(W_2, W_2^c)$; therefore, $\M(G)/E'$ is a rank $2$ minor of $\M(G)$. Identify the vertices of $G/E'$ as $W_1$, $W_2$, and $(W_1 \cup W_2)^c$. Then, the cocycles of $G/E'$ are exactly $E(W_1, W_1^c), E(W_2, W_2^c)$, and $ E(W_1 \cup W_2, (W_1 \cup W_2)^c)$.
	\end{proof}
	We recall from Subsection \ref{subsec:extension} that single-element extensions of unoriented matroids are described by functions, called signatures, on the cocircuits of the matroid. The requirements for a set $\mathcal{D}$ to be a degeneracy set of a graph $G$ can be translated to the requirements for a signature $\sigma^\ast$ to be a signature of a single-element extension of $M(G)$. 
	\begin{lem}\label{lem:degsets-subclasses}
		$\mathcal{D}$ is a degeneracy set of a graph $G$ if and only if there exists a single-element extension of $M$ by an element $p$ such that $\{C^\ast \in C^\ast(M):\sigma^\ast(C^\ast)=0\} = C^\ast(\mathcal{D})$. 
	\end{lem}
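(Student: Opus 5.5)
We will use the equivalence (1)$\Leftrightarrow$(3) of Theorem \ref{thm:sigs-unoriented extensions}. A subset $\mathcal{L}\subseteq C^\ast(M)$ is the zero set of the signature of a single-element extension exactly when it is a linear subclass. That means: whenever a modular pair of cocircuits lies in $\mathcal{L}$, every cocircuit contained in their union lies in $\mathcal{L}$. The lemma therefore reduces to showing that, for $\mathcal{D}\subseteq \text{BCon}(G)$ closed under complements, condition (2) of Definition \ref{def:deg-set} is equivalent to $C^\ast(\mathcal{D})$ being a linear subclass. The map $W\mapsto E(W,W^c)$ from $\text{BCon}(G)$ to cocircuits is two-to-one, identifying $W$ with $W^c$. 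So any $\mathcal{L}$ pulls back to a complement-closed $\mathcal{D}$, and property (1) is automatic. We may thus work with complement-closed families throughout.

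\emph{Linear subclass $\Rightarrow$ degeneracy set.} Take disjoint $W_1,W_2\in\mathcal{D}$ with $W_1\cup W_2\in\text{BCon}(G)$. By Lemma \ref{lem:modular-pairs}, $E(W_1,W_1^c)$ and $E(W_2,W_2^c)$ form a modular pair. The cocircuit $E(W_1\cup W_2,(W_1\cup W_2)^c)$ is contained in their union, since every edge leaving $W_1\cup W_2$ leaves $W_1$ or $W_2$. Linearity puts it in $\mathcal{L}$, hence $W_1\cup W_2\in\mathcal{D}$.

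\emph{Degeneracy set $\Rightarrow$ linear subclass.} This is the main work. Let $C_1^\ast=E(W_1,W_1^c)$ and $C_2^\ast=E(W_2,W_2^c)$ be a modular pair in $C^\ast(\mathcal{D})$. We first characterise modular pairs of bonds, as a converse to Lemma \ref{lem:modular-pairs}. By the rank computation in that proof, $r^\ast(C_1^\ast\cup C_2^\ast)$ is governed by the number of components of $G\setminus(C_1^\ast\cup C_2^\ast)$. Modularity forces this number to be exactly $3$, and forces $G\setminus(C_1^\ast\cap C_2^\ast)$ to be connected. The three components give a partition $V=A\sqcup B\sqcup C$ into connected pieces. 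Each $C_i^\ast$ is a bond separating one piece from the other two. After replacing $W_i$ by its complement if necessary, we may take $W_1=A$ and $W_2=B$, disjoint, with $W_1\cup W_2$ or its complement connected.

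Next, any bond $C^\ast\subseteq C_1^\ast\cup C_2^\ast$ comes from a biconnected $W$ that is a union of components of $G\setminus(C_1^\ast\cup C_2^\ast)$. So $W$ is one of $A$, $B$, $C$ or a complement of one of these. It remains to show $C=(A\cup B)^c$ lies in $\mathcal{D}$ whenever $E(C,C^\ast)$ is actually a bond, that is, whenever $A\cup B$ is biconnected. In that case property (2) gives $A\cup B\in\mathcal{D}$, and complement closure gives $C\in\mathcal{D}$. If $A\cup B$ is not connected, its cut is not a bond and there is nothing to check.

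The expected obstacle is the characterisation of modular pairs. The concern is whether some modular pair has $G\setminus(C_1^\ast\cup C_2^\ast)$ with four components, as when two bonds cross. We will handle this by computing $r^\ast$ in general. Suppose the deleted edges leave $k$ components and $G\setminus(C_1^\ast\cap C_2^\ast)$ has $c$ components. Modularity then reads $(k-1)+(c-1)=2$. Since $k\ge3$ for distinct bonds, this forces $k=3$ and $c=1$. The crossing case has $k=4$ and is thus excluded. Combined with Lemma \ref{lem:rank2minorexists} (the rank $2$ contraction), this confirms that the only cocircuits in the union are the three listed ones, which completes the argument.
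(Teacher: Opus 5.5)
Your proposal is correct and follows the same route as the paper: both reduce the lemma to Crapo's criterion (Theorem \ref{thm:sigs-unoriented extensions}(3)) and match property (2) of Definition \ref{def:deg-set} with closure of $C^\ast(\mathcal{D})$ under modular pairs, using Lemma \ref{lem:modular-pairs} for the direction from extensions to degeneracy sets. Your component count $k+c=4$ supplies the converse of Lemma \ref{lem:modular-pairs}, which the paper only asserts: after complementing, a modular pair comes from disjoint $W_1,W_2$, and the only other cocircuit in the union is the cut of $W_1\cup W_2$. Your argument also treats the case where $W_1\cup W_2$ is not connected, which the paper passes over.
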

	\begin{proof}
		First, let $\mathcal{D}$ be a degeneracy set of a graph $G$. Then, we define a signature $\sigma^\ast$ such that $\sigma^\ast(C^\ast) = 0$ if and only if $C^\ast \in C^\ast(\mathcal{D})$. We then consider modular pairs $C_1^\ast, C_2^\ast$ of cocircuits such that $\sigma^\ast(C_1^\ast) = \sigma^\ast(C_2^\ast) = 0$. For such a pair, we have $C_1^\ast, C_2^\ast \in C^\ast(\mathcal{D})$. Let $W_1, W_2$ be the biconnected sets in $\mathcal{D}$ such that $C_1^\ast = E(W_1, W_1^c)$ and $C_2^\ast = E(W_2, W_2^c)$. We note that $W_1 \cap W_2 = \emptyset$ in order for $C_1^\ast, C_2^\ast$ to be a modular pair, and $W_1 \cup W_2$ is nontrivial since $C_1^\ast \neq C_2^\ast$. Then, consider $C_3^\ast \subset C_1^\ast \cup C_2^\ast$. By the minimality of cocircuits, $C_3^\ast = E(W_1 \cup W_2, (W_1\cup W_2)^c)$. Because $\mathcal{D}$ is a degeneracy set, $W_1 \cup W_2 \in \mathcal{D}$, and $\sigma^\ast(C_3^\ast) = 0$, by definition. By Theorem \ref{thm:sigs-unoriented extensions}, there exists a single-element extension of $M(G)$ such that $\sigma^\ast$ is the signature of that extension.
		
		Now, consider some extension $M'$ of $M(G)$ by an element $p$, with signature $\sigma^\ast$. Let $\mathcal{D}(M') = \{W, W^c:\sigma^\ast(E(W,W^c)) = 0\}.$ The first condition of Definition \ref{def:deg-set} is immediately satisfied, and by Theorem \ref{thm:sigs-unoriented extensions}(3), the second condition is satisfied. Therefore $\mathcal{D}(M')$ is a degeneracy set.
	\end{proof}
	
	\begin{corollary}
		If $\mathcal{D}$ is a degeneracy set for a graph $G$, then $C^\ast(\mathcal{D})$ is a linear subclass of cocircuits. 
	\end{corollary}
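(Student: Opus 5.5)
This corollary follows directly from Lemma \ref{lem:degsets-subclasses} together with the definition of a linear subclass of cocircuits given after Theorem \ref{thm:sigs-unoriented extensions}. Let $\mathcal{D}$ be a degeneracy set of $G$. The forward direction of Lemma \ref{lem:degsets-subclasses} produces a single-element extension $M'$ of $M(G)$ by an element $p$. Its signature $\sigma^\ast$ satisfies $\{C^\ast \in C^\ast(M(G)) : \sigma^\ast(C^\ast) = 0\} = C^\ast(\mathcal{D})$. In the paper's terminology, the zero set of the signature of a single-element extension is by definition a linear subclass of cocircuits. Hence $C^\ast(\mathcal{D})$ is a linear subclass.

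For a more self-contained argument, I would verify condition (3) of Theorem \ref{thm:sigs-unoriented extensions} directly for the set $C^\ast(\mathcal{D})$. That condition says: whenever two cocircuits of a modular pair both lie in $C^\ast(\mathcal{D})$, every other cocircuit contained in their union also lies in $C^\ast(\mathcal{D})$.

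So take $C_1^\ast = E(W_1, W_1^c)$ and $C_2^\ast = E(W_2, W_2^c)$ in $C^\ast(\mathcal{D})$ forming a modular pair. The cocycle $E(W,W^c)$ is the same as $E(W^c,W)$, and $\mathcal{D}$ is closed under complements. So I may replace $W_1$ or $W_2$ by its complement as needed to arrange $W_1 \cap W_2 = \emptyset$, as in the proof of Lemma \ref{lem:degsets-subclasses}. Then I would identify the remaining cocircuit in $C_1^\ast \cup C_2^\ast$ as $E(W_1 \cup W_2, (W_1 \cup W_2)^c)$. The rank-two contraction of Lemma \ref{lem:rank2minorexists} shows that these three cocycles are the only ones involved. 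Property (2) of Definition \ref{def:deg-set} then puts $W_1 \cup W_2$ in $\mathcal{D}$, so this third cocircuit lies in $C^\ast(\mathcal{D})$.

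The only delicate point is the normalization to disjoint $W_1, W_2$ whose union is biconnected. That step is handled exactly as in the proof of Lemma \ref{lem:degsets-subclasses}, so I expect no real obstacle.
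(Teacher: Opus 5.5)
Your first paragraph is exactly the paper's implicit argument: the corollary is immediate from the forward direction of Lemma~\ref{lem:degsets-subclasses} together with the definition of a linear subclass as the zero set of the signature of a single-element extension. Your optional direct check reproduces that lemma's proof. The subtlety you flagged is harmless: if $E(W_1,W_2)=\emptyset$ after normalizing to disjoint sets, the only cocircuits contained in $C_1^\ast\cup C_2^\ast$ are $C_1^\ast$ and $C_2^\ast$, so condition (3) holds vacuously there.
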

	
	A graphic matroid $M(G)$ may have single-element extensions $M'$ that are not orientable matroids, such as in Example \ref{ex:not-all-extensions-orientable}. We say a degeneracy set $\mathcal{D}$ is \textit{orientable} if the single-element extension corresponding to $\mathcal{D}$ is an orientable matroid.

	Recall that the set of all degeneracy sets of a graph $G$ is denoted by $\text{Deg}(G)$. While $\text{Deg(G)}$ and the set of linear subclasses of cocircuits of $M(G)$ are the same, the poset structure on $\text{Deg(G)}$ is stronger than inclusion. We let $\Or\text{Deg}(G)$ denote the set of all orientable degeneracy sets with the poset structure defined in Definition \ref{def:poset-relation-deg-set}. By definition, every orientable degeneracy set corresponds (non-uniquely) to a single-element extension of $\M(G)$, and therefore corresponds (non-uniquely) to a subdivision of $\Lambda(\M^\ast(G))$. For any subdivision $\Su$ of $\Lambda(\M^\ast(G))$, we then describe the degeneracy set $\mathcal{D}_\Su$. 
	\begin{proposition}
		Every subdivision $\Su$ of $\Lambda(\M^\ast(G))$ corresponds to some orientable degeneracy set $\mathcal{D}_S \subseteq \text{BCon}(G)$. 
	\end{proposition}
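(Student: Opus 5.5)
The plan is to compose the known bijections and then forget orientations. Start from a subdivision $\Su$ of $\Lambda(\M^\ast(G))$. Apply Theorem \ref{thm::subdivisions<->extensions} with $\M = \M^\ast(G)$, using $(\M^\ast(G))^\ast = \M(G)$. This gives a single-element extension $\M(G)\cup\{p\}$ of the oriented graphic matroid, and we call its localization $\sigma^\ast_\Su:\mathcal{C}^\ast(\M(G))\to\{+,-,0\}$. If one wants it explicitly, $\sigma^\ast_\Su$ comes from Lemma \ref{lem:lawrence_polytopes_lifting_subdivisions}(3) applied to the restrictions $\Su_F$ to the circuit faces of $\Lambda(\M^\ast(G))$. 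These circuits are indexed by the circuits of $\M^\ast(G)$, that is, by the signed cocircuits of $\M(G)$.

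Next, define $\mathcal{D}_\Su = \{W\in \text{BCon}(G) : \sigma^\ast_\Su(\vec{C}^\ast_W) = 0\}$. Here $\vec{C}^\ast_W$ is the cocycle $E(W,W^c)$ oriented towards $W$. Because $\sigma^\ast_\Su(-\vec{C}^\ast) = -\sigma^\ast_\Su(\vec{C}^\ast)$ and $-\vec{C}^\ast_W = \vec{C}^\ast_{W^c}$, the zero condition is symmetric in $W$ and $W^c$. So condition (1) of Definition \ref{def:deg-set} holds immediately.

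The central step is to pass to the underlying unoriented extension. The underlying matroid $\underline{\M(G)\cup\{p\}}$ is a single-element extension of $M(G)$. Its unoriented signature is $|\sigma^\ast_\Su|$, obtained by replacing $\pm$ with $1$. This holds because the supports of the signed cocircuits of an oriented matroid are exactly the cocircuits of its underlying matroid. So the extended cocircuit $(\vec{C}^\ast,\sigma^\ast_\Su(\vec{C}^\ast))$ has support $C^\ast\cup\{p\}$ exactly when $\sigma^\ast_\Su\neq 0$, and the unique extension of Crapo's proposition picks that support. The zero set of this unoriented signature is therefore $C^\ast(\mathcal{D}_\Su)$. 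The reverse direction of Lemma \ref{lem:degsets-subclasses} then shows that $\mathcal{D}_\Su$ is a degeneracy set.

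Orientability holds by construction, since the extension of $M(G)$ realizing $\mathcal{D}_\Su$ is the underlying matroid of an oriented matroid. The main point needing care is the support-compatibility step, namely that forgetting signs in a localization yields the Crapo signature of the underlying extension. I expect this to follow directly from the definitions, though it must be stated carefully. A second check is whether one wants to exclude a trivial subdivision producing a trivial (coloop) extension. The paper assumes all extensions are nontrivial, and a rank-preserving extension is guaranteed by Theorem \ref{thm::subdivisions<->extensions}.
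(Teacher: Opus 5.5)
Your proof is correct, but it verifies the degeneracy-set axioms by a different route than the paper.

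The paper stays in the oriented setting. After defining $\mathcal{D}_\Su$ from the localization $\sigma^\ast_\Su$, it checks condition (2) of Definition \ref{def:deg-set} directly. It takes disjoint $W_1,W_2\in\mathcal{D}_\Su$ with biconnected union and passes to the three-cocircuit rank $2$ minor of Lemma \ref{lem:rank2minorexists}. It then observes that a nonzero value on $\vec{C}^\ast_{W_1\cup W_2}$ would produce the third excluded configuration of Theorem \ref{thm::or-ext-equivalences}.

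You instead forget signs. You identify $|\sigma^\ast_\Su|$ as the Crapo signature of $\underline{\M(G)\cup\{p\}}$, using uniqueness of the cocircuit contained in $C^\ast\cup\{p\}$. You then invoke the reverse direction of Lemma \ref{lem:degsets-subclasses}, so your verification ultimately rests on Crapo's modular-pair criterion together with Lemma \ref{lem:modular-pairs}.

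What your route buys is a genuine proof of the word ``orientable'' in the statement. It exhibits the unoriented extension with zero set $C^\ast(\mathcal{D}_\Su)$ as the underlying matroid of an oriented extension, which is exactly what the definition of an orientable degeneracy set requires. The paper's proof only checks the degeneracy-set axioms and leaves this support-compatibility identification implicit.

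What the paper's route buys is a self-contained oriented argument. Its rank-$2$ forbidden-configuration technique is the same one reused later in Theorems \ref{thm:stability-condition-through-refinement} and \ref{thm:subdiv-to-forest} and in Lemma \ref{lem:odeg-upper-lifting}.
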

	\begin{proof}
		Let $\Su$ be a subdivision of $\Lambda(\M^\ast(G))$. By Theorem \ref{thm::subdivisions<->extensions}, the subdivision $\Su$ corresponds to a single-element extension of $\M(G)$. Let $\sigma^\ast$ be the localization describing that extension of $\M(G)$, as defined in Theorem \ref{thm::or-ext-equivalences}. Let $\vec{C}^\ast_W$ denote the cocircuit $E(W, W^c)$ oriented towards $W$, so $\vec{C}^\ast_W = -\vec{C}^\ast_{W^c}$. Consider the set $\mathcal{D}_{\Su} \coloneqq \{W \in \text{BCon(G)}: \sigma^\ast(\pm\vec{C}^\ast_W) = 0\}$. 
		
		We wish to show that $\mathcal{D}_{\Su}$ is a degeneracy set of the graph $G$. First, it is trivial to see that if $W \in \mathcal{D}_{\Su}$, then $W^c \in \mathcal{D}_{\Su}$. We then check the second condition of Definition \ref{def:deg-set}. Consider biconnected subsets $W_1$ and $W_2$ in $\mathcal{D}_S$ such that $W_1 \cap W_2 = \emptyset$, and $W_1 \cup W_2$ is biconnected and nontrivial. Note that if $G$ has less than 3 vertices, there does not exist such a $W_1$ and $W_2$, and this condition is trivially satisfied. We then assume $G$ has at least three vertices. Then, for the cocircuits $C_1^\ast = E(W_1, W_1^c)$ and $C_2^\ast = E(W_2, W_2^c)$, we have $\sigma^\ast(\vec{C}_1^\ast) = \sigma^\ast(\vec{C}_2^\ast) = 0$. Let $C_3^\ast = E(W_1 \cup W_2, (W_1\cup W_2)^c)$, which is also a cocircuit of $\M(G)$. By Lemma \ref{lem:rank2minorexists}, we can consider the rank two minor $\mathcal{N} = \M(G) / E'$, where $E' = E(G[W_1]) \sqcup E(G[W_2]) \sqcup E(G[(W_1 \cup W_2)^c])$. The minor $\mathcal{N}$ can be reduced to having three edges by deletion. Since $\sigma^\ast$ is a localization, by Theorem \ref{thm::or-ext-equivalences}, it cannot produce one of the three excluded subconfigurations in Figure \ref{fig:forbiddenconfigs}. If $\sigma^\ast(\vec{C}_3^\ast) = \pm$, we have the third forbidden subconfiguration in Figure \ref{fig:forbiddenconfigs}. Therefore, we must have $\sigma^\ast(\vec{C}_3^\ast) = \sigma^\ast(-\vec{C}_3^\ast) = 0$, and $W_1 \cup W_2 \in \mathcal{D}_{\Su}$, as desired.     
	\end{proof}
	We note that $\mathcal{D}_\Su$ is not unique to the subdivision; for example, every triangulation $\T$ of $\Lambda(\M^\ast(G))$ corresponds to $\mathcal{D}_\T = \emptyset$. 
	\subsection{Subdividing atlases}
	As is done for triangulations in Theorem \ref{thm: triang-to-atlas}, we wish to show that every subdivision of the Lawrence polytope $\Lambda(\M^\ast(G))$ corresponds to a collection of internally oriented minimal $\mathcal{D}_\Su$-admissible spanning forests of $G$, which we call a \textit{subdividing atlas}. We extend the map $\chi^\ast$ defined in Theorem \ref{thm: triang-to-atlas} to a map on subdivisions of Lawrence polytopes in the natural way.
	\begin{lem} \label{lem::cell->orientation}
		Let $\Su$ be some subdivision of $\Lambda(\M^\ast(G))$. Then, for each cell $c \in \Su$, we have that $\chi^\ast(c)$ is the internal orientation of some forest $F$ of $G$.
	\end{lem}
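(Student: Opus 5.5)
The plan is to reduce to the triangulation case, Theorem \ref{thm: triang-to-atlas}, by refining $\Su$. Every subdivision of a polytope can be refined to a triangulation, so choose a triangulation $\T$ of $\Lambda(\M^\ast(G))$ that refines $\Su$. Fix a cell $c\in\Su$, and let $\T_c=\{\tau\in\T:\tau\subseteq c\}$.

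First I will check that $\T_c$ is a triangulation of $c$. Since $\T$ refines $\Su$, every simplex of $\T$ lies in some cell of $\Su$. Since $c$ is full-dimensional and cells of $\Su$ meet only in common faces, the simplices contained in $c$ cover $c$. In particular $\T_c\neq\emptyset$. Each vertex $P_j$ of $c$ is a vertex of $\Lambda(\M^\ast(G))$, so it is a vertex of every simplex of $\T_c$ containing it. Conversely, every vertex of a simplex in $\T_c$ is a point $P_j$ lying in $c$; since the points $P_j$ are in convex position, such a point is a vertex of $c$. Hence the vertex set of $c$ is the union of the vertex sets of the $\tau\in\T_c$, and
\[\chi^\ast(c)=\bigcup_{\tau\in\T_c}\chi^\ast(\tau)\]
as sets of arcs.

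By Theorem \ref{thm: triang-to-atlas}, each $\chi^\ast(\tau)$ is an internally oriented spanning tree $\vec{T}^\ast_\tau$. Its tree edges are one-way oriented and its non-tree edges are bioriented. Consequently, in the union $\chi^\ast(c)$ every edge $e$ carries at least one arc, so $e$ is either one-way oriented or bioriented; no edge is unoriented.

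Define $F$ to be the set of edges that are one-way oriented in $\chi^\ast(c)$. Suppose $e\in F$ and $\tau\in\T_c$. If $e$ were bioriented in $\chi^\ast(\tau)$, it would be bioriented in the union. Hence $e$ is one-way oriented in $\chi^\ast(\tau)$, that is, $e\in T_\tau$. So $F\subseteq\bigcap_{\tau\in\T_c}T_\tau$. Since $\T_c$ is nonempty, $F$ is contained in a spanning tree and is therefore a forest. Every edge of $F$ is one-way oriented and every edge outside $F$ is bioriented. Thus $\chi^\ast(c)$ is the internal orientation of the forest $F$, with each edge of $F$ oriented consistently across all trees $T_\tau$.

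The only step needing care is the identification of the vertex set of $c$ with the union of the vertex sets of the simplices of $\T$ inside $c$. I would justify it as above, using that the $P_j$ are in convex position and that a refinement restricts to a triangulation of each cell. The remaining steps are bookkeeping on arcs.
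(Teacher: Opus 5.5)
Your proof is correct and takes the same route as the paper: refine $\Su$ to a triangulation $\T$, write $c$ as the union of the simplices of $\T$ it contains, and take the union of the corresponding internally oriented spanning trees. You also justify two steps the paper leaves implicit. First, the vertex set of $c$ is the union of the vertex sets of the simplices in $\T_c$; this uses that cells are full-dimensional and that the $P_j$ are in convex position. Second, each one-way oriented edge of the union lies in every $T_\tau$, so these edges form a forest.
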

	\begin{proof}
		Recall that every subdivision can be refined to a triangulation of $\Lambda(\M^\ast(G))$. Say that triangulation is $\mathcal{T}$. Then, $c$ is the union of some simplices in $\mathcal{T}$, which each correspond to an internally oriented spanning tree. The union of such fourientations will be some internally oriented forest $F$ of $G$.
	\end{proof}
	
	\begin{lem}\label{lem:cells are admissible}
		Let $\Su$ be a subdivision of $\Lambda(\M^\ast(G))$. Then, for every cell $c \in \Su$, $\chi^\ast(c)$ is an internal orientation of a  $\mathcal{D}_{\Su}$-admissible spanning forest of $G$.
	\end{lem}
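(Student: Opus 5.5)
Fix a cell $c\in\Su$ and write $\chi^\ast(c)$ as the fourientation $\vec{F}$ produced by Lemma \ref{lem::cell->orientation}, where $F$ is a forest in which every edge of $F$ is one-way oriented and every edge outside $F$ is bioriented. Two things need proof: $F$ is spanning, and each connected component $F_i$ of $F$ satisfies $V(F_i)\in\hat{\mathcal{D}}_\Su$. Spanning is immediate. The cell $c$ contains a full-dimensional simplex $\tau$ of a refining triangulation $\T$, so $\chi^\ast(c)\supseteq\chi^\ast(\tau)$, and $\chi^\ast(\tau)$ is an internally oriented spanning tree by Theorem \ref{thm: triang-to-atlas}. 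Hence $F$ has all vertices of $G$.

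For admissibility, fix a component $F_i$ and decompose $G[V(F_i)^c]=\bigsqcup_j G[W_j]$ into connected components. Each $W_j$ is biconnected: $G[W_j]$ is connected by construction, and $G[W_j^c]$ is connected because it contains the connected $G[V(F_i)]$ and every other $W_k$ attaches to $V(F_i)$. It remains to show $\sigma^\ast_\Su(\pm\vec{C}^\ast_{W_j})=0$.

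Put $C^\ast=E(W_j,W_j^c)$. Every edge of $C^\ast$ has one endpoint in $W_j$ and the other in $V(F_i)$, so no edge of $C^\ast$ lies in $F$, and all edges of $C^\ast$ are bioriented in $\chi^\ast(c)$. Thus $c$ contains both vertices $P_e$ and $P_{-e}$ for every $e\in C^\ast$. In $\M^\ast(G)$ the cocycle $C^\ast$ is a signed circuit $\vec{C}$. By Lemma 4.11 of \cite{triangulations-oriented}, the corresponding circuit of $\Lambda(\M^\ast(G))$ has support $\{P_{\pm e}:e\in C^\ast\}$, which is a face $F_C$ by Lemma \ref{lem:lawrence_polytopes_lifting_subdivisions}(1). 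This support lies entirely in $c$.

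Apply Lemma \ref{lem:lawrence_polytopes_lifting_subdivisions}(2). If the restriction $\Su_{F_C}$ were $\T^{\pm}_{\vec{C}}$, each maximal cell of the restriction would omit a vertex of $F_C$. But $c\cap F_C$ is a cell of full rank in $F_C$, since a face of a cell containing the whole circuit support has the rank of $F_C$, and it contains every vertex of $F_C$. So $\Su_{F_C}=\{F_C\}$, and by Lemma \ref{lem:lawrence_polytopes_lifting_subdivisions}(3) the signature of the corresponding extension vanishes on this circuit. Under the duality of Theorem \ref{thm::subdivisions<->extensions}, circuits of $\M^\ast(G)$ are cocircuits of $\M(G)$, so this gives $\sigma^\ast_\Su(\pm\vec{C}^\ast_{W_j})=0$. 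Hence $W_j\in\mathcal{D}_\Su$ for every $j$, so $V(F_i)\in\hat{\mathcal{D}}_\Su$, and $F$ is $\mathcal{D}_\Su$-admissible.

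The main obstacle is this last step: one must check carefully that the signature in Lemma \ref{lem:lawrence_polytopes_lifting_subdivisions}(3) is exactly the localization $\sigma^\ast_\Su$ defining $\mathcal{D}_\Su$ under the bijection of Theorem \ref{thm::subdivisions<->extensions}. I would verify this by unwinding the construction in \cite[Theorem 4.14]{triangulations-oriented}. One must also confirm the claim that $c\cap F_C$ is a full-rank cell of the restriction, using that $c\cap F_C$ is a face of $c$ containing all vertices of $F_C$.
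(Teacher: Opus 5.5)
Your proposal is correct and follows essentially the same route as the paper. Both split $G[V(F_i)^c]$ into biconnected pieces $W_j$ and note that every edge of $E(W_j,W_j^c)$ is bioriented in $\chi^\ast(c)$, so $c$ contains the entire face of the corresponding Lawrence circuit; the restricted subdivision is then trivial, and Lemma \ref{lem:lawrence_polytopes_lifting_subdivisions}(3) gives $\sigma^\ast_\Su(\pm\vec{C}^\ast_{W_j})=0$. Your direct check that each $W_j$ is biconnected is more careful than the paper's, and the identification of the Lemma \ref{lem:lawrence_polytopes_lifting_subdivisions}(3) signature with $\sigma^\ast_\Su$ that you flag is something the paper simply takes as given via the bijection of Theorem \ref{thm::subdivisions<->extensions}.
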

	
	\begin{proof}
		By Lemma \ref{lem::cell->orientation}, $\chi^\ast(c)$ is an internally oriented spanning forest $\vec{F^\ast}$ of $G$. The forest $F$ has a decomposition $F = \bigsqcup_{i\in I} F_i$ into connected components. If $F$ is connected, then $F$ is a spanning tree.
		
		Recall that $F$ is $\mathcal{D}_{\Su}$-admissible if $V(F_i) \in \hat{\mathcal{D}_{\Su}}$, where $\hat{\mathcal{D}_{\Su}}$ is the extended degeneracy set of $\hat{\mathcal{D}_\Su}$ defined in Definition \ref{def:ext-deg-set}. We consider $V(F_i)$. Every spanning forest $F$ of $G$ is a subgraph of a spanning tree of $G$. Recall that removing edges from a spanning tree $T$ partitions $V(G)$ into biconnected subsets $W$ and $W^c$. In this way, we can view $G[V(F_i)^c]$ as the disjoint union of $G[W_j]$ for $j \in J$, where $W_j$ is a biconnected subset of $V(G)$. Let $C_j^\ast = E(W_j, W_j^c)$. Thus, if we show that every $W_j$ is in $\mathcal{D}_{\Su}$, we have $V(F_i) \in \hat{\mathcal{D}}_{\Su}$. If $F$ is a spanning tree, then $V(F) = V(G) \in \hat{\mathcal{D}}_{\Su}$, as desired, so we now assume $F$ is not a spanning tree. 
		
		Let $F_{\vec{C}^\ast_j}$ denote the face of $\Lambda(\M^\ast(G))$ corresponding to the cocircuit $\vec{C}_j^\ast$ of $\M(G)$, as described in Lemma \ref{lem:lawrence_polytopes_lifting_subdivisions}(2). For every cocycle $C_j^\ast$ of $G$, the edges of $C_j^\ast$ are bioriented in $\vec{F^\ast}$. Therefore, the intersection of the cell $c$ with the face $F_{\vec{C}^\ast_j}$ is the face $F_{\vec{C}^\ast_j}$ for every $j \in J$. As a result, the induced subdivision $\Su_{F_{\vec{C}_j^\ast}}$ is the trivial subdivision for every $j \in J$. By Lemma \ref{lem:lawrence_polytopes_lifting_subdivisions}(3), the localization $\sigma^\ast_\Su$ corresponding to the subdivision $\Su$ satisfies $\sigma^\ast_\Su(\pm \vec{C}_j^\ast) = 0$ for all $j \in J$. By definition of $\mathcal{D}_{\Su}$, we have $W_j \in \mathcal{D}_{\Su}$ for every $j\in J$. Since every $V(F_i) \in \hat{\mathcal{D}}_{\Su}$, we have that $F$ is $\mathcal{D}_{\Su}$-admissible, as desired.
	\end{proof}
	
	\begin{lem}\label{lem::subdiv->oriented forests}
		Let $\Su$ be a subdivision of $\Lambda(\M^\ast(G))$. Then, for every cell $c \in \Su$, $\chi^\ast(c)$ is an internally orientated forest $\vec{F}^\ast$ for some $F \in m\mathcal{SS}_{\mathcal{D}_S}(G)$.
	\end{lem}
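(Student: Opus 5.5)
The plan is to combine Lemma \ref{lem:cells are admissible} with a minimality argument that again uses the face restrictions of Lemma \ref{lem:lawrence_polytopes_lifting_subdivisions}. Fix a cell $c \in \Su$. By Lemma \ref{lem:cells are admissible}, $\chi^\ast(c)$ is an internal orientation $\vec{F}^\ast$ of a $\mathcal{D}_\Su$-admissible spanning forest $F = \bigsqcup_{i} F_i$. In $\vec{F}^\ast$ the edges of $F$ are one-way oriented and the edges of $E(G)\backslash E(F)$ are bioriented. It remains to show that $F$ is minimal in $\mathcal{SS}_{\mathcal{D}_\Su}(G)$. Since admissibility is a condition on the connected components, it suffices to show that deleting any single edge $e \in E(F)$ destroys admissibility. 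Deleting several edges passes through such a single deletion, or else it can be handled the same way by choosing one deleted edge $e$ and the component adjacent to it.

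So suppose, for contradiction, that $F' = F\backslash\{e\}$ is $\mathcal{D}_\Su$-admissible. Let $e$ lie in $F_{i_e}$, and split $F_{i_e}\backslash e$ into components with vertex sets $W_e$ and $U = V(F_{i_e})\backslash W_e$. Admissibility of $F'$ gives $U \in \hat{\mathcal{D}}_\Su$, so $G[U^c] = \bigsqcup_j G[W_j]$ with every $W_j \in \mathcal{D}_\Su$. Let $W_j$ be the component containing $W_e$. It is biconnected, since $G[W_j]$ is a component of $G[U^c]$ and $W_j^c$ is connected through $U$. Its cocycle $C^\ast = E(W_j, W_j^c)$ contains $e$.

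The key observation is that $e$ is the only edge of $F$ in $C^\ast$. Every other edge of $F$ lies inside a single component $F_k$. Each $F_k$ other than $F_{i_e}$ is connected in $G[U^c]$, so it lies entirely in $W_j$ or entirely in its complement. Hence in $\vec{F}^\ast$ every edge of $C^\ast$ except $e$ is bioriented, while $e$ is one-way oriented.

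Now consider the face $F_{\vec{C}^\ast}$ of $\Lambda(\M^\ast(G))$ attached to $\vec{C}^\ast$, whose vertices are the arcs $\pm\vec{f}$ for $f \in C^\ast$. The cell $c$ contains all of these vertices except one arc of $e$. This is the step I expect to be the main obstacle: I need to check that $c \cap F_{\vec{C}^\ast}$ has full rank $k$ in the face, so that it appears in $\Su_{F_{\vec{C}^\ast}}$, and that it is not all of $F_{\vec{C}^\ast}$ because it misses a vertex. I would argue via the circuit structure from Lemma 4.11 of \cite{triangulations-oriented}: the face is a circuit support, so removing a single element leaves a set of rank $k$ that is a simplex of $\mathcal{T}^{\pm}_{\vec{C}}$.

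Granting this, Lemma \ref{lem:lawrence_polytopes_lifting_subdivisions}(2) shows that $\Su_{F_{\vec{C}^\ast}}$ is not the trivial subdivision, so it is one of $\mathcal{T}^\pm_{\vec{C}}$. By Lemma \ref{lem:lawrence_polytopes_lifting_subdivisions}(3) this gives $\sigma^\ast_\Su(\vec{C}^\ast) = \pm \neq 0$, so $W_j \notin \mathcal{D}_\Su$, a contradiction. Hence $F \in m\mathcal{SS}_{\mathcal{D}_\Su}(G)$.
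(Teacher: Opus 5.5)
Your single-edge argument is correct and uses the same mechanism as the paper. You find $W\in\mathcal{D}_\Su$ whose cocycle meets $E(F)$ only in $e$. Then $c\cap F_{\vec{C}^\ast}$ omits exactly one vertex of a circuit support, so it is full rank and proper; your independence argument settles the obstacle you flagged. Lemma \ref{lem:lawrence_polytopes_lifting_subdivisions} then forces $\sigma^\ast_\Su(\vec{C}^\ast)\neq 0$.

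The gap is the opening reduction. Minimality in $\mathcal{SS}_{\mathcal{D}_\Su}(G)$ means that no admissible $H\subsetneq F$ exists. Your claim that ``deleting several edges passes through such a single deletion'' is asserted without proof, and it does not follow just from admissibility being a condition on components. Your fallback, choosing a deleted edge $e$ and ``the component adjacent to it,'' fails for an arbitrary choice. Take the $4$-cycle $u v y x$ with $\mathcal{D}=\text{BCon}(G)$, let $F$ be the path $x u v y$, and let $H$ be the spanning forest whose only edge is $uv$. Both are admissible. For $e=ux$ and the $H$-component $\{u,v\}$, the complement $\{x,y\}$ is connected and its cocycle $\{ux,vy\}$ contains two edges of $F$. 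The face intersection then misses two vertices, is not full rank, and gives no contradiction. The paper's own proof picks $e\in F\backslash H$ arbitrarily and asserts $C_i^\ast=C^\ast(T,e)$, so it has the same defect.

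The fix is a specific choice of $e$. In a component of $F$ containing an edge of $F\backslash H$, contract the components of $H$. The result is a tree with at least two nodes; pick a leaf $H_1$ and its unique incident edge $e$. Then $e$ is the only edge of $F$ with exactly one endpoint in $V(H_1)$. Since $V(H_1)\in\hat{\mathcal{D}}_\Su$, the component $W$ of $G[V(H_1)^c]$ containing the other endpoint of $e$ lies in $\mathcal{D}_\Su$. Also $E(W,W^c)=E(W,V(H_1))$, because distinct components of $G[V(H_1)^c]$ are nonadjacent. Hence $E(W,W^c)\cap E(F)=\{e\}$, and the rest of your argument applies verbatim. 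Alternatively, you can keep your reduction by showing that $F\backslash\{e\}$ is admissible for this $e$. Proving that needs axiom (2) of Definition \ref{def:deg-set}, to merge $V(H_1)$ with the adjacent components of the complement of its $F$-component, not just the componentwise definition of admissibility.
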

	
	\begin{proof}
		From Lemma \ref{lem:cells are admissible}, let $\underline{\chi^\ast(c)}$ denote the $\mathcal{D}_S$-admissible spanning forest that $\chi^\ast(c)$ is an internal orientation of. For contradiction, assume that $\underline{\chi^\ast(c)}$ is not minimal in the poset $\mathcal{SS}_{\mathcal{D}_\Su}(G)$. Therefore, there exists some minimal $\mathcal{D}_S$-admissible forest $H$ such that $\underline{\chi^\ast(c)} > H$. Additionally, let $T$ be a spanning tree in $\mathcal{SS}_{\mathcal{D}_\Su}(G)$ such that $T \geq \underline{\chi^\ast(c)} > H$. Such a spanning tree exists because all spanning trees are $\mathcal{D}$-admissible for any degeneracy set $\mathcal{D}$.
		
		Since $T$ is a connected $\mathcal{D}_\Su$-admissible subgraph of $G$, we can consider the restriction of $\mathcal{D}_\Su$ to $T$, as defined in Lemma \ref{lem:restrict-deg-set}. We have
		\[\mathcal{D}_\Su(T) = \{Z \in \text{BCon}(T):Z \in \hat{\mathcal{D}}_\Su\}.\]
		
		Since $\text{BCon}(T) \subseteq \text{BCon}(G)$, if $Z \in \text{BCon}(T)$ and $Z \in \hat{\mathcal{D}}_\Su$, $Z$ is also biconnected in $G$ and $Z \in \mathcal{D}_\Su$. Therefore, 
		\[\mathcal{D}_\Su(T) = \{Z \in \text{BCon}(T):Z \in \mathcal{D}_\Su\}.\]
		
		We note that both $\underline{\chi^\ast(c)}$ and $H$ are also $\mathcal{D}_\Su(T)$-admissible.
		
		Consider some edge $e$ in $\underline{\chi^\ast(c)}$ but not in $H$. Then, there exists a connected component of $H$, called $H_e$, such that $e$ is incident to some vertex in $H$. Since $H$ is $\mathcal{D}_\Su(T)$-admissible, we have $V(H_e) \in \hat{\mathcal{D}}_\Su(T)$, so $G[(V(H_e))^c] = \sqcup_{i \in I} G[W_i]$, where all $W_i$ are in $\mathcal{D}_\Su(T)$. There exists a unique $W_i$ in this decomposition such that $e \in E(W_i, W_i^c)$, and by definition $W_i \in \mathcal{D}_\Su(T)$, so $W_i \in \mathcal{D}_\Su$. Let $C_i^\ast$ be the cocycle $E(W_i, W_i^c)$ of $G$. Then, $C^\ast_i$ is the fundamental cocycle $C^\ast(T,e)$. Consider the face of the Lawrence polytope $\Lambda(\M^\ast(G))$ corresponding to the the cocircuit $\vec{C}^\ast_i$ of $\M(G)$, which we denote by $F_{\vec{C}_i^\ast}$. Since $\underline{\chi^\ast(c)}$ contains the edge $e$, we have that $\chi^\ast(c) \cap F_{\vec{C}_i^\ast}$ is a cell of the triangulation $\mathcal{T}_{\vec{C}_i^\ast}^+$ or $\mathcal{T}_{\vec{C}_i^\ast}^-$ of the restricted oriented matroid $\Lambda(\M^\ast(G))(F_{\vec{C}_i^\ast})$. Therefore, the subdivision $S$ orients the cocircuit $C^\ast$, and by definition $W_i \notin \mathcal{D}_S$, a contradiction.
	\end{proof}
	
	The following lemma translates Lemma \ref{lem:lawrence_polytopes_lifting_subdivisions} into the language of this paper.
	
	\begin{lem}\label{lem:fundcocircuits-restricted triangs}
		Consider a subdivision $\Su$ of $\Lambda(\M^\ast(G))$. Consider a signed cocircuit $\vec{C}^\ast$ of $\M(G)$, and let $F_{\vec{C}^\ast}$ be the face of $\Lambda(\M^\ast(G))$ corresponding to $\vec{C}^\ast$. Let $c$ be a cell of $\Su$. Then, $c \cap F_{\vec{C}^\ast}$ is a nontrivial cell of the restriction of $\Su$ to $F_{\vec{C}^\ast}$ if and only if  $\vec{C}^\ast \subseteq \chi^\ast(c)$, $C^\ast$ is a fundamental cocycle of a spanning tree $T$ such that $T \geq_{\Su\Su_{\mathcal{D}_\Su}(G)} \underline{\chi^\ast(c)}$, and $\sigma_\Su^\ast(\vec{C}^\ast) = \pm$. 
	\end{lem}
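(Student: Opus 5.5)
We prove the two directions separately; both rest on Lemma \ref{lem:lawrence_polytopes_lifting_subdivisions}(2)--(3), which identifies the restriction $\Su_{F_{\vec{C}^\ast}}$ with the trivial subdivision or with one of $\T^\pm_{\vec{C}^\ast}$. That lemma also gives that $\sigma^\ast_\Su(\vec{C}^\ast)=0$ exactly when the restriction is trivial. So "$c\cap F_{\vec{C}^\ast}$ is a nontrivial cell" means that $\sigma^\ast_\Su(\vec{C}^\ast)=\pm$ and that $c\cap F_{\vec{C}^\ast}$ is a full-rank simplex of $\T^{\pm}_{\vec{C}^\ast}$. We use throughout the dictionary of Theorem \ref{thm: triang-to-atlas}, extended to cells as in Lemmas \ref{lem::cell->orientation}--\ref{lem::subdiv->oriented forests}. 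Under it, $c$ corresponds to the fourientation $\chi^\ast(c)$, edges of $\underline{\chi^\ast(c)}$ are one-way oriented, and edges outside it are bioriented. By Lemma 4.11 of \cite{triangulations-oriented}, the face $F_{\vec{C}^\ast}$ is spanned by the vertices $P_{\pm j}$ for $e_j\in C^\ast$.

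For the forward direction, suppose $c\cap F_{\vec{C}^\ast}$ is a cell of $\T^{\pm}_{\vec{C}^\ast}$. Its vertex set is then $(\vec{C}_+\cup\vec{C}_-)\setminus\{e\}$ for the circuit of $\Lambda(\M^\ast(G))$ supported on the face, and it has full rank in the face. Translating back through $\chi^\ast$:
\begin{itemize}
\item all arcs of $\vec{C}^\ast$ except one appear in $\chi^\ast(c)$ with the orientation of $\vec{C}^\ast$, and the remaining arc is present at least in that direction, which gives $\vec{C}^\ast\subseteq\chi^\ast(c)$;
\item the full-rank condition forces exactly one edge $e$ of $C^\ast$ to be one-way oriented, so $e\in\underline{\chi^\ast(c)}$ and every other edge of $C^\ast$ lies outside $\underline{\chi^\ast(c)}$.
\end{itemize}
Next we extend $\underline{\chi^\ast(c)}$ to a spanning tree $T$ using no further edges of $C^\ast$. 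This is possible because $C^\ast\setminus\{e\}$ is disjoint from the forest and $G\setminus(C^\ast\setminus\{e\})$ is connected, since $C^\ast$ is a bond. Then $C^\ast=C^\ast(T,e)$. We must also arrange $T\geq\underline{\chi^\ast(c)}$ in the poset of admissible subgraphs. Spanning trees are always $\mathcal{D}_\Su$-admissible, so it suffices to have $T$ contain the forest, and the choice above does.

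For the converse, assume $\vec{C}^\ast\subseteq\chi^\ast(c)$, that $C^\ast=C^\ast(T,e)$ with $T\supseteq\underline{\chi^\ast(c)}$, and that $\sigma^\ast_\Su(\vec{C}^\ast)=\pm$. Because $\sigma^\ast_\Su\neq0$, the restriction $\Su_{F_{\vec{C}^\ast}}$ is a triangulation $\T^{\pm}_{\vec{C}^\ast}$. It then suffices to show that $c\cap F_{\vec{C}^\ast}$ has full rank in the face.
\begin{itemize}
\item Since $T$ meets $C^\ast$ only in $e$, at most one edge of $C^\ast$ is one-way oriented in $\chi^\ast(c)$, and the others are bioriented.
\item The case where $e$ itself is bioriented cannot occur. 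If it were, all of $C^\ast$ would be bioriented and $c$ would contain the whole face $F_{\vec{C}^\ast}$. The argument of Lemma \ref{lem:cells are admissible} would then force the restriction to be trivial, contradicting $\sigma^\ast_\Su=\pm$.
\end{itemize}
So $e$ is one-way oriented, in the direction dictated by $\vec{C}^\ast\subseteq\chi^\ast(c)$. The vertex set of $c\cap F_{\vec{C}^\ast}$ is then the full circuit minus one element, hence a maximal simplex, and it lies in $\T^{\pm}_{\vec{C}^\ast}$.

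The main obstacle is the rank bookkeeping that identifies "nontrivial cell of the restriction" with "exactly one edge of $C^\ast$ one-way oriented". This requires checking that a bioriented edge contributes both $P_j$ and $P_{-j}$, and that the restriction only keeps intersections of rank $k$. The sign matching between $\vec{C}^\ast$, the choice of $e$, and which of $\T^{\pm}$ occurs needs the most care, and I would verify it on the rank-2 minor of Lemma \ref{lem:rank2minorexists}.
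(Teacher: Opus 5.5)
Your proof is essentially correct and improves on the paper in two places. The forward direction follows the paper's route, the reverse direction takes a different one, and one sign step needs fixing.

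\textbf{Forward direction.} Both you and the paper read off from Lemma~\ref{lem:lawrence_polytopes_lifting_subdivisions} that $c\cap F_{\vec C^\ast}$ is the support of the Lawrence circuit minus one element. Hence exactly one edge $e$ of $C^\ast$ is one-way in $\chi^\ast(c)$. For the spanning tree, the paper claims that \emph{every} spanning tree containing $\underline{\chi^\ast(c)}$ meets $C^\ast$ only in $e$. That fails in general, since a tree containing the forest can pick up further edges of the bond. Your construction is what is actually needed: extend the forest inside the connected graph $G\setminus(C^\ast\setminus\{e\})$.

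\textbf{Reverse direction.} The paper passes to a refining triangulation $\T$ and takes the simplex $\tau$ with $\chi^\ast(\tau)=\vec T^\ast$. It then asserts $\tau\cap F_{\vec C^\ast}=c\cap F_{\vec C^\ast}$, which tacitly uses $\tau\subseteq c$. You argue directly on $c$ instead:
\begin{enumerate}
\item The one-way edges of $\chi^\ast(c)$ lie in $T$, so at most $e$ is one-way on $C^\ast$.
\item If $e$ were bioriented, $c$ would contain $F_{\vec C^\ast}$ and the restriction would be trivial.
\item Hence $c\cap F_{\vec C^\ast}$ is a full-rank simplex, and so a cell of the nontrivial restriction.
\end{enumerate}
This avoids the refinement entirely. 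It also shows the hypothesis $\vec C^\ast\subseteq\chi^\ast(c)$ is never used in this direction.

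\textbf{The sign step.} Your first forward bullet is wrong as written. The missing vertex of $c\cap F_{\vec C^\ast}$ is a single arc on $e$. If that arc belongs to $\vec C^\ast$, then $e$ appears only reversed and $\vec C^\ast\not\subseteq\chi^\ast(c)$. Which case occurs is decided by whether the restriction is $\T^+_{\vec C^\ast}$ or $\T^-_{\vec C^\ast}$, that is, by the sign of $\sigma^\ast_\Su(\vec C^\ast)$.

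Moreover, $F_{\vec C^\ast}=F_{-\vec C^\ast}$, and every other condition in the statement is invariant under $\vec C^\ast\mapsto-\vec C^\ast$. So the containment can hold for only one of $\pm\vec C^\ast$. The forward conclusion is therefore true only once $\vec C^\ast$ is signed compatibly with $\sigma^\ast_\Su$. The paper makes this choice silently with its ``without loss of generality $\sigma^\ast_\Su(\vec C^\ast)=+$''. State the normalization explicitly and your argument is complete.
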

	\begin{proof}
		Let $\T$ be a triangulation of $\Lambda(\M^\ast(G))$ that refines $\Su$. For the reverse direction, by Lemma \ref{lem::subdiv->oriented forests}, $\chi^\ast(c)$ is an internal orientation of a forest $\underline{\chi^\ast(c)} = F \in m\mathcal{SS}_{\mathcal{D}_\Su}(G)$. Let $T$ be a spanning tree such that $T \geq F$, which has $C^\ast$ as a fundamental cocycle. Assume without loss of generality that $\sigma_\Su^\ast(\vec{C}^\ast)= +$. Because $\T$ refines $\Su$, we also have $\sigma_\T^\ast(\vec{C}^\ast) = +$, and $\vec{C}^\ast \subseteq \vec{T^\ast} = \chi^\ast(\tau)$ for some $\tau \in \T$. It is clear from Theorem \ref{thm: triang-to-atlas} that if $\vec{C}^\ast \subseteq \chi^\ast(\tau) = \vec{T^\ast}$ and $C^\ast$ is a fundamental cocycle of $T$, the rank of $\tau\cap F_{\vec{C}^\ast}$ is the rank of $F_{\vec{C}^\ast}$. Additionally, because $\T$ is a triangulation, $\tau \cap F_{\vec{C}^\ast}$ is a simplex and the restriction of $\T$ to $F_{\vec{C}^\ast}$ is nontrivial. The proof follows from $\tau \cap F_{\vec{C}^\ast} = c \cap F_{\vec{C}^\ast}$.
		
		For the forward direction, denote the restriction of $\Su$ to $F_{\vec{C}^\ast}$ by $\Su_{\vec{C}^\ast}$. By nontriviality, we have that $\sigma_\Su(\vec{C}^\ast) = \pm$ and $c \cap F_{\vec{C}^\ast}$ is a simplex. Assume without loss of generality that $\sigma^\ast_\Su(\vec{C}^\ast) = +$. By Lemma \ref{lem:lawrence_polytopes_lifting_subdivisions}, $c\cap F_{\vec{C}^\ast}$ is a cell of the triangulation $\T^+_{\vec{C}^\ast}$ of $\Lambda(\M^\ast(G))(F_{\vec{C}^\ast})$. Therefore, $c \cap F_{\vec{C}^\ast}$ is of the form $E(\Lambda(\M^\ast(G))(F_{\vec{C}^\ast})) \backslash e$ for some $e \in (\vec{C}^\ast)_+$. We then have $\vec{C}^\ast \subseteq \chi^\ast(c \cap F_{\vec{C}^\ast}) \subseteq \chi^\ast(c)$. Additionally, since $\chi^\ast(c\cap F_{\vec{C}^\ast})$ only has one one-way oriented edge, any spanning tree $T$ such that $T \geq_{\mathcal{SS}_{\mathcal{D}_\Su}(G)} \underline{\chi^\ast(c)}$ also only contains $e \in C^\ast$, and therefore $C^\ast$ is a fundamental cocycle of $T$.
	\end{proof}

	We can describe the cells of the subdivision more explicitly in terms of the refining triangulation. 
	
	\begin{lem} \label{lem::subdiv->atlases}
		Let $\Su$ be a subdivision of the Lawrence polytope $\Lambda(\M^\ast(G))$, and let $\chi^\ast$ be the map between cells of $\Su$ to internal orientations of $F \in m\mathcal{SS}_{\mathcal{D}_\Su}(G)$. Then, for each $F \in m\mathcal{SS}_{\mathcal{D}_\Su}(G)$, there exists a cell $c_F$ of $\Su$ such that $\chi^\ast(c_F) = \vec{F}^\ast$. 
	\end{lem}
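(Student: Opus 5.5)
The plan is to build $c_F$ from a cell of a refining triangulation. We then show that the forest it carries is exactly $F$, using minimality of $F$ in $\mathcal{SS}_{\mathcal{D}_\Su}(G)$.

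First, fix $F \in m\mathcal{SS}_{\mathcal{D}_\Su}(G)$ and a triangulation $\T$ of $\Lambda(\M^\ast(G))$ refining $\Su$. Choose a spanning tree $T \supseteq F$. By Theorem \ref{thm: triang-to-atlas}, $\chi^\ast(\T)$ is an internal atlas, so there is a unique simplex $\tau \in \T$ with $\chi^\ast(\tau) = \vec{T^\ast}$. Since $\T$ refines $\Su$, $\tau$ lies in some cell $c$ of $\Su$; set $c_F \coloneqq c$. By Lemma \ref{lem::subdiv->oriented forests}, $\chi^\ast(c)$ is an internal orientation of some $F' \in m\mathcal{SS}_{\mathcal{D}_\Su}(G)$. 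Moreover, $\chi^\ast(c) \supseteq \chi^\ast(\tau)$, so every edge outside $T$ is bioriented in $\chi^\ast(c)$, and hence $F' \subseteq T$.

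Next, we show $F' \subseteq F$. Take an edge $e \in T \setminus F$ and its fundamental cocycle $C^\ast(T,e) = E(W,W^c)$.

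\emph{Claim: $W \in \mathcal{D}_\Su$.} Contract the components of $F$ in $T$. Then $W$ is a union of vertex sets of components $F_i$ of $F$, and $e$ joins a component $F_a \subseteq W^c$ to a component $F_b \subseteq W$. Since $F$ is admissible, $G[V(F_a)^c] = \bigsqcup_j G[W_j]$ with every $W_j \in \mathcal{D}_\Su$. The side $W$ is connected, lies in $V(F_a)^c$, and contains the endpoint of $e$ in $F_b$. I would argue that $W$ is assembled from some of these $W_j$ together with pieces coming from the other components, by repeatedly applying condition (2) of Definition \ref{def:deg-set} to disjoint pairs whose union is biconnected. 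If that assembly is awkward, the fallback is to mimic the argument in the proof of Lemma \ref{lem::subdiv->oriented forests} inside the restricted degeneracy set $\mathcal{D}_\Su(T)$ of Lemma \ref{lem:restrict-deg-set}. This claim is the step I expect to be the main obstacle.

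Given $W \in \mathcal{D}_\Su$, we have $\sigma^\ast_\Su(\pm\vec{C}^\ast) = 0$ for $\vec{C}^\ast$ the cocircuit $C^\ast(T,e)$ with either orientation. So the restriction $\Su_{F_{\vec{C}^\ast}}$ is trivial by Lemma \ref{lem:lawrence_polytopes_lifting_subdivisions}. On the other hand, $C^\ast$ is a fundamental cocycle of $T$ and $\vec{C}^\ast \subseteq \vec{T^\ast}$ for the appropriate orientation. Hence, as in the proof of Lemma \ref{lem:fundcocircuits-restricted triangs}, $\tau \cap F_{\vec{C}^\ast}$ has full rank in $F_{\vec{C}^\ast}$. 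Therefore $c \cap F_{\vec{C}^\ast} \supseteq \tau \cap F_{\vec{C}^\ast}$ also has full rank, and triviality of the restriction forces $c \cap F_{\vec{C}^\ast} = F_{\vec{C}^\ast}$. In particular both arcs $\pm\vec{e}$ lie in $\chi^\ast(c)$, so $e$ is bioriented there and $e \notin F'$. This gives $F' \subseteq F$.

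Finally, both $F'$ and $F$ are $\mathcal{D}_\Su$-admissible and $F' \subseteq F$. Since $F$ is minimal in $\mathcal{SS}_{\mathcal{D}_\Su}(G)$, we get $F' = F$. Hence $\chi^\ast(c_F)$ is an internal orientation $\vec{F}^\ast$ of $F$: the edges of $F$ are one-way oriented (inherited from $\vec{T^\ast}$) and all other edges are bioriented.
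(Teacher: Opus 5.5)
Your proposal is correct and follows essentially the paper's route. You pass to a refining triangulation $\T$, take the cell $c\supseteq\tau$ with $\chi^\ast(\tau)=\vec{T^\ast}$ for a spanning tree $T\supseteq F$, use degeneracy of the fundamental cocycles $C^\ast(T,e)$ for $e\in T\setminus F$ to force those edges to be bioriented in $\chi^\ast(c)$, and conclude by minimality of $F$. You make the inclusions $F'\subseteq T$ and $F'\subseteq F$ explicit, where the paper only appeals to the forests being ordered by inclusion.

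The claim you flag as the main obstacle is simply asserted in the paper, and your fallback settles it. By Lemma \ref{lem:restrict-deg-set}, each $V(F_i)$ lies in the extended degeneracy subset of $\mathcal{D}_\Su(T)$. The component of $T[V(F_b)^c]$ across $e$ is exactly $W^c$, so $W^c\in\mathcal{D}_\Su(T)$. Hence $W\in\mathcal{D}_\Su$, because $G[W]$ is connected.
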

	\begin{proof}
		Consider a forest $F \in m\mathcal{SS}_{\mathcal{D}_\Su}(G)$. Define
		\[T(F) \coloneqq \{T \in \mathcal{ST}(G):T \geq F \text{ in } \mathcal{SS}_{\mathcal{D}_\Su}(G)\}.\]
		Then, $E(F) = \cap_{T \in T(F)} E(T)$. Consider a triangulation $\T$ of $\Lambda(\M^\ast(G))$ that refines $\Su$, and consider a $T \in T(F)$. Then for edges in $E(T) \backslash E(F)$, the fundamental cocycles $C^\ast(T,e)$ satisfy $\sigma_\Su^\ast(\pm\vec{C}^\ast) =0$. Let $\tau \in \T$ be the maximal simplex of $\T$ such that $\chi^\ast(\tau) = \vec{T}^\ast$. We cannot have $\tau \in \Su$, because $\tau \cap F_{\vec{C}^\ast(T,e)}$ is a cell of the restricted triangulation of $F_{\vec{C}^\ast(T,e)}$ as described in Lemma \ref{lem:fundcocircuits-restricted triangs}. Therefore, there exists a cell $c$ of $\Su$ such that $c \cap F_{\vec{C}^\ast(T,e)} = F_{\vec{C}^\ast(T,e)}$, and $\tau \subset c$. From Lemma \ref{lem::subdiv->oriented forests}, $\chi^\ast(c) = \vec{F}_T^\ast$ for some forest $F_T \in m\mathcal{SS}_{\mathcal{D}_\Su}(G)$, but since $m\mathcal{SS}_{\mathcal{D}_\Su}(G)$ is ordered with respect to inclusion, we have $F_T = F$, as desired. We can then take $c_F = \cup_{T \in T(F)} (\chi^\ast)^{-1}(\vec{T}^\ast)$, where each $(\chi^\ast)^{-1}(\vec{T}^\ast)$ is a maximal simplex of $\T$. 
	\end{proof}
	
	\begin{thm}\label{thm::subdiv->atlases}
		Let $\Su$ be a subdivision of $\Lambda(\M^\ast(G))$. Then, the set $\{\chi^\ast(c):c \in \Su\}$ is a collection of internal orientations $\{\vec{F}^\ast:F \in m\mathcal{SS}_{\mathcal{D}_\Su}(G)\}$.
	\end{thm}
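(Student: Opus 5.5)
The plan is to obtain the theorem by combining the two preceding lemmas, which give the two inclusions, and then to check that the correspondence is one-to-one. Write $A(\Su) \coloneqq \{\chi^\ast(c):c \in \Su\}$ and $B(\Su) \coloneqq \{\vec{F}^\ast:F \in m\mathcal{SS}_{\mathcal{D}_\Su}(G)\}$. Here $\vec{F}^\ast$ denotes the internal orientation of $F$ that is determined by the refining triangulation.

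\emph{The inclusion $A(\Su)\subseteq B(\Su)$.} By Lemma \ref{lem::subdiv->oriented forests}, every cell $c\in\Su$ satisfies $\chi^\ast(c)=\vec{F}^\ast$ for some $F\in m\mathcal{SS}_{\mathcal{D}_\Su}(G)$. Thus each element of $A(\Su)$ is an internally oriented minimal $\mathcal{D}_\Su$-admissible forest.

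\emph{The inclusion $B(\Su)\subseteq A(\Su)$.} By Lemma \ref{lem::subdiv->atlases}, every $F\in m\mathcal{SS}_{\mathcal{D}_\Su}(G)$ has a cell $c_F$ with $\chi^\ast(c_F)=\vec{F}^\ast$. Moreover, $c_F$ is the union of the simplices $(\chi^\ast)^{-1}(\vec{T}^\ast)$ for $T\in T(F)$, taken in a triangulation $\T$ refining $\Su$.

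\emph{Each minimal forest appears exactly once.} This step requires the most care. Suppose two distinct cells $c\neq c'$ satisfy $\underline{\chi^\ast(c)}=\underline{\chi^\ast(c')}=F$. Every maximal simplex $\tau$ of $\T$ lies in a unique cell of $\Su$. The simplices contained in $c$ are the $\tau$ with $\chi^\ast(\tau)=\vec{T}^\ast$, and the argument in the proof of Lemma \ref{lem::subdiv->atlases} shows that each such $T$ lies in $T(F)$. Conversely, each $T\in T(F)$ appears exactly once in the internal atlas of $\T$, by Theorem \ref{thm: triang-to-atlas}. So it corresponds to a unique simplex, and that simplex lies in a unique cell, which must then be $c_F$. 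Hence $c=c'=c_F$. The map $c\mapsto \underline{\chi^\ast(c)}$ is therefore a bijection from $\Su$ onto $m\mathcal{SS}_{\mathcal{D}_\Su}(G)$, and the orientation is forced by the simplices it contains.

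The main obstacle I expect is the claim that every maximal simplex $\tau\subseteq c$ has underlying tree in $T(F)$, meaning the tree contains $F$ and its extra edges have degenerate fundamental cocycles. To prove it, I would take $e\in E(T)\setminus E(F)$ and argue as follows. By Lemma \ref{lem:fundcocircuits-restricted triangs}, if $\sigma_\Su^\ast(\pm\vec{C}^\ast(T,e))$ were nonzero, then $c\cap F_{\vec{C}^\ast(T,e)}$ would be a simplex of the restricted triangulation. Its unique one-way oriented edge $e$ would then belong to $\underline{\chi^\ast(c)}=F$, a contradiction.

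Similarly, $F\subseteq T$ holds because the one-way oriented edges of $\chi^\ast(c)$ form the intersection over its simplices, so each of them is one-way oriented in every $\vec{T}^\ast$. Once these two points are settled, the remaining assertions follow directly from the cited lemmas.
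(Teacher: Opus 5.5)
Your first two steps are exactly the paper's proof. The paper proves the theorem only by citing Lemma~\ref{lem::subdiv->oriented forests} (every cell gives some $\vec F^\ast$ with $F$ minimal) and Lemma~\ref{lem::subdiv->atlases} (every minimal $F$ is hit). Your check that each simplex $\tau_T\subseteq c$ has $T\supseteq F$, and $\sigma^\ast_\Su(\pm\vec C^\ast(T,f))=0$ for $f\in T\setminus F$, is also sound.

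The gap is in the step you add beyond the paper, that each minimal forest appears exactly once. You conclude that the cell containing $\tau_T$, $T\in T(F)$, ``must then be $c_F$''. This presupposes that a \emph{single} cell contains all the $\tau_T$ with $T\in T(F)$. Lemma~\ref{lem::subdiv->atlases} only asserts that some cell $c_F$ with $\chi^\ast(c_F)=\vec F^\ast$ exists. Its proof shows only that each $\tau_T$ lies in \emph{some} cell with underlying forest $F$. Its closing sentence $c_F=\bigcup_{T\in T(F)}(\chi^\ast)^{-1}(\vec T^\ast)$ is an unjustified restatement of exactly the uniqueness you want.

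Suppose two cells $c\neq c'$ had underlying forest $F$. They would have to orient $F$ differently, since a cell is the convex hull of the points recorded in $\chi^\ast(c)$. The simplices indexed by $T(F)$ would then just be split between $c$ and $c'$, and nothing in your argument rules this out. So the step is circular. The same issue hides in your definition of $\vec F^\ast$ as ``the orientation determined by the refining triangulation''. That is well defined only once all trees $T\supseteq F$ are known to orient $F$ the same way.

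The missing idea is exactly this independence of the tree. Take $\tau_T\subseteq c$, $\tau_{T'}\subseteq c'$ and $e\in F$.
\begin{enumerate}
\item Besides your fact for $f\in T\setminus F$, one also has $\sigma^\ast_\Su(\pm\vec C^\ast(T,e))\neq 0$. Otherwise the restriction of $\Su$ to the face $F_{\vec C^\ast(T,e)}$ is trivial. Since $c\cap F_{\vec C^\ast(T,e)}\supseteq\tau_T\cap F_{\vec C^\ast(T,e)}$ has full rank, $c$ would then contain both arcs of $e$.
\item Let $\M'$ be the extension of $\M(G)$ with localization $\sigma^\ast_\Su$. An element $x\in T$ lies in the fundamental circuit of $p$ with respect to the basis $T$ iff $\sigma^\ast_\Su(\vec C^\ast(T,x))\neq 0$. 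By step 1 and your fact, this circuit has support $F\cup\{p\}$. The same holds for $T'$, so both circuits are $\pm X$ for a single signed circuit $X$ with $X_p=+$.
\item Sign the cocircuit $(\vec C^\ast(T,e),\sigma^\ast_\Su(\vec C^\ast(T,e)))$ of $\M'$ so that $e$ is positive. Its support meets $\underline X$ exactly in $\{e,p\}$. Orthogonality then gives $X_e=-\sigma^\ast_\Su(\vec C^\ast(T,e))$, and likewise with $T'$ in place of $T$.
\item By Lemma~\ref{lem:lawrence_polytopes_lifting_subdivisions}(2)--(3), the arc of $e$ missing from $c$ (resp.\ $c'$) is determined by this normalized sign. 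Hence $\chi^\ast(c)=\chi^\ast(c')$, and so $c=c'$.
\end{enumerate}
With this added, $c\mapsto\underline{\chi^\ast(c)}$ is a bijection onto $m\mathcal{SS}_{\mathcal{D}_\Su}(G)$. Without it, your argument establishes only what the paper's two-line proof does.
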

	\begin{proof}
		From Lemma \ref{lem::subdiv->oriented forests}, for every cell $c$ of $\Su$, $\chi^\ast(c)$ is an internally oriented forest $\vec{F}^\ast$ for $F \in m\mathcal{SS}_{\mathcal{D}_\Su}(G)$. Each $F \in m\mathcal{SS}_{\mathcal{D}_\Su}(G)$ receives an orientation by Lemma \ref{lem::subdiv->atlases}.
	\end{proof}

	\subsection{Constructing stability conditions from subdivisions of the Lawrence polytope}
	Theorem \ref{thm:triang-to-goodIG} shows that every triangulation of the Lawrence polytope $\Lambda(\M^\ast(G))$ corresponds to a nondegenerate stability condition. In this section, we generalize this result by showing that every subdivision $\Su$ of the Lawrence polytope $\Lambda(\M^\ast(G))$ yields a stability condition $\mathfrak{n}^{\Su}$. As we've done for triangulations in \cite{crepeau}, one could take the subdividing atlas in Theorem \ref{thm::subdiv->atlases}, construct a $\mathcal{D}_{\Su}$-forest function $I$, and count the number of divisors in $\mathcal{BD}_I(G)$. What makes this strategy difficult for subdivisions is that $c_{\mathcal{D}_\Su}(G)$ does not always have a nice combinatorial interpretation. Instead, for a subdivision $\Su$, we use a refining triangulation $\mathcal{T}$ to construct a stability condition, and then show the $\mathcal{D}$-forest function corresponding to the stability condition $\mathfrak{n}^\T$ is equivalent to the one constructed directly from the subdividing atlas.

	We first prove the following lemma. Throughout, let $\T$ be a triangulation of the Lawrence polytope $\Lambda(\M^\ast(G))$, which corresponds both to the orientable $\emptyset$-forest function $I^\T:\mathcal{ST}(G) \to \text{Div}^{-1}(G)$ and the stability condition $\mathfrak{n}^\T$, as constructed in Theorem \ref{thm:triang-to-goodIG}. The stability condition $\mathfrak{n}^\T$ is degree $g(G)-1$. Recall from Lemma \ref{lem:forest-fn-to-n} that for a biconnected subset $W$, if a spanning tree $T$ has $E(W,W^c)$ as a fundamental cocycle, $I^\T(T)_W = \mathfrak{n}^\T_W - g(G[W])$ for all such spanning trees. We then define $I^\T_W \coloneqq \mathfrak{n}^\T_W - g(G[W])$.
	
	\begin{lem} \label{lem::partition-I values}
		Let $\T$ be a triangulation of the Lawrence polytope $\Lambda(\M^\ast(G))$. Consider pairwise disjoint, biconnected sets $W_1$, $W_2$, $W_3$ such that $W_1 \cup W_2 \cup W_3 = V(G)$. Then, either $I_{W_i}^\T = -1$, $I_{W_j}^\T = -1$, and $I_{W_k}^\T = 0$ or $I_{W_i}^\T = -1$, $I_{W_j}^\T = 0$, and $I_{W_k}^\T = 0$ for $\{i,j,k\} = \{1,2,3\}$.
	\end{lem}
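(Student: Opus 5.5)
We work directly with the orientable $\emptyset$-forest function $I^\T$ coming from Theorem \ref{thm:triang-to-goodIG}. The first step is to show that each $I^\T_{W}$ lies in $\{0,-1\}$. Take a spanning tree $T$ having $E(W,W^c)$ as a fundamental cocycle $C^\ast(T,e)$. By Lemma \ref{lem:I-orientability} with $F=T$ and $\mathcal{D}=\emptyset$, the quantity $I^\T(T)_W$ equals $0$ or $-1$, according to whether $e$ is oriented towards $W$ or away from it in $-(\vec{T}^\ast)^c$. Since $I^\T_W=\mathfrak{n}^\T_W-g(G[W])$ does not depend on the choice of $T$, we get $I^\T_W\in\{0,-1\}$. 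The same lemma gives $I^\T_W+I^\T_{W^c}=-1$.

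Next, fix the tripartition $W_1,W_2,W_3$. Because each $G[W_i]$ is connected and $G$ is connected, we can choose a spanning tree $T$ adapted to the partition. Concretely, $T$ is the union of spanning trees of each $G[W_i]$ together with exactly two edges between the parts, say $e\in E(W_i,W_k)$ and $f\in E(W_j,W_k)$, forming a path $W_i - W_k - W_j$ in the contracted three-vertex graph of Lemma \ref{lem:rank2minorexists}. Then $E(W_i,W_i^c)$ and $E(W_j,W_j^c)$ are fundamental cocycles of $T$, so $I^\T(T)_{W_i}=I^\T_{W_i}$ and $I^\T(T)_{W_j}=I^\T_{W_j}$. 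We also have $I^\T(T)_{V(G)}=-1$. Writing $I^\T(T)_{W_k}=-1-I^\T_{W_i}-I^\T_{W_j}$, the orientation of $e$ and $f$ shows that this value lies in $\{-1,0,1\}$.

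The main obstacle is to rule out the two extreme patterns, namely all three $I^\T_{W_\ell}$ equal to $0$, or all three equal to $-1$. To handle this, apply Equation \eqref{eq:tribond-stabilitycond} to the nondegenerate stability condition $\mathfrak{n}^\T$ of degree $d=g(G)-1$. Substituting $\mathfrak{n}^\T_{W_\ell}=I^\T_{W_\ell}+g(G[W_\ell])$ and using
\[
\sum_\ell g(G[W_\ell])+\sum_{\ell<m}|E(W_\ell,W_m)| = g(G)+2,
\]
which holds by counting edges and vertices, the left-hand side becomes $\sum_\ell I^\T_{W_\ell}+3$. Nondegeneracy forces this value to lie in $\{1,2\}$, so $\sum_\ell I^\T_{W_\ell}\in\{-2,-1\}$. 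Combined with $I^\T_{W_\ell}\in\{0,-1\}$, this gives exactly two $-1$'s or exactly one $-1$, which is the claim. In carrying this out, I would double-check the genus identity: $|E|=\sum|E(G[W_\ell])|+\sum|E(W_\ell,W_m)|$ and $g(G[W_\ell])=|E(G[W_\ell])|-|W_\ell|+1$, so the sum equals $|E|-|V|+3=g(G)+2$. The spanning-tree argument above then serves only as a consistency check.
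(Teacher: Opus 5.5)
Your proof is correct and follows the paper's argument. You substitute $\mathfrak{n}^\T_W = I^\T_W + g(G[W])$ into the nondegenerate tribond condition and use the genus-sum identity to get $\sum_\ell I^\T_{W_\ell}\in\{-1,-2\}$, then combine this with $I^\T_W\in\{0,-1\}$ from Lemma \ref{lem:I-orientability}. The adapted spanning-tree paragraph is not needed and can be dropped, since $E(W_k,W_k^c)$ is not a fundamental cocycle of that tree and so it says nothing about $I^\T_{W_k}$.
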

	\begin{proof}
		Because $\mathfrak{n}^{\mathcal{T}}$ is nondegenerate, by Definition \ref{def:stability conditon}, we have
		\begin{equation}\label{eqn:lemma-with-I}
			\sum_{i=1}^3 \mathfrak{n}^{\mathcal{T}}_{W_i} + \sum_{1 \leq i < j \leq 3} |E(W_i, W_j) |- (g(G)-1) \in \{1,2\}.
		\end{equation}
		
		We have $\mathfrak{n}_{W}^\T = I_{W}^\T + g(G[W])$ for every $W \in \text{BCon}(G)$. We observe that
		\begin{equation} \label{eqn:sum of genus}
			\sum_{i=1}^3 g(G[W_i]) = \sum_{i=1}^3 (|E(G[W_i])| - |W_i| + 1) = |E(G)| + 3  - |V(G)| - \sum_{1 \leq i < j \leq 3} E|(W_i, W_j)| 
		\end{equation}
		so Equation \ref{eqn:lemma-with-I} becomes $I^\T_{W_1} + I_{W_2}^\T + I_{W_3}^\T \in \{-1,-2\}$. The function $I^\T$ is orientable, so from Lemma \ref{lem:I-orientability}, each $I_{W_i}^\T \in \{0,-1\}$, so either $I_{W_i}^\T = -1$, $I_{W_j}^\T = -1$, and $I_{W_k}^\T = 0$, or $I_{W_i}^\T = -1$, $I_{W_j}^\T = 0$, and $I_{W_k}^\T = 0$. 
	\end{proof}
	
	We can also consider the single-element extension of $\M(G)$ corresponding to a triangulation $\T$ of $\Lambda(\M^\ast(G))$, and the localization $\sigma_\T^\ast$ of that extension. Let $W_i$, $W_j$, and $W_k$ be pairwise disjoint biconnected subsets such that $W_i \cup W_j \cup W_k = V(G)$. By Lemma \ref{lem:rank2minorexists}, there exists a rank $2$ minor of $\M(G)$, called $\mathcal{N}$, which is the graphic matroid of the graph minor in Figure \ref{fig:matroidminor}.
	
	\begin{figure}[h]
		\centering
		\begin{tikzcd}
			&& {W_i} && \\
			& {} \\
			{W_k} &&&& {W_j}
			\arrow["{E(W_i, W_k)}"{description}, dashed, no head, from=1-3, to=3-1]
			\arrow["{E(W_i, W_j)}"{description}, dashed, no head, from=1-3, to=3-5]
			\arrow["{E(Wj, W_k)}"{description}, dashed, no head, from=3-1, to=3-5]
		\end{tikzcd}
		\caption{Minor of $G$ with cocycles $E(W_i, W_i^c), E(W_j,W_j^c), E(W_k,W_k^c)$}
		\label{fig:matroidminor}
	\end{figure}
	
	We can then consider the extension defined by $\sigma^\ast_\T$ on $\mathcal{N}$. 
	\begin{lem} \label{lem:restiction_configs}
		Let $\sigma^\ast_\T$ be a localization of the extension of $\M(G)$ corresponding to the triangulation $\T$.  
		Let $\mathcal{N}$ be the minor of $\M(G)$ with cocircuits $E(W_i,W_i^c)$, $E(W_j, W_j^c)$, and $E(W_k, W_k^c)$, and let $\vec{C}^\ast_W$ be the cocircuit $E(W,W^c)$ oriented towards $W$. Then, $\sigma^\ast_\T$ yields one of two configurations in Figure \ref{fig:possible configs} on the minor $\mathcal{N}$. 
		
		\begin{figure}
			\centering
			\begin{subfigure}{.4\textwidth}
				\centering
				\begin{tikzpicture}
					\draw (0,2)--(0,-2);
					\draw (-2,1.75)--(2,-1.75);
					\draw (2,1.75)--(-2,-1.75);
					\node[anchor = north west] at (0,2) {\scriptsize $-\vec{C}_{W_k}^\ast$};
					\node[anchor = south] at (0,2) {$-$};
					\node[anchor = north] at (0,-2) {$+$};
					\node[anchor = south east] at (0,-2) {\scriptsize $\vec{C}_{W_k}^\ast$};
					
					\node[anchor = south west] at (2,1.75) {$-$};
					\node[anchor = north west] at (2,1.75) {\scriptsize $\vec{C}_{W_i}^\ast$};
					
					\node[anchor = north east] at (-2,-1.75) {$+$}; 
					\node[anchor = south east] at (-2,-1.75) {\scriptsize $-\vec{C}_{W_i}^\ast$};

					\node[anchor = south east] at (-2,1.75) {$-$};
					\node[anchor = west] at (-2,1.75) {\scriptsize $\vec{C}_{W_j}^\ast$};
					\node[anchor = north west] at (2,-1.75) {$+$};
					\node[anchor = east] at (2,-1.75) {\scriptsize $-\vec{C}_{W_j}^\ast$};
				\end{tikzpicture}
				\subcaption{Configuration A}
			\end{subfigure}
			\begin{subfigure}{.4\textwidth}
				\centering
				\begin{tikzpicture}
					\draw (0,2)--(0,-2);
					\draw (-2,1.75)--(2,-1.75);
					\draw (2,1.75)--(-2,-1.75);
					\node[anchor = north west] at (0,2) {\scriptsize $-\vec{C}_{W_k}^\ast$};
					\node[anchor = south] at (0,2) {$-$};
					\node[anchor = north] at (0,-2) {$+$};
					\node[anchor = south east] at (0,-2) {\scriptsize $\vec{C}_{W_k}^\ast$};
					
					\node[anchor = south west] at (2,1.75) {$-$};
					\node[anchor = north west] at (2,1.75) {\scriptsize $\vec{C}_{W_i}^\ast$};
					
					\node[anchor = north east] at (-2,-1.75) {$+$}; 
					\node[anchor = south east] at (-2,-1.75) {\scriptsize $-\vec{C}_{W_i}^\ast$};

					\node[anchor = south east] at (-2,1.75) {$+$};
					\node[anchor = west] at (-2,1.75) {\scriptsize $\vec{C}_{W_j}^\ast$};
					\node[anchor = north west] at (2,-1.75) {$-$};
					\node[anchor = east] at (2,-1.75) {\scriptsize $-\vec{C}_{W_j}^\ast$};
				\end{tikzpicture}
				\subcaption{Configuration B}
			\end{subfigure}
			\caption{Possible configurations}
			\label{fig:possible configs}
		\end{figure}
	\end{lem}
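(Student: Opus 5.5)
The plan is to reduce the statement to a finite check on the rank $2$ minor $\mathcal{N}$ of Lemma \ref{lem:rank2minorexists}, using that $\sigma^\ast_\T$ is a localization. First I would argue that $\sigma^\ast_\T$ takes no zero values. Every triangulation $\T$ has $\mathcal{D}_\T=\emptyset$, so $\sigma^\ast_\T(\pm\vec{C}^\ast_W)\in\{+,-\}$ for every $W\in\text{BCon}(G)$. Alternatively, by Lemma \ref{lem:lawrence_polytopes_lifting_subdivisions}(2)--(3), the restriction of a triangulation to any circuit face is one of $\T^\pm_{\vec{C}}$, never trivial. Hence on $\mathcal{N}$ each of the six rays $\pm\vec{C}^\ast_{W_i},\pm\vec{C}^\ast_{W_j},\pm\vec{C}^\ast_{W_k}$ carries a nonzero sign, and antipodal rays carry opposite signs because $\sigma^\ast(-\vec{C}^\ast)=-\sigma^\ast(\vec{C}^\ast)$.

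Next I would fix the cyclic order of the six rays in the rank $2$ arrangement realizing $\mathcal{N}$. The graph of $\mathcal{N}$ is a triangle on the vertices $W_i,W_j,W_k$, so its cocircuits are the three stars. The signed cocircuits $\vec{C}^\ast_{W_i},\vec{C}^\ast_{W_j},\vec{C}^\ast_{W_k}$, each oriented towards its vertex, satisfy a positive linear dependence: their sum is zero, since every edge of the triangle is counted once with each sign. So in the plane the rays $\vec{C}^\ast_{W_i},\vec{C}^\ast_{W_j},\vec{C}^\ast_{W_k}$ are pairwise separated by the rays $-\vec{C}^\ast_{W_\cdot}$, and the cyclic order alternates: $\vec{C}^\ast_{W_i},-\vec{C}^\ast_{W_k},\vec{C}^\ast_{W_j},-\vec{C}^\ast_{W_i},\vec{C}^\ast_{W_k},-\vec{C}^\ast_{W_j}$, matching the labels in Figure \ref{fig:possible configs}.

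With only $\pm$ signs available, a sign pattern is determined by the triple $(\sigma^\ast_\T(\vec{C}^\ast_{W_i}),\sigma^\ast_\T(\vec{C}^\ast_{W_j}),\sigma^\ast_\T(\vec{C}^\ast_{W_k}))\in\{\pm\}^3$. Around the circle, the forbidden configuration in Figure \ref{fig:forbiddenconfigs} with no zeros is the pattern in which the signs alternate on consecutive rays. This pattern occurs exactly when all three $\vec{C}^\ast_{W}$ have the same sign. The remaining six triples have two signs equal and one different. Up to the $S_3$ relabeling of $i,j,k$, together with the choice of which sign is in the majority, these are Configurations A and B.

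Finally I would cross-check this against Lemma \ref{lem::partition-I values}, and I expect that identification to be the main obstacle. For a spanning tree containing $E(W,W^c)$ as a fundamental cocycle, the sign $\sigma^\ast_\T(\vec{C}^\ast_W)$ should correspond to whether the unique tree edge in that cocycle is oriented towards $W$ in $-(\vec{T}^\ast)^c$, that is, to $I^\T_W\in\{0,-1\}$. Lemma \ref{lem::partition-I values} excludes the all-equal case on the $I$ side, which confirms the same dichotomy. The delicate point is pinning down the sign convention and the cyclic order carefully enough that the surviving patterns literally match the two pictures, rather than their mirror images. I would rely on the forbidden-configuration argument as the proof and use the correspondence with $I^\T$ only as a consistency check.
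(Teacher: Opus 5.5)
Your argument is correct, but it runs in the opposite direction from the paper's. The paper does not use the excluded configurations here. It starts from Lemma \ref{lem::partition-I values}, which comes from the tribond axiom for the nondegenerate stability condition $\mathfrak{n}^{\T}$ together with $I^{\T}_W\in\{0,-1\}$. It then transports the two possible patterns of $(I^{\T}_{W_i},I^{\T}_{W_j},I^{\T}_{W_k})$ to sign patterns via the dictionary ``$I^{\T}_W=0$ iff $\sigma^\ast_{\T}(\vec{C}^\ast_W)=+$'', which it asserts follows from the definition of localizations.

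You instead stay entirely on the oriented-matroid side. First, $\sigma^\ast_{\T}$ vanishes nowhere, because $\mathcal{D}_{\T}=\emptyset$, or because a triangulation cannot restrict trivially to the non-simplex face supported on a circuit. Second, the relation $\vec{C}^\ast_{W_i}+\vec{C}^\ast_{W_j}+\vec{C}^\ast_{W_k}=0$ fixes the alternating cyclic order of the six rays. Third, the only zero-free excluded pattern in Theorem \ref{thm::or-ext-equivalences} is the alternating one, which is exactly the case where all three $\vec{C}^\ast_W$ receive the same sign.

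Your route gives a self-contained proof that needs nothing about $\mathfrak{n}^{\T}$ or $I^{\T}$, and it works for any nowhere-zero localization of $\M(G)$. Combined with the dictionary, it would even reprove Lemma \ref{lem::partition-I values} rather than depend on it.

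The paper's route gives the labelled correspondence: the pattern $(-1,-1,0)$ yields Configuration A and $(-1,0,0)$ yields Configuration B, with the same indices. This is what the proof of Theorem \ref{thm:stability-condition-through-refinement} uses when it reads off $\sigma^\ast_{\T}(\vec{C}^\ast_{W_i})=\sigma^\ast_{\T}(\vec{C}^\ast_{W_j})=-$ from ``case (1)''. Your proof gives the dichotomy only up to relabelling. For that downstream use you would need to prove the correspondence between $I^{\T}_W$ and $\sigma^\ast_{\T}(\vec{C}^\ast_W)$, not treat it as a consistency check.
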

	\begin{proof}
		We consider the two possibilities for $I_{W_i}^\T, I_{W_j}^\T, I_{W_k}^\T$ outlined in Lemma \ref{lem::partition-I values}. It follows from the definition of localizations that if $I_{W}^\T = -1$, then $\sigma_\T^\ast(-\vec{C}^\ast_{W}) = +$, and if $I_{W}^\T = 0$, then $\sigma_\T^\ast(\vec{C}^\ast_{W}) = +$. 
		
		First, consider if $I_{W_i}^\T = I_{W_j}^\T = -1$ and $I_{W_k}^\T = 0$. Then, $\sigma_\T^\ast(-\vec{C}_{W_i}^\ast) = \sigma_\T^\ast(-\vec{C}_{W_j}^\ast) = \sigma_\T^\ast(\vec{C}_{W_k}^\ast)= +$. This corresponds to Configuration A in Figure \ref{fig:possible configs}; for example, the orientation of the edges $E(W_i, W_j)$ in $-\vec{C}_{W_i}^\ast$ is opposite of the orientation of those edges in $-\vec{C}_{W_j}^\ast$, so $-\vec{C}_{W_i}^\ast$ and $-\vec{C}_{W_j}^\ast$ should be on the opposite sides of the hyperplane corresponding to the signed cocircuits $\pm\vec{C}_{W_k}^\ast$. If instead $I_{W_i}^\T = -1$ and $I_{W_j}^\T = I_{W_k}^\T = 0$, we have Configuration B. 
	\end{proof}

	\begin{thm}\label{thm:stability-condition-through-refinement}
		Let $\Su$ be a subdivision of the Lawrence polytope $\Lambda(\M^\ast(G))$. Let $\mathcal{T}$ be a triangulation of $\Lambda(\M^\ast(G))$ that refines $\Su$, and consider the associated stability condition $\mathfrak{n}^\T$. Let $\sigma_\T^\ast$ be the localization of the extension of $\M(G)$ corresponding to $\T$, and let $\sigma^\ast_\Su$ be the localization of the extension of $\M(G)$ corresponding to $\Su$. 
		
		Define $\mathfrak{n}^\Su: \text{BCon}(G) \to \mathbb{Z}$ by
		\[\mathfrak{n}^\Su_W = \begin{cases} \mathfrak{n}^\T_W - 1 & W \in \mathcal{D}_\Su \text{ and } \sigma_\T^\ast(\vec{C}^\ast_W) = +, \\
			\mathfrak{n}^\T_W & \text{otherwise.}
		\end{cases}\]
		
		Then, $\mathfrak{n}^\Su$ is a degenerate stability condition of degree $g(G) - 1$ such that $\mathcal{D}(\mathfrak{n}^\Su) = \mathcal{D}_\Su$. 
	\end{thm}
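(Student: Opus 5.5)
The plan is to check the axioms of Definition \ref{def:stability conditon} directly for $\mathfrak{n}^\Su$, with degree $d=g(G)-1$. I will work throughout with the normalized quantities $I^\Su_W \coloneqq \mathfrak{n}^\Su_W - g(G[W])$ and $I^\T_W = \mathfrak{n}^\T_W - g(G[W])$. By Lemma \ref{lem:I-orientability}, $I^\T_W\in\{0,-1\}$, and the sign rule from the proof of Lemma \ref{lem:restiction_configs} says $I^\T_W=0$ if and only if $\sigma^\ast_\T(\vec C^\ast_W)=+$. The definition of $\mathfrak{n}^\Su$ therefore reads simply:
\[I^\Su_W=-1 \quad \text{if } W\in\mathcal{D}_\Su, \qquad I^\Su_W=I^\T_W \quad \text{otherwise}.\]

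\textbf{Step 1: the two-set condition.} Using $g(G[W])+g(G[W^c])+|E(W,W^c)| = g(G)+1$, the quantity $\mathfrak{n}_W+\mathfrak{n}_{W^c}+|E(W,W^c)|-d$ equals $I_W+I_{W^c}+2$. For $\T$ this equals $1$, since $\T$ is nondegenerate, so $\{I^\T_W,I^\T_{W^c}\}=\{0,-1\}$. This matches $\sigma^\ast_\T(-\vec C^\ast_W)=-\sigma^\ast_\T(\vec C^\ast_W)$. For $W\in\mathcal{D}_\Su$ both values become $-1$, giving $0$, so $W$ is degenerate. Otherwise nothing changes and the value is $1$. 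Hence $\mathcal{D}(\mathfrak{n}^\Su)=\mathcal{D}_\Su$. This set is nonempty (so the condition is degenerate) exactly when $\Su$ is not a triangulation; I will note that caveat.

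\textbf{Step 2: the three-set conditions.} Let $W_1,W_2,W_3$ be a partition of $V(G)$ into biconnected sets. By the genus computation in the proof of Lemma \ref{lem::partition-I values}, equation \eqref{eq:tribond-stabilitycond} becomes the requirement that $S\coloneqq I^\Su_{W_1}+I^\Su_{W_2}+I^\Su_{W_3}+3$ lies in $\{1,2\}$, $\{1\}$, or $\{0\}$ according as zero, one, or three of the $W_i$ lie in $\mathcal{D}_\Su$. I split into cases by how many $W_i$ are in $\mathcal{D}_\Su$.

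\emph{No $W_i$ in $\mathcal{D}_\Su$.} Then $S$ is the value for $\T$, which lies in $\{1,2\}$.

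\emph{Two or three $W_i$ in $\mathcal{D}_\Su$.} Property (1) forces the third into $\mathcal{D}_\Su$ too. For instance, if $W_1,W_2\in\mathcal{D}_\Su$, then $W_3^c=W_1\cup W_2$ is in $\mathcal{D}_\Su$ because $\mathcal{D}_\Su$ is a degeneracy set, and so $W_3\in\mathcal{D}_\Su$. With all three in $\mathcal{D}_\Su$, every $I^\Su_{W_i}=-1$, so $S=0$.

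\emph{Exactly one $W_i$ in $\mathcal{D}_\Su$, say $W_k$.} This is the main obstacle. We need $I^\T_{W_i}+I^\T_{W_j}=-1$, i.e.\ exactly one of $\sigma^\ast_\T(\vec C^\ast_{W_i})$, $\sigma^\ast_\T(\vec C^\ast_{W_j})$ is $+$. I will use the rank 2 minor $\mathcal N$ of Lemma \ref{lem:rank2minorexists}. On $\mathcal N$ the localization $\sigma^\ast_\Su$ vanishes on $\pm\vec C^\ast_{W_k}$ and is nonzero on the other two lines, and it avoids the forbidden configurations of Figure \ref{fig:forbiddenconfigs}. The middle forbidden configuration is exactly the pattern in which the rays $\vec C^\ast_{W_i}$ and $\vec C^\ast_{W_j}$, which lie on opposite sides of the $W_k$-line, carry the same sign. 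So $\sigma^\ast_\Su(\vec C^\ast_{W_i})=-\sigma^\ast_\Su(\vec C^\ast_{W_j})$.

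It then remains to transfer these signs to $\T$. I claim $\sigma^\ast_\T$ agrees with $\sigma^\ast_\Su$ wherever the latter is nonzero, since $\T$ refines $\Su$. This follows from Lemma \ref{lem:lawrence_polytopes_lifting_subdivisions}: the restriction of $\T$ to a face refines the restriction of $\Su$, and a triangulation $\mathcal T^\pm_{\vec C}$ admits no proper refinement, so $\Su_F=\mathcal T^+_{\vec C}$ forces $\T_F=\mathcal T^+_{\vec C}$. Hence exactly one of $I^\T_{W_i},I^\T_{W_j}$ is $0$, giving $S=1$. This rules out the possibility, allowed in principle by Lemma \ref{lem:restiction_configs}, that configuration A or B puts the two $-1$'s (or two $0$'s) on $W_i,W_j$.

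Finally, property (1) holds by the closure argument already used in the two-or-three case, and the degree is $g(G)-1$ by construction. This completes the plan.
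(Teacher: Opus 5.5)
Your proposal is correct and follows essentially the same route as the paper: normalize by $g(G[W])$, verify the two-set identity, use closure of $\mathcal{D}_\Su$ under disjoint unions for property (1), and settle the case of exactly one degenerate set with the second forbidden configuration on the rank-2 minor, together with the fact that $\sigma^\ast_\T$ agrees with $\sigma^\ast_\Su$ wherever the latter is nonzero. Your handling of that last case is slightly more direct than the paper's split into Configurations A and B, you justify the refinement sign-agreement that the paper uses tacitly, and your caveat that $\mathfrak{n}^\Su$ is degenerate only when $\Su$ is not a triangulation is accurate.
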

	\begin{proof}
		First, we check that 
		\[\mathfrak{n}^\Su_W + \mathfrak{n}^\Su_{W^c} + |E(W, W^c)| - (g(G) - 1) \in \{0,1\}.\]
		Since $\mathfrak{n}^{\mathcal{T}}$ is a nondegenerate stability condition, we have
		\[\mathfrak{n}_W^{\mathcal{T}} + \mathfrak{n}_{W^c}^{\mathcal{T}} + |E(W,W^c)| - g(G) + 1 =1\]
		for every $W \in \text{BCon}(G)$. If $W \notin \mathcal{D}_\Su$, then by definition $W^c \notin \mathcal{D}_\Su$, so we have $\mathfrak{n}_W^\Su =\mathfrak{n}^{\mathcal{T}}_W$ and $\mathfrak{n}^\Su_{W^c} =\mathfrak{n}^{\mathcal{T}}_{W^c}$ and the desired equality holds. If $W \in \mathcal{D}_\Su$, then either $\mathfrak{n}^\Su_W =\mathfrak{n}^{\mathcal{T}}_W -1$ or $\mathfrak{n}^\Su_{W^c} =\mathfrak{n}^{\mathcal{T}}_{W^c}-1$, and the desired equality holds. By construction, $\mathcal{D}(\mathfrak{n}^\Su) = \mathcal{D}_\Su$. 
		
		Now, consider biconnected, pairwise disjoint subsets $W_1, W_2, W_3$ such that $W_1 \cup W_2 \cup W_3 = V(G)$. If $W_i$ and $W_j$ are $\mathfrak{n}^\Su$-degenerate, both $W_i$ and $W_j$ are in $\mathcal{D}_\Su$. It follows from the definition of a degeneracy set that $W_k = (W_i \cup W_j)^c \in \mathcal{D}_\Su$, and therefore $W_k$ is $\mathfrak{n}^\Su$-degenerate. Finally, we show that
		\begin{equation}\label{eqn:subdiv-thm-tribond}
			\sum_{i=1}^3 \mathfrak{n}^\Su_{W_i} + \sum_{1 \leq i < j \leq 3} |E(W_i, W_j)| - (g(G) -1) = \begin{cases}
				\{1,2\} & W_i \text{ is } \mathfrak{n}^\Su\text{-nondegenerate for all } i \\
				\{1\} & \exists! i \text{ such that } W_i \text{ is } \mathfrak{n}^\Su\text{-degenerate} \\
				\{0\} & W_i \text{ is } \mathfrak{n}^\Su\text{-degenerate for all } i.
			\end{cases}
		\end{equation}
		
		In the first case, we have $\mathfrak{n}^\Su_{W_i} = \mathfrak{n}_{W_i}^{\mathcal{T}}$ for $i = 1,2,3$, and the condition is satisfied because $\mathfrak{n}^\T$ is a stability condition. In the last case, we have $\mathfrak{n}^\Su_{W_i} = -1  + g(G[W_i])$ for all $i$, so using Equation \ref{eqn:sum of genus}, we see Equation \ref{eqn:subdiv-thm-tribond} is $0$. Finally, we consider when there exists a unique $i$ such that $W_i$ is $\mathfrak{n}^\Su$-degenerate. Without loss of generality, assume $W_1$ is the unique $\mathfrak{n}^\Su$-degenerate set. Let $I^\T$ be the function on spanning trees constructed from the triangulation $\T$. Then, Lemma \ref{lem::partition-I values} gives us two possibilities for $I_{W_i}^\T, I_{W_j}^\T, I_{W_k}^\T$, where $\{i,j,k\} = \{1,2,3\}$:
		\begin{enumerate}
			\item $I_{W_i}^\T = -1, I_{W_j}^\T=-1$, $I_{W_k}^\T = 0$;
			\item $I_{W_i}^\T = -1, I_{W_j}^\T = 0, I_{W_k}^\T = 0$.
		\end{enumerate}
		
		In the first case, we have
		\[\sum_{i=1}^3 \mathfrak{n}^\T_{W_i} + \sum_{1 \leq i < j \leq 3} |E(W_i, W_j)| - g(G) +1 = 1.\]
		Therefore, we wish to show that $\mathfrak{n}^\Su_{W_i} = \mathfrak{n}_{W_i}^\T$ for each $i \in \{1,2,3\}$. By Lemma \ref{lem:restiction_configs}, the localization $\sigma_\T^\ast$ yields Configuration A on the minor of $\M(G)$ corresponding to contracting all edges $E(G) \backslash (E(W_i, W_j) \cup E(W_i,W_k) \cup E(W_j,W_k))$. If $W_k$ is $\mathfrak{n}^\Su$-degenerate, then Configuration A becomes the second forbidden subconfiguration in Figure \ref{fig:forbiddenconfigs}; therefore, either $W_i$ or $W_j$ is $\mathfrak{n}^\Su$-degenerate. In either case, we have $\sigma_\T^\ast(\vec{C}_{W_i}^\ast) = \sigma_\T^\ast(\vec{C}_{W_j}^\ast) = -$, so $\mathfrak{n}^\Su_{W_i} = \mathfrak{n}_{W_i}^\T$ for each $i \in \{1,2,3\}$.
		For the second case, we have
		\[\sum_{i=1}^3 \mathfrak{n}^\T_{W_i} + \sum_{1 \leq i < j \leq 3} |E(W_i, W_j)| - g(G) +1 = 2.\] 
		By Lemma \ref{lem:restiction_configs}, the localization $\sigma_\T^\ast$ yields Configuration B on the minor of $\M(G)$ corresponding to contracting all edges $E(G) \backslash (E(W_i, W_j) \cup E(W_i,W_k) \cup E(W_j,W_k))$. If $W_i$ is $\mathfrak{n}^\Su$-degenerate, then Configuration B becomes the second forbidden subconfiguration in Figure \ref{fig:forbiddenconfigs}; therefore, exactly one of $W_j$ or $W_k$ is $\mathfrak{n}^\Su$-degenerate. Without loss of generality, assume $W_j$ is $\mathfrak{n}^\Su$-degenerate. We have $\sigma_\T^\ast(\vec{C}_{W_j}^\ast) = +$, so $\mathfrak{n}^\Su_{W_j} = \mathfrak{n}^\T_{W_j} - 1$ and the desired equality holds. Therefore, $\mathfrak{n}^\Su$ is a stability condition with degeneracy set $\mathcal{D}_\Su$. 
	\end{proof}

	Recall from Lemma \ref{lem:forest-fn-from-n} that for any stability condition $\mathfrak{n}$ with degeneracy set $\mathcal{D}(\mathfrak{n})$, we can construct a $\mathcal{D}(\mathfrak{n})$-forest function $I_{\mathfrak{n}}$ such that $|\mathcal{BD}_{I_{\mathfrak{n}}}(G)| = c_{\mathcal{D}(\mathfrak{n})}(G)$. For the stability condition $\mathfrak{n}^\Su$, we can explicitly describe $I_{\mathfrak{n}^\Su}$.
	
	\begin{lem}\label{lem:I-from-n}
		Let $\Su$ be a subdivision of $\Lambda(\M^\ast(G))$, and consider the stability condition $\mathfrak{n}^\Su$, as defined in Theorem \ref{thm:stability-condition-through-refinement}, with degeneracy set $\mathcal{D}_\Su$. Consider a forest $F \in m\mathcal{SS}_{\mathcal{D}_\Su}(G)$. For $e \in F$, recall that $F \backslash e = F[W_e] \sqcup F[V(F_{i_e})\backslash W_e] \sqcup_{i \neq i_e}F_i$, where $F_{i_e}$ is the connected component of $F$ containing $e$. Then, $I_{\mathfrak{n}^\Su}(F)$ is the unique divisor satisfying 
		\[\begin{cases}
			I_{\mathfrak{n}^\Su}(F)_{W_e} \in \{0, -1\} \\
			I_{\mathfrak{n}^\Su}(F)_{V(F_{i_e})\backslash W_e} \in \{0, -1\} \backslash I_{\mathfrak{n}^\Su}(F)_{W_e} \\
			I_{\mathfrak{n}^\Su}(F)_{V(F_i)} = -1.
		\end{cases}\]
	\end{lem}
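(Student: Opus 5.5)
We would compute $I_{\mathfrak{n}^\Su}(F)$ directly from the defining equations \eqref{eqn::I from n} of Lemma \ref{lem:forest-fn-from-n}. That is, we would evaluate the restricted stability conditions $\mathfrak{n}^\Su(F_i)$ and their degrees via Lemma \ref{lem:restriction-n-to-subgraphs}, and show the resulting values are exactly those displayed. Uniqueness is automatic, since a divisor on $G$ is determined by its sums over the sets $W_e$, $V(F_{i_e})\backslash W_e$ and $V(F_i)$. This holds because $F$ is a spanning forest: the values $I(F)_{W_e}$ for all $e$ determine $I(F)$ on each component, by peeling off leaves.

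\textbf{Step 1 (component sums).} For a component $F_i$ with vertex set $V_i=V(F_i)\in\hat{\mathcal{D}}_\Su$, we have $|\mathfrak{n}^\Su(F_i)| = \mathfrak{n}^\Su_{V_i} - |X_i|$ with $X_i = E(G[V_i])\backslash E(F_i)$. Since $F_i$ is a tree, $|X_i| = g(G[V_i])$. Write $G[V_i^c]=\bigsqcup_j G[Z_j]$ with all $Z_j\in\mathcal{D}_\Su$, and use Definition \ref{def:extended-V-fn}(1). Every $Z_j$ is degenerate, so the correction term vanishes, and $\mathfrak{n}^\Su_{Z_j} = g(G[Z_j])-1$. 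This holds either because $\mathfrak{n}^\Su_{Z_j}=\mathfrak{n}^\T_{Z_j}-1$ with $I^\T_{Z_j}=0$, or because $\mathfrak{n}^\Su_{Z_j}=\mathfrak{n}^\T_{Z_j}$ with $I^\T_{Z_j}=-1$; here we use that $\sigma^\ast_\T(\vec{C}^\ast_W)=+$ exactly when $I^\T_W=0$, as in the proof of Lemma \ref{lem:restiction_configs}. Plugging in $d=g(G)-1$ and doing a genus count like Equation \eqref{eqn:sum of genus} should give $\mathfrak{n}^\Su_{V_i} = g(G[V_i])-1$, hence $|\mathfrak{n}^\Su(F_i)|=-1$. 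We will need a general version of \eqref{eqn:sum of genus} for a partition of $V(G)$ into $V_i$ and the $Z_j$, noting that $E(V_i,V_i^c)$ is the disjoint union of the $E(Z_j,V_i)$ and there are no edges between distinct $Z_j$.

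\textbf{Step 2 (the split sets).} For $e\in F_{i_e}$, we need $I(F)_{W_e}=\mathfrak{n}^\Su(F_{i_e})_{W_e} = \mathfrak{n}^\Su_{W_e}-|X\cap E(G[W_e])|$. Here $W_e$ is connected, but it need not be biconnected in $G$. Its complement splits into $V(F_{i_e})\backslash W_e$ together with the $Z_j$'s adjacent to it, plus the other pieces. We evaluate $\mathfrak{n}^\Su_{W_e}$ through the extended function again. The key observation is that $W_e$ union all $Z_j$ attached to it is a biconnected set $W'$ whose cocycle is the fundamental cocycle $C^\ast(T,e)$ for any spanning tree $T\geq F$ built by adding edges inside the degenerate pieces. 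Moreover $W'\notin\mathcal{D}_\Su$, by the argument in Lemma \ref{lem::subdiv->oriented forests} and minimality of $F$. So $\mathfrak{n}^\Su_{W'}=\mathfrak{n}^\T_{W'} = g(G[W'])+I^\T_{W'}$ with $I^\T_{W'}\in\{0,-1\}$ by Lemma \ref{lem:I-orientability}. We then relate $\mathfrak{n}^\Su_{W_e}$ to $\mathfrak{n}^\Su_{W'}$ using the same genus bookkeeping and the degenerate values $g(G[Z_j])-1$ from Step 1, and conclude $I(F)_{W_e}=I^\T_{W'}\in\{0,-1\}$. The complementary piece is then forced by $I(F)_{V(F_{i_e})}=-1$.

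\textbf{Main obstacle.} The main obstacle is Step 2: making the passage from the non-biconnected $W_e$ to the biconnected $W'$ rigorous through Definition \ref{def:extended-V-fn}, and checking the bookkeeping. Alternatively, one could observe that the displayed conditions are exactly the orientability conditions of Lemma \ref{lem:I-orientability}. One would then try to prove $I_{\mathfrak{n}^\Su}$ orientable by showing each $I_{\mathfrak{n}^\Su}(F)$ equals $D_{-(\vec{F}^\ast)^c}$ for the subdividing atlas of Theorem \ref{thm::subdiv->atlases}. That route could be used as a consistency check.
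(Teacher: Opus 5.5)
Your proposal is correct and follows essentially the paper's route. You evaluate Lemma \ref{lem:forest-fn-from-n} through the extended $V$-function, using that the degenerate pieces satisfy $\mathfrak{n}^\Su_Z = g(G[Z])-1$ and that exactly one component of $G[W_e^c]$ (the one containing $V(F_{i_e})\backslash W_e$) is nondegenerate, contributing $0$ or $-1$. Your two reorganizations are harmless and slightly cleaner: you express that contribution through $I^\T$ of its complement $W'$ rather than through $\sigma^\ast_\Su$, and you get the value on $V(F_{i_e})\backslash W_e$ from $|\mathfrak{n}^\Su(F_{i_e})|=-1$ instead of redoing the calculation. Take $W'$ to be exactly the complement of that nondegenerate component, so a degenerate piece adjacent to both $W_e$ and $V(F_{i_e})\backslash W_e$ lies in the component, not in $W'$. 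Also, $E(W',W'^c)$ is the fundamental cocycle $C^\ast(T,e)$ only for a suitably chosen spanning tree $T \geq F$, not for every such $T$.
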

	\begin{proof}
		Recall from Lemma \ref{lem:forest-fn-from-n} that $I_{\mathfrak{n}^\Su}(F)$ is the unique divisor on $G$ satisfying
		\[\begin{cases}
			I_{\mathfrak{n}^\Su}(F)_{W_e} = \mathfrak{n}^\Su(F_{i_e})_{W_e} \\
			I_{\mathfrak{n}^S}(F)_{V(F_{i_e})\backslash W_e} = \mathfrak{n}^\Su(F_{i_e})_{V(F_{i_e}) \backslash W_e} \\
			I_{\mathfrak{n}^\Su}(F)_{V(F_i)} = |\mathfrak{n}^\Su(F_i)|.
		\end{cases}\]
		
		We first consider $I_{\mathfrak{n}^\Su}(F)_{V(F_i)} = |\mathfrak{n}^\Su(F_i)|$. Because $F \in m\mathcal{SS}_{\mathcal{D}_\Su}(G)$, each $F_i$ is a connected, $\mathcal{D}_\Su$-admissible subgraph of $G$. Let $E_i \subset E(G)$ be the edges such that $F_i = G[V(F_i)] \backslash E_i$. We note that $|E_i| = g(G[V(F_i)])$, because $E_i$ is the complement of the spanning tree $F_i$ of $G[V(F_i)]$. From Lemma~\ref{lem:restriction-n-to-subgraphs}, we have $|\mathfrak{n}^\Su(F_i)| = \mathfrak{n}^\Su_{V(F_i)} - |E_i|$. We then wish to determine $\mathfrak{n}^\Su_{V(F_i)}$.
		
		Since $F_i$ is connected, $V(F_i)$ is a connected subset of $V(G)$. We can extend $\mathfrak{n}^\Su$ to be a function on connected subsets of $V(G)$, as defined in Definition~\ref{def:extended-V-fn}. Let $G[V(F_i)^c] = \bigsqcup_{j \in J} G[Z_j]$ be the decomposition of $G[V(F_i)^c]$ into biconnected subsets $Z_j$. We define
		\begin{equation}\label{equation:extension-to-connected}
			n^\Su_{V(F_i)} = g(G) - 1 - \sum_{j \in J}\mathfrak{n}^\Su_{Z_j} - |E(V(F_i), V(F_i)^c)| + |\{Z_j | Z_j \notin \mathcal{D}_\Su\}|.
		\end{equation}
		Because $F_i$ is a connected component of $F$, and $F$ is $\mathcal{D}_\Su$-admissible, $V(F_i) \in \hat{\mathcal{D}}_\Su$, and every $Z_j$ in the decomposition of $G[V(F_i)^c]$ is in $\mathcal{D}_\Su$. Therefore, $|\{Z_j | Z_j \notin \mathcal{D}_\Su\}| = 0$. By the definition of $\mathfrak{n}^\Su$ in Theorem \ref{thm:stability-condition-through-refinement}, for each $Z_j$ in the decomposition, we have $\mathfrak{n}^\Su_{Z_j} = -1 + g(G[Z_j]) = |E(G[Z_j])| - |Z_j|$. Using the decomposition $G[V(F_i)^c] = \bigsqcup_{j \in J} G[Z_j]$, we have
		\begin{align*}
			\sum_{j \in J} |Z_j| &= |V(F_i)^c| = |V(G)| - |V(F_i)| \\
			\sum_{j \in J} |E(G[Z_j])| &= |E(G)| - |E(V(F_i), V(F_i)^c)| - |E(G[V(F_i)])|. 
		\end{align*}
		After plugging into Equation~\ref{equation:extension-to-connected} and simplifying, we have $n^\Su_{V(F_i)} = |E(G[V(F_i)])| - |V(F_i)|$, so $|\mathfrak{n}^\Su(F_i)| = -1$.
		
		Now, we consider $I_{\mathfrak{n}^\Su}(F)_{W_e} = \mathfrak{n}^\Su(F_{i_e})_{W_e}$. Again, $W_e$ is a connected subset of $G$, so $G[W_e^c] = \sqcup_{j \in J} G[Z_j]$ for biconnected subsets $Z_j$ of $V(G)$. We first examine $\mathfrak{n}^\Su_{W_e}$. As before,
		\begin{equation} \label{equation-b}
			\mathfrak{n}^\Su_{W_e} = g(G) - 1 - \sum_{j \in J}\mathfrak{n}^\Su_{Z_j} - |E(W_e,W_e^c)| + |\{Z_j | Z_j \notin \mathcal{D}_\Su\}|.
		\end{equation}
		
		$W_e$ is not the vertex set of a connected component of $F$, since it is contained in $V(F_{i_e})$. Since $F$ is minimal in $\mathcal{SS}_{\mathcal{D}_\Su}(G)$, we also have $W_e \notin \hat{\mathcal{D}}_\Su$. Therefore, for the decomposition of $G[W_e^c]$ into disjoint $G[Z_j]$ where all $Z_j$ are biconnected, there exists a $Z_e$ such that $Z_e \notin \mathcal{D}_\Su$. Additionally, because all of the $G[Z_j]$ are disjoint, $Z_e$ is the unique biconnected subset in the decomposition of $G[W_e^c]$ such that $e \in E(Z_e, Z_e^c)$. Consider the localization $\sigma^\ast_\Su$ of the extension of $\M(G)$ corresponding to $\Su$. From Theorem \ref{thm:stability-condition-through-refinement}, we have
		\[ \mathfrak{n}^\Su_{Z_j} = \begin{cases}
			g(G[Z_j]) & \text{if } Z_j \notin \mathcal{D}_\Su \text{ and } \sigma_\Su^\ast(\vec{C}^\ast_{Z_j})=+\\
			-1 + g(G[Z_j]) & \text{otherwise.}
		\end{cases}\]
		This allows us to rewrite Equation~\ref{equation-b} as
		\begin{align*}
			\sum_{j \in J} \mathfrak{n}^\Su_{Z_j} &= \sum_{j \in J} g(G[Z_j]) - |\{ Z_j | Z_j \in \mathcal{D}_\Su\}| - |\{Z_j |  Z_j \notin \mathcal{D}_\Su \text{ and } \sigma_\Su^\ast(\vec{C}_{Z_j}^\ast)=-\}| 
			\\
			&= |J| + |E(G)| - |E(W_e, W_e^c)| - |E(G[W_e])| - |V(G)| + |W_e| -|\{ Z_j | Z_j \in \mathcal{D}_\Su\}| \\
			& \hspace{3cm}-|\{Z_j |  Z_j \notin \mathcal{D}_S \text{ and } \sigma_\Su^\ast(\vec{C}_{Z_j}^\ast)=- \} |\\
			&= |\{Z_j |  Z_j \notin \mathcal{D}_S \text{ and } \sigma_\Su^\ast(\vec{C}_{Z_j}^\ast)= + \}| + g(G) - 1 - |E(W_e, W_e^c)| - |E(G[W_e])| + |W_e|.
		\end{align*}
		
		Plugging back into our expression for $\mathfrak{n}^\Su_{W_e}$, we have
		\begin{align*}
			\mathfrak{n}^S_{W_e} &= |\{Z_j |  Z_j \notin \mathcal{D}_\Su \text{ and } \sigma_\Su^\ast(\vec{C}_{Z_j}^\ast)= - \}| + |E(G[W_e])| - |W_e| \\
			&= |\{Z_j |  Z_j \notin \mathcal{D}_S \text{ and } \sigma_\Su^\ast(\vec{C}_{Z_j}^\ast)= -\}| + g(G[W_e]) - 1.
		\end{align*}
		Let $E_{i_e} \subset E(G)$ such that $F_{i_e} = G[V(F_{i_e})] \backslash E_{i_e}$. Then, $|E_{i_e} \cap E(G[W_e])| = g(G[W_e])$. We then have 
		\begin{align*}
			\mathfrak{n}(F_{i_e})_{W_e} &= \mathfrak{n}_{W_e} - |E_{i_e} \cap E(G[W_e])| \\
			&= |\{Z_j |  Z_j \notin \mathcal{D}_S \text{ and } \sigma_\Su^\ast(\vec{C}_{Z_j}^\ast)= - \}| -1.
		\end{align*}
		We know $Z_e \notin \mathcal{D}_\Su$. Then, for all other $Z_j \ne Z_e$, we have that $E(Z_j, Z_j^c) \subseteq E(G) \backslash E(F_{i_e})$. Assume that $G[Z_j] \subseteq G[V(F_{i_e}^c)]$. Since $V(F_{i_e})$ is $\mathcal{D}_\Su$-admissible, we have a decomposition $G[V(F_{i_e})^c] = \bigsqcup_{k \in K} G[W_k]$ where each $W_k \in \mathcal{D}_\Su$. If there exists a $W_k$ such that $Z_j = W_k$, then $Z_j$ is $\mathfrak{n}^\Su$-degenerate, as desired. Because $Z_j$ is biconnected, the other possibility is there exists $W_k$ such that $Z_j \subset W_k$. However, in this case $G[W_k]$ would also be in the decomposition of $G[W_e^c]$, so such a $Z_j$ cannot exist.
		
		Therefore, $|\{Z_j |  Z_j \notin \mathcal{D}_\Su \text{ and } \sigma_\Su^\ast(\vec{C}_{Z_j}^\ast)= - \}|$ is either $0$ or $1$, depending on the orientation of $C_{Z_e}^\ast$, so $I_{\mathfrak{n}^\Su}(F)_{W_e} \in \{0, -1\}$.  By the same calculation, we have that $I_{\mathfrak{n}^\Su}(F)_{V(F_{i_e})\backslash W_e} \in \{0, -1\} \backslash I_{\mathfrak{n}^\Su}(F)_{W_e}$, as desired.
	\end{proof}
	
	\begin{corollary} \label{cor:I-from-n}
		Let $\Su$ be a subdivision of $\Lambda(\M^\ast(G))$, and $\mathfrak{n}^\Su$ be the associated stability condition with degeneracy set $\mathcal{D}_\Su$. Then, the $\mathcal{D}_\Su$-forest function $I_{\mathfrak{n}^\Su}$ is orientable. 
	\end{corollary}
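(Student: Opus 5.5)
The plan is to deduce Corollary \ref{cor:I-from-n} directly from Lemma \ref{lem:I-from-n} combined with the characterization of orientability in Lemma \ref{lem:I-orientability}. By definition, $I_{\mathfrak{n}^\Su}$ is orientable exactly when, for every $F \in m\mathcal{SS}_{\mathcal{D}_\Su}(G)$, the divisor $I_{\mathfrak{n}^\Su}(F)$ is orientable. We will verify the hypothesis of Lemma \ref{lem:I-orientability} and then check that its three conditions are exactly the conclusion of Lemma \ref{lem:I-from-n}.

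First, we confirm the counting hypothesis $|\mathcal{BD}_{I_{\mathfrak{n}^\Su}}(G)| = c_{\mathcal{D}_\Su}(G)$. Theorem \ref{thm:stability-condition-through-refinement} shows that $\mathfrak{n}^\Su$ is a stability condition with $\mathcal{D}(\mathfrak{n}^\Su) = \mathcal{D}_\Su$. Theorem \ref{thm:stability-forest-fn-bij}, together with the construction in Lemma \ref{lem:forest-fn-from-n}, then says the associated forest function $I_{\mathfrak{n}^\Su}$ satisfies this equality. So Lemma \ref{lem:I-orientability} applies.

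Second, we fix $F \in m\mathcal{SS}_{\mathcal{D}_\Su}(G)$ and compare. Lemma \ref{lem:I-from-n} states that $I_{\mathfrak{n}^\Su}(F)$ satisfies:
\begin{itemize}
\item $I_{\mathfrak{n}^\Su}(F)_{W_e} \in \{0,-1\}$ for every edge $e \in F$;
\item the complementary value on $V(F_{i_e})\backslash W_e$;
\item $I_{\mathfrak{n}^\Su}(F)_{V(F_i)} = -1$ for every component.
\end{itemize}
These are verbatim the conditions of Equation \ref{eqn:forest-cases}. The reverse direction of Lemma \ref{lem:I-orientability} then shows $I_{\mathfrak{n}^\Su}(F)$ is orientable. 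Concretely, we orient each $e \in F$ toward $W_e$ if $I(F)_{W_e}=0$ and away otherwise, which gives an orientation $\Or$ of $F$ with $D_\Or = I(F)$.

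The only step needing care is a bookkeeping point in Lemma \ref{lem:I-orientability}. The component condition $I(F)_{V(F_i)}=-1$ is demanded for all components $F_i$, including $F_{i_e}$. In Lemma \ref{lem:I-from-n} it is stated for all components as well, so this is immediate: its first computation applies to any connected component. Since $F$ was arbitrary, $I_{\mathfrak{n}^\Su}$ is orientable. As a consistency check, Corollary \ref{cor:stab-degree-from-orientable-I} then recovers that the degree is $g(G)-1$, matching Theorem \ref{thm:stability-condition-through-refinement}.
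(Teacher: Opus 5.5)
Your proposal is correct and is the same argument as the paper's, which simply notes that the three conditions established in Lemma \ref{lem:I-from-n} are exactly those of Lemma \ref{lem:I-orientability}. Your explicit check of the hypothesis $|\mathcal{BD}_{I_{\mathfrak{n}^\Su}}(G)| = c_{\mathcal{D}_\Su}(G)$, via Theorem \ref{thm:stability-forest-fn-bij} and Lemma \ref{lem:forest-fn-from-n}, is a detail the paper leaves implicit.
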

	
	\begin{proof}
		This follows immediately from Lemmas \ref{lem:I-orientability} and \ref{lem:I-from-n}.
	\end{proof}
	
	Lemma \ref{lem:I-from-n} and Corollary \ref{cor:I-from-n} allow us to generalize Theorem \ref{thm:triang-to-goodIG} to subdivisions of the Lawrence polytope $\Lambda(\M^\ast(G))$. We show this agrees from constructing a $\mathcal{D}_\Su$-forest function directly from the subdivision. 
	\begin{thm}
		Let $\Su$ be a subdivision of the Lawrence polytope $\Lambda(\M^\ast(G))$. Consider the map $\chi^\ast$ from cells of the subdivision $\Su$ to internal orientations $\vec{F^{\ast}}$ for each $F \in m\mathcal{SS}_{\mathcal{D}_\Su}(G)$. Then, define the $\mathcal{D}_\Su$-forest function $I^\Su$ by
		\begin{align*}
			I^\Su:m\mathcal{SS}_{\mathcal{D}_\Su}(G) &\to \Div^{-b_0(F)}(G) \\ 
			F &\mapsto D_{-(\vec{F^\ast})^c}
		\end{align*}
		and $\mathcal{BD}_{I^\Su}(G)$ satisfies $|\mathcal{BD}_{I^\Su}(G)| = c_{\mathcal{D}_\Su}(G)$. 
	\end{thm}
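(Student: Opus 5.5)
The plan is to identify $I^\Su$ with the forest function $I_{\mathfrak{n}^\Su}$ attached to the stability condition $\mathfrak{n}^\Su$ of Theorem \ref{thm:stability-condition-through-refinement}. By Theorem \ref{thm:stability-forest-fn-bij} (equivalently Lemma \ref{lem:forest-fn-from-n}), $I_{\mathfrak{n}^\Su}$ satisfies $|\mathcal{BD}_{I_{\mathfrak{n}^\Su}}(G)| = c_{\mathcal{D}_\Su}(G)$. So it suffices to show $I^\Su(F) = I_{\mathfrak{n}^\Su}(F)$ for every $F \in m\mathcal{SS}_{\mathcal{D}_\Su}(G)$. First, $I^\Su$ is well defined. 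By Theorem \ref{thm::subdiv->atlases}, every such $F$ receives an internal orientation $\vec{F}^\ast = \chi^\ast(c_F)$. The cell $c_F$ is unique, because two cells with the same image under $\chi^\ast$ would have the same vertex set. Then $-(\vec{F}^\ast)^c$ is an orientation of $F$, so $I^\Su(F)=D_{-(\vec F^\ast)^c}$ is orientable of degree $-b_0(F)$.

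Both divisors are orientable divisors supported on the forest $F$: this holds for $I_{\mathfrak{n}^\Su}$ by Corollary \ref{cor:I-from-n}. As in the proof of Lemma \ref{lem:I-orientability}, an orientable divisor on a forest determines its orientation uniquely. It is read off edge by edge: $e$ points into $W_e$ exactly when the value on $W_e$ is $0$. Hence it suffices to check that, for each edge $e$ of $F$, the two divisors give $W_e$ the same value in $\{0,-1\}$.

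For $I_{\mathfrak{n}^\Su}$, the proof of Lemma \ref{lem:I-from-n} gives this value as $|\{Z_e : \sigma^\ast_\Su(\vec C^\ast_{Z_e}) = -\}| - 1$. Here $Z_e$ is the unique component of $G[W_e^c]$ with $e\in E(Z_e,Z_e^c)$, and $Z_e\notin\mathcal{D}_\Su$. So the value is governed by the sign $\sigma^\ast_\Su$ on the cocircuit $C^\ast_e := E(Z_e,Z_e^c)$.

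For $I^\Su$, the value is governed by the direction of $e$ in $\vec F^\ast$. Choose a spanning tree $T\ge F$ in $\mathcal{SS}_{\mathcal{D}_\Su}(G)$ obtained by adding, inside each component of $G[V(F_{i_e})^c]$, spanning trees together with connecting edges. Then $C^\ast_e$ is the fundamental cocycle $C^\ast(T,e)$; this uses the decomposition argument at the end of Lemma \ref{lem:I-from-n}. The direction of $e$ is then fixed by Lemma \ref{lem:fundcocircuits-restricted triangs}. Since $Z_e\notin\mathcal D_\Su$, we have $\sigma^\ast_\Su(\pm\vec C^\ast_e)=\pm$, so $c_F\cap F_{\vec C^\ast_e}$ is a nontrivial cell of the restricted triangulation $\T^{\pm}_{\vec C^\ast_e}$. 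Consequently the one-way oriented edge $e$ appears in $\vec F^\ast$ with the sign it has in whichever of $\pm\vec C^\ast_e$ receives $+$ under $\sigma^\ast_\Su$. Taking $-(\cdot)^c$ reverses this direction. Comparing with the first computation yields the same value on $W_e$.

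The main obstacle is this last sign bookkeeping. One must match three conventions: the orientation of $\vec C^\ast_{Z}$ towards $Z$, the sign convention for $\T^+_{\vec C}$ in Lemma \ref{lem:lawrence_polytopes_lifting_subdivisions}, and the reversal in $-(\vec F^\ast)^c$. These must be matched consistently with the triangulation case, where $I^\T_W=-1$ iff $\sigma^\ast_\T(-\vec C^\ast_W)=+$ (Lemma \ref{lem:restiction_configs}). I would calibrate using that triangulation case: $\T$ refines $\Su$, so $\sigma^\ast_\T$ and $\sigma^\ast_\Su$ agree on $\vec C^\ast_e$, and $\vec F^\ast\subseteq\vec T^\ast=\chi^\ast(\tau)$ for some $\tau\subseteq c_F$. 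Since $I^\T(T)=D_{-(\vec T^\ast)^c}$ restricts to $I^\Su(F)$ on the edge $e$, the agreement reduces to the already established correspondence between $I^\T$ and $\sigma^\ast_\T$ for triangulations.
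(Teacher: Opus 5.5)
Your proposal is the paper's proof made explicit: you identify $I^\Su$ with $I_{\mathfrak{n}^\Su}$ edge by edge via $\sigma^\ast_\Su$ and then invoke Theorem \ref{thm:stability-forest-fn-bij}, whereas the paper asserts in one line that the one-way edges of $\vec F^\ast$ are governed by $\sigma^\ast_\Su$. One slip needs fixing: $-(\vec F^\ast)^c$ does not reverse the one-way edges of $\vec F^\ast$ (it only exchanges bioriented and unoriented edges), so your stated convention plus the claimed reversal would give the opposite value on $W_e$; your calibration repairs this provided you route it through the face $F_{\vec C^\ast_e}$ rather than through a simplex $\tau\subseteq c_F$ (for your chosen $T$, such a simplex need not exist): since $C^\ast(T,e)=C^\ast_e$, both $c_F\cap F_{\vec C^\ast_e}$ and $\tau_T\cap F_{\vec C^\ast_e}$, where $\chi^\ast(\tau_T)=\vec T^\ast$, are the unique cell of $\T^{\pm}_{\vec C^\ast_e}$ whose one-way edge is $e$, so $\vec F^\ast$ and $\vec T^\ast$ orient $e$ identically and the operation $-(\cdot)^c$ cancels against $I^\T(T)=D_{-(\vec T^\ast)^c}$.
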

	
	\begin{proof}
		From Theorem \ref{thm::subdiv->atlases}, the subdivision $\Su$ yields an internal orientation $\vec{F^\ast}$ of every $F \in m\mathcal{SS}_{\mathcal{D}_\Su}(G)$. The one-way oriented edges in $\vec{F}^\ast$ are described by the localization $\sigma^\ast_\Su$, and therefore $I^\Su$ is the $\mathcal{D}_\Su$-forest function $I_{\mathfrak{n}^\Su}$ above.
	\end{proof}
	
	\begin{example}\label{ex:ndeg}
		Let $\Su_{\text{triv}}$ be the trivial subdivision of $\Lambda(\M^\ast(G))$, where the only cell is the Lawrence polytope itself. Then $\chi^\ast(\Su_{\text{triv}})$ is the fourientation $\vec{G^\ast}$ where every edge of $G$ is bioriented. The localization $\sigma^\ast_\Su$ sends every signed cocircuit to $0$. Therefore, the degeneracy subset of $\Su_\text{triv}$ is $\text{BCon}(G)$, the set of all biconnected subsets of $G$. 
		We then have $I^\Su(G) = D_{-(\vec{G^\ast})^c} = \begin{bmatrix} -1 & -1 & \cdots & -1 \end{bmatrix}^T \in \mathbb{Z}^{|V(G)|}$. The set $\mathcal{BD}_{I^\Su}(G)$ is all orientable divisors $D_\Or$ on $G$. By Lemma 40 of \cite{bernardi-bij}, $|\mathcal{BD}_{I^\Su}(G)| = |\text{spanning forests of } G|$. The function $I^\Su$ corresponds to the maximally degenerate stability condition $\mathfrak{n}^{\deg}$ defined in Lemma 5.2 of \cite{fava-pagani-viviani2024}. 
	\end{example}
	
	Given an orientable $\mathcal{D}$-forest function $I$ satisfying $|\mathcal{BD}_I(G)| = c_{\mathcal{D}}(G)$, we can construct a subdivision $\Su$ of $\Lambda(\M^\ast(G))$ such that $I = I^\Su$. 
	\begin{thm}\label{thm:subdiv-to-forest}
		Let $I$ be an orientable $\mathcal{D}$-forest function such that $|\mathcal{BD}_I(G)| = c_{\mathcal{D}}(G)$. Then $\mathcal{D}$ is an orientable degeneracy set, and $I = I^\Su$ for some subdivision $\Su$ of $\Lambda(\M^\ast(G))$.
	\end{thm}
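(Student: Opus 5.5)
The plan is to turn the data of $I$ into a localization of $\M(G)$ and then use Theorem \ref{thm::subdivisions<->extensions} to get a subdivision. By Lemma \ref{lem:forest-fn-to-n}, $I$ determines a stability condition $\mathfrak{n}^I$ of degree $g(G)-1$ (Corollary \ref{cor:stab-degree-from-orientable-I}) with $\mathcal{D}(\mathfrak{n}^I)=\mathcal{D}$. Corollary \ref{cor:stab-values-from-orientable-I} then gives its values: for $W\notin\mathcal{D}$, $\mathfrak{n}^I_W-g(G[W])$ is $0$ or $-1$, and the value does not depend on the adapted forest.

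Define $\sigma^\ast$ on signed cocircuits by
\begin{itemize}
\item $\sigma^\ast(\pm\vec{C}^\ast_W)=0$ for $W\in\mathcal{D}$;
\item $\sigma^\ast(\vec{C}^\ast_W)=+$ if $W\notin\mathcal{D}$ and $\mathfrak{n}^I_W=g(G[W])$;
\item $\sigma^\ast(-\vec{C}^\ast_W)=+$ if $W\notin\mathcal{D}$ and $\mathfrak{n}^I_W=g(G[W])-1$.
\end{itemize}
This mirrors the dictionary used in the proof of Lemma \ref{lem:restiction_configs}. Antisymmetry $\sigma^\ast(-\vec{C}^\ast_W)=-\sigma^\ast(\vec{C}^\ast_W)$ follows from the nondegenerate equality $\mathfrak{n}^I_W+\mathfrak{n}^I_{W^c}+|E(W,W^c)|-g(G)+1=1$. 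Substituting $g(G)=g(G[W])+g(G[W^c])+|E(W,W^c)|-1$, this forces exactly one of $W,W^c$ to take the value $g-1$.

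Next, I verify that $\sigma^\ast$ is a localization using Theorem \ref{thm::or-ext-equivalences}(3). Every rank $2$ contraction with three parallel classes arises as in Lemma \ref{lem:rank2minorexists} from a partition $V(G)=W_1\sqcup W_2\sqcup W_3$ into biconnected pieces. One should check that contracting to rank $2$ in a graphic matroid always yields a three-vertex multigraph, possibly after deleting some edges. Set $I_{W_i}=\mathfrak{n}^I_{W_i}-g(G[W_i])$. Equation \eqref{eq:tribond-stabilitycond} combined with Equation \ref{eqn:sum of genus} gives the following.
\begin{itemize}
\item If no $W_i$ is degenerate, then $\sum I_{W_i}\in\{-1,-2\}$. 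This yields Configuration A or B, exactly as in Lemmas \ref{lem::partition-I values} and \ref{lem:restiction_configs}. Neither is forbidden: the first forbidden picture would need all three $I$ equal.
\item If exactly one $W_i$ is degenerate, then $\sum I=-2$ with $I_{W_i}=-1$. So the other two have opposite values, which rules out the second forbidden picture.
\item If all three are degenerate, the signature is all zeros. The degeneracy-set axiom (Definition \ref{def:deg-set}) excludes two zeros with a nonzero third, i.e.\ the third forbidden picture.
\end{itemize}
Hence $\sigma^\ast$ is a localization. Its underlying unoriented signature has zero set $C^\ast(\mathcal{D})$, so by Lemma \ref{lem:degsets-subclasses} the corresponding unoriented extension is orientable, and $\mathcal{D}$ is orientable.

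By Theorem \ref{thm::subdivisions<->extensions}, $\sigma^\ast$ corresponds to a subdivision $\Su$ with $\sigma^\ast_\Su=\sigma^\ast$, and $\mathcal{D}_\Su=\mathcal{D}$ by construction. It remains to show $I=I^\Su$. By the preceding theorem, $I^\Su=I_{\mathfrak{n}^\Su}$. By Theorem \ref{thm:stability-forest-fn-bij} it suffices to show $\mathfrak{n}^\Su=\mathfrak{n}^I$, since both forest functions are then the one attached to the same stability condition; here we use that $I=I_{\mathfrak{n}^I}$, as the two constructions are inverse. The values of $\mathfrak{n}^\Su$ are given by the computation in the proof of Lemma \ref{lem:I-from-n}:
\begin{itemize}
\item $g(G[W])-1$ on $\mathcal{D}$;
\item otherwise $g(G[W])$ or $g(G[W])-1$ according to $\sigma^\ast_\Su(\vec{C}^\ast_W)=\pm$.
\end{itemize}
These are the values from Corollary \ref{cor:stab-values-from-orientable-I}, so the two agree.

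I expect the main obstacle to be the rank-$2$ check. One must justify that the three-partition minors exhaust all relevant rank-$2$ contractions. One must also match the sign conventions of Figure \ref{fig:forbiddenconfigs} against the orientation-toward-$W$ convention carefully; the degenerate-one case is where a sign slip is most likely.
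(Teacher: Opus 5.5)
Your proposal is correct and follows essentially the same route as the paper. Both arguments run as follows:
\begin{enumerate}
\item Define $\sigma^\ast$ from the values of $\mathfrak{n}^I$ (given by the corollary on orientable forest functions).
\item Show $\sigma^\ast$ is a localization by ruling out the three forbidden rank-$2$ configurations on the minors coming from partitions of $V(G)$ into three biconnected parts.
\item Obtain $\Su$ from the Lawrence-polytope bijection.
\item Conclude $I=I^\Su$ from $\mathfrak{n}^\Su=\mathfrak{n}^I$.
\end{enumerate}
The differences are presentational. You organize the forbidden-configuration check by the number of degenerate parts rather than configuration by configuration, and your sign conventions there are right. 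You also spell out three things the paper leaves implicit: the antisymmetry of $\sigma^\ast$, why every relevant rank-$2$ minor arises from a biconnected three-part partition, and the identity $I=I_{\mathfrak{n}^I}$ coming from the forest-function bijection.
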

	\begin{proof}
		Assume $I$ is an orientable $\mathcal{D}$-forest function such that $|\mathcal{BD}_I(G)| = c_{\mathcal{D}}(G)$. We recall from Corollaries \ref{cor:stab-degree-from-orientable-I} and \ref{cor:stab-values-from-orientable-I} that $\mathfrak{n}^I$ is a degree $g(G)-1$ stability condition defined by		
		\begin{equation*}
			\mathfrak{n}^I_W = \begin{cases}
				g(G[W]) - 1 & W \in \mathcal{D} \\
				g(G[W]) & W \notin \mathcal{D} \text{ and } I(F)_W = 0 \text{ for all forests satisfying Equation \ref{eqn:adapted-forest}} \\
				g(G[W])-1 & W \notin \mathcal{D} \text{ and } I(F)_W = -1 \text{ for all forests satisfying Equation \ref{eqn:adapted-forest}}. 	
			\end{cases}
		\end{equation*}
		Let $\vec{C}^\ast_W$ be the signed cocircuit $E(W, W^c)$ oriented towards $W$. We define $\sigma^\ast: \mathcal{C}^\ast(\M) \to \{+,-,0\}$ in the following way:
		\[\sigma^\ast(\vec{C}^\ast_W) = \begin{cases}
			0 & W \in \mathcal{D} \\
			+ & W \notin \mathcal{D} \text{ and } \mathfrak{n}^I_W = g(G[W]) \\
			- & \text{otherwise.}
		\end{cases}\]
		We then show that $\sigma^\ast$ is a localization by showing that it cannot produce one of the three forbidden subconfigurations in Figure \ref{fig:forbiddenconfigs}. Let $W_i$, $W_j$, and $W_k$ be pairwise disjoint biconnected subsets such that $W_i \cup W_j \cup W_k = V(G)$, and consider the rank two minor of $\M(G)$ with vertices $W_i$, $W_j$, and $W_k$, as pictured in Figure \ref{fig:matroidminor}. Because $\mathcal{D}$ is a degeneracy set, we cannot have $\sigma^\ast(\pm \vec{C}^\ast_{W_i}) = \sigma^\ast(\pm\vec{C}^\ast_{W_j}) = 0$ and $\sigma^\ast(\vec{C}_{W_k}^\ast) = \pm$, so $\sigma^\ast$ does not create the third forbidden subconfiguration in Figure \ref{fig:forbiddenconfigs}. 
		
		If $\sigma^\ast$ produces the first forbidden subconfiguration in Figure \ref{fig:forbiddenconfigs}, we either have $\sigma^\ast(\vec{C}_{W_i}^\ast) = \sigma^\ast(\vec{C}_{W_j}^\ast) = \sigma^\ast(\vec{C}_{W_k}^\ast) = +$ or $\sigma^\ast(\vec{C}_{W_i}^\ast) = \sigma^\ast(\vec{C}_{W_j}^\ast) = \sigma^\ast(\vec{C}_{W_k}^\ast) = -$. In either case, $W_i$, $W_j$, and $W_k$ are all not $\mathfrak{n}^I$-degenerate, so the stability condition $\mathfrak{n}^I$ must satisfy
		$$ \mathfrak{n}^I_{W_i} + \mathfrak{n}^I_{W_j} + \mathfrak{n}^I_{W_k} + |E(W_i, W_j)| + |E(W_i, W_k)| + |E(W_j, W_k)|- g(G) +1 \in \{1,2\}.$$
		However in the first case, $\mathfrak{n}^I_{W_i} = g(G[W_i])$, $\mathfrak{n}^I_{W_j} = g(G[W_j])$, and $\mathfrak{n}^I_{W_k} = g(G[W_k])$, so
		$$ \mathfrak{n}^I_{W_i} + \mathfrak{n}^I_{W_j} + \mathfrak{n}^I_{W_k} + |E(W_i, W_j)| + |E(W_i, W_k)| + |E(W_j, W_k)|- g(G) +1 = 3.$$
		In the second case, $\mathfrak{n}^I_{W_i} = g(G[W_i])-1$, $\mathfrak{n}^I_{W_j} = g(G[W_j])-1$, and $\mathfrak{n}^I_{W_k} = g(G[W_k])-1$, so
		$$ \mathfrak{n}^I_{W_i} + \mathfrak{n}^I_{W_j} + \mathfrak{n}^I_{W_k} + |E(W_i, W_j)| + |E(W_i, W_k)| + |E(W_j, W_k)|- g(G) +1 = 0.$$
		Therefore, $\sigma^\ast$ cannot produce the first forbidden subconfiguration. Finally, for the remaining forbidden subconfiguration, assume without loss of generality that $\sigma^\ast(\pm\vec{C}^\ast_{W_i}) = 0$, $\sigma^\ast(\vec{C}_{W_j}^\ast) = -$, and $\sigma^\ast(\vec{C}_{W_k}^\ast) = -$. Since $W_i$ is uniquely $\mathfrak{n}^I$-degenerate, the stability condition $\mathfrak{n}^I$ must satisfy
		$$ \mathfrak{n}^I_{W_i} + \mathfrak{n}^I_{W_j} + \mathfrak{n}^I_{W_k} + |E(W_i, W_j)| + |E(W_i, W_k)| + |E(W_j, W_k)|- g(G) +1 = 1$$.
		However, this implies $\mathfrak{n}^I_{W_i} = g(G[W_i])$, which contradicts the definition of $\mathfrak{n}^I$. Therefore, $\sigma^\ast$ is a localization, and it corresponds to a subdivision $\Su$ of $\Lambda(\M^\ast(G))$. By definition, $\mathcal{D}_\Su = \mathcal{D}$, and $\mathcal{D}$ is orientable. It follows from Theorem \ref{thm:stability-condition-through-refinement} that $\mathfrak{n}^\Su = \mathfrak{n}^I$, so $I^\Su = I$. 
		
	\end{proof}
	We can also generalize Theorem \ref{thm:acyclic-classical} to regular subdivisions of the Lawrence polytope $\Lambda(\M^\ast(G))$. 
	
	\begin{corollary}\label{cor:reg->clasical}
		Let $\Su$ be a regular subdivision of the Lawrence polytope $\Lambda(\M^\ast(G))$. Then, the stability condition $\mathfrak{n}^\Su$ is classical. 
	\end{corollary}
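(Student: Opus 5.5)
The plan is to reduce to the triangulation case, Theorem \ref{thm:acyclic-classical}, and then perturb the polarization onto the walls picked out by $\mathcal{D}_\Su$. Since $\Su$ is regular, \cite[Corollary 2.3.18]{triangulations-book} gives a regular triangulation $\T$ refining $\Su$. We may use this $\T$ in the construction of Theorem \ref{thm:stability-condition-through-refinement}. By Theorem \ref{thm:acyclic-classical}, $\mathfrak{n}^\T = \mathfrak{n}(\phi)$ for some generic numerical polarization $\phi$ of degree $g(G)-1$. Genericity means that for every $W \in \text{BCon}(G)$ the quantity $\phi_W - |E(W,W^c)|/2$ is not an integer, and
\[\mathfrak{n}^\T_W = \left\lceil \phi_W - \tfrac{|E(W,W^c)|}{2}\right\rceil.\]

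Next I would produce a polarization $\psi$ with $\mathfrak{n}(\psi) = \mathfrak{n}^\Su$. The natural target is a point $\psi$ that lies on the wall $\{\psi_W - |E(W,W^c)|/2 = \mathfrak{n}^\T_W - 1\}$ whenever $\sigma^\ast_\T(\vec{C}^\ast_W)=+$ and $W\in\mathcal{D}_\Su$. This is equivalently the wall $\{\psi_{W^c} - |E(W,W^c)|/2 = \mathfrak{n}^\T_{W^c}\}$ for the complement, since $\deg\psi=g-1$ and $\mathfrak{n}^\T_W+\mathfrak{n}^\T_{W^c}+|E(W,W^c)|=g$. On such a wall the ceiling for $W$ is $\mathfrak{n}^\T_W - 1$ and the ceiling for $W^c$ is $\mathfrak{n}^\T_{W^c}$, which matches the definition of $\mathfrak{n}^\Su$. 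Away from the walls of $\mathcal{D}_\Su$ we want $\psi$ to stay in the closure of the chamber of $\phi$ and off every other wall, so that the remaining values are unchanged.

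So the key step is to show that the face of the closed chamber $\overline{\mathfrak{c}(\phi)}$ cut out by the walls indexed by $\mathcal{D}_\Su$ is nonempty and is not contained in any other wall. I would obtain this directly from regularity of $\Su$. Regular subdivisions correspond to faces of the secondary fan refined by the cone of $\T$. Under the dictionary of Theorem \ref{thm::subdivisions<->extensions}, lifting functions for $\Lambda(\M^\ast(G))$ give realizable extensions of $\M(G)$, that is, a new vector $p$ in the column space. The sign $\sigma^\ast(\vec{C}^\ast_W)$ is then the sign of a linear functional evaluated at $p$. I expect the computation in \cite{crepeau} for Theorem \ref{thm:acyclic-classical} to identify this functional with $\phi_W - |E(W,W^c)|/2 - \mathfrak{n}^\T_W + \tfrac12$, up to a positive rescaling and translation. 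If so, moving the lifting function from the interior of the cone of $\T$ to the relative interior of the face for $\Su$ moves $\phi$ to a point $\psi$ on exactly the walls with $\sigma^\ast_\Su = 0$, namely those indexed by $\mathcal{D}_\Su$. Checking that the sides of these walls agree with the sign rule in Theorem \ref{thm:stability-condition-through-refinement} is then the bookkeeping above.

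The main obstacle is this identification of the secondary-fan face with a face of the Oda--Seshadri chamber. If it is awkward to carry out, I would fall back on a more elementary route. Take $\psi$ on the segment from $\phi$ toward the limit point determined by a lifting function of $\Su$, and use the explicit formula for $\mathfrak{n}^\Su$ in Theorem \ref{thm:stability-condition-through-refinement}. Then compare $\mathfrak{n}(\psi)$ with $\mathfrak{n}^\Su$ one $W$ at a time, using that both have degeneracy set $\mathcal{D}_\Su$ and agree off $\mathcal{D}_\Su$.
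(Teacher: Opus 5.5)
\textbf{There is a genuine gap at the key step.} Your reduction to a regular refining triangulation $\T$ is correct, and so is your bookkeeping. A polarization $\psi$ in the closed chamber of $\phi$ that lies on exactly the walls indexed by $\mathcal{D}_\Su$, on the sides you specify, does satisfy $\mathfrak{n}(\psi)=\mathfrak{n}^\Su$. But the existence of such a $\psi$ is the entire content of the corollary, and you leave it conditional on an identification you only ``expect''.

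As written, that identification also contradicts your own bookkeeping. The functional $\phi_W-\tfrac{|E(W,W^c)|}{2}-\mathfrak{n}^\T_W+\tfrac12$ vanishes on $\{\phi_W-\tfrac{|E(W,W^c)|}{2}=\mathfrak{n}^\T_W-\tfrac12\}$. That is not a wall but a hyperplane strictly inside the slab between the two walls for $W$. Sending the lifting into the face of $\Su$ would put $\psi$ there, where $\lceil\psi_W-\tfrac{|E(W,W^c)|}{2}\rceil=\mathfrak{n}^\T_W$, so you would recover $\mathfrak{n}^\T$ rather than $\mathfrak{n}^\Su$. Allowing a separate translation for each $W$ makes the guess vacuous; the point is that the correct shifts all come from a single point of $V^{g-1}(G)$.

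The fallback does not close the gap either. ``The limit point determined by a lifting function of $\Su$'' has no meaning until liftings are tied to polarizations. Comparing $W$ by $W$ ``using that both have degeneracy set $\mathcal{D}_\Su$'' presupposes the property of $\mathfrak{n}(\psi)$ that has to be proved.

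\textbf{The missing idea: the relevant walls all pass through one point.} Put $x_W\coloneqq\phi_W-\tfrac{|E(W,W^c)|}{2}-(g(G[W])-1)$. Since $g(G[W])-1=|E(G[W])|-|W|$, every wall $\{x_W=0\}$ passes through the single polarization $\phi_0(v)=\tfrac{\deg(v)}{2}-1$ of degree $g(G)-1$. The difference $x=\phi-\phi_0$ ranges over sum-zero vectors in $\mathbb{R}^{V(G)}$, with $x_W=\sum_{v\in W}x(v)$.

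From the proof of Lemma \ref{lem:restiction_configs}, $\mathfrak{n}^\T_W=g(G[W])$ exactly when $\sigma^\ast_\T(\vec{C}^\ast_W)=+$. So the chamber of $\phi$ consists of the $x$ with $x_W\in(0,1)$ when $\sigma^\ast_\T(\vec{C}^\ast_W)=+$ and $x_W\in(-1,0)$ otherwise. Hence $\phi_0$ lies in its closure, and near $\phi_0$ the Oda--Seshadri arrangement is the central arrangement $\{x_W=0\}_{W\in\text{BCon}(G)}$. This is the arrangement of hyperplanes spanned by the flats of $\M(G)$ realized by the vectors $e_{h(e)}-e_{t(e)}$.

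This arrangement is the secondary fan of $\Lambda(\M^\ast(G))$ modulo lineality. For a lifting $\omega$, set $u_i=\omega(P_i)-\omega(P_{-i})$ and $y=\partial u$, where $\partial$ is the boundary map. Then $y_W$ is the pairing of $u$ with the signed incidence vector of $\vec{C}^\ast_W$. That pairing is exactly what decides the restriction of the regular subdivision to the circuit face $F_{\vec{C}^\ast_W}$. So $\mathrm{sign}(y_W)=\sigma^\ast(\vec{C}^\ast_W)$ up to one global sign convention.

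Regularity of $\Su$ therefore gives $y$ with $\mathrm{sign}(y_W)=\sigma^\ast_\Su(\vec{C}^\ast_W)$ for all $W$, replacing $y$ by $-y$ if needed. For small $\epsilon>0$, the polarization $\psi=\phi_0+\epsilon y$ has $\mathfrak{n}(\psi)_W=g(G[W])-1+\lceil\epsilon y_W\rceil$. This equals $\mathfrak{n}^\Su_W$: by Theorem \ref{thm:stability-condition-through-refinement} and the fact about $\mathfrak{n}^\T$ above, $\mathfrak{n}^\Su_W=g(G[W])$ if $\sigma^\ast_\Su(\vec{C}^\ast_W)=+$ and $g(G[W])-1$ otherwise.

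\textbf{Comparison with the paper.} Completed this way, your route is a one-step argument that shows exactly where regularity enters. The paper instead inducts along covering relations among regular subdivisions from $\T$ up to $\Su$. It moves the polarization onto one wall at a time and uses the second forbidden configuration to handle walls that are not facets of the current chamber.
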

	
	\begin{proof}
		Recall that any regular subdivision of the Lawrence polytope $\Lambda(\M^\ast(G))$ can be refined to a regular triangulation $\T$ of $\Lambda(\M^\ast(G))$. We induct on the distance in the poset $\text{Subdiv}(\mathcal{P}^\ast)$ between the regular subdivision $\Su$ and a refining regular triangulation $\T$. First, consider if $\Su = \T$. From Theorem \ref{thm:acyclic-classical}, the triangulation $\T$ corresponds to a classical stability condition $\mathfrak{n}^\T = \mathfrak{n}(\phi^\T)$, where $\phi^\T$ is a generic numerical polarization on $G$. From Theorem \ref{thm:acyclic-classical}, $\phi^\T$ lives in a chamber in $V^{g-1}(G)$ defined by the hyperplanes
		\[\left\{\phi_W^\T - \frac{|E(W,W^c)|}{2} = \mathfrak{n}_W^\T \right\}_{W \in \text{BCon}(G)} \bigcup \left\{\phi_W^\T - \frac{|E(W,W^c)|}{2} = \mathfrak{n}_W^\T -1\right\}_{W \in \text{BCon}(G)}.\]
		
		Then, consider a regular subdivision $\Su'$ that is refined by the regular subdivision $\Su$, and such that there exists no subdivision $\Su''$ such that $\Su' \geq \Su'' \geq \Su$. We can then consider a polarization $\phi^\Su$ that lives in between the hyperplanes 
		\[\left\{\phi_W^\T - \frac{|E(W,W^c)|}{2} = \mathfrak{n}_W^\T = \mathfrak{n}^\Su_W\right\} \bigcup \left\{\phi_W^\T - \frac{|E(W,W^c)|}{2} = \mathfrak{n}_W^\T -1 = \mathfrak{n}_W^\Su -1\right\}\]
		for $W \notin \mathcal{D}_\Su$, and on the hyperplanes
		\[\left \{ \phi_W^\T - \frac{|E(W,W^c)|}{2} = \mathfrak{n}_W^\T - 1 = \mathfrak{n}^\Su_W\right \} = \left \{ \phi_{W^c}^\T - \frac{|E(W,W^c)|}{2} = \mathfrak{n}_{W^c}^\T = \mathfrak{n}^\Su_{W^c}\right \}\] for $W, W^c \in \mathcal{D}_\Su$, where $W$ is chosen such that $\sigma_\Su^\ast(\vec{C}^\ast_W) = +$. 
		
		We then consider $W \in \mathcal{D}_{\Su'} \backslash \mathcal{D}_\Su$. One can check that if there exists $W_1, W_2 \in \mathcal{D}_{\Su'}$ such that $W_1 \cap W_2=\emptyset$ and $W_1 \cup W_2$ is also biconnected, then \[\phi_{W_1\cup W_2}^{\Su'} - \frac{|E(W_1 \cup W_2, (W_1 \cup W_2)^c)|}{2} = g(G[W_1\cup W_2]) -1.\] 
		Therefore, we consider $W \in \mathcal{D}_{\Su'} \backslash \mathcal{D}_\Su$ that cannot be written as the union of disjoint biconnected sets $W_1$ and $W_2$ in $\mathcal{D}_{\Su'}$. If $W$ corresponds to a facet of the chamber that $\phi^\Su$ lives in, we move to that wall. Then, assume that the hyperplane $\{\phi_W^\Su - \frac{|E(W,W^c)|}{2} = g(G[W]) - 1\}$ is not a facet of the chamber $\phi^\Su$ lives in. Then, the value of $\phi_W^\Su - \frac{|E(W,W^c)|}{2}$ is determined by the facets of the chamber $\phi^\Su$ lives in, and $\sigma^\ast_\Su(\vec{C}^\ast_W)$ is fixed by some collection of $\sigma^\ast_{\Su}(\vec{C}_{W_{F_1}}^\ast)$ and $\sigma^\ast_{\Su}(\vec{C}_{W_{F_2}}^\ast)$, where the hyperplanes corresponding to $W_{F_1}$ and $W_{F_2}$ are facets of the chamber, $W_{F_1} \cap W_{F_2} = \emptyset$, and $W = (W_{F_1} \cup W_{F_2})^c$. Then, if $\sigma_{\Su'}^\ast(\pm \vec{C}_W^\ast) = 0$, we must also have $\sigma_{\Su'}^\ast(\pm\vec{C}_{W_F}^\ast) = 0$ for the biconnected subsets $W_F$, and $W_F \in \mathcal{D}_{\Su'}$, or else $\sigma_{\Su'}$ would yield the second forbidden subconfiguration in Figure \ref{fig:forbiddenconfigs}. Therefore, we can obtain $\phi^{\Su'}$ by moving to a wall of the chamber $\phi^\Su$ lives in. It follows that $\mathfrak{n}(\phi^{\Su'}) = \mathfrak{n}^{\Su'}$, and $\mathfrak{n}^{\Su'}$ is classical.
	\end{proof}
	\subsection{Non-orientable degeneracy sets and stability conditions}
	It is currently unknown if every degeneracy set of a graph $G$ is the degeneracy set of some stability condition \cite[Question 4.23]{fava-pagani-viviani2025}. The bijection between subdivisions of $\Lambda(\M^\ast(G))$ and extensions of $\M(G)$ yields the following corollary of Theorem \ref{thm:stability-condition-through-refinement}.
	\begin{corollary}\label{cor:stab->odeg is surj}
		Let $\text{VStab}(G)$ be the set of all $V-$stability conditions on $G$, and let $\mathcal{O}\text{Deg}(G)$ be the set of all orientable degeneracy sets of $G$. Then the map $\text{VStab}(G) \to \Or\text{Deg}(G)$ is surjective.
	\end{corollary}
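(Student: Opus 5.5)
To prove Corollary \ref{cor:stab->odeg is surj}, we fix an arbitrary orientable degeneracy set $\mathcal{D} \in \Or\text{Deg}(G)$ and produce a stability condition whose degeneracy set is exactly $\mathcal{D}$. The idea is to pass from $\mathcal{D}$ to an oriented extension, from the extension to a subdivision of the Lawrence polytope, and then apply Theorem \ref{thm:stability-condition-through-refinement}.

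First, since $\mathcal{D}$ is orientable, Lemma \ref{lem:degsets-subclasses} gives a single-element extension $M'$ of $M(G)$ by an element $p$ whose signature vanishes exactly on $C^\ast(\mathcal{D})$. By the definition of orientability, $M'$ is an orientable matroid, so choose an oriented matroid $\M'$ with $\underline{\M'} = M'$. We need $\M'$ to be an extension of $\M(G)$ itself, not just of some orientation of $M(G)$. The deletion $\M'\backslash p$ is an orientation of $M(G)$. Since $M(G)$ is regular, and in particular binary, it has a unique orientation up to reorientation. So after reorienting some elements of $E$ in $\M'$, we may assume $\M'\backslash p = \M(G)$. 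Reorientation does not change supports, so the resulting localization $\sigma^\ast$ satisfies $\sigma^\ast(\pm\vec{C}^\ast_W) = 0$ exactly when $E(W,W^c) \in C^\ast(\mathcal{D})$, that is, exactly when $W \in \mathcal{D}$.

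Next, by Theorem \ref{thm::subdivisions<->extensions}, applied with $\M = \M^\ast(G)$ so that $\M^\ast = \M(G)$, this oriented extension corresponds to a subdivision $\Su$ of $\Lambda(\M^\ast(G))$ with localization $\sigma^\ast_\Su = \sigma^\ast$. By the definition of $\mathcal{D}_\Su$ as the set of $W$ with $\sigma^\ast_\Su(\pm\vec{C}^\ast_W)=0$, we get $\mathcal{D}_\Su = \mathcal{D}$.

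Finally, refine $\Su$ to a triangulation $\T$, which is always possible, and apply Theorem \ref{thm:stability-condition-through-refinement}. This yields a stability condition $\mathfrak{n}^\Su$ with $\mathcal{D}(\mathfrak{n}^\Su) = \mathcal{D}_\Su = \mathcal{D}$, which proves surjectivity.

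The main obstacle is the reorientation step, since orientability of $M'$ alone only gives an extension of \emph{some} orientation of $M(G)$. The approach is to invoke uniqueness of orientations of binary (regular) matroids up to reorientation. Then one checks that reorienting elements of $E$, and if necessary $p$, preserves both the extension property and the zero set of the signature. Two smaller points also need a remark: the extension can be taken nontrivial, as assumed throughout Subsection \ref{subsec:extension}; and the degenerate case $\mathcal{D}=\emptyset$ is covered by triangulations via Theorem \ref{thm:triang-to-goodIG}.
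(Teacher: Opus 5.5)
Your proof is correct and follows the paper's route: an orientable $\mathcal{D}$ gives an oriented extension of $\M(G)$, hence a subdivision $\Su$ of $\Lambda(\M^\ast(G))$ with $\mathcal{D}_\Su = \mathcal{D}$, and Theorem \ref{thm:stability-condition-through-refinement} then yields $\mathfrak{n}^\Su$ with $\mathcal{D}(\mathfrak{n}^\Su) = \mathcal{D}$. Your reorientation step, using the uniqueness of orientations of regular (binary) matroids up to reorientation, spells out what the paper takes for granted when it says that every orientable degeneracy set corresponds to a single-element extension of $\M(G)$.
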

	
	Even when restricting to classical stability conditions, we can have stability conditions that cannot come from a subdivision of $\Lambda(\M^\ast(G))$. 
	\begin{example}
		We consider the following stability condition associated to the numerical polarization on $K_4$ in Example 5 in Section 8 of \cite{oda-seshadri}, where the vertices of $K_4$ are labeled as in Example \ref{ex:not-all-extensions-orientable}. Let $\phi$ be the polarization on $K_4$ given by $\phi(v_1) = \phi(v_4) = 1$, and $\phi(v_2)= \phi(v_3) = 0$. The stability condition $\mathfrak{n}(\phi)$ has degeneracy set $\mathcal{D}(\mathfrak{n}(\phi)) = \{v_1v_2, v_1v_3, v_1v_4, v_2v_3, v_2v_4, v_3v_4\}$. $\mathcal{D}(\mathfrak{n}(\phi))$ corresponds to the linear subclass of cocircuits $\mathcal{L}$ discussed in Example \ref{ex:not-all-extensions-orientable}, and the corresponding single-element extension of $M(K_4)$ was not orientable.
	\end{example}
	
	Therefore, to resolve Question 4.23 in \cite{fava-pagani-viviani2025}, one must only consider the non-orientable degeneracy sets on $G$. Determining the orientability of a degeneracy set $\mathcal{D}$ is equivalent to determining if a matroid is orientable, which is NP-Complete \cite{orientable-complexity}. However, determining the orientability of a degeneracy set $\mathcal{D}$ can viewed as the existence of a relation in the poset $\text{Deg(G)}$.
	
	\begin{lem}\label{lem:odeg-upper-lifting}
		Consider degeneracy sets $\mathcal{D}_1$ and $\mathcal{D}_2$ such that $\mathcal{D}_1 \geq \mathcal{D}_2$. If $\mathcal{D}_2$ is orientable, then $\mathcal{D}_1$ is orientable. 
	\end{lem}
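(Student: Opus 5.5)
The plan is to pass through stability conditions, using the constructions already in the paper. Since $\mathcal{D}_2$ is orientable, it is the degeneracy set of an orientable single-element extension of $M(G)$. That is, there is a localization $\sigma^\ast_2$ of $\M(G)$ with $\{W : \sigma_2^\ast(\pm\vec{C}^\ast_W) = 0\} = \mathcal{D}_2$. By Theorem \ref{thm::subdivisions<->extensions}, $\sigma^\ast_2$ corresponds to a subdivision $\Su_2$ of $\Lambda(\M^\ast(G))$ with $\mathcal{D}_{\Su_2} = \mathcal{D}_2$. Theorem \ref{thm:stability-condition-through-refinement} then gives a stability condition $\mathfrak{n}^2 := \mathfrak{n}^{\Su_2}$ of degree $g(G)-1$ with $\mathcal{D}(\mathfrak{n}^2) = \mathcal{D}_2$. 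By Lemma \ref{lem:I-from-n} and Corollary \ref{cor:I-from-n}, its forest function is orientable, and by Corollary \ref{cor:stab-values-from-orientable-I} every value satisfies $\mathfrak{n}^2_W \in \{g(G[W])-1, g(G[W])\}$, with $\mathfrak{n}^2_W = g(G[W])-1$ for $W \in \mathcal{D}_2$.

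Next I apply the upper lifting property, Proposition \ref{prop:order-preserving-upper-lifting}(2), in its explicit form (\ref{eqn:upper-lifting-stab-condition}). Since $\mathcal{D}_1 \geq \mathcal{D}_2$, choose $\mathcal{E}$ as in Definition \ref{def:poset-relation-deg-set} and define $\mathfrak{n}^1$ by raising $\mathfrak{n}^2$ by one on $\mathcal{E}$. This gives a stability condition of degree $g(G)-1$ with $\mathcal{D}(\mathfrak{n}^1) = \mathcal{D}_1$. Its values again lie in $\{g(G[W])-1, g(G[W])\}$: on $\mathcal{E} \subseteq \mathcal{D}_2$ the value rises from $g(G[W])-1$ to $g(G[W])$, and all other values are unchanged. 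For $W \in \mathcal{D}_1$ the value is $g(G[W])-1$ on both $W$ and $W^c$.

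To conclude, I want to show that $\mathcal{D}_1$ is orientable. The cleanest route is to rerun the argument of Theorem \ref{thm:subdiv-to-forest} directly with $\mathfrak{n}^1$ in place of $\mathfrak{n}^I$. That proof used only two facts: that $\mathfrak{n}^I$ is a stability condition of degree $g(G)-1$, and that its values are $g(G[W])$ or $g(G[W])-1$, with $g(G[W])-1$ on the degeneracy set. So I define $\sigma^\ast(\vec{C}^\ast_W) = 0$ if $W \in \mathcal{D}_1$, $+$ if $W \notin \mathcal{D}_1$ and $\mathfrak{n}^1_W = g(G[W])$, and $-$ otherwise. Antisymmetry, $\sigma^\ast(-\vec C^\ast_W) = -\sigma^\ast(\vec C^\ast_W)$, follows from the identity $\mathfrak{n}^1_W + \mathfrak{n}^1_{W^c} + |E(W,W^c)| - g(G) + 1 = 1$ for nondegenerate $W$, combined with the genus count of Equation \ref{eqn:sum of genus} applied to the bipartition $(W, W^c)$. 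I then check the three forbidden configurations of Figure \ref{fig:forbiddenconfigs} on every rank two minor from Lemma \ref{lem:rank2minorexists}, exactly as in that proof, using the tribond condition (\ref{eq:tribond-stabilitycond}). Theorem \ref{thm::or-ext-equivalences} then makes $\sigma^\ast$ a localization whose zero set is $C^\ast(\mathcal{D}_1)$, so $\mathcal{D}_1$ is orientable.

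The main obstacle is justifying that the argument of Theorem \ref{thm:subdiv-to-forest} really requires only the value constraint on $\mathfrak{n}$, and not the existence of an orientable forest function $I$. I expect this to hold, since the configuration checks are pure arithmetic with the tribond equation. A secondary point needs care: rank two minors of $\M(G)$ where one of the three vertices is not biconnected, or where the minor is not of the three-vertex form. For these I would argue, as the paper implicitly does, that cocircuit triples in rank two contractions of a graphic matroid always come from partitions of $V(G)$ into three biconnected pieces, as in Figure \ref{fig:matroidminor}.
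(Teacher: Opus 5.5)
Your argument is correct. Its setup matches the paper's: the subdivision $\Su_2$, $\mathfrak{n}^2=\mathfrak{n}^{\Su_2}$, the explicit upper lift $\mathfrak{n}^1$, and the signature read off from $\mathfrak{n}^1$. The difference lies in how the forbidden configurations are excluded.

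The paper uses stability of $\mathfrak{n}^1$ only for antisymmetry. It notes that $\sigma^\ast_2(\vec C^\ast_W)\in\{0,\sigma^\ast_1(\vec C^\ast_W)\}$. Since $\sigma^\ast_2$ is a localization, $\sigma^\ast_2$ must then vanish on the offending sets: on $W_j,W_k$ for the second configuration, and on all three sets for the first. So these sets lie in $\mathcal{D}_2\setminus\mathcal{D}_1$, which contradicts conditions (2) and (3) of Definition \ref{def:poset-relation-deg-set} on $\mathcal{E}$.

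You instead drop $\sigma^\ast_2$ and $\mathcal{E}$ once you know $I_W:=\mathfrak{n}^1_W-g(G[W])\in\{-1,0\}$, and let the stability axioms of $\mathfrak{n}^1$ do the work. Your ``main obstacle'' is a non-issue. By the genus count, the bipartition expression equals $2+I_W+I_{W^c}$. The left side of (\ref{eq:tribond-stabilitycond}) with $d=g(G)-1$ equals $3+I_{W_1}+I_{W_2}+I_{W_3}$. Antisymmetry and all three exclusions follow immediately from these two identities.

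Your route isolates a reusable criterion: any degree-$(g(G)-1)$ stability condition with every $I_W\in\{-1,0\}$ has an orientable degeneracy set. This is really what the proof of Theorem \ref{thm:subdiv-to-forest} shows, so the lemma becomes a short corollary. The price is that you lean on the full tribond property of the lifted $\mathfrak{n}^1$ from Proposition \ref{prop:order-preserving-upper-lifting}, which is exactly where conditions (2)--(3) on $\mathcal{E}$ are hidden. The paper's route, by contrast, uses little about $\mathfrak{n}^1$ beyond its explicit formula, and shows directly which axiom on $\mathcal{E}$ rules out which configuration.

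Your treatment of rank-two minors is also fine. In a rank-two contraction $G/A$, if two of the three vertex classes are non-adjacent, there are only two cocircuit pairs and no three-element configuration arises. Otherwise all three classes are biconnected.
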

	\begin{proof}
		If $\mathcal{D}_2$ is orientable, there exists a subdivision $\Su_2$ of $\Lambda(\M^\ast(G))$ such that $\mathcal{D}_{\Su_2} = \mathcal{D}_2$ and a corresponding localization $\sigma_2^\ast$. If $\mathfrak{n}^2$ is the stability condition corresponding to $\Su_2$, recall from the proof of Theorem \ref{thm:subdiv-to-forest} that $\mathfrak{n}^2$ is defined by
		\[
		\mathfrak{n}^2_W = \begin{cases}
			g(G[W]) - 1 & W \in \mathcal{D}_2 \\
			g(G[W]) & \sigma_2^\ast(\vec{C}^\ast_W) = + \\
			g(G[W]) - 1 & \sigma_2^\ast(\vec{C}^\ast_W) = -.
		\end{cases}
		\]
		By Proposition \ref{prop:order-preserving-upper-lifting}, there exists a stability condition $\mathfrak{n}^1$ such that $\mathfrak{n}^1 \geq \mathfrak{n}^2$ and $\mathcal{D}(\mathfrak{n}^1) = \mathcal{D}_1$. We show that $\mathfrak{n}^1$ corresponds to some subdivision of the Lawrence polytope $\Lambda(\M^\ast(G))$. We recall from Equation \ref{eqn:upper-lifting-stab-condition} that $\mathfrak{n}_1$ is defined by 
		\[\mathfrak{n}^1_W = \begin{cases}
			\mathfrak{n}^2_W & W \in \mathcal{D}_1 \text{ or } W \notin \mathcal{D}_2 \\
			\mathfrak{n}^2_W + 1 & W \in \mathcal{E} \\
			\mathfrak{n}^2_W & W \in \mathcal{E}^c,
		\end{cases}\]
		where $\mathcal{E}$ is a subset of $\mathcal{D}_2 \backslash \mathcal{D}_1$ satisfying the conditions in Definition \ref{def:poset-relation-deg-set}. In particular, we can explicitly describe $\mathfrak{n}^1$ using the localization $\sigma_2^\ast$:
		\[\mathfrak{n}^1_W = \begin{cases}
			g(G[W]) - 1& W \in \mathcal{D}_1 \\
			g(G[W]) - 1 & W\notin \mathcal{D}_2 \text{ and } \sigma_2^\ast\vec{C}^\ast_W) = - \\
			g(G[W])  & W\notin \mathcal{D}_2 \text{ and } \sigma_2^\ast\vec{C}^\ast_W) = + \\
			g(G[W]) & W \in \mathcal{E} \\
			g(G[W]) - 1 & W \in \mathcal{E}^c.
		\end{cases}\]
		
		We define a map $\sigma^\ast_1:\mathcal{C}^\ast(\M(G)) \to \{+,-,0\}$ by 
		\[\sigma^\ast_1(\vec{C}^\ast_W) = \begin{cases}
			0 & W \in \mathcal{D}_1 \\
			+ &  W \notin \mathcal{D}_1 \text{ and } \mathfrak{n}^1_W = g(G[W]) \\
			- & W \notin \mathcal{D}_1 \text{ and }\mathfrak{n}^1_W = g(G[W]) - 1.
		\end{cases}\]
		Because $\mathfrak{n}^1$ is a stability condition of degree $g(G) - 1$, it follows that $\sigma^\ast_1(\vec{C}^\ast_W) = -\sigma^\ast_1(\vec{C}_{W^c}^\ast) = \sigma^\ast_1(-\vec{C}_W^\ast)$. We also note that $\sigma_2^\ast(\vec{C}^\ast_{W}) \in \{0, \sigma_1^\ast(\vec{C}^\ast_{W})\}$ for every $W \in \text{BCon}(G)$. 
		
		We then show that $\sigma_1^\ast$ is a localization by showing it does not produce one of the three forbidden subconfigurations in Figure \ref{fig:forbiddenconfigs}. Again, let $W_i$, $W_j$, and $W_k$ be pairwise disjoint biconnected subsets such that $W_i \cup W_j \cup W_k = V(G)$, and consider the rank two minor of $\M(G)$ with vertices $W_i$, $W_j$, and $W_k$, as pictured in Figure \ref{fig:matroidminor}. Since $\mathcal{D}_1$ is a degeneracy set, $\sigma^\ast_1$ cannot produce the third forbidden subconfiguration. We then consider the second forbidden subconfiguration. Assume without loss of generality that $\sigma_1^\ast(\pm \vec{C}_{W_i}^\ast) = 0$, so $W_i \in \mathcal{D}_1$. The relation $\mathcal{D}_1 \geq \mathcal{D}_2$ implies $\mathcal{D}_1 \subseteq \mathcal{D}_2$, so $W_i \in \mathcal{D}_2$ and $\sigma^\ast_2(\pm \vec{C}_{W_i}^\ast) = 0$. If $\sigma^\ast_1$ produces the second forbidden subconfiguration, we have $\sigma^\ast_1(\vec{C}_{W_j}^\ast) = \sigma^\ast_1(\vec{C}_{W_k}^\ast) = -$; however, $\sigma^\ast_2$ is a localization and cannot produce a forbidden subconfiguration, so we must have $\sigma^\ast_2(\pm \vec{C}_{W_j}^\ast) = \sigma^\ast_2(\pm \vec{C}_{W_k}^\ast) = 0$. Therefore $W_j$ and $W_k$ are both in $\mathcal{D}_2$ and both not in $\mathcal{D}_1$, so $W_j$ and $W_k$ are in $\mathcal{D}_2 \backslash \mathcal{D}_1$ and both are in $\mathcal{E}^c$. But, this contradicts the definition of $\mathcal{E}$, because exactly one of $W_j$ and $W_k$ must be in $\mathcal{E}$. Therefore, $\sigma^\ast_1$ cannot produce the second forbidden subconfiguration. Finally, if $\sigma^\ast_1$ produces the first forbidden subconfiguration, we must have $\sigma_2(\pm \vec{C}_{W_i}^\ast) = \sigma_2(\pm \vec{C}_{W_j}^\ast) = \sigma_2(\pm \vec{C}_{W_k}^\ast) = 0$ because $\sigma_2^\ast$ is a localization. However in this subconfiguration, we would then have $\sigma_1( \vec{C}_{W_i}^\ast) = \sigma_1( \vec{C}_{W_j}^\ast) = \sigma_1( \vec{C}_{W_k}^\ast)$, and either all three of $W_i, W_j, W_k$ appear in $\mathcal{E}$ or none of them do, contradicting the definition of $\mathcal{E}$.
		
		Therefore, $\sigma^\ast_1$ is a localization, and it corresponds to a subdivision $\Su_1$ of $\Lambda(\M^\ast(G))$ such that $\mathcal{D}_{\Su_1} = \mathcal{D}_1$. By definition, $\mathcal{D}_1$ is orientable. 
	\end{proof}
	
	\begin{thm}\label{thm:orientable-iff-bcon}
		The degeneracy set $\mathcal{D}$ is orientable if and only if $\mathcal{D}\geq \text{BCon}(G)$ in $\text{Deg}(G)$. 
	\end{thm}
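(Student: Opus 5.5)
The reverse direction follows from Lemma \ref{lem:odeg-upper-lifting}. By Example \ref{ex:ndeg}, the trivial subdivision $\Su_{\text{triv}}$ of $\Lambda(\M^\ast(G))$ has localization $\sigma^\ast \equiv 0$ and degeneracy set $\mathcal{D}_{\Su_{\text{triv}}} = \text{BCon}(G)$. Hence $\text{BCon}(G)$ is an orientable degeneracy set. If $\mathcal{D} \geq \text{BCon}(G)$ in $\text{Deg}(G)$, Lemma \ref{lem:odeg-upper-lifting} with $\mathcal{D}_2 = \text{BCon}(G)$ and $\mathcal{D}_1 = \mathcal{D}$ then shows that $\mathcal{D}$ is orientable.

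For the forward direction, let $\mathcal{D}$ be orientable. Choose a subdivision $\Su$ with $\mathcal{D}_\Su = \mathcal{D}$, and let $\sigma^\ast_\Su$ be its localization. We must verify Definition \ref{def:poset-relation-deg-set} for $\mathcal{D}_1 = \mathcal{D}$ and $\mathcal{D}_2 = \text{BCon}(G)$. Inclusion $\mathcal{D} \subseteq \text{BCon}(G)$ is automatic. The natural candidate is
\[\mathcal{E} \coloneqq \{W \in \text{BCon}(G)\backslash \mathcal{D} : \sigma^\ast_\Su(\vec{C}^\ast_W) = +\}.\]
This is the set of $W$ at which $\mathfrak{n}^\Su_W = g(G[W])$, i.e.\ the $W$ raised by one in Equation \ref{eqn:upper-lifting-stab-condition} when lifting $\mathfrak{n}^{\deg}$ to $\mathfrak{n}^\Su$. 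Condition (1) follows from $\vec{C}^\ast_{W^c} = -\vec{C}^\ast_W$ together with $\sigma^\ast(-\vec{C}^\ast) = -\sigma^\ast(\vec{C}^\ast)$. Since $\sigma^\ast_\Su$ is nonzero off $\mathcal{D}$, exactly one of $W, W^c$ lies in $\mathcal{E}$.

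Conditions (2) and (3) should come from the rank $2$ minors of Lemma \ref{lem:rank2minorexists}, and this is where the real work lies. Take pairwise disjoint biconnected $W_i, W_j, W_k$ covering $V(G)$, and consider the minor of Figure \ref{fig:matroidminor}; its three lines carry the signed cocircuits $\pm\vec{C}^\ast_{W_i}, \pm\vec{C}^\ast_{W_j}, \pm\vec{C}^\ast_{W_k}$.

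For condition (2), take $Z_1, Z_2 \notin \mathcal{D}$, disjoint, with $Z_1 \cup Z_2 \in \mathcal{D}$. Set $W_k = (Z_1 \cup Z_2)^c$, which lies in $\mathcal{D}$, so $\sigma^\ast(\pm\vec{C}^\ast_{W_k}) = 0$. The second forbidden subconfiguration in Figure \ref{fig:forbiddenconfigs} rules out $\sigma^\ast(\vec{C}^\ast_{Z_1}) = \sigma^\ast(\vec{C}^\ast_{Z_2})$. Reading the positions of the rays as in the proof of Lemma \ref{lem:restiction_configs} then forces $\sigma^\ast(\vec{C}^\ast_{Z_1}) = -\sigma^\ast(\vec{C}^\ast_{Z_2})$, so exactly one of $Z_1, Z_2$ lies in $\mathcal{E}$. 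The main obstacle is this sign bookkeeping: one must check that ``$\sigma^\ast_\Su$ equal on $\vec{C}^\ast_{Z_1}$ and $\vec{C}^\ast_{Z_2}$'' is precisely the forbidden pattern, given how the two rays sit relative to the line of $W_k$. I would verify this by direct computation on the three-vertex graph $G/E'$, writing the cocircuits as sign vectors on $E(W_i,W_j), E(W_i,W_k), E(W_j,W_k)$.

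For condition (3), take $Z_1, Z_2, Z_3 \notin \mathcal{D}$ partitioning $V(G)$. Then all three lines of the minor carry nonzero signs. The first forbidden subconfiguration is exactly the case where $\sigma^\ast(\vec{C}^\ast_{Z_1})$, $\sigma^\ast(\vec{C}^\ast_{Z_2})$ and $\sigma^\ast(\vec{C}^\ast_{Z_3})$ all agree; this is the same translation used in the proof of Theorem \ref{thm:subdiv-to-forest}. So $\mathcal{E}$ contains one or two of the $Z_i$. This establishes $\mathcal{D} \geq \text{BCon}(G)$.

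As a cross-check, $\mathfrak{n}^\Su$ is obtained from $\mathfrak{n}^{\deg}$ exactly by Equation \ref{eqn:upper-lifting-stab-condition} with this $\mathcal{E}$, consistent with Proposition \ref{prop:order-preserving-upper-lifting}.
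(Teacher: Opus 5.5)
Your proof is correct. The reverse direction is exactly the paper's argument. The forward direction takes a genuinely different route.

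\textbf{The paper's route.} The paper never writes down $\mathcal{E}$. It observes that $\mathfrak{n}^\Su \geq \mathfrak{n}^{\deg}$ in $\text{VStab}(G)$: both have degree $g(G)-1$, $\mathfrak{n}^{\deg}_W = g(G[W])-1$, and $\mathfrak{n}^\Su_W \in \{g(G[W])-1,\, g(G[W])\}$. It then applies the order-preserving property of $\mathfrak{n}\mapsto\mathcal{D}(\mathfrak{n})$ from Proposition \ref{prop:order-preserving-upper-lifting}(1), which gives $\mathcal{D}=\mathcal{D}(\mathfrak{n}^\Su)\geq\mathcal{D}(\mathfrak{n}^{\deg})=\text{BCon}(G)$.

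\textbf{Your route.} You verify Definition \ref{def:poset-relation-deg-set} directly from the localization axioms, in effect running the argument of Lemma \ref{lem:odeg-upper-lifting} backwards. The sign bookkeeping you flag as the main obstacle does come out as you claim:
\begin{itemize}
\item On the triangle minor with parts $A,B,C$, the six rays occur cyclically as $\vec{C}^\ast_A, -\vec{C}^\ast_B, \vec{C}^\ast_C, -\vec{C}^\ast_A, \vec{C}^\ast_B, -\vec{C}^\ast_C$. This is the layout of Configuration A in Lemma \ref{lem:restiction_configs}; "towards $A$" is adjacent to "away from $B$" and "away from $C$".
\item Hence the first forbidden pattern, alternating signs, is exactly $\sigma^\ast(\vec{C}^\ast_A)=\sigma^\ast(\vec{C}^\ast_B)=\sigma^\ast(\vec{C}^\ast_C)\neq 0$. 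That gives your condition (3).
\item When the $C$-line carries $0$, the rays $\vec{C}^\ast_A$ and $-\vec{C}^\ast_B$ lie on the same open half-circle. The second forbidden pattern is then exactly $\sigma^\ast(\vec{C}^\ast_A)=\sigma^\ast(\vec{C}^\ast_B)\neq 0$. That gives your condition (2).
\item The minor really is a triangle, because the union of any two parts is the complement of a biconnected set. It is therefore connected, so each $E(Z_a,Z_b)$ is nonempty.
\end{itemize}

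\textbf{Comparison.} Your approach gives a self-contained, purely matroidal proof with an explicit witness $\mathcal{E}$. It does not need Theorem \ref{thm:stability-condition-through-refinement} or the external order-preservation result. The paper's approach is shorter because it delegates exactly this verification to Proposition \ref{prop:order-preserving-upper-lifting}. Your $\mathcal{E}$ is precisely the set where $\mathfrak{n}^\Su$ exceeds $\mathfrak{n}^{\deg}$, which is the set that proposition implicitly produces.
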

	\begin{proof}
		First, assume that the degeneracy set $\mathcal{D}$ is orientable, so there exists a subdivision $\Su$ such that $\mathcal{D}_\Su = \mathcal{D}$. Then, consider the stability condition $\mathfrak{n}^\Su$, as defined in Theorem \ref{thm:stability-condition-through-refinement}. Recall the stability condition $\mathfrak{n}^{\deg}$ from Example \ref{ex:ndeg}, which satisfies $\mathfrak{n}^\Su \geq \mathfrak{n}^{\deg}$. Because the map from stability conditions to degeneracy sets is order-preserving, we have $\mathcal{D} = \mathcal{D}(\mathfrak{n}^\Su) \geq \mathcal{D}(\mathfrak{n}^{\deg}) = \text{BCon}(G)$, as desired.
		
		Then, assume $\mathcal{D}\geq \text{BCon}(G)$. By Lemma \ref{lem:odeg-upper-lifting}, the orientability of $\text{BCon}(G)$ implies that $\mathcal{D}$ is orientable. 
	\end{proof}
	
	\section{Tool and Computational Resource Disclosure}
	The author did not use any AI or LLM tools for any aspect of this research or the writing of this manuscript.

\end{document}